\DeclareFontFamily{OT1}{pzc}{}
\DeclareFontShape{OT1}{pzc}{m}{it}{<-> s * [1.10] pzcmi7t}{}
\DeclareMathAlphabet{\mathpzc}{OT1}{pzc}{m}{it}
\numberwithin{table}{section}
\numberwithin{figure}{section}
\newtheorem{theorem}{Theorem}[section]
\newtheorem{lemma}{Lemma}[section]
\newtheorem{corollary}{Corollary}[section]
\newtheorem{assumption}{Assumption}[section]
\newcommand\eps{\epsilon}
\def\beq{\begin{equation}}
\def\eeq{\end{equation}}
\def\bals{\begin{align*}}
\def\eals{\end{align*}}
\def\bal{\begin{align}}
\def\eal{\end{align}}
\numberwithin{equation}{section}
\numberwithin{theorem}{section}
\numberwithin{corollary}{section}
\begin{document}
\title[Change points in RCA models ]{Change point detection in random
coefficient autoregressive models}
\author{Lajos Horv\'ath}
\address{Lajos Horv\'ath, Department of Mathematics, University of Utah,
Salt Lake City, UT 84112--0090 USA}
\author{Lorenzo Trapani}
\address{Lorenzo Trapani, School of Economics, University of Nottingham,
University Park, Nottingham NG7 2RD U.K.}
\subjclass{Primary 62M10; Secondary 62G20}
\keywords{Changepoint problem, weighted CUSUM process, Random Coefficient
AutoRegression, Darling-Erd\H{o}s theorem.}

\begin{abstract}
We propose a family of CUSUM-based statistics to detect the presence of
changepoints in the deterministic part of the autoregressive parameter in a
Random Coefficient AutoRegressive (RCA) sequence. In order to ensure the
ability to detect breaks at sample endpoints, we thoroughly study \textit{%
weighted} CUSUM statistics, analysing the asymptotics for virtually all
possible weighing schemes, including the standardised CUSUM process (for
which we derive a Darling-Erd\H{o}s theorem) and even heavier weights
(studying the so-called R\'enyi statistics). Our results are valid
irrespective of whether the sequence is stationary or not, and indeed prior
knowledge of stationarity or lack thereof is not required from a practical
point of view. From a technical point of view, our results require the
development of strong approximations which, in the nonstationary case, are
entirely new. Similarly, we allow for heteroskedasticity of unknown form in
both the error term and in the stochastic part of the autoregressive
coefficient, proposing a family of test statistics which are robust to
heteroskedasticity; again, our tests can be readily applied, with no prior
knowledge as to the presence or type of heteroskedasticity. Simulations show
that our procedures work well in finite samples, under all cases considered
(stationarity versus nonstationarity, homoskedasticity versus various forms
of heteroskedasticity). We complement our theory with applications to
financial, economic and epidemiological time series.
\end{abstract}

\maketitle


\section{Introduction\label{intro}}

In this paper we study the stability of the autoregressive parameter of an
RCA(1) sequence: 
%
\begin{equation}
y_{i}=\left\{ 
\begin{array}{ll}
\left( \beta _{0}+\epsilon _{i,1}\right) y_{i-1}+\epsilon _{i,2},\quad %
\mbox{if}\;\;\;1\leq i\leq k^{\ast }, &  \\ 
\left( \beta _{A}+\epsilon _{i,1}\right) y_{i-1}+\epsilon _{i,2},\quad %
\mbox{if}\;\;\;k^{\ast }+1\leq i\leq N, & 
\end{array}%
\right.  \label{arcnew1}
\end{equation}%
where $y_{0}$ denotes an initial value. We test for the null hypothesis of
no change versus the alternative of at most one change (AMOC) i.e. 
\begin{align}
H_{0}& :\;k^{\ast }>N,  \label{rcanu} \\
H_{A}& :\;1<k^{\ast }<N\;\;\mbox{and}\;\;\beta _{0}\neq \beta _{A}.
\label{racor1}
\end{align}

The RCA model was firstly studied by \citet{andel} and \citet{nichollsquinn}%
. It belongs in the wider class of nonlinear models for time series (see %
\citealp{fanyao}), which have been proposed \textquotedblleft as a reaction
against the supremacy of linear ones - a situation inherited from strong,
though often implicit, Gaussian assumptions\textquotedblright\ (%
\citealp{akharif2003}). Arguably, (\ref{arcnew1}) is very flexible, allowing
for the autoregressive \textquotedblleft root\textquotedblright\ $\beta
_{0}+\epsilon _{i,1}$ to vary over time, and thus for the possibility of
having stationary and nonstationary regimes. This may be a more appropriate
model than a linear specification (see \citealp{lieberman2012}; %
\citealp{leybourne1996}); \citet{gky} argue that a time-varying parameter
model like (\ref{arcnew1}) can be viewed as a competitor for a model with an
abrupt break in the autoregressive root. Furthermore, equation (\ref{arcnew1}%
) also allows for the possibility of (conditional) heteroskedasticity in $%
y_{i}$; \citet{tsay1987} shows that the widely popular ARCH\ model by %
\citet{engle1982} can be cast into (\ref{arcnew1}), which therefore can be
viewed as a second-order equivalent. Finally, a major advantage of (\ref%
{arcnew1}) compared to standard autoregressive models is that estimators of $%
\beta _{0}$ are always asymptotically normal, irrespective of whether $y_{i}$
is stationary or nonstationary, thus avoiding the risk of over-differencing
(see \citealp{leybourne1996}).\newline

Given such generality and flexibility, (\ref{arcnew1}) has been used in many
applied sciences, including biology (\citealp{stenseth}), medicine (%
\citealp{fryz}), and physics (\citealp{slkezak2019random}). The RCA\ model
has also been applied successfully in the analysis of economic and financial
data, and we refer to the recent contribution by \citet{regis} for a
comprehensive review.

The inferential theory for (\ref{arcnew1}) has been studied extensively. %
\citet{schick1996}, \citet{koul1996} and \citet{praskova2004} study Weighted
Least Squares (WLS)\ estimation of $\beta _{0}$; \citet{berkes2009} and %
\citet{aue2011} study Quasi Maximum Likelihood estimation, and %
\citet{hillpeng2016} develop an Empirical Likelihood estimator. Several
tests have also been developed, including tests for stationarity (see e.g. %
\citealp{zhao2012}; and \citealp{trapanistrict}) and for the randomness of
the autoregressive coefficient (\citealp{akharif2003}; \citealp{nagakura2009}%
; and \citealp{HT16}).

In contrast, changepoint detection is still underexplored in the RCA
framework. To the best of our knowledge, the only exceptions are %
\citet{lee1998}, \citet{lee2003cusum} and \citet{aue2004strong}; in these
papers, a CUSUM\ test is proposed, but only for the stationary case and
based on the unweighted CUSUM\ process. The latter is well-known to suffer
from low power, being in particular less able to detect changepoints
occurring at the beginning/end of the sample. As a solution, the literature
has proposed weighted versions of the CUSUM\ process on the interval $\left[
0,1\right] $, where more emphasis is given to observations at the sample
endpoints (see \citealp{csorgo1997}). Weighing functions are typically of
the form $\left[ t\left( 1-t\right) \right] ^{\kappa }$ with $0\leq \kappa
<\infty $, for $t\in \left[ 0,1\right] $, with more weight placed on
observations at the endpoints as $\kappa $ increases. In particular, the
case $\kappa =\frac{1}{2}$ corresponds to the standardised CUSUM\ process
also proposed by \citet{andrews1993}, whereas the more heavily weighted case 
$\kappa >\frac{1}{2}$ corresponds to a family of test statistics known as
\textquotedblleft R\'{e}nyi statistics\textquotedblright\ (see %
\citealp{horvathmiller}). When $\kappa >0$, the asymptotics becomes more
complicated, since the weighted statistics diverge at the endpoints $t=0$
and $t=1$, and one can no longer rely on weak convergence to derive the
limiting distributions. In order to overcome this issue, \citet{andrews1993}
proposes trimming the interval on which the weighted CUSUM\ process is
studied; however, this has the undesirable consequence that tests are unable
to detect breaks when these occurs e.g. at the end of the sample.

\bigskip

\textit{Contribution of this paper}

\bigskip

In this paper, we bridge all the gaps mentioned above by proposing a family
of weighted, untrimmed CUSUM statistics. Our paper makes the following four
contributions.

First, we study virtually all possible weighing schemes, deriving the
asymptotics for all $0\leq \kappa <\infty $. From a practical viewpoint,
this entails that our test statistics are designed to detect breaks even
when these are very close to the sample endpoints. Second, all our results
hold irrespective of whether $y_{i}$ is stationary or not; this robustness
arises from using the WLS estimator, and from the well-known fact that the
RCA\ model does not suffer from the \textquotedblleft knife edge
effect\textquotedblright\ which characterizes linear models (%
\citealp{lumsdaine1996consistency}). From a practical point of view, this
entails that the tests can be applied with no modifications required, and no
prior knowledge of the stationarity of $y_{i}$ or lack thereof. This feature
is particularly desirable e.g. in the context of detecting the beginning (or
end) of bubbles (see \citealp{harvey2016}): with our set-up, it is possible
to detect changes from stationary to nonstationary/explosive behaviour (as
e.g. in \citealp{horvath2020sequential}; and \citealp{horvath2021sequential}%
) which characterize the emergence of a bubble, but it is also possible -
again with no modifications required - to detect changes from explosive to
non-explosive behaviour, as would be the case at the end of a bubble. Being
able to accommodate both cases is a distinctive advantage of the RCA set-up:
whilst tests for changes \textit{towards} an explosive behaviour have been
developed in the literature (see, \textit{inter alia}, \citealp{phillips2011}%
; \citealp{phillips2015testing}; and the review by \citealp{homm2012testing}%
), tests to detect changes \textit{from} an explosive behaviour are more
rare, possibly due to the more complicated asymptotics in this case. Third,
we allow for heteroskedasticity in both $\epsilon _{i,1}$ and $\epsilon
_{i,2}$, which is usually not considered in the RCA context; interestingly,
for the case $\kappa \geq \frac{1}{2}$, we recover the same, nuisance free
distribution as in the homoskedastic case (in particular, when $\kappa =%
\frac{1}{2}$, we obtain a \textquotedblleft classical\textquotedblright\
Darling-Erd\H{o}s limit theorem). Hence, our modified test statistics can be
used from the outset, with no prior knowledge required as to whether $%
\epsilon _{i,1}$, or $\epsilon _{i,2}$, or both, is heteroskedastic. Fourth,
our asymptotics is based on strong approximations for the partial sums of an
RCA\ sequence, which are valid irrespective of the stationarity or lack
thereof of $y_{i}$; the strong approximation for the nonstationary case is
entirely new. 
\newline

The remainder of the paper is organised as follows. We present our test
statistics in Section \ref{tests}, and study their asymptotics in the
homoskedastic case, as a benchmark, in Section \ref{homosk}. The
heteroskedastic case is studied in Section \ref{heterosk}. In Section \ref%
{simulations}, we report a simulation exercise; applications to real data
are in Section \ref{empirics}. Section \ref{conclusions} concludes.
Extensions, technical lemmas and all proofs are relegated to the Supplement.

NOTATION. We use the following notation: \textquotedblleft $\overset{{%
\mathcal{D}}}{\rightarrow }$\textquotedblright\ for weak convergence;
\textquotedblleft $\overset{\mathcal{P}}{\rightarrow }$\textquotedblright\
for convergence in probability; \textquotedblleft \textit{a.s.}%
\textquotedblright\ for \textquotedblleft almost surely\textquotedblright ;
\textquotedblleft $\overset{{\mathcal{D}}}{=}$\textquotedblright\ for
equality in distribution; $\lfloor \cdot \rfloor $ is the integer value
function. Positive, finite constants are denoted as $c_{0}$, $c_{1}$, ...
and their value may change from line to line. Other notation is introduced
further in the paper.

\section{The test statistics\label{tests}}

Our approach is based on comparing the estimates of $\beta _{0}$ before and
after each point in time $k$, by dividing the data into two subsets at $k$
and estimating the autoregressive parameter in both subsamples. As mentioned
above, we use WLS, with weights $1+y_{i-1}^{2}$. This has the advantages of 
\textit{(i)} avoiding restrictions on the moments of the observations, and 
\textit{(ii)} ensuring standard normal asymptotics irrespective of whether $%
y_{i}$ is stationary or not. The WLS estimators are 
\begin{equation}
\widehat{\beta }_{k,1}=\left( \sum_{i=2}^{k}\frac{y_{i-1}^{2}}{1+y_{i-1}^{2}}%
\right) ^{-1}\left( \sum_{i=2}^{k}\frac{y_{i}y_{i-1}}{1+y_{i-1}^{2}}\right)
,\quad 2\leq k\leq N,  \label{rcabet}
\end{equation}%
and 
\begin{equation}
\widehat{\beta }_{k,2}=\left( \sum_{i=k+1}^{N}\frac{y_{i-1}^{2}}{%
1+y_{i-1}^{2}}\right) ^{-1}\left( \sum_{i=k+1}^{N}\frac{y_{i}y_{i-1}}{%
1+y_{i-1}^{2}}\right) ,\quad 1\leq k\leq N-1.  \label{rcabet2}
\end{equation}%
Our test statistics will be functionals of the process 
\begin{equation}
Q_{N}(t)=\left\{ 
\begin{array}{ll}
0,\quad \mbox{if}\;\;\;0\leq t<2/(N+1), &  \\ 
\displaystyle N^{1/2}(t(1-t))(\widehat{\beta }_{\lfloor (N+1)t\rfloor ,1}-%
\widehat{\beta }_{\lfloor (N+1)t\rfloor ,2}),\quad \mbox{if}%
\;\;\;2/(N+1)\leq t<1-2/(N+1), &  \\ 
0,\quad \mbox{if}\;\;\;1-2/(N+1)<t\leq 1. & 
\end{array}%
\right.  \label{qnt}
\end{equation}

A \textquotedblleft natural\textquotedblright\ choice to detect the presence
of a possible change is to use the sup-norm of (\ref{qnt}), viz. $%
\sup_{0<t<1}\left\vert Q_{N}(t)\right\vert $, but, as mentioned above, this
choice may have low power in detecting changes which occur early or late in
the sample. In order to enhance the power at sample endpoints, one can use
weight functions:%
\begin{equation}
\sup_{0<t<1}\frac{\left\vert Q_{N}(t)\right\vert }{w\left( t\right) }.
\label{sup-w-norm}
\end{equation}

\begin{assumption}
\label{as-wc-1} It holds that: (i) $\inf_{\delta \leq t\leq 1-\delta }w(t)>0$
for all $0<\delta <1/2$; (ii) $w(t)$ is non decreasing in a neighborhood of $%
0$; (iii) $w(t)$ is non increasing in a neighborhood of $1$.
\end{assumption}

The functions $w(t)$ satisfying Assumption \ref{as-wc-1} belong in a very
wide class; a possible example is $w(t)=\left( t\left( 1-t\right) \right)
^{\kappa }$ with $\kappa >0$. The existence of the limit of (\ref{sup-w-norm}%
) can be determined based on the finiteness of the integral functional (see %
\citealp{csorgo1993}) 
\begin{equation}
I(w,c)=\int_{0}^{1}\frac{1}{t(1-t)}\exp \left( -\frac{cw^{2}(t)}{t(1-t)}%
\right) dt.  \label{defiwc}
\end{equation}%
As we show below, (\ref{defiwc}) entails that $w(t)=\left( t\left(
1-t\right) \right) ^{\kappa }$ with $0<\kappa <\frac{1}{2}$ can be employed
in this context.

In order to further enhance the power of our testing procedures, functions
which place more weight at the sample endpoints can also be used, i.e.%
\begin{equation}
\sup_{0<t<1}\frac{\left\vert Q_{N}(t)\right\vert }{\left( t\left( 1-t\right)
\right) ^{\kappa }},  \label{sup-R\'{e}nyi}
\end{equation}%
with $\kappa \geq \frac{1}{2}$. As mentioned above, when $\kappa =\frac{1}{2}
$, the corresponding limit theorems will be of the Darling-Erd\H{o}s type (%
\citealp{darling1956limit}); when $\kappa >\frac{1}{2}$, the test statistics
defined in (\ref{sup-R\'{e}nyi}) are known as \textquotedblleft R\'{e}nyi
statistics\textquotedblright\ (\citealp{horvathmiller}).

\section{Testing for changepoint under homoskedasticity\label{homosk}}

We begin by assuming that the errors $\{\epsilon _{i,1},\epsilon
_{i,2},-\infty <i<\infty \}$ have constant variance.

\begin{assumption}
\label{rcaas} It holds that:\ (i) $\{\epsilon _{i,1},-\infty <i<\infty \}$
and $\{\epsilon _{i,2},-\infty <i<\infty \}$ are independent sequences; (ii) 
$\{\epsilon _{i,1},-\infty <i<\infty \}$ are independent and identically
distributed random variables with $E\epsilon _{i,1}=0$, $0<E\epsilon
_{i,1}^{2}=\sigma _{1}^{2}<\infty $ and $E|\epsilon _{i,1}|^{4}<\infty $;
(iii) $\{\epsilon _{i,2},-\infty <i<\infty \}$ are independent and
identically distributed random variables with $E\epsilon _{i,2}=0$, $%
0<E\epsilon _{i,2}^{2}=\sigma _{2}^{2}<\infty $ and $E|\epsilon
_{i,2}|^{4}<\infty $.
\end{assumption}

In (\ref{arcnew1}), the stationarity or lack thereof of $y_{i}$ is
determined by the value of $E\ln \left\vert \beta _{0}+\epsilon
_{0,1}\right\vert $ (see \citealp{aue2006}). In particular, if $-\infty \leq
E\ln \left\vert \beta _{0}+\epsilon _{0,1}\right\vert <0$, then $y_{i}$
converges exponentially fast to a strictly stationary solution for all
initial values $y_{0}$. Conversely, if $E\ln \left\vert \beta _{0}+\epsilon
_{0,1}\right\vert \geq 0$, then $y_{i}$ is nonstationary - specifically, $%
\left\vert y_{i}\right\vert $ diverges exponentially fast a.s. when $E\ln
\left\vert \beta _{0}+\epsilon _{0,1}\right\vert >0$, whereas it diverges in
probability, but at a rate slower than exponential, in the boundary case $%
E\ln \left\vert \beta _{0}+\epsilon _{0,1}\right\vert =0$ (see %
\citealp{HT2016}). \newline

We show that the asymptotic variance of the limiting process depends on
whether $y_{i}$ is stationary or not: we therefore study the two cases
(stationarity versus lack thereof) separately. We show that the variance of
the weak limit of $Q_{N}\left( t\right) $ is 
%
\begin{equation}
\eta ^{2}=\left\{ 
\begin{array}{l}
\displaystyle\left[ E\left( \frac{\bar{y}_{0}^{2}}{1+\bar{y}_{0}^{2}}\right)
^{2}\sigma _{1}^{2}+E\left( \frac{\bar{y}_{0}}{1+\bar{y}_{0}^{2}}\right)
^{2}\sigma _{2}^{2}\right] \left( E\frac{\bar{y}_{0}^{2}}{1+\bar{y}_{0}^{2}}%
\right) ^{-2}, \\ 
\quad \quad \quad \quad \quad \quad \quad \mbox{if}\;\;\;-\infty \leq E\ln
|\beta _{0}+\epsilon _{0,1}|<0, \\ 
\sigma _{1}^{2},\quad \mbox{if}\;\;\;E\ln |\beta _{0}+\epsilon _{0,1}|\geq 0.%
\end{array}%
\right.  \label{etasq}
\end{equation}%
We require the following notation. In order to study the case $\kappa =\frac{%
1}{2}$, we define%
\begin{equation}
a(x)=(2\ln x)^{1/2}\quad \mbox{and}\quad b(x)=2\ln x+\frac{1}{2}\ln \ln x-%
\frac{1}{2}\ln \pi .  \label{da-1/0}
\end{equation}%
Also, in order to study the case $\kappa >\frac{1}{2}$, let 
\begin{equation}
r_{N}=\min (r_{1}(N),r_{2}(N)),\quad \lim_{N\rightarrow \infty }\frac{r_{N}}{%
r_{1}(N)}=\gamma _{1}\quad \mbox{and}\quad \lim_{N\rightarrow \infty }\frac{%
r_{N}}{r_{2}(N)}=\gamma _{2},  \label{fadef1}
\end{equation}%
\begin{equation}
\mathfrak{a}(\kappa )=\sup_{1\leq t<\infty }\frac{|W(t)|}{t^{\kappa }}.
\label{fadef3}
\end{equation}

\begin{assumption}
\label{thrcaas11}\ It holds that $r_{1}(N)\rightarrow \infty $, $%
r_{1}(N)/N\rightarrow 0$, and $r_{2}(N)\rightarrow \infty $, $%
r_{2}(N)/N\rightarrow 0$.
\end{assumption}

We start with the stationary case $-\infty \leq E\ln |\beta _{0}+\epsilon
_{0,1}|<0$. In this case, the solution of (\ref{arcnew1}) under the null
hypothesis is close to $\overline{y}_{i}$, the unique anticipative
stationary solution of 
\begin{equation}
\overline{y}_{i}=(\beta _{0}+\epsilon _{i,1})\overline{y}_{i-1}+\epsilon
_{i,2},\quad -\infty <i<\infty .  \label{rcasta}
\end{equation}%
We need the following (technical) assumption, to rule out the degenerate
case that, under stationarity, the denominator of $\eta ^{2}$ defined in (%
\ref{etasq}) is zero with probability $1$.

\begin{assumption}
\label{stationary-nonzero} It holds that $P\{\overline{y}_{0}=0\}<1$.
\end{assumption}

\begin{theorem}
\label{thrca1} We assume that $H_{0}$ of (\ref{rcanu}), Assumptions \ref%
{as-wc-1}, \ref{rcaas} and \ref{stationary-nonzero} hold, and $-\infty \leq
E\ln |\beta _{0}+\epsilon _{0,1}|<0$.\newline
(i) If $I(w,c)<\infty $ for some $c>0$, then it holds that 
\begin{equation*}
\sup_{0<t<1}\frac{\left\vert Q_{N}(t)\right\vert }{w(t)}\;\overset{{\mathcal{%
D}}}{\rightarrow }\;\eta \sup_{0<t<1}\frac{|B(t)|}{w(t)},
\end{equation*}%
where $\{B(t),0\leq t\leq 1\}$ is a standard Brownian bridge and $\eta $ is
defined in (\ref{etasq}). \newline
(ii) For all $x$, it holds that%
\begin{equation*}
\lim_{N\rightarrow \infty }P\Biggl\{a(\ln N)\frac{1}{\eta }%
\max_{1<k<N}\left( \frac{k(N-k)}{N}\right) ^{1/2}|\widehat{\beta }_{k,1}-%
\widehat{\beta }_{k,2}|\leq x+b(\ln N)\Biggl\}=\exp (-2e^{-x}).
\end{equation*}

(iii) If Assumption \ref{thrcaas11} is satisfied, then it holds that 
\begin{equation*}
\left( \frac{r_{N}}{N}\right) ^{\kappa -1/2}\frac{1}{\eta }%
\sup_{r_{1}(N)/N\leq t\leq 1-r_{2}(N)/N}\frac{|Q_{N}(t)|}{\left( t\left(
1-t\right) \right) ^{\kappa }}\;\overset{{\mathcal{D}}}{\rightarrow }\;\max
\left( \gamma _{1}^{\kappa -1/2}\mathfrak{a}_{1}(\kappa ),\gamma
_{2}^{\kappa -1/2}\mathfrak{a}_{2}(\kappa )\right) ,
\end{equation*}%
for all $\kappa >1/2$, where and $r_{N}$, $\gamma _{1}$ and $\gamma _{2}$
are defined in (\ref{fadef1}), and $\mathfrak{a}_{1}\left( \kappa \right) $
and $\mathfrak{a}_{2}\left( \kappa \right) $ are independent copies of $%
\mathfrak{a}\left( \kappa \right) $ defined in (\ref{fadef3}).
\end{theorem}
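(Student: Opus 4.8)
The plan is to reduce all three parts to a single Gaussian approximation of the difference $\widehat{\beta}_{k,1}-\widehat{\beta}_{k,2}$ and then feed it into three different boundary‑crossing results for the Brownian bridge. First I would put the estimators in martingale form: under $H_{0}$, substituting $y_{i}=(\beta_{0}+\epsilon_{i,1})y_{i-1}+\epsilon_{i,2}$ into (\ref{rcabet})--(\ref{rcabet2}) gives $\widehat{\beta}_{k,1}-\beta_{0}=A_{k}^{-1}M_{k}$ and $\widehat{\beta}_{k,2}-\beta_{0}=(A_{N}-A_{k})^{-1}(M_{N}-M_{k})$, where
\begin{equation*}
A_{k}=\sum_{i=2}^{k}\frac{y_{i-1}^{2}}{1+y_{i-1}^{2}},\qquad M_{k}=\sum_{i=2}^{k}\frac{y_{i-1}(\epsilon_{i,1}y_{i-1}+\epsilon_{i,2})}{1+y_{i-1}^{2}}.
\end{equation*}
The summands of $M_{k}$ are martingale differences with conditional variance $\frac{y_{i-1}^{2}}{(1+y_{i-1}^{2})^{2}}(\sigma_{1}^{2}y_{i-1}^{2}+\sigma_{2}^{2})$. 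A short rearrangement then produces the pivotal representation $Q_{N}(t)=(A_{N}/N)^{-1}N^{-1/2}(M_{k}-(k/N)M_{N})$, $k=\lfloor(N+1)t\rfloor$, plus lower‑order remainders, so that after replacing the denominators by the ergodic limit $A_{k}/k\to E\bar{y}_{0}^{2}/(1+\bar{y}_{0}^{2})$ the whole problem is governed by the tied‑down martingale $M_{k}-(k/N)M_{N}$.

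Next I would establish a strong approximation of $M_{\lfloor N\cdot\rfloor}$ by a rescaled Wiener process with variance $s^{2}=\sigma_{1}^{2}E(\bar{y}_{0}^{2}/(1+\bar{y}_{0}^{2}))^{2}+\sigma_{2}^{2}E(\bar{y}_{0}/(1+\bar{y}_{0}^{2}))^{2}$; this is the stationary instance of the strong approximation developed in the paper, and combined with the previous step it yields $Q_{N}(t)=\eta B_{N}(t)+\text{(error)}$ uniformly, where $\eta=s/(E\bar{y}_{0}^{2}/(1+\bar{y}_{0}^{2}))$ reproduces (\ref{etasq}) and $B_{N}$ is a sequence of Brownian bridges. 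The delicate feature is that the error must be controlled not in sup‑norm but in the weighted norms dictated by $w(t)$ and by $(t(1-t))^{\kappa}$, both of which explode at $t=0,1$; this forces one to truncate in short blocks near the endpoints and to bound there both the martingale increments and the deviation of $A_{k}$ from its mean.

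With the approximation in hand the three parts separate. For (i), $I(w,c)<\infty$ guarantees (via the Cs\"{o}rg\H{o}--R\'{e}v\'{e}sz integral test) that $\sup_{t}|B(t)|/w(t)$ is finite a.s.\ and that the weighted error is asymptotically negligible near the endpoints, whence the continuous‑mapping theorem delivers the stated limit. For (ii) I would rewrite the statistic as $\sup_{t}|Q_{N}(t)|/(t(1-t))^{1/2}$, pass to $\eta\sup_{t}|B(t)|/\sqrt{t(1-t)}$, and use the substitution $t=e^{u}/(1+e^{u})$, under which $B(t)/\sqrt{t(1-t)}$ becomes a stationary Ornstein--Uhlenbeck process observed over an interval of length $\sim2\ln N$; the classical Darling--Erd\H{o}s theorem for the extreme of such a process then produces the limit $\exp(-2e^{-x})$ with exactly the norming $a(\ln N),b(\ln N)$ of (\ref{da-1/0}). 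For (iii), near $t=0$ I would replace the bridge by a Wiener process and invoke self‑similarity: writing $t=(r_{1}(N)/N)v$ and using $W(ct)\overset{\mathcal{D}}{=}c^{1/2}W(t)$ turns $\sup_{t}|W(t)|/t^{\kappa}$ over $[r_{1}(N)/N,\delta]$ into $(r_{1}(N)/N)^{1/2-\kappa}\sup_{v\geq1}|W(v)|/v^{\kappa}=(r_{1}(N)/N)^{1/2-\kappa}\mathfrak{a}(\kappa)$ with $\mathfrak{a}(\kappa)$ of (\ref{fadef3}), so that multiplying by $(r_{N}/N)^{\kappa-1/2}$ gives the prefactor $(r_{N}/r_{1}(N))^{\kappa-1/2}\to\gamma_{1}^{\kappa-1/2}$; the symmetric computation at $t=1$ yields the second term, and since the two endpoints depend on disjoint (hence independent) increments of the limiting Wiener process, one obtains the maximum of the two independent copies $\mathfrak{a}_{1}(\kappa),\mathfrak{a}_{2}(\kappa)$. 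Note also that the normalisation $(r_{N}/N)^{\kappa-1/2}\to0$ kills the central part of the process, so only the endpoints contribute.

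The main obstacle throughout is the uniform control of the Gaussian approximation error in the weighted norm near $t=0$ and $t=1$, where the small denominators $A_{k}$ (respectively $A_{N}-A_{k}$) interact with the exploding weights. This is precisely what makes (ii) the hardest of the three: the Darling--Erd\H{o}s normalisation is logarithmically sharp, so the approximation error must be shown to vanish faster than a quantity that itself tends to zero only at a rate of order $(\ln N)^{-1/2}$, leaving very little room.
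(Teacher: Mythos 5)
Your proposal is correct and takes essentially the same route as the paper's proof: the same decomposition of $\widehat{\beta}_{k,1}-\widehat{\beta}_{k,2}$ into weighted partial sums over the ratio denominators, a strong Wiener approximation with error $O_P(k^{\zeta})$, $\zeta<1/2$ (the paper derives it from Bernoulli-shift approximations after replacing $y_i$ by the stationary solution $\bar{y}_i$, with independence of the left and right approximating processes coming from the blocking construction), endpoint control via $\lim_{t\to 0^{+}}t^{1/2}/w(t)=0$ for part (i), a Darling--Erd\H{o}s argument for part (ii), and the Wiener self-similarity/R\'enyi argument for part (iii). The only step you should phrase more carefully is in (ii): one cannot literally ``pass to'' $\eta\sup_{0<t<1}|B(t)|/(t(1-t))^{1/2}$, which is a.s. infinite; as in the paper, one first shows via LIL-type bounds that the maximum is attained on $(\ln N)^{4}\le k\le N-(\ln N)^{4}$ with probability tending to one, and only on that range is the approximation error negligible after multiplication by $a(\ln N)$ --- your truncation remark implicitly covers this, and note that the required error rate is $o((\ln\ln N)^{-1/2})$, not $o((\ln N)^{-1/2})$.
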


We now turn to the nonstationary case. We need an additional technical
condition:

\begin{assumption}
\label{rcaap11}\ It holds that $\epsilon _{0,2}$ has a bounded density.
\end{assumption}

\begin{theorem}
\label{thrca111} We assume that $H_{0}$ of (\ref{rcanu}), Assumptions \ref%
{as-wc-1}, \ref{rcaas}, \ref{rcaap11} hold, and $0\leq E\ln |\beta
_{0}+\epsilon _{0,1}|<\infty $.\newline
(i) If $I(w,c)<\infty $ for some $c>0$, then it holds that%
\begin{equation*}
\sup_{0<t<1}\frac{\left\vert Q_{N}(t)\right\vert }{w(t)}\;\overset{{\mathcal{%
D}}}{\rightarrow }\;\eta \sup_{0<t<1}\frac{|B(t)|}{w(t)},
\end{equation*}%
where $\{B(t),0\leq t\leq 1\}$ is a standard Brownian bridge and $\eta $ is
defined in (\ref{etasq}). \newline
(ii) For all $x$ 
\begin{equation*}
\lim_{N\rightarrow \infty }P\Biggl\{a(\ln N)\frac{1}{\eta }%
\max_{1<k<N}\left( \frac{k(N-k)}{N}\right) ^{1/2}|\widehat{\beta }_{k,1}-%
\widehat{\beta }_{k,2}|\leq x+b(\ln N)\Biggl\}=\exp (-2e^{-x}),
\end{equation*}%
where $a(x)$ and $b(x)$ are defined in (\ref{da-1/0}).

(iii) If Assumption \ref{thrcaas11} is satisfied, then it holds that 
\begin{equation*}
\left( \frac{r_{N}}{N}\right) ^{\kappa -1/2}\frac{1}{\eta }%
\sup_{r_{1}(N)/N\leq t\leq 1-r_{2}(N)/N}\frac{|Q_{N}(t)|}{\left( t\left(
1-t\right) \right) ^{\kappa }}\;\overset{{\mathcal{D}}}{\rightarrow }\;\max
\left( \gamma _{1}^{\kappa -1/2}\mathfrak{a}_{1}(\kappa ),\gamma
_{2}^{\kappa -1/2}\mathfrak{a}_{2}(\kappa )\right) ,
\end{equation*}%
for all $\kappa >1/2$, where and $r_{N}$, $\gamma _{1}$ and $\gamma _{2}$
are defined in (\ref{fadef1}), and $\mathfrak{a}_{1}\left( \kappa \right) $
and $\mathfrak{a}_{2}\left( \kappa \right) $ are independent copies of $%
\mathfrak{a}\left( \kappa \right) $ defined in (\ref{fadef3}).
\end{theorem}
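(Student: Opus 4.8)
The plan is to follow the same architecture as the proof of the stationary Theorem~\ref{thrca1}, the only genuinely new ingredient being a strong (Gaussian) approximation for the partial sums of the RCA sequence valid in the nonstationary regime. Substituting (\ref{arcnew1}) under $H_0$ into (\ref{rcabet})--(\ref{rcabet2}) gives the basic identity
\begin{equation*}
\widehat{\beta}_{k,1}-\beta_0=\Bigl(\sum_{i=2}^{k}d_i\Bigr)^{-1}\sum_{i=2}^{k}u_i,\qquad d_i=\frac{y_{i-1}^{2}}{1+y_{i-1}^{2}},\quad u_i=\epsilon_{i,1}d_i+\epsilon_{i,2}\frac{y_{i-1}}{1+y_{i-1}^{2}},
\end{equation*}
and the analogous expression for $\widehat{\beta}_{k,2}$, where $\{u_i,\mathcal{F}_i\}$ with $\mathcal{F}_i=\sigma(\epsilon_{j,1},\epsilon_{j,2}:j\le i)$ is a martingale difference sequence. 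In the nonstationary case $0\le E\ln|\beta_0+\epsilon_{0,1}|<\infty$ we have $|y_i|\to\infty$, so $d_i\to1$ and $y_{i-1}/(1+y_{i-1}^{2})\to0$; this makes $u_i$ close to $\epsilon_{i,1}$ and $\sum_{i=2}^{k}d_i$ close to $k$, which is precisely what collapses $\eta^{2}$ in (\ref{etasq}) to $\sigma_1^{2}$. Writing $S_k=\sum_{i=2}^{k}\epsilon_{i,1}$, a short computation then shows
\begin{equation*}
N^{1/2}t(1-t)\bigl(\widehat{\beta}_{\lfloor(N+1)t\rfloor,1}-\widehat{\beta}_{\lfloor(N+1)t\rfloor,2}\bigr)=\frac{1}{N^{1/2}}\bigl(S_{\lfloor(N+1)t\rfloor}-tS_N\bigr)+o_{P}(1),
\end{equation*}
uniformly in the relevant range of $t$, so that $Q_N(t)$ reduces to the standardised CUSUM process of the i.i.d.\ sequence $\{\epsilon_{i,1}\}$.

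The heart of the argument is to make these approximations uniform and quantitative, i.e.\ to control the error martingale $\sum_{i=2}^{k}(u_i-\epsilon_{i,1})$ and the drift $\sum_{i=2}^{k}(d_i-1)=-\sum_{i=2}^{k}(1+y_{i-1}^{2})^{-1}$. Since
\begin{equation*}
u_i-\epsilon_{i,1}=-\epsilon_{i,1}\frac{1}{1+y_{i-1}^{2}}+\epsilon_{i,2}\frac{y_{i-1}}{1+y_{i-1}^{2}}
\end{equation*}
is again a martingale difference with conditional variance bounded by a multiple of $(1+y_{i-1}^{2})^{-1}$, both quantities are governed by the occupation sum $\sum_{i}(1+y_{i-1}^{2})^{-1}$. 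When $E\ln|\beta_0+\epsilon_{0,1}|>0$ we have $|y_i|\to\infty$ exponentially fast a.s., so this sum converges a.s.\ and the errors are $O(1)$, trivially negligible. The delicate case is the boundary $E\ln|\beta_0+\epsilon_{0,1}|=0$, in which $\ln|y_i|$ behaves like a mean-zero random walk that is prevented from drifting to $-\infty$ by the additive innovations $\epsilon_{i,2}$; here one must show that $y_i$ cannot spend too much time near the origin, so that $\sum_{i=2}^{N}(1+y_{i-1}^{2})^{-1}=O(N^{1/2})$ up to logarithmic factors. This is exactly where Assumption~\ref{rcaap11} (the bounded density of $\epsilon_{0,2}$) enters, through an anti-concentration bound on $y_i$. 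Feeding this occupation estimate into the quadratic variation of the error martingale bounds it by $o(N^{1/2})$ via standard martingale inequalities; an application of the Koml\'os--Major--Tusn\'ady approximation to $\{\epsilon_{i,1}\}$ (whose rate is secured by the finite fourth moments in Assumption~\ref{rcaas}) then yields a Wiener process $W$ with $\sup_{2\le k\le N}|S_k-\sigma_1 W(k)|=o(N^{1/2})$ at the required rate. This strong approximation, new in the nonstationary setting, gives $N^{-1/2}(S_{\lfloor(N+1)t\rfloor}-tS_N)=\sigma_1 N^{-1/2}(W((N+1)t)-tW(N))+o_{P}(1)$, and by the scaling $W(Ns)\overset{\mathcal{D}}{=}N^{1/2}W(s)$ the right-hand side is distributionally $\sigma_1 B(t)=\eta B(t)$ with $B$ a standard Brownian bridge.

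With the Brownian-bridge approximation in hand, the three parts follow exactly as in the stationary case. For (i), the integral condition $I(w,c)<\infty$ is precisely the Cs\"org\H{o}--Horv\'ath criterion guaranteeing that the weighted sup functional is a.s.\ finite and continuous for the bridge, so the approximation transfers under the continuous mapping theorem and $\sup_{0<t<1}|Q_N(t)|/w(t)\overset{\mathcal{D}}{\rightarrow}\eta\sup_{0<t<1}|B(t)|/w(t)$; the control near the endpoints, where $w$ is allowed to be small, uses the monotonicity clauses (ii)--(iii) of Assumption~\ref{as-wc-1}. For (ii), the case $\kappa=\tfrac12$, the standardised bridge $B(t)/(t(1-t))^{1/2}$ becomes, after a logarithmic time change, a stationary Ornstein--Uhlenbeck process, to which the classical Darling--Erd\H{o}s extreme-value theory applies, producing the Gumbel limit $\exp(-2e^{-x})$ with the normalisers $a(\ln N)$ and $b(\ln N)$ of (\ref{da-1/0}). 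For (iii), $\kappa>\tfrac12$, on the trimmed range $[r_1(N)/N,1-r_2(N)/N]$ the bridge is well approximated near each endpoint by an independent Wiener process; rescaling time by $r_1(N)/N$ near $0$ and by $r_2(N)/N$ near $1$ and using $\sup_{1\le t<\infty}|W(t)|/t^{\kappa}=\mathfrak{a}(\kappa)$ from (\ref{fadef3}) yields, by the asymptotic independence of the two endpoint contributions (the middle being killed by the prefactor $(r_N/N)^{\kappa-1/2}\to0$), the limit $\max(\gamma_1^{\kappa-1/2}\mathfrak{a}_1(\kappa),\gamma_2^{\kappa-1/2}\mathfrak{a}_2(\kappa))$.

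The main obstacle, and the step I expect to be hardest, is the boundary case $E\ln|\beta_0+\epsilon_{0,1}|=0$ of the strong approximation: establishing the occupation bound $\sum_{i=2}^{N}(1+y_{i-1}^{2})^{-1}=O(N^{1/2})$ with a rate good enough for part (ii) requires a careful fluctuation analysis of $\ln|y_i|$ combined with the anti-concentration supplied by Assumption~\ref{rcaap11}, and keeping the approximation uniform down to indices $k$ of order a power of $\ln N$ near the sample endpoints is what drives the bulk of the technical work.
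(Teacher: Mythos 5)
Your proposal is correct and follows essentially the same route as the paper: the paper also reduces the nonstationary case to the stationary architecture by showing that $y_{i-1}^2/(1+y_{i-1}^2)$ can be replaced by $1$ and controlling the occupation sum $\sum_i (1+y_{i-1}^2)^{-1}$ through the anti-concentration bound $P\{|y_i|\le i^{\delta}\}\le c_1 i^{-\delta}$ (Lemma A.4 of \citet{HT2016}, which is exactly where the bounded-density Assumption \ref{rcaap11} enters), then bounds all error terms uniformly at rate $O_P(k^{\zeta})$ with $\zeta<1/2$, applies the Koml\'os--Major--Tusn\'ady approximation to $\{\epsilon_{i,1}\}$ from both sample ends, and finishes with the same weighted-CUSUM, Darling--Erd\H{o}s and R\'enyi arguments as in Theorem \ref{thrca1}. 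The only cosmetic differences are that the paper handles the explosive and boundary regimes in one stroke via that lemma (rather than your case split), uses fourth moments with M\'oricz's maximal inequality and exponential-block chaining instead of your quadratic-variation/martingale-inequality bounds, and needs only a polynomial saving $O_P(N^{1-\delta})$ for the occupation sum, so your sharper $O(N^{1/2}\log N)$ claim, though consistent with the random-walk heuristic, is stronger than required.
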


Theorems \ref{thrca1} and \ref{thrca111} stipulate that the limiting
distributions of the weighted CUSUM statistics are the same irrespective of
whether $y_{i}$ is stationary, explosive or at the boundary: the impact of
nonstationarity is only on $\eta ^{2}$. Hence, it is important to find an
estimator for $\eta ^{2}$ which is consistent for all cases. Let 
\begin{equation*}
\widehat{a}_{N,1}=\frac{1}{N-1}\sum_{i=2}^{N}\frac{(y_{i}-\widehat{\beta }%
_{N,1}y_{i-1})^{2}y_{i-1}^{2}}{(1+y_{i-1}^{2})^{2}}\quad \mbox{and}\quad 
\widehat{a}_{N,2}=\frac{1}{N-1}\sum_{i=2}^{N}\frac{y_{i-1}^{2}}{1+y_{i-1}^{2}%
}
\end{equation*}%
We use the following estimator for $\eta ^{2}$ 
\begin{equation}
\widehat{\eta }_{N}^{2}=\frac{\widehat{a}_{N,1}}{\widehat{a}_{N,2}^{2}}.
\label{eta-est}
\end{equation}

\begin{corollary}
\label{rcaco1} The results of Theorems \ref{thrca1}--\ref{thrca111} remain
true if $\eta $ is replaced with $\widehat{\eta }_{N}$.
\end{corollary}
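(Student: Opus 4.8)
The plan is to reduce the entire statement to a single analytic fact --- the consistency (supplemented by a mild rate for the Darling--Erd\H{o}s limit) of $\widehat{\eta}_N$ for $\eta$ --- and then to invoke Slutsky's theorem together with the limit laws already established in Theorems \ref{thrca1}--\ref{thrca111}. In every one of the three parts the quantity $\eta$ enters only as a deterministic scaling: a divisor of the normalised statistic in parts (ii) and (iii), and the multiplicative constant of the Brownian-bridge limit in part (i). Writing $T_N$ for any of the three feasible statistics formed with $\widehat{\eta}_N$ in place of $\eta$, and $\widetilde{T}_N$ for the corresponding infeasible $\eta$-studentised statistic (so that e.g. in part (i) we take $\widetilde{T}_N=\eta^{-1}\sup_{0<t<1}|Q_N(t)|/w(t)$, with limit $\sup_{0<t<1}|B(t)|/w(t)$), we have $T_N=(\eta/\widehat{\eta}_N)\,\widetilde{T}_N$. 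Since $\widetilde{T}_N$ converges in distribution by the relevant theorem, it suffices to show $\widehat{\eta}_N\overset{\mathcal{P}}{\rightarrow}\eta$ for parts (i) and (iii), and to add a rate for part (ii).

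First I would prove $\widehat{\eta}_N^{2}\overset{\mathcal{P}}{\rightarrow}\eta^{2}$ in each regime by computing the probability limits of $\widehat{a}_{N,1}$ and $\widehat{a}_{N,2}$ in (\ref{eta-est}). In the stationary case $-\infty\le E\ln|\beta_0+\epsilon_{0,1}|<0$, I would replace $y_i$ by the stationary solution $\overline{y}_i$ of (\ref{rcasta}) (the coupling error being negligible by the exponential convergence noted after Assumption \ref{rcaas}) and apply the ergodic theorem. Using the consistency of $\widehat{\beta}_{N,1}$ and the independence of $(\epsilon_{i,1},\epsilon_{i,2})$ from $\overline{y}_{i-1}$, the cross term drops out and $\widehat{a}_{N,1}\overset{\mathcal{P}}{\rightarrow}\sigma_1^2 E(\overline{y}_0^2/(1+\overline{y}_0^2))^2+\sigma_2^2 E(\overline{y}_0/(1+\overline{y}_0^2))^2$, while $\widehat{a}_{N,2}\overset{\mathcal{P}}{\rightarrow}E\,\overline{y}_0^2/(1+\overline{y}_0^2)$, so $\widehat{\eta}_N^2$ converges to the first branch of (\ref{etasq}); Assumption \ref{stationary-nonzero} guarantees the denominator is positive. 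In the nonstationary case $0\le E\ln|\beta_0+\epsilon_{0,1}|<\infty$ the ergodic theorem is unavailable, but $|y_{i-1}|$ diverges a.s., so $y_{i-1}^2/(1+y_{i-1}^2)=1-(1+y_{i-1}^2)^{-1}$ with $(1+y_{i-1}^2)^{-1}\to 0$ a.s.\ exponentially fast; hence $\widehat{a}_{N,2}\overset{\mathcal{P}}{\rightarrow}1$, and since $y_i-\beta_0 y_{i-1}=\epsilon_{i,1}y_{i-1}+\epsilon_{i,2}$ the summands of $\widehat{a}_{N,1}$ are asymptotically $\epsilon_{i,1}^2$, giving $\widehat{a}_{N,1}\overset{\mathcal{P}}{\rightarrow}\sigma_1^2$ and $\widehat{\eta}_N^2\overset{\mathcal{P}}{\rightarrow}\sigma_1^2$, the second branch of (\ref{etasq}).

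For part (ii) first-order consistency is not enough, because of the slowly diverging normalisation $a(\ln N),b(\ln N)$ of (\ref{da-1/0}). Writing $M_N=\max_{1<k<N}(k(N-k)/N)^{1/2}|\widehat{\beta}_{k,1}-\widehat{\beta}_{k,2}|$, I would decompose
\begin{equation*}
a(\ln N)\frac{M_N}{\widehat{\eta}_N}-b(\ln N)=\frac{\eta}{\widehat{\eta}_N}\Bigl(a(\ln N)\frac{M_N}{\eta}-b(\ln N)\Bigr)+\Bigl(\frac{\eta}{\widehat{\eta}_N}-1\Bigr)b(\ln N).
\end{equation*}
The first summand converges to the Gumbel law by part (ii) of the theorem together with $\eta/\widehat{\eta}_N\overset{\mathcal{P}}{\rightarrow}1$; the correction term is $o_{P}(1)$ as soon as $(\widehat{\eta}_N-\eta)\,b(\ln N)=o_{P}(1)$. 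Since $b(\ln N)=O(\ln\ln N)$, any polynomial rate of consistency suffices, and I would establish $\widehat{\eta}_N-\eta=O_{P}(N^{-1/2})$. In the stationary case this follows from a central limit theorem for the weakly dependent summands of $\widehat{a}_{N,1},\widehat{a}_{N,2}$, whose finite variance is guaranteed by the fourth-moment bounds of Assumption \ref{rcaas}(ii)--(iii) (the summand of $\widehat{a}_{N,1}$ is dominated by $2\epsilon_{i,1}^2+\tfrac12\epsilon_{i,2}^2$); in the nonstationary case it follows from the $O(N^{-1})$ a.s.\ bound on $\widehat{a}_{N,2}-1$ and the $O_{P}(N^{-1/2})$ fluctuation of $(N-1)^{-1}\sum_{i=2}^{N}\epsilon_{i,1}^2$ around $\sigma_1^2$, together with the (at least $N^{-1/2}$) rate of $\widehat{\beta}_{N,1}-\beta_0$.

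The main obstacle is the nonstationary regime, where the divergence of $y_i$ rules out the ergodic/LLN machinery that makes the stationary case routine. The resolution I would rely on is the a.s.\ exponential decay of $(1+y_{i-1}^2)^{-1}$, which collapses the ratio weights $y_{i-1}^2/(1+y_{i-1}^2)$ onto $1$ fast enough that the data-dependent summands of $\widehat{a}_{N,1}$ and $\widehat{a}_{N,2}$ inherit the asymptotics of the i.i.d.\ quantities $\epsilon_{i,1}^2$ and of the constant $1$, respectively. The one point requiring care is controlling the plug-in error from using $\widehat{\beta}_{N,1}$ rather than $\beta_0$ inside $\widehat{a}_{N,1}$; this is handled by the consistency and rate of $\widehat{\beta}_{N,1}$ obtained en route to Theorem \ref{thrca111}, and by the fact that the exceedingly weak $1/\ln\ln N$ precision demanded by the Darling--Erd\H{o}s normalisation makes the whole argument robust to crude bounding of the lower-order remainders.
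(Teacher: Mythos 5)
Your overall strategy coincides with the paper's: reduce the corollary to consistency of $\widehat{\eta}_N$ with a polynomial rate (the rate being needed only to beat the $b(\ln N)=O(\ln\ln N)$ centering in the Darling--Erd\H{o}s part), then conclude by Slutsky-type arguments; your treatment of the stationary regime (coupling $y_i$ to $\overline{y}_i$, law of large numbers for the Bernoulli shift, plug-in error controlled by $\widehat{\beta}_{N,1}-\beta_0=O_P(N^{-1/2})$) is also essentially the paper's argument, which obtains $\widehat{\eta}_N=\eta+O_P(N^{-\zeta})$ via a truncation/blocking variance bound rather than an explicit CLT.

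However, there is a genuine gap in your nonstationary case. You justify $\widehat{a}_{N,2}\overset{\mathcal{P}}{\rightarrow}1$ and $\widehat{a}_{N,1}\overset{\mathcal{P}}{\rightarrow}\sigma_1^2$ by asserting that $|y_{i}|$ diverges a.s.\ and that $(1+y_{i-1}^2)^{-1}\rightarrow 0$ a.s.\ \emph{exponentially fast}. That assertion is true only when $E\ln|\beta_0+\epsilon_{0,1}|>0$; Theorem \ref{thrca111} (and hence the corollary) also covers the boundary case $E\ln|\beta_0+\epsilon_{0,1}|=0$, where, as noted in Section \ref{homosk}, $|y_i|$ diverges only \emph{in probability} and at a rate slower than exponential. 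In that case there is no a.s.\ exponential collapse of the weights $y_{i-1}^2/(1+y_{i-1}^2)$ onto $1$, and your ``$O(N^{-1})$ a.s.\ bound on $\widehat{a}_{N,2}-1$'' fails. The paper's device is probabilistic rather than pathwise: by Lemma A.4 of \citet{HT2016}, $P\{|y_i|\leq i^{\delta}\}\leq c_1 i^{-\delta}$ for some $0<\delta<1$, whence
\begin{equation*}
E\left(\frac{1}{1+y_i^2}\right)\leq P\{|y_i|\leq i^{\delta}\}+(1+i^{2\delta})^{-1}\leq c_2 i^{-\delta},
\end{equation*}
and then Markov/maximal inequalities control $\frac{1}{N}\sum_{i}(1+y_{i-1}^2)^{-1}$, while the fluctuation term $\frac{1}{N-1}\sum_{i}(\epsilon_{i,1}^2-\sigma_1^2)\,y_{i-1}^4/(1+y_{i-1}^2)^2$ is handled by a second-moment (martingale-difference) computation using the independence of $\epsilon_{i,1}$ from $y_{i-1}$. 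Replacing your a.s.\ exponential-decay claim by this moment-based argument closes the gap; the rest of your proof then goes through as written.
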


Corollary \ref{rcaco1} states that the feasible versions of our test
statistics, based on $\widehat{\eta }_{N}$, have the same distribution as
the infeasible ones, based on $\eta $. Practically, this means that the test
statistics developed above can be implemented with no prior knowledge as to
whether $y_{i}$ is stationary or not.

\section{Change point detection with heteroskedastic errors\label{heterosk}}

In the previous section we assumed, as is typical in the RCA\ literature,
that the innovations $\{\epsilon _{i,1},\epsilon _{i,2},1\leq i\leq N\}$ are
homoskedastic, which may be an undesirable restriction. The literature on
the changepoint problem has recently considered this issue, but
contributions are still relatively rare: exceptions include \citet{xu2015}, %
\citet{gorecki2017} \citet{bardsley2017} and \citet{horvath2021} (see also %
\citealp{xuphillips2008}, for adaptive estimation in autoregressive models).
Heteroskedasticity is particularly interesting and challenging in the RCA\
case: if the distribution of $\epsilon _{i,1}$ is allowed to change, the
observations might change from stationarity to non stationarity even if $%
\beta _{0}$ does not undergo any change; however, inference on the RCA\
model will still be asymptotically normal in light of the properties of the
WLS\ estimator discussed above.

In this section, we extend all the results above allowing for
heteroskedasticity in both $\epsilon _{i,1}$ and $\epsilon _{i,2}$. Our
results are valid also in the baseline case of homoskedasticity, and do not
require any explicit knowledge of the form of heteroskedasticity.

\bigskip

Changes in the distribution of $\{\epsilon _{i,1},\epsilon _{i,2},1\leq
i\leq N\}$ at times $1<m_{1}<\ldots <m_{M}<N$ are allowed through the
following assumption.

\begin{assumption}
\label{rcamult} It holds that $m_{\ell }=\lfloor N\tau _{\ell }\rfloor $,
for $1\leq \ell \leq M$, with $0<\tau _{1}<\tau _{2}<\ldots <\tau _{M}<1$.
\end{assumption}

Henceforth, we will use the notation: $m_{0}=0,m_{M+1}=N,\tau _{0}=0$ and $%
\tau _{M+1}=1$.

For each subsequence $\{y_{i},m_{\ell -1}<i\leq m_{\ell }\}$, $1\leq \ell
\leq M+1$, the condition for stationarity can be satisfied; in this case,
the elements of this subsequence can be approximated with stationary
variables $\{\bar{y}_{\ell ,j},-\infty <j<\infty \}$ defined by the
recursion 
\begin{equation*}
\bar{y}_{\ell ,j}=(\beta _{0}+\epsilon _{\ell ,j,1})\bar{y}_{\ell
,j-1}+\epsilon _{\ell ,j,2},\quad -\infty <j<\infty ,
\end{equation*}%
where $\epsilon _{\ell ,j,1}=\epsilon _{j,1},m_{\ell -1}<j\leq m_{\ell }$,
and $\epsilon _{\ell ,j,1},-\infty <j<\infty ,j\not\in (m_{\ell -1},m_{\ell
-1}+1,\ldots ,m_{\ell }]$ are independent and identically distributed copies
of $\epsilon _{m_{\ell },1}$. The random variables $\epsilon _{\ell ,j,2}$
are defined in the same way.

To allow for changes in the distributions of the errors, we replace
Assumptions \ref{rcaas}-\ref{rcaap11} with

\begin{assumption}
\label{rcaas1} It holds that: (i) $\{\epsilon _{i,1},-\infty <i<\infty \}$
and $\{\epsilon _{i,2},-\infty <i<\infty \}$ are independent sequences of
independent random variables; (ii) $P\{y_{0}=0\}<1$; (iii) for each $1\leq
\ell \leq M+1$, $\{\epsilon _{i,1},\;m_{\ell -1}<i\leq m_{\ell }\}$ are
identically distributed with $E\epsilon _{m_{\ell },1}=0,E\epsilon _{m_{\ell
},1}^{2}=\sigma _{m_{\ell },1}^{2}$ and $E|\epsilon _{m_{\ell
},1}|^{4}<\infty $; (iv) for each $1\leq \ell \leq M+1$, $\{\epsilon
_{i,2},m_{\ell -1}<i\leq m_{\ell }\}$ are identically distributed with $%
E\epsilon _{m_{\ell },2}=0,E\epsilon _{m_{\ell },2}^{2}=\sigma _{m_{\ell
},2}^{2}$ and $E|\sigma _{m_{\ell },2}|^{4}<\infty $; (v) if $E\ln |\beta
_{0}+\epsilon _{m_{\ell },1}|<0$, $P\left( \bar{y}_{\ell ,0}=0\right) <1$, $%
1\leq \ell \leq M+1$; (vi) if $E\ln |\beta _{0}+\epsilon _{m_{\ell },1}|\geq
0$, then $\epsilon _{m_{\ell },2}$ has a bounded density, $1\leq \ell \leq
M+1$.
\end{assumption}

By Assumption \ref{rcaas1}, the WLS estimator may have different variances
in the various regimes. In order to study the limit theory, consider the
following notation:

\begin{equation}
\eta _{\ell }^{2}=\left\{ 
\begin{array}{ll}
\begin{array}{l}
\displaystyle E\left( \frac{\bar{y}_{\ell ,0}^{2}}{1+\bar{y}_{\ell ,0}^{2}}%
\right) ^{2}\sigma _{\ell ,1}^{2}+E\left( \frac{\bar{y}_{\ell ,0}}{1+\bar{y}%
_{\ell ,0}^{2}}\right) ^{2}\sigma _{\ell ,2}^{2}, \\ 
\quad \quad \quad \quad \quad \quad \quad \mbox{if}\;\;\;-\infty \leq E\ln
|\beta _{0}+\epsilon _{m_{\ell },1}|<0,%
\end{array}
&  \\ 
\sigma _{\ell ,1}^{2},\quad \mbox{if}\;\;\;E\ln |\beta _{0}+\epsilon
_{m_{\ell },1}|\geq 0 & 
\end{array}%
\right.  \label{eta-l}
\end{equation}%
and 
\begin{equation}
a_{\ell }=\left\{ 
\begin{array}{ll}
\displaystyle E\left( \frac{\bar{y}_{\ell ,0}^{2}}{1+\bar{y}_{\ell ,0}^{2}}%
\right) ,\quad \mbox{if}\;\;\;-\infty \leq E\ln |\beta _{0}+\epsilon
_{m_{\ell },1}|<0, &  \\ 
1,\quad \mbox{if}\;\;\;E\ln |\beta _{0}+\epsilon _{m_{\ell },1}|\geq 0, & 
\end{array}%
\right.  \label{a-l}
\end{equation}%
$1\leq \ell \leq M+1$. Also, let 
\begin{equation}
\bar{\eta}(t)=\sum_{j=1}^{\ell -1}\frac{\eta _{j}^{2}}{a_{j}^{2}}(\tau
_{j}-\tau _{j-1})+\frac{\eta _{\ell }^{2}}{a_{\ell }^{2}}(t-\tau _{\ell
-1}),\quad \tau _{\ell -1}<t\leq \tau _{\ell },1\leq \ell \leq M+1,
\label{eta-bar}
\end{equation}%
and define the zero mean Gaussian process $\left\{ \Gamma \left( t\right)
,0\leq t\leq 1\right\} $, with $E\left[ \Gamma \left( t\right) \Gamma \left(
s\right) \right] =\bar{\eta}\left( \min \left( t,s\right) \right) $.

We begin by investigating how the limits in Theorems \ref{thrca1} and \ref%
{thrca111} behave under heteroskedasticity.

\begin{theorem}
\label{thrca4} We assume that $H_{0}$ of (\ref{rcanu}), and Assumptions \ref%
{as-wc-1}, \ref{rcamult} and \ref{rcaas1} hold.\newline
(i) If $I(w,c)<\infty $ for some $c>0$, then it holds that 
\begin{equation*}
\sup_{0<t<1}\frac{\left\vert Q_{N}(t)\right\vert }{w(t)}\;\overset{{\mathcal{%
D}}}{\rightarrow }\;\sup_{0<t<1}\frac{\left\vert \Gamma (t)-t\Gamma
(1)\right\vert }{w(t)}.
\end{equation*}%
\newline
(ii) For all $x$, it holds that 
\begin{equation*}
\lim_{N\rightarrow \infty }P\Biggl\{a(\ln N)\sup_{0<t<1}\frac{\left\vert
Q_{N}(t)\right\vert }{\eta _{0}^{1/2}(t,t)}\leq x+b(\ln N)\Biggl\}=\exp
(-2e^{-x}).
\end{equation*}%
\newline
(iii) If Assumption \ref{thrcaas11} is also satisfied, then it holds that,
for all $\kappa >1/2$ 
\begin{equation*}
\left( \frac{r_{N}}{N}\right) ^{\kappa -1/2}\sup_{r_{1}(N)/N\leq t\leq
1-r_{2}(N)/N}\frac{(t(1-t))^{-\kappa +1/2}}{\eta _{0}^{1/2}(t,t)}|{Q}%
_{N}(t)|\;\overset{{\mathcal{D}}}{\rightarrow }\;\max \left( \gamma
_{1}^{\kappa -1/2}\mathfrak{a}_{1}(\kappa ),\gamma _{2}^{\kappa -1/2}%
\mathfrak{a}_{2}(\kappa )\right) .
\end{equation*}
\end{theorem}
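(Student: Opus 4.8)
The plan is to reduce all three parts to the analysis of two partial sum processes. Under $H_0$ the common level $\beta_0$ cancels from $\widehat\beta_{k,1}-\widehat\beta_{k,2}$, so setting $z_i=(\epsilon_{i,1}y_{i-1}^2+\epsilon_{i,2}y_{i-1})/(1+y_{i-1}^2)$, $d_i=y_{i-1}^2/(1+y_{i-1}^2)$, $S_k=\sum_{i=2}^k z_i$ and $D_k=\sum_{i=2}^k d_i$, one has the exact identity $\widehat\beta_{k,1}-\widehat\beta_{k,2}=(D_NS_k-D_kS_N)/[D_k(D_N-D_k)]$, whence $Q_N(t)=\sqrt N\,t(1-t)(D_NS_k-D_kS_N)/[D_k(D_N-D_k)]$ with $k=\lfloor(N+1)t\rfloor$. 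The sequence $\{z_i\}$ is a martingale difference sequence whose conditional variance stabilises, inside the $\ell$-th regime, at $\eta_\ell^2$ of (\ref{eta-l}), while $d_i$ stabilises in Ces\`aro sense at $a_\ell$ of (\ref{a-l}); thus everything is driven by $(S_k,D_k)$.

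The key input I would establish first is a strong approximation valid regime by regime and uniformly across the boundaries $m_\ell=\lfloor N\tau_\ell\rfloor$: on a suitable probability space there is a Wiener process $W$ with $\max_{2\le k\le N}|S_k-W(V_k)|=o_{P}(N^{1/2-\varrho})$ for some $\varrho>0$, where $V_{\lfloor Nt\rfloor}/N\to V(t):=\sum_{j<\ell}\eta_j^2(\tau_j-\tau_{j-1})+\eta_\ell^2(t-\tau_{\ell-1})$, together with the uniform bound $\max_{2\le k\le N}|D_k-NA(k/N)|=o_{P}(N^{1/2-\varrho})$, where $A(t):=\sum_{j<\ell}a_j(\tau_j-\tau_{j-1})+a_\ell(t-\tau_{\ell-1})$. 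This is where the stationary and nonstationary regimes must be handled separately (in the latter $|y_i|$ diverges, $d_i\to1$ and $a_\ell=1$), and it is the genuinely new ingredient: I would take the single-regime approximations that underlie Theorems \ref{thrca1}--\ref{thrca111} and splice them at the finitely many $m_\ell$, each transition costing only a negligible boundary term.

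For part (i) I would insert these approximations into $Q_N$ and expand the ratio using $D_k=NA(k/N)(1+o_{P}(1))$; the process then converges, on compact subsets of $(0,1)$, to $t(1-t)[\,G(t)/A(t)-(G(1)-G(t))/(A(1)-A(t))\,]$ with $G(t)=W(V(t))$. A direct computation of the covariance kernel shows that this limit coincides in law with $\Gamma(t)-t\Gamma(1)$, the Gaussian process attached to $\bar\eta$ in (\ref{eta-bar}): the mechanism is that each increment of $G$ is divided by the local denominator level $a_\ell$, which turns $V$ into $\bar\eta$ and produces the linear pinning $-t\Gamma(1)$ rather than $-(A(t)/A(1))\Gamma(1)$. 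To upgrade weak convergence on compacts to the weighted supremum over all of $(0,1)$ I would invoke $I(w,c)<\infty$ exactly as in \citet{csorgo1993}: it guarantees both that $\sup_{0<t<1}|\Gamma(t)-t\Gamma(1)|/w(t)$ is a.s.\ finite and that the two end-strips are uniformly negligible, their behaviour being governed by the first and last regimes alone.

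For parts (ii) and (iii) I would standardise pointwise by $\eta_0^{1/2}(t,t)$, the limiting standard deviation of $Q_N(t)$; the decisive simplification is that near $t=0$ and $t=1$ only the first and last regimes are active, so after standardisation the process is locally a standardised bridge with no residual nuisance parameter, which is exactly why the limits match the homoskedastic ones in Theorems \ref{thrca1} and \ref{thrca111}. For (ii) the standardised process is, near each endpoint, asymptotically a stationary Ornstein--Uhlenbeck process in the $\log$ time scale, and the Darling--Erd\H{o}s argument (\citealp{darling1956limit}) delivers the Gumbel law, the factor $2$ in $\exp(-2e^{-x})$ reflecting the two asymptotically independent ends. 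For (iii), since $\kappa>1/2$ the weight is heavy enough that the trimmed supremum concentrates at the trimming points; zooming in through $t=(r_1(N)/N)s$ and $t=1-(r_2(N)/N)s$, the rescaled process tends to a standard Wiener process, each end contributing $\sup_{s\ge1}|W(s)|/s^\kappa=\mathfrak a(\kappa)$ of (\ref{fadef3}), with $\gamma_1,\gamma_2$ of (\ref{fadef1}) entering through the rescaling and the independent maximum arising from the two ends. The main obstacle throughout is the nonstationary strong approximation with a remainder small enough to survive division by $w(t)$ (respectively by $(t(1-t))^\kappa$) arbitrarily close to the endpoints: a bulk rate $o_{P}(N^{1/2})$ is insufficient, and one needs the sharper regime-wise rate together with tight relative control of the clock $V_k$ and of $D_k$ at the first and last few indices, where the Gaussian approximation of a martingale is least accurate; the covariance identification collapsing the ratio limit of part (i) to $\Gamma(t)-t\Gamma(1)$ is the secondary difficulty.
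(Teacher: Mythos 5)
Your overall architecture is the same as the paper's: regime-wise strong approximations for the partial sums, spliced at the finitely many dates $m_{\ell }$, ratio control of the denominators, and, for parts \textit{(ii)} and \textit{(iii)}, localization to the sample endpoints where only the first and last regimes are active, followed by Darling--Erd\H{o}s and R\'{e}nyi-type extreme value arguments. Parts \textit{(ii)} and \textit{(iii)} of your proposal are sound for exactly the reason you give: after standardisation by $\eta _{0}^{1/2}(t,t)$, the extremes are driven by the Wiener processes attached to the first and last regimes alone, and there no nuisance parameter survives.

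The genuine gap is in part \textit{(i)}, at the sentence ``a direct computation of the covariance kernel shows that this limit coincides in law with $\Gamma (t)-t\Gamma (1)$''. Your derived limit,
\begin{equation*}
X(t)=t(1-t)\left[ \frac{G(t)}{A(t)}-\frac{G(1)-G(t)}{A(1)-A(t)}\right] ,\qquad G(t)=W(\mathfrak{b}(t)),\quad A=\mathfrak{c}_{1},
\end{equation*}
is correct -- it equals $t(1-t)\Theta (t)/[\mathfrak{c}_{1}(t)\mathfrak{c}_{2}(t)]$ with $\Theta $ from (\ref{rcadeTh}), consistently with Theorem \ref{thrca5}\textit{(i)} -- but it does \emph{not} coincide in law with $\Gamma (t)-t\Gamma (1)$ unless all the $a_{\ell }$ in (\ref{a-l}) are equal. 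The mechanism you invoke (each increment of $G$ being divided by the local level $a_{\ell }$) is not what the ratio $S_{k}/D_{k}$ does: the whole sum $S_{k}$ is divided by the aggregated denominator $D_{k}\approx NA(k/N)$, and aggregation does not commute with increment-by-increment division across regimes. Concretely, take $M=1$, $\tau _{1}=1/2$, $\eta _{1}=\eta _{2}=1$, $a_{1}=1/2$, $a_{2}=1$ and $t=3/4$: then $\mathrm{Var}(X(3/4))=63/256$, whereas $\mathrm{Var}(\Gamma (3/4)-\tfrac{3}{4}\Gamma (1))=72/256$, so no covariance computation can close this step. The discrepancy vanishes only near $t=0$ and $t=1$, where a single regime is active, which is why parts \textit{(ii)} and \textit{(iii)} are unaffected. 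You should also be aware that the same difficulty is latent in the paper's own display (\ref{rcavol1}): once $k$ passes $m_{1}$, the coefficient multiplying the first-regime Wiener process in $\frac{k(N-k)}{N}(\widehat{\beta }_{k,1}-\widehat{\beta }_{k,2})$ is (up to the common factor $(N-k)/N$) equal to $\eta _{1}k/(a_{1}m_{1}+a_{2}(k-m_{1}))$ rather than $\eta _{1}/a_{1}$; the form of the limit your ratio computation produces is the one that matches Theorem \ref{thrca5}, and the identification with $\Gamma (t)-t\Gamma (1)$ can only be asserted when the $a_{\ell }$ coincide. A secondary, repairable, flaw: your claimed uniform bound $\max_{2\leq k\leq N}|D_{k}-NA(k/N)|=o_{P}(N^{1/2-\varrho })$ is impossible in a stationary regime, where $D_{k}-ED_{k}$ has CLT-scale fluctuations of order $k^{1/2}$; what is true, and what your expansion actually needs, is the relative statement $D_{k}=NA(k/N)\left( 1+O_{P}(k^{-\zeta })\right) $ for some $\zeta <1/2$, which is how the paper controls the denominators.
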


Theorem \ref{thrca4} is only of theoretical interest, but we point out that
heteroskedasticity impacts only on part \textit{(i)}. In that case, the
limiting distribution of the weighted $Q_{N}(t)$ is given by a Gaussian
process with covariance kernel 
\begin{equation}
\eta _{0}(t,s)=E\left[ \left( \Gamma (t)-t\Gamma (1)\right) \left( \Gamma
(s)-s\Gamma (1)\right) \right] =\bar{\eta}(\min (t,s))-t\bar{\eta}(s)-s\bar{%
\eta}(t)+st\bar{\eta}(1).  \label{rcabarga}
\end{equation}

Parts \textit{(ii)-(iii)} of the theorem are the same as in the case of
homoskedasticity. 
Upon inspecting the proof, in these cases the asymptotic
distribution is driven only by the observations which are as close to sample
endpoints as $o\left( N\right) $. On these intervals, (\ref{eta-bar})
ensures that the asymptotic variance $\eta _{0}(t,t)$ is proportional to $%
t(1-t)$.

Finally, note that, in light of the definitions of $\eta _{0}(t,t)$ and $%
\eta _{\ell }^{2}$ and $a_{\ell }$, heteroskedasticity in $\epsilon _{i,2}$
does not play a role in the nonstationary case.

\subsection{Feasible tests under heteroskedasticity\label{feasible}}

By Theorem \ref{thrca4}, the implementation of tests based on $Q_{N}(t)$
requires an estimate of $\eta _{0}\left( t,t\right) $. However, this is
fraught with difficulties, since it requires knowledge of the different
regime dates, $m_{\ell }$. Thus, we consider a modification of $Q_{N}(t)$ to
reflect the possible changes in the variances of the errors.

Let 
\begin{equation}
\widehat{\mathfrak{c}}_{N,1}(t)=\frac{1}{N}\sum_{i=2}^{\lfloor (N+1)t\rfloor
}\frac{y_{i-1}^{2}}{1+y_{i-1}^{2}}\quad \mbox{and}\quad \widehat{\mathfrak{c}%
}_{N,2}(t)=\frac{1}{N}\sum_{i=\lfloor (N+1)t\rfloor +1}^{N}\frac{y_{i-1}^{2}%
}{1+y_{i-1}^{2}},\;\;\;0\leq t\leq 1;  \label{c-hat}
\end{equation}%
clearly, $\widehat{\mathfrak{c}}_{N,2}(t)=\widehat{\mathfrak{c}}_{N,1}(1)-%
\widehat{\mathfrak{c}}_{N,1}(t)$. We then define the modified test statistic 
\begin{equation*}
\overline{Q}_{N}(t)=\left\{ 
\begin{array}{ll}
0,\quad \mbox{if}\;\;\;0<t<2/(N+1),\vspace{0.3cm} &  \\ 
N^{1/2}\widehat{\mathfrak{c}}_{N,1}(t)\widehat{\mathfrak{c}}_{N,2}(t)\left( 
\widehat{\beta }_{\lfloor (N+1)t\rfloor ,1}-\widehat{\beta }_{\lfloor
(N+1)t\rfloor ,2}\right) ,\quad \mbox{if}\;\;\;2/(N+1)\leq t<1-2/(N+1),%
\vspace{0.3cm} &  \\ 
0,\quad \mbox{if}\;\;\;1-2/(N+1)\leq t<1. & 
\end{array}%
\right.
\end{equation*}%
Under the null of no change, the same arguments as in the proof of Corollary %
\ref{rcaco1} guarantee that $\widehat{\mathfrak{c}}_{N,1}(t)$ and $\widehat{%
\mathfrak{c}}_{N,2}(t)$ converge to the functions 
\begin{equation}
\mathfrak{c}_{1}(t)=\sum_{j=1}^{\ell -1}(\tau _{\ell }-\tau _{\ell
-1})a_{\ell }+(t-\tau _{\ell -1})a_{\ell },\quad \tau _{\ell -1}<t\leq \tau
_{\ell },1\leq \ell \leq M+1,  \label{c1-t}
\end{equation}%
and $\mathfrak{c}_{2}(t)=\mathfrak{c}_{1}(1)-\mathfrak{c}_{1}(t)$, for $%
0\leq t\leq 1$, where $\tau _{\ell }$ is defined in Assumption \ref{rcamult}%
, and $a_{\ell },1\leq \ell \leq M+1$ is defined in (\ref{a-l}). In order to
present our main results, we define the zero mean Gaussian process%
\begin{equation}
\Theta (t)=\mathfrak{c}_{2}(t)\Delta (t)-\mathfrak{c}_{1}(t)(\Delta
(1)-\Delta (t)),\quad 0\leq t\leq 1;  \label{rcadeTh}
\end{equation}%
$\{\Delta (t),0\leq t\leq 1\}$ is also a zero mean Gaussian process with $%
E\Delta (t)\Delta (s)=\mathfrak{b}(\min (t,s))$, where 
\begin{subequations}
\begin{equation}
{\mathfrak{b}}(t)=\sum_{j=1}^{\ell -1}(\tau _{j}-\tau _{j-1})\eta
_{j}^{2}+(t-\tau _{\ell -1})\eta _{\ell }^{2},\quad \tau _{\ell -1}<t\leq
\tau _{\ell },1\leq \ell \leq M+1.  \label{b-t}
\end{equation}%
Let ${\mathfrak{g}}(t,s)=E\left( \Theta (t)\Theta (s)\right) $; elementary
calculations yield 
\end{subequations}
\begin{equation}
{\mathfrak{g}}(t,s)=\mathfrak{c}_{1}(1)\mathfrak{c}_{1}(1)\mathfrak{b}\left(
\min (t,s)\right) -\mathfrak{c}_{1}(1)\mathfrak{c}_{1}(t)\mathfrak{b}(s)-%
\mathfrak{c}_{1}(1)\mathfrak{c}_{1}(s)\mathfrak{b}(t)+\mathfrak{c}_{1}(t)%
\mathfrak{c}_{1}(s)\mathfrak{b}(1).  \label{rcadefg}
\end{equation}

\begin{theorem}
\label{thrca5}We assume that $H_{0}$ of (\ref{rcanu}), and Assumptions \ref%
{as-wc-1}, \ref{rcamult} and \ref{rcaas1} hold.\newline
(i) If $I(w,c)<\infty $ for some $c>0$, then 
\begin{equation*}
\sup_{0<t<1}\frac{\left\vert \overline{Q}_{N}(t)\right\vert }{w(t)}\;\overset%
{{\mathcal{D}}}{\rightarrow }\;\sup_{0<t<1}\frac{\left\vert \Theta
(t)\right\vert }{w(t)},
\end{equation*}%
where $\{\Theta (t),0\leq t\leq 1\}$ is the Gaussian process defined in %
\eqref{rcadeTh}.\newline
(ii) For all $x$ 
\begin{equation*}
\lim_{N\rightarrow \infty }P\Biggl\{a(\ln N)\sup_{0<t<1}\frac{\left\vert 
\overline{Q}_{N}(t)\right\vert }{\mathfrak{g}^{1/2}\left( t,t\right) }\leq
x+b(\ln N)\Biggl\}=\exp (-2e^{-x}),
\end{equation*}%
where $a(x)$, $b(x)$ are defined in (\ref{da-1/0}) and $\mathfrak{g}(t,s)$
is given in (\ref{rcadefg}).\newline
(iii) If Assumption \ref{thrcaas11} also holds and $\kappa >1/2$, then 
\begin{equation*}
\left( \frac{r_{N}}{N}\right) ^{\kappa -1/2}\sup_{t_{1}<t<t_{2}}\frac{%
(t(1-t))^{-\kappa +1/2}}{\mathfrak{g}^{1/2}\left( t,t\right) }\left\vert 
\overline{Q}_{N}(t)\right\vert \;\overset{{\mathcal{D}}}{\rightarrow }\;\max
\left( \gamma _{1}^{\kappa -1/2}\mathfrak{a}_{1}(\kappa ),\gamma
_{2}^{\kappa -1/2}\mathfrak{a}_{2}(\kappa )\right) ,
\end{equation*}%
where $t_{1}=r_{N}/N$, $t_{2}=1-t_{1}$, and $r_{N}$, $\gamma _{1}$, $\gamma
_{2}$ are defined in (\ref{fadef1}), $\mathfrak{a}_{1}\left( \kappa \right) $
and $\mathfrak{a}_{2}\left( \kappa \right) $ are independent copies of $%
\mathfrak{a}\left( \kappa \right) $ defined in (\ref{fadef3}), and $%
\mathfrak{g}(t,s)$ is given in (\ref{rcadefg}).
\end{theorem}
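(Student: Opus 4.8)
The plan is to follow the same three-step template used for Theorem \ref{thrca4}, the only essential difference being that the self-normalising weights $\widehat{\mathfrak{c}}_{N,1}(t)\widehat{\mathfrak{c}}_{N,2}(t)$ replace the deterministic factor $t(1-t)$. First I would record the martingale representation of the estimators. Writing $e_{i}=(\epsilon_{i,1}y_{i-1}^{2}+\epsilon_{i,2}y_{i-1})/(1+y_{i-1}^{2})$ and $S_{k}=\sum_{i=2}^{k}e_{i}$, under $H_{0}$ one has $\widehat{\beta}_{k,1}-\beta_{0}=(N\widehat{\mathfrak{c}}_{N,1}(t))^{-1}S_{k}$ and $\widehat{\beta}_{k,2}-\beta_{0}=(N\widehat{\mathfrak{c}}_{N,2}(t))^{-1}(S_{N}-S_{k})$ with $k=\lfloor(N+1)t\rfloor$, so that the factors $\widehat{\mathfrak{c}}_{N,j}(t)$ cancel against the denominators of (\ref{rcabet})--(\ref{rcabet2}) and
\[
\overline{Q}_{N}(t)=N^{-1/2}\bigl(\widehat{\mathfrak{c}}_{N,1}(1)\,S_{\lfloor(N+1)t\rfloor}-\widehat{\mathfrak{c}}_{N,1}(t)\,S_{N}\bigr).
\]
The array $\{e_{i}\}$ is a martingale difference sequence whose conditional variance accumulates, regime by regime, to $\mathfrak{b}(t)$ of \eqref{b-t}; hence the strong approximation for the partial sums of an RCA sequence (valid in both the stationary and the nonstationary regimes, as developed for Theorem \ref{thrca4}) supplies a Wiener process $W$ for which $\sup_{0\leq t\leq 1}|N^{-1/2}S_{\lfloor(N+1)t\rfloor}-N^{-1/2}W(N\mathfrak{b}(t))|$ is negligible at the required rate. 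Combined with the uniform consistency $\sup_{t}|\widehat{\mathfrak{c}}_{N,1}(t)-\mathfrak{c}_{1}(t)|\overset{\mathcal{P}}{\to}0$, established exactly as in Corollary \ref{rcaco1}, this gives $\overline{Q}_{N}(t)\Rightarrow\mathfrak{c}_{1}(1)\Delta(t)-\mathfrak{c}_{1}(t)\Delta(1)=\Theta(t)$, the final identity being a rearrangement of \eqref{rcadeTh} using $\mathfrak{c}_{1}(t)+\mathfrak{c}_{2}(t)=\mathfrak{c}_{1}(1)$.

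For part \textit{(i)} I would invoke weak convergence of $\overline{Q}_{N}$ to $\Theta$ together with the continuous mapping theorem applied to the functional $f\mapsto\sup_{0<t<1}|f(t)|/w(t)$. Since this functional is discontinuous at the endpoints, the integral condition $I(w,c)<\infty$ is used exactly as in \citet{csorgo1993}: one splits $[0,1]$ into a central block $[\delta,1-\delta]$, where convergence is automatic, and two shrinking end blocks, on which the exponential integrability encoded by \eqref{defiwc} forces the weighted supremum of both $\overline{Q}_{N}$ and $\Theta$ to be asymptotically negligible, yielding $\sup_{0<t<1}|\overline{Q}_{N}(t)|/w(t)\overset{\mathcal{D}}{\to}\sup_{0<t<1}|\Theta(t)|/w(t)$.

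For parts \textit{(ii)} and \textit{(iii)} weak convergence is unavailable, since with $\kappa\geq\tfrac{1}{2}$ the weighted supremum diverges, so I would argue directly from the strong approximation and localise near the two endpoints, where the limit is driven by observations at distance $o(N)$ from $0$ and $1$. The key computation is the local behaviour of the normaliser: near $t=0$ only the first regime is active, so $\mathfrak{c}_{1}(t)\sim a_{1}t\to0$, $\mathfrak{b}(t)\sim\eta_{1}^{2}t$, and \eqref{rcadefg} gives $\mathfrak{g}(t,t)\sim\mathfrak{c}_{1}(1)^{2}\,\mathfrak{b}(t)$. In the ratio $\overline{Q}_{N}(t)/\mathfrak{g}^{1/2}(t,t)$ the term $\widehat{\mathfrak{c}}_{N,1}(t)S_{N}$ is of smaller order, the factor $\mathfrak{c}_{1}(1)$ cancels, and one is left with $W(N\mathfrak{b}(t))/(N\mathfrak{b}(t))^{1/2}$, a standardised Wiener process free of all heteroskedasticity parameters; the analogous reduction holds near $t=1$ with $\eta_{M+1}^{2}$. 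This self-normalisation is precisely what collapses the heteroskedastic problem onto the homoskedastic one. Part \textit{(ii)} then follows from the classical Darling-Erd\H{o}s theorem applied to $\sup|W(s)|/s^{1/2}$ at each endpoint, the two endpoints being asymptotically independent and each contributing a factor $\exp(-e^{-x})$, accounting for the $\exp(-2e^{-x})$ limit with the normalisers $a(\ln N),b(\ln N)$ of \eqref{da-1/0}. Part \textit{(iii)} is analogous: after the same time change, the trimmed supremum over $[r_{1}(N)/N,\,1-r_{2}(N)/N]$ converges on each side to $\sup_{1\leq s<\infty}|W(s)|/s^{\kappa}=\mathfrak{a}(\kappa)$ of \eqref{fadef3}, rescaled by $\gamma_{j}^{\kappa-1/2}$ through \eqref{fadef1}, and the two independent copies combine into the stated maximum.

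The main obstacle is the endpoint analysis underlying parts \textit{(ii)}--\textit{(iii)}: one must show that the self-normalisation by $\mathfrak{g}^{1/2}(t,t)$ genuinely removes the regime-dependent variances uniformly on the shrinking end-intervals, controlling simultaneously the strong-approximation error, the estimation error in $\widehat{\mathfrak{c}}_{N,1}(t)$, and the subtracted term $\widehat{\mathfrak{c}}_{N,1}(t)S_{N}$, all at a rate fast enough to survive multiplication by the diverging weight near $t=0$ and $t=1$. Establishing that these errors are of smaller order than the $\mathfrak{g}^{1/2}(t,t)$ scale down to distance $r_{N}/N$ (for \textit{(iii)}) and down to the Darling-Erd\H{o}s scale (for \textit{(ii)}) is the delicate part, and it is here that the sharpness of the strong approximation for the nonstationary RCA sequence is indispensable.
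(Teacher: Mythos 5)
Your proposal is correct, and it rests on the same two ingredients as the paper's argument — the regime-by-regime strong approximation of the partial sums constructed for Theorem \ref{thrca4}, and the uniform consistency of $\widehat{\mathfrak{c}}_{N,1}(t)$ — but it is organised around a genuinely different decomposition, and the difference is worth recording. The paper's own proof of Theorem \ref{thrca5} is a transfer argument: it establishes (\ref{sup-c}), (\ref{rcasim1}) and hence (\ref{rcaest1}), and then invokes the approximations (\ref{rcavol1})--(\ref{rcavol2}) and the endpoint-localisation estimates from the proof of Theorem \ref{thrca4} wholesale, so that $\overline{Q}_{N}$ inherits its asymptotics from those of $N^{1/2}(\widehat{\beta}_{k,1}-\widehat{\beta}_{k,2})$ once the random weights are replaced by $\mathfrak{c}_{1}(t)\mathfrak{c}_{2}(t)$. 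You instead start from the exact identity $\overline{Q}_{N}(t)=N^{-1/2}\bigl(\widehat{\mathfrak{c}}_{N,1}(1)S_{\lfloor (N+1)t\rfloor}-\widehat{\mathfrak{c}}_{N,1}(t)S_{N}\bigr)$, which is valid under $H_{0}$ and exploits the cancellation of the WLS denominators against the multiplicative weights. This buys something real: you never need to control the inverted sums $\bigl(\sum_{i\leq k}y_{i-1}^{2}/(1+y_{i-1}^{2})\bigr)^{-1}$ uniformly down to the sample endpoints — the technically hardest ingredient of the Theorem \ref{thrca1}/\ref{thrca111}/\ref{thrca4} machinery, of the type (\ref{ht-a})--(\ref{ht-b}) — but only the far weaker uniform law of large numbers (\ref{sup-c}); moreover, the limit $\Theta(t)=\mathfrak{c}_{1}(1)\Delta(t)-\mathfrak{c}_{1}(t)\Delta(1)$ and the endpoint behaviour $\mathfrak{g}(t,t)\sim\mathfrak{c}_{1}^{2}(1)\mathfrak{b}(t)$ as $t\to 0^{+}$, which is exactly what collapses parts (ii)--(iii) onto the homoskedastic Darling--Erd\H{o}s and R\'{e}nyi limits, emerge transparently from your representation. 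What the paper's route buys is economy of exposition, since all lemmas are reused verbatim. Two points in your sketch deserve emphasis when written out in full: the approximating Wiener processes must still be constructed piecewise, regime by regime (this blocking is what makes the processes driving the two endpoints independent, hence the factor $\exp(-2e^{-x})$ rather than $\exp(-e^{-x})$); and the three error terms you list (strong-approximation error, estimation error in $\widehat{\mathfrak{c}}_{N,1}$, and the drift term $\widehat{\mathfrak{c}}_{N,1}(t)S_{N}$) must be shown to be $o_{P}$ of the $\mathfrak{g}^{1/2}(t,t)$ scale uniformly down to $(\ln N)^{4}/N$ for part (ii) and $r_{N}/N$ for part (iii) — these are precisely the bounds the paper proves en route to Theorem \ref{thrca4}, so nothing in your plan fails, but they are not free.
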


Some comments on the practical implementation of the results in Theorem \ref%
{thrca5} are in order. Parts \textit{(ii)} and \textit{(iii)} require an
estimate of $\mathfrak{g}(t,t)$; to this end, we use $\widehat{\mathfrak{c}}%
_{N,1}(t)$ defined in (\ref{c-hat}) instead of $\mathfrak{c}_{1}(t)$, and we
estimate ${\mathfrak{b}}(t,s)$ as 
\begin{equation*}
\widehat{\mathfrak{b}}_{N}(t)=\frac{1}{N}\sum_{i=2}^{\lfloor Nt\rfloor
}\left( \frac{(y_{i}-\widehat{\beta }_{N,1}y_{i-1})y_{i-1}}{1+y_{i-1}^{2}}%
\right) ^{2},\quad 0\leq t\leq 1.
\end{equation*}%
Then we can define 
\begin{equation}
\widehat{{\mathfrak{g}}}(t,s)=\widehat{\mathfrak{c}}_{N,1}^{2}\left(
1\right) \widehat{\mathfrak{b}}_{N}\left( \min \left( t,s\right) \right) -%
\widehat{\mathfrak{c}}_{N,1}\left( 1\right) \left( \widehat{\mathfrak{c}}%
_{N,1}\left( t\right) \widehat{\mathfrak{b}}_{N}\left( s\right) +\widehat{%
\mathfrak{c}}_{N,1}\left( s\right) \widehat{\mathfrak{b}}_{N}\left( t\right)
\right) +\widehat{\mathfrak{c}}_{N,1}\left( t\right) \widehat{\mathfrak{c}}%
_{N,1}\left( s\right) \widehat{\mathfrak{b}}_{N}\left( 1\right) .
\label{g-hat}
\end{equation}%
The implementation of part \textit{(i)} of Theorem \ref{thrca5} is more
complicated, since the presence of nuisance parameters is not relegated to a
multiplicative function. We reject the null hypothesis in (\ref{rcanu}) if 
\begin{equation*}
\sup_{0<t<1}\frac{\left\vert \overline{Q}_{N}(t)\right\vert }{w(t)}\geq
c(\alpha ),
\end{equation*}%
with $c(\alpha )$ defined as $P\left\{ \sup_{0<t<1}\frac{\left\vert \Theta
\left( t\right) \right\vert }{w(t)}\geq c(\alpha )\right\} =\alpha $.
Computing the covariance functions, one can verify that $\{\Delta (t),0\leq
t\leq 1\}\;\overset{{\mathcal{D}}}{=}\;\{W(\mathfrak{b}(t)),0\leq t\leq 1\}$%
, where $\{W(x),0\leq x<\infty \}$ is a Wiener process. In order to
approximate the critical values, one can simulate independent Wiener
processes $W_{i}(x)$, $1\leq i\leq L$, and compute the empirical
distribution function

\begin{equation}
{\mathcal{F}}_{N,L}(x)=\frac{1}{L}\sum_{i=1}^{L}I\left\{ \sup_{0<t<1}\frac{%
\left\vert \widehat{\Theta }_{i}(t)\right\vert }{w(t)}\leq x\right\} ,
\label{cr-val-method}
\end{equation}%
where $\widehat{\Theta }_{i}(t)=\widehat{\mathfrak{c}}_{N,2}(t)W_{i}(%
\widehat{\mathfrak{b}}_{N}(t))-\widehat{\mathfrak{c}}_{N,1}(t)(W_{i}(%
\widehat{\mathfrak{b}}_{N}(1))-W_{i}(\widehat{\mathfrak{b}}_{N}(t)))$.

Let $c_{N,L}(\alpha )$ be defined as $c_{N,L}(\alpha )=\inf \{x:\;{\mathcal{F%
}}_{N,L}(x)\geq 1-\alpha \}$.

\begin{corollary}
\label{cnlalpha}We assume that $H_{0}$ of (\ref{rcanu}) holds. Under the
same assumptions as Theorem \ref{thrca5}(i), it holds that%
\begin{equation*}
\lim_{\min \left( N,L\right) \rightarrow \infty }P\left\{ \sup_{0<t<1}\frac{%
\left\vert \overline{Q}_{N}(t)\right\vert }{w(t)}\geq c_{N,L}(\alpha
)\right\} =\alpha .
\end{equation*}%
The results of Theorem \ref{thrca5}(ii)-(iii) remain true if ${\mathfrak{g}}%
(t,s)$ is replaced with $\widehat{{\mathfrak{g}}}(t,s)$.
\end{corollary}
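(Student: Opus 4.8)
The plan is to handle the two assertions separately, reducing each to a consistency statement for the variance objects built from $\widehat{\mathfrak{c}}_{N,1}$ and $\widehat{\mathfrak{b}}_N$. For the first assertion it suffices to prove that $c_{N,L}(\alpha)\overset{\mathcal{P}}{\rightarrow}c(\alpha)$ as $\min(N,L)\rightarrow\infty$, where $c(\alpha)$ is the $(1-\alpha)$-quantile of $\sup_{0<t<1}|\Theta(t)|/w(t)$; granting this, Theorem \ref{thrca5}(i), Slutsky's lemma, and the continuity at $c(\alpha)$ of the distribution function $F(x)=P\{\sup_{0<t<1}|\Theta(t)|/w(t)\le x\}$ yield the stated rejection probability. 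I would first establish uniform consistency of the estimators, namely $\sup_{0\le t\le 1}|\widehat{\mathfrak{c}}_{N,1}(t)-\mathfrak{c}_1(t)|\overset{\mathcal{P}}{\rightarrow}0$ and $\sup_{0\le t\le 1}|\widehat{\mathfrak{b}}_N(t)-\mathfrak{b}(t)|\overset{\mathcal{P}}{\rightarrow}0$; these follow from the same regime-by-regime law of large numbers used in the proof of Corollary \ref{rcaco1}, now applied under Assumption \ref{rcaas1}, with the passage from pointwise to uniform convergence furnished by the monotonicity and continuity of the piecewise-linear limits $\mathfrak{c}_1$ and $\mathfrak{b}$.

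Next I would condition on the data $\mathcal{Y}_N=\sigma(y_0,\dots,y_N)$. Given $\mathcal{Y}_N$, the simulated processes $\widehat{\Theta}_i(\cdot)$ in \eqref{cr-val-method} are i.i.d.\ zero-mean Gaussian with covariance kernel $\widehat{\mathfrak{g}}$ of \eqref{g-hat}, so the summands $\sup_{0<t<1}|\widehat{\Theta}_i(t)|/w(t)$ are, conditionally, i.i.d.\ with some (random) distribution function $G_N$. The uniform consistency of the preceding step forces $\widehat{\mathfrak{g}}(t,s)\overset{\mathcal{P}}{\rightarrow}\mathfrak{g}(t,s)$ uniformly on $[0,1]^2$, whence, by the same weighted-sup convergence argument underlying Theorem \ref{thrca5}(i), $G_N(x)\overset{\mathcal{P}}{\rightarrow}F(x)$ at every $x$. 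On the other hand, for fixed $\mathcal{Y}_N$ the Glivenko--Cantelli theorem gives $\sup_x|\mathcal{F}_{N,L}(x)-G_N(x)|\rightarrow 0$ a.s.\ as $L\rightarrow\infty$. Combining the two limits yields $\sup_x|\mathcal{F}_{N,L}(x)-F(x)|\overset{\mathcal{P}}{\rightarrow}0$ as $\min(N,L)\rightarrow\infty$, and since $F$ is continuous, inverting at level $1-\alpha$ delivers $c_{N,L}(\alpha)\overset{\mathcal{P}}{\rightarrow}c(\alpha)$, as required.

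For the second assertion, $\mathfrak{g}(t,t)$ enters only as a multiplicative normaliser, so it is enough to prove the \emph{ratio} consistency $\sup_t|\widehat{\mathfrak{g}}(t,t)/\mathfrak{g}(t,t)-1|\overset{\mathcal{P}}{\rightarrow}0$ on the relevant range, after which parts (ii)--(iii) of Theorem \ref{thrca5} and the continuous mapping theorem finish the proof. Expanding $\widehat{\mathfrak{g}}(t,t)$ through \eqref{g-hat} in terms of $\widehat{\mathfrak{c}}_{N,1}$ and $\widehat{\mathfrak{b}}_N$, the uniform convergence of the first step controls every $t$ bounded away from $0$ and $1$. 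The genuinely delicate region is near the endpoints, where both $\mathfrak{g}(t,t)$ and its estimator vanish: since, as noted after Theorem \ref{thrca4}, the Darling--Erd\H{o}s statistic of part (ii) and the R\'enyi statistic of part (iii) are driven only by observations within $o(N)$ of $t=0$ and $t=1$, the normalisation must be controlled on the shrinking intervals $[r_1(N)/N,\delta]$ and $[1-\delta,1-r_2(N)/N]$, where one needs the ratio $\widehat{\mathfrak{b}}_N(t)/\mathfrak{b}(t)$ (and likewise $\widehat{\mathfrak{c}}_{N,1}(t)/\mathfrak{c}_1(t)$) to be uniformly close to $1$ rather than merely the difference to be small.

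I expect this endpoint ratio control to be the main obstacle. Showing that $\widehat{\mathfrak{b}}_N(t)/\mathfrak{b}(t)\rightarrow 1$ uniformly over intervals whose length shrinks like $r_1(N)/N$ cannot follow from ordinary uniform consistency; it requires a maximal (increment-type) bound on the partial sums $\frac{1}{N}\sum_{i\le \lfloor Nt\rfloor}((y_i-\widehat{\beta}_{N,1}y_{i-1})y_{i-1}/(1+y_{i-1}^2))^2$ of the same strength as the estimates underlying the Darling--Erd\H{o}s limit itself, carried out separately in the stationary and nonstationary regimes of Assumption \ref{rcaas1}. For the first assertion, the comparatively mild subtlety is the joint double limit in $(N,L)$ and the randomness of $c_{N,L}(\alpha)$ through its dependence on $\mathcal{Y}_N$; this is precisely what the conditioning argument above resolves, once the continuity of $F$ at $c(\alpha)$ is verified (which holds because the supremum of a nondegenerate Gaussian process has an atomless law).
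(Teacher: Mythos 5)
Your treatment of the first assertion is correct, and it is essentially the argument the paper relies on implicitly: the paper proves no standalone result for Corollary \ref{cnlalpha}, but its proof of Theorem \ref{thrca5} establishes exactly the inputs you use, namely $\sup_{0<t<1}|\widehat{\mathfrak{c}}_{N,1}(t)-\mathfrak{c}_{1}(t)|=O_{P}((\ln N)^{-\zeta })$ and $\sup_{0<t<1}|\widehat{\mathfrak{b}}_{N}(t)-\mathfrak{b}(t)|=O_{P}((\ln N)^{-\zeta _{1}})$ in (\ref{sup-c}) and (\ref{rcasim1}). Your conditioning argument (conditional i.i.d.\ Gaussian draws with kernel $\widehat{\mathfrak{g}}$, Glivenko--Cantelli in $L$ -- which via Dvoretzky--Kiefer--Wolfowitz is even uniform over the data, so the double limit is unproblematic -- convergence of the conditional law to that of $\sup_{t}|\Theta (t)|/w(t)$, and quantile inversion using atomlessness of the Gaussian supremum) is the standard route and is sound.

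On the second assertion your diagnosis is correct and in fact sharper than anything the paper offers: the paper only establishes the additive consistency $\sup_{0<t,s<1}|\widehat{\mathfrak{g}}_{N}(t,s)-{\mathfrak{g}}(t,s)|=o_{P}(1)$ in (\ref{rcaest1}) and then states the corollary, whereas, as you observe, the Darling--Erd\H{o}s and R\'{e}nyi statistics are decided on regions where $\mathfrak{g}(t,t)\asymp t$ is of order $(\ln N)^{4}/N$, respectively $r_{N}/N$, so an additive error of size $(\ln N)^{-\zeta }$ gives no control of the ratio $\widehat{\mathfrak{g}}(t,t)/\mathfrak{g}(t,t)$ there. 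However, your proposal stops at the diagnosis: the ratio control is precisely what must be proved, and it is attainable -- since $\widehat{\mathfrak{b}}_{N}$ and $\widehat{\mathfrak{c}}_{N,1}$ are partial-sum processes of (approximately independent) square-integrable summands, a H\'{a}jek--R\'{e}nyi/Kolmogorov maximal inequality applied regime by regime yields $\sup \{|\widehat{\mathfrak{b}}_{N}(t)/\mathfrak{b}(t)-1|:\;t\geq c(N)/N\}=o_{P}(1)$ for any $c(N)\rightarrow \infty $, which covers both the R\'{e}nyi range $t\geq r_{N}/N$ and the Darling--Erd\H{o}s localization $t\geq (\ln N)^{4}/N$ -- but you do not carry this out. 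Moreover, for part (ii) you overlook one region: the supremum runs over all $t$, including $2/(N+1)\leq t\leq (\ln N)^{4}/N$, where the window holds too few observations for any law of large numbers, so the ratio cannot be made close to one. There one instead needs $\sup \mathfrak{g}(t,t)/\widehat{\mathfrak{g}}(t,t)=O_{P}(1)$ (a lower bound in probability on $\inf_{2\leq k\leq (\ln N)^{4}}k^{-1}\sum_{i\leq k}\widehat{u}_{i}^{2}$, with $\widehat{u}_{i}$ the weighted residuals), which, combined with the $O_{P}((\ln \ln \ln N)^{1/2})$ bound for the true-normalized statistic on that region as in (\ref{erdos-2}), keeps it negligible after the substitution. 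Supplying these two estimates would complete your proof of the second assertion.
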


\subsection{Consistency versus alternatives\label{consistency}}

We study the consistency of our tests versus the AMOC alternative\footnote{%
In Section \ref{mbreaks} in the Supplement, we also discuss the power of our
tests versus the alternative of multiple breaks.}%
\begin{equation*}
y_{i}=\left\{ 
\begin{array}{ll}
\left( \beta _{0}+\epsilon _{i,1}\right) y_{i-1}+\epsilon _{i,2},\quad %
\mbox{if}\;\;\;1\leq i\leq k^{\ast } &  \\ 
\left( \beta _{A}+\epsilon _{i,1}\right) y_{i-1}+\epsilon _{i,2},\quad %
\mbox{if}\;\;\;k^{\ast }+1\leq i\leq N & 
\end{array}%
\right. .
\end{equation*}%
Let $\Delta _{N}=\left\vert \beta _{0}-\beta _{A}\right\vert $, and define $%
t^{\ast }$ as $\left\lfloor Nt^{\ast }\right\rfloor =k^{\ast }$.

\begin{theorem}
\label{amoc}We assume that $H_{0}$ of (\ref{rcanu}) holds. Under the same
assumptions as Theorem \ref{thrca5}, if, as $N\rightarrow \infty $%
\begin{equation}
N^{1/2}\Delta _{N}\frac{\displaystyle \left( \frac{k^{\ast }}{N}\left( \frac{%
N-k^{\ast }}{N}\right) \right) }{\displaystyle w\left( t^{\ast }\right) }%
\rightarrow \infty ,  \label{restr-1}
\end{equation}%
then it holds that, as $\min \left( N,L\right) \rightarrow \infty $%
\begin{equation}
\sup_{0<t<1}\frac{\left\vert \overline{Q}_{N}(t)\right\vert }{w\left(
t\right) }\overset{P}{\rightarrow }\infty .  \label{part3}
\end{equation}%
Further, if, as $N\rightarrow \infty $ 
\begin{equation}
\frac{N^{1/2}}{\left( \ln \ln N\right) ^{1/2}}\Delta _{N}\left( \frac{%
k^{\ast }}{N}\left( \frac{N-k^{\ast }}{N}\right) \right) ^{1/2}\rightarrow
\infty ,  \label{restr-2}
\end{equation}%
then it holds that%
\begin{equation}
a(\ln N)\sup_{0<t<1}\frac{1}{\widehat{\mathfrak{g}}^{1/2}\left( t,t\right) }%
\left\vert \overline{Q}_{N}(t)\right\vert -b(\ln N)\overset{P}{\rightarrow }%
\infty .  \label{part2}
\end{equation}%
Finally, if 
\begin{equation}
\left( \frac{r_{N}}{N}\right) ^{\kappa -1/2}N^{1/2}\Delta _{N}\left( \frac{%
k^{\ast }}{N}\left( \frac{N-k^{\ast }}{N}\right) \right) ^{1-\kappa
}\rightarrow \infty ,  \label{restr-3}
\end{equation}%
then it holds that, for all $\kappa >\frac{1}{2}$%
\begin{equation}
\left( \frac{r_{N}}{N}\right) ^{\kappa -1/2}\sup_{t_{1}<t<t_{2}}\frac{%
(t(1-t))^{-\kappa +1/2}}{\widehat{\mathfrak{g}}^{1/2}\left( t,t\right) }%
\left\vert \overline{Q}_{N}(t)\right\vert \overset{P}{\rightarrow }\infty .
\label{part33}
\end{equation}
\end{theorem}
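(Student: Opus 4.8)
The plan is to prove all three divergences by one mechanism: bound the relevant supremum from below by the value of the statistic at the true break fraction $t^{\ast}$, show that at $t^{\ast}$ a deterministic ``signal'' of size $N^{1/2}\Delta_{N}\,t^{\ast}(1-t^{\ast})$ emerges and dominates the stochastic fluctuations, and then verify that each of the normalisations in \eqref{part3}, \eqref{part2} and \eqref{part33} turns this signal into $+\infty$ under the matching restrictions \eqref{restr-1}, \eqref{restr-2} and \eqref{restr-3}. (The data here of course follow the AMOC model displayed above, with a single break of size $\Delta_{N}$ at $k^{\ast}=\lfloor Nt^{\ast}\rfloor$.)

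First I would analyse $\overline{Q}_{N}(t^{\ast})$. Since the two subsamples defining $\widehat{\beta}_{k^{\ast},1}$ and $\widehat{\beta}_{k^{\ast},2}$ are homogeneous --- the first drawn entirely from the regime with root $\beta_{0}$, the second entirely from the regime with root $\beta_{A}$ --- the within-regime consistency of the WLS estimator (which, as used throughout, holds whether or not $y_{i}$ is stationary) gives $\widehat{\beta}_{k^{\ast},1}=\beta_{0}+O_{P}((Nt^{\ast})^{-1/2})$ and $\widehat{\beta}_{k^{\ast},2}=\beta_{A}+O_{P}((N(1-t^{\ast}))^{-1/2})$, whence $|\widehat{\beta}_{k^{\ast},1}-\widehat{\beta}_{k^{\ast},2}|=\Delta_{N}+O_{P}((Nt^{\ast})^{-1/2}+(N(1-t^{\ast}))^{-1/2})$. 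Applying the laws of large numbers underlying Corollary \ref{rcaco1} regime by regime makes $\widehat{\mathfrak{c}}_{N,1}(t^{\ast})$ and $\widehat{\mathfrak{c}}_{N,2}(t^{\ast})$ converge to positive limits of exact order $t^{\ast}$ and $1-t^{\ast}$ respectively (a change in the root only alters the constants, and their aggregate stays bounded away from $0$ because the WLS weights $y_{i-1}^{2}/(1+y_{i-1}^{2})$ do). Combining, $\overline{Q}_{N}(t^{\ast})=\pm N^{1/2}\Delta_{N}\,\widehat{\mathfrak{c}}_{N,1}(t^{\ast})\widehat{\mathfrak{c}}_{N,2}(t^{\ast})+O_{P}((t^{\ast}(1-t^{\ast}))^{1/2})$, so the signal has exact order $N^{1/2}\Delta_{N}\,t^{\ast}(1-t^{\ast})$, and the remainder is $O_{P}(1)$.

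The three conclusions then follow by substitution. For \eqref{part3} I divide by $w(t^{\ast})$: the signal equals $N^{1/2}\Delta_{N}\,t^{\ast}(1-t^{\ast})/w(t^{\ast})$ up to a positive constant, which diverges by \eqref{restr-1}, while the centred process retains the null-type weighted tightness (the break enters only additively, through the mean), so its weighted supremum stays $O_{P}(1)$ and is swamped. For \eqref{part2} and \eqref{part33} I first replace $\widehat{\mathfrak{g}}^{1/2}(t^{\ast},t^{\ast})$ by its probability limit; the computation from \eqref{b-t}--\eqref{rcadefg} gives, at any interior point, $\mathfrak{g}(t,t)\propto t(1-t)$, so $\mathfrak{g}^{1/2}(t^{\ast},t^{\ast})\asymp(t^{\ast}(1-t^{\ast}))^{1/2}$ and the ratio at $t^{\ast}$ is of order $N^{1/2}\Delta_{N}(t^{\ast}(1-t^{\ast}))^{1/2}$. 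For the Darling--Erd\H{o}s normalisation, recalling $a(\ln N)\sim(2\ln\ln N)^{1/2}$ and $b(\ln N)\sim 2\ln\ln N$, the left side of \eqref{part2} exceeds $a(\ln N)\,N^{1/2}\Delta_{N}(t^{\ast}(1-t^{\ast}))^{1/2}-b(\ln N)$, which tends to $+\infty$ precisely under \eqref{restr-2}. For the R\'enyi normalisation I evaluate the supremum at $t^{\ast}$ (which lies in $(t_{1},t_{2})$ for $N$ large); the exponents of $t^{\ast}(1-t^{\ast})$ combine as $(-\kappa+\tfrac12)-\tfrac12+1=1-\kappa$, leaving $(r_{N}/N)^{\kappa-1/2}N^{1/2}\Delta_{N}(t^{\ast}(1-t^{\ast}))^{1-\kappa}$ up to a constant, which diverges by \eqref{restr-3}.

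The main obstacle is confined to parts \emph{(ii)}--\emph{(iii)}, where the normaliser $\widehat{\mathfrak{g}}(t,t)$ is built from the \emph{full-sample} estimator $\widehat{\beta}_{N,1}$. Under the alternative this estimator is inconsistent: it converges to a weighted average $\beta^{\ast}$ of $\beta_{0}$ and $\beta_{A}$, so the residuals $y_{i}-\widehat{\beta}_{N,1}y_{i-1}$ carry an extra term $(\beta_{i}-\beta^{\ast})y_{i-1}$ of order $\Delta_{N}|y_{i-1}|$, which inflates $\widehat{\mathfrak{b}}_{N}$ --- and hence $\widehat{\mathfrak{g}}(t^{\ast},t^{\ast})$ --- by a quantity of order $\Delta_{N}^{2}$. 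The delicate step is therefore to bound this inflation, establishing $\widehat{\mathfrak{g}}(t^{\ast},t^{\ast})=O_{P}((1+\Delta_{N}^{2})\,t^{\ast}(1-t^{\ast}))$, and to check that \eqref{restr-2}--\eqref{restr-3} still force divergence once the $\Delta_{N}^{2}$ contribution in the denominator is accounted for. This bookkeeping is hardest exactly when the break separates a stationary from a nonstationary regime, since then the two halves of every partial sum of $y_{i-1}^{2}/(1+y_{i-1}^{2})$ and of the residual squares live on entirely different scales; controlling them uniformly is precisely what the (new) nonstationary strong approximations developed earlier in the paper are needed for.
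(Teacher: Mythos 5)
Your proposal is correct and takes essentially the same route as the paper's own proof: both arguments lower-bound the supremum by its value at $t^{\ast}$, use within-regime consistency $\widehat{\beta}_{k^{\ast},1}\overset{P}{\rightarrow}\beta_{0}$, $\widehat{\beta}_{k^{\ast},2}\overset{P}{\rightarrow}\beta_{A}$ to extract a signal of order $N^{1/2}\Delta_{N}\,t^{\ast}(1-t^{\ast})$, and then match the three normalisations against (\ref{restr-1})--(\ref{restr-3}) using $\mathfrak{g}(t^{\ast},t^{\ast})\asymp t^{\ast}(1-t^{\ast})$. The ``main obstacle'' you flag (inflation of $\widehat{\mathfrak{g}}$ caused by the inconsistent full-sample estimator $\widehat{\beta}_{N,1}$) is exactly what the paper dispatches by assuming $|\beta_{0}-\beta_{A}|$ bounded and asserting uniform convergence of $\widehat{\mathfrak{c}}_{N,1}$, $\widehat{\mathfrak{b}}_{N}$ and hence $\widehat{\mathfrak{g}}_{N}$ to limit functions $\mathfrak{c}_{1}^{\ast}$, $\mathfrak{b}^{\ast}$, $\mathfrak{g}^{\ast}$, which is the same device you describe.
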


The theorem ensures that, as long as (\ref{restr-1}), (\ref{restr-2}) and (%
\ref{restr-3}) hold, our tests reject the null with probability
(asymptotically) $1$. Conditions (\ref{restr-1}), (\ref{restr-2}) and (\ref%
{restr-3}) essentially state that breaks will be detected as long as they
are \textquotedblleft not too small\textquotedblright , and
\textquotedblleft not too close\textquotedblright\ to the endpoints of the
sample.

Consider (\ref{restr-1}). This condition can be understood by considering
two cases. First, when $\frac{k^{\ast }}{N}\rightarrow c\in \left(
0,1\right) $, it is required that $N^{1/2}\Delta _{N}\rightarrow \infty $:
this entails that $\beta _{A}$ may depend on the sample size $N$, so that
even small changes in the regression parameter are allowed. When $\Delta
_{N}>0$, (\ref{restr-1}) holds as long as $k^{\ast }N^{\frac{2\kappa -1}{%
2\left( 1-\kappa \right) }}\rightarrow \infty $: tests based on weight
functions $w\left( t\right) =\left( t\left( 1-t\right) \right) ^{\kappa }$
can detect breaks almost as close to the sample endpoints as $O\left( N^{%
\frac{1-2\kappa }{2\left( 1-\kappa \right) }}\right) $.

Turning to (\ref{restr-2}), when $\frac{k^{\ast }}{N}\rightarrow c>0$, the
test is powerful as long as $\left( \frac{N}{\ln \ln N}\right) ^{1/2}\Delta
_{N}\rightarrow \infty $: again small changes are allowed for, but these are
now \textquotedblleft less small\textquotedblright\ by a $O\left( \ln \ln
N\right) $ factor. Conversely, when $\Delta _{N}>0$, (\ref{restr-1}) holds
as long as $k^{\ast }\left( \ln \ln N\right) ^{-1/2}\rightarrow \infty $:
breaks that are as close as $O\left( \sqrt{\ln \ln N}\right) $ periods to
the sample endpoints can be detected. This effect is reinforced in the case
of R\'{e}nyi statistics, where, on account of (\ref{restr-3}), the only
requirement is that $k^{\ast }>r_{N}$.

\section{Simulations\label{simulations}}

We provide some Monte Carlo evidence on the performance of the test
statistics proposed in Section \ref{feasible}.\footnote{%
In Section \ref{crval} in the Supplement, we also evaluate, as a benchmark,
the performance of our tests under homoskedasticity, based on the theory in
Section \ref{homosk}.}

Data are generated using (\ref{arcnew1}). In all experiments, we use $\beta
_{0}\in \left\{ 0.5,0.75,1,1.05\right\} $ to consider both the cases of
stationary and nonstationary $y_{i}$. We have experimented also with
different values of $\beta _{0}$, but results are essentially the same.
Under the alternative, we consider both a mid-sample and an end-of-sample
break%
\begin{align}
y_{i} =&\left( \beta _{0}+\Delta I\left( i\geq 0.5N\right) +\epsilon
_{i,1}\right) y_{i-1}+\epsilon _{i,2},  \label{po1} \\
y_{i} =&\left( \beta _{0}+\Delta I\left( i\geq 0.9N\right) +\epsilon
_{i,1}\right) y_{i-1}+\epsilon _{i,2}.  \label{po2}
\end{align}

The shocks $\epsilon _{i,1}$ and $\epsilon _{i,2}$ are simulated as
independent of one another and \textit{i.i.d.} with distributions $N\left(
0,\sigma _{1}^{2}\right) $ and $N\left( 0,\sigma _{2}^{2}\right) $
respectively. We report results for $\sigma _{1}^{2}=0.01$ and $\sigma
_{2}^{2}=0.5$ - the value of $\sigma _{1}^{2}$ is based on \textquotedblleft
typical\textquotedblright\ values as found e.g. in the empirical
applications in \citet{HT16}. We note however that, in unreported
simulations using different values of $\sigma _{1}^{2}$ and $\sigma _{2}^{2}$%
, the main results do not change, except for the (expected) fact that tests
have better properties (in terms of size and power) for smaller values of $%
\sigma _{2}^{2}$. Similarly, the test performs better (with empirical
rejection frequencies closer to their nominal value) when $\sigma _{1}^{2}$
is larger, and tends to be undersized for smaller values of $\sigma _{1}^{2}$%
. Both effects (of $\sigma _{1}^{2}$ and $\sigma _{2}^{2}$) vanish as $N$
increases. When allowing for heteroskedasticity, we generate $\epsilon
_{i,1} $ and $\epsilon _{i,2}$ as \textit{i.i.d.}$N\left( 0,\sigma
_{1}^{2}\right) $ and \textit{i.i.d.}$N\left( 0,\sigma _{2}^{2}\right) $ for 
$1\leq i\leq N/2$, and \textit{i.i.d.}$N\left( 0,1.5\sigma _{1}^{2}\right) $
and \textit{i.i.d.}$N\left( 0,1.5\sigma _{2}^{2}\right) $ for $N/2+1\leq
i\leq N$.

Finally, we generate $N+1,000$ values of $y_{i}$ from (\ref{arcnew1}) - with 
$y_{0}=0$ - and discard the first $1,000$ values. All our routines are based
on $2,000$ replications, and we use critical values corresponding to a
nominal level equal to $5\%\footnote{%
When using (\ref{cr-val-method}), we use $L=200$. Results are however not
particulary sensitive to this specification.}$ - hence, empirical rejection
frequencies under the null have a $95\%$ confidence interval $\left[
0.04,0.06\right] $. \newline

We consider four different cases: \textit{(i)} homoskedasticity in both $%
\epsilon _{i,1}$ and $\epsilon _{i,2}$; \textit{(ii)} homoskedasticity in $%
\epsilon _{i,1}$ and heteroskedasticity in $\epsilon _{i,2}$; \textit{(iii) }%
homoskedasticity in $\epsilon _{i,2}$ and heteroskedasticity in $\epsilon
_{i,1}$; and, finally, \textit{(iv)} heteroskedasticity in both $\epsilon
_{i,1}$ and $\epsilon _{i,2}$.

\begin{table*}[h]
\caption{Empirical rejection frequencies under the null - part 1}
\label{tab:SizeHet1}\centering
\resizebox{\textwidth}{!}{

\begin{tabular}{llllllllllllllllllllll}
\hline\hline
&  &  &  &  &  &  &  &  &  &  &  &  &  &  &  &  &  &  &  &  &  \\ 
&  & $\beta $ & \multicolumn{4}{c}{$0.5$} & \multicolumn{1}{c}{} & 
\multicolumn{4}{c}{$0.75$} & \multicolumn{1}{c}{} & \multicolumn{4}{c}{$1$}
& \multicolumn{1}{c}{} & \multicolumn{4}{c}{$1.05$} \\ 
&  &  &  &  &  &  &  &  &  &  &  &  &  &  &  &  &  &  &  &  &  \\ 
& $N$ &  & $200$ & $400$ & $800$ & $1600$ &  & $200$ & $400$ & $800$ & $1600$
&  & $200$ & $400$ & $800$ & $1600$ &  & $200$ & $400$ & $800$ & $1600$ \\ 
$\kappa $ &  &  &  &  &  &  &  &  &  &  &  &  &  &  &  &  &  &  &  &  &  \\ 
&  &  &  &  &  &  &  &  &  &  &  &  &  &  &  &  &  &  &  &  &  \\ 
$0$ &  &  & $0.056$ & $0.066$ & $0.048$ & $0.060$ &  & $0.058$ & $0.049$ & $0.054$ & $0.058$ &  & $0.058$ & $0.059$ & $0.056$ & $0.060$ &  & $0.057$ & $0.057$ & $0.063$ & $0.059$ \\ 
$0.25$ &  &  & $0.048$ & $0.053$ & $0.046$ & $0.053$ &  & $0.046$ & $0.050$
& $0.050$ & $0.058$ &  & $0.052$ & $0.060$ & $0.056$ & $0.056$ &  & $0.057$
& $0.056$ & $0.062$ & $0.060$ \\ 
$0.45$ &  &  & $0.033$ & $0.034$ & $0.036$ & $0.038$ &  & $0.035$ & $0.027$
& $0.031$ & $0.040$ &  & $0.033$ & $0.042$ & $0.038$ & $0.036$ &  & $0.029$
& $0.034$ & $0.038$ & $0.041$ \\ 
$0.5$ &  &  & $0.033$ & $0.026$ & $0.033$ & $0.050$ &  & $0.035$ & $0.030$ & 
$0.034$ & $0.035$ &  & $0.037$ & $0.038$ & $0.042$ & $0.041$ &  & $0.040$ & $0.039$ & $0.042$ & $0.045$ \\ 
$0.51$ &  &  & $0.015$ & $0.020$ & $0.019$ & $0.037$ &  & $0.016$ & $0.019$
& $0.028$ & $0.038$ &  & $0.016$ & $0.017$ & $0.025$ & $0.038$ &  & $0.015$
& $0.025$ & $0.028$ & $0.036$ \\ 
$0.75$ &  &  & $0.034$ & $0.047$ & $0.042$ & $0.040$ &  & $0.042$ & $0.043$
& $0.047$ & $0.047$ &  & $0.039$ & $0.043$ & $0.041$ & $0.051$ &  & $0.035$
& $0.042$ & $0.045$ & $0.034$ \\ 
$0.85$ &  &  & $0.034$ & $0.050$ & $0.045$ & $0.046$ &  & $0.047$ & $0.050$
& $0.050$ & $0.047$ &  & $0.043$ & $0.049$ & $0.049$ & $0.053$ &  & $0.043$
& $0.060$ & $0.048$ & $0.040$ \\ 
$1$ &  &  & $0.038$ & $0.050$ & $0.047$ & $0.045$ &  & $0.050$ & $0.048$ & $0.049$ & $0.043$ &  & $0.043$ & $0.051$ & $0.046$ & $0.052$ &  & $0.043$ & $0.061$ & $0.046$ & $0.042$ \\ 
&  &  &  &  &  &  &  &  &  &  &  &  &  &  &  &  &  &  &  &  &  \\ 
\hline\hline
&  &  &  &  &  &  &  &  &  &  &  &  &  &  &  &  &  &  &  &  &  \\ 
&  & $\beta $ & \multicolumn{4}{c}{$0.5$} & \multicolumn{1}{c}{} & 
\multicolumn{4}{c}{$0.75$} & \multicolumn{1}{c}{} & \multicolumn{4}{c}{$1$}
& \multicolumn{1}{c}{} & \multicolumn{4}{c}{$1.05$} \\ 
&  &  &  &  &  &  &  &  &  &  &  &  &  &  &  &  &  &  &  &  &  \\ 
& $N$ &  & $200$ & $400$ & $800$ & $1600$ &  & $200$ & $400$ & $800$ & $1600$
&  & $200$ & $400$ & $800$ & $1600$ &  & $200$ & $400$ & $800$ & $1600$ \\ 
$\kappa $ &  &  &  &  &  &  &  &  &  &  &  &  &  &  &  &  &  &  &  &  &  \\ 
&  &  &  &  &  &  &  &  &  &  &  &  &  &  &  &  &  &  &  &  &  \\ 
$0$ &  &  & $0.055$ & $0.058$ & $0.051$ & $0.053$ &  & $0.054$ & $0.050$ & $0.055$ & $0.057$ &  & $0.058$ & $0.052$ & $0.052$ & $0.058$ &  & $0.054$ & $0.058$ & $0.058$ & $0.060$ \\ 
$0.25$ &  &  & $0.054$ & $0.053$ & $0.053$ & $0.054$ &  & $0.057$ & $0.043$
& $0.053$ & $0.055$ &  & $0.052$ & $0.056$ & $0.054$ & $0.054$ &  & $0.050$
& $0.059$ & $0.058$ & $0.060$ \\ 
$0.45$ &  &  & $0.033$ & $0.031$ & $0.033$ & $0.043$ &  & $0.034$ & $0.029$
& $0.033$ & $0.037$ &  & $0.030$ & $0.038$ & $0.040$ & $0.034$ &  & $0.027$
& $0.034$ & $0.034$ & $0.043$ \\ 
$0.5$ &  &  & $0.036$ & $0.026$ & $0.043$ & $0.044$ &  & $0.038$ & $0.030$ & 
$0.036$ & $0.040$ &  & $0.039$ & $0.037$ & $0.040$ & $0.040$ &  & $0.037$ & $0.040$ & $0.040$ & $0.048$ \\ 
$0.51$ &  &  & $0.018$ & $0.019$ & $0.025$ & $0.041$ &  & $0.019$ & $0.023$
& $0.032$ & $0.034$ &  & $0.020$ & $0.018$ & $0.027$ & $0.040$ &  & $0.017$
& $0.029$ & $0.026$ & $0.045$ \\ 
$0.75$ &  &  & $0.031$ & $0.045$ & $0.043$ & $0.043$ &  & $0.045$ & $0.046$
& $0.052$ & $0.050$ &  & $0.046$ & $0.043$ & $0.045$ & $0.054$ &  & $0.039$
& $0.050$ & $0.040$ & $0.042$ \\ 
$0.85$ &  &  & $0.036$ & $0.050$ & $0.049$ & $0.050$ &  & $0.050$ & $0.047$
& $0.053$ & $0.048$ &  & $0.046$ & $0.047$ & $0.049$ & $0.051$ &  & $0.044$
& $0.055$ & $0.042$ & $0.046$ \\ 
$1$ &  &  & $0.037$ & $0.055$ & $0.048$ & $0.051$ &  & $0.051$ & $0.050$ & $0.051$ & $0.047$ &  & $0.045$ & $0.050$ & $0.046$ & $0.051$ &  & $0.044$ & $0.053$ & $0.045$ & $0.047$ \\ 
&  &  &  &  &  &  &  &  &  &  &  &  &  &  &  &  &  &  &  &  &  \\ 
\hline\hline
\end{tabular}
}
\par
\begin{tablenotes}
      \tiny
            \item Empirical rejection frequencies under the null of no change - the first panel refers to homoskedastic innovations; in the second panel we consider heteroskedasticity in $\epsilon_{i,2}$ only. 
            
\end{tablenotes}
\end{table*}

\begin{table*}[h]
\caption{Empirical rejection frequencies under the null - part 2}
\label{tab:SizeHet2}\centering
\resizebox{\textwidth}{!}{

\begin{tabular}{llllllllllllllllllllll}
\hline\hline
&  &  &  &  &  &  &  &  &  &  &  &  &  &  &  &  &  &  &  &  &  \\ 
&  & $\beta $ & \multicolumn{4}{c}{$0.5$} & \multicolumn{1}{c}{} & 
\multicolumn{4}{c}{$0.75$} & \multicolumn{1}{c}{} & \multicolumn{4}{c}{$1$}
& \multicolumn{1}{c}{} & \multicolumn{4}{c}{$1.05$} \\ 
&  &  &  &  &  &  &  &  &  &  &  &  &  &  &  &  &  &  &  &  &  \\ 
& $N$ &  & $200$ & $400$ & $800$ & $1600$ &  & $200$ & $400$ & $800$ & $1600$
&  & $200$ & $400$ & $800$ & $1600$ &  & $200$ & $400$ & $800$ & $1600$ \\ 
$\kappa $ &  &  &  &  &  &  &  &  &  &  &  &  &  &  &  &  &  &  &  &  &  \\ 
&  &  &  &  &  &  &  &  &  &  &  &  &  &  &  &  &  &  &  &  &  \\ 
$0$ &  &  & $0.046$ & $0.061$ & $0.045$ & $0.057$ &  & $0.065$ & $0.045$ & $0.056$ & $0.052$ &  & $0.066$ & $0.063$ & $0.056$ & $0.058$ &  & $0.058$ & $0.054$ & $0.062$ & $0.059$ \\ 
$0.25$ &  &  & $0.044$ & $0.055$ & $0.050$ & $0.056$ &  & $0.062$ & $0.047$
& $0.056$ & $0.054$ &  & $0.059$ & $0.058$ & $0.054$ & $0.060$ &  & $0.051$
& $0.050$ & $0.062$ & $0.059$ \\ 
$0.45$ &  &  & $0.023$ & $0.031$ & $0.029$ & $0.036$ &  & $0.035$ & $0.030$
& $0.040$ & $0.034$ &  & $0.036$ & $0.033$ & $0.041$ & $0.043$ &  & $0.027$
& $0.036$ & $0.038$ & $0.045$ \\ 
$0.5$ &  &  & $0.027$ & $0.033$ & $0.026$ & $0.040$ &  & $0.040$ & $0.026$ & 
$0.042$ & $0.040$ &  & $0.040$ & $0.029$ & $0.043$ & $0.039$ &  & $0.036$ & $0.035$ & $0.039$ & $0.050$ \\ 
$0.51$ &  &  & $0.015$ & $0.006$ & $0.017$ & $0.021$ &  & $0.008$ & $0.014$
& $0.023$ & $0.020$ &  & $0.009$ & $0.013$ & $0.021$ & $0.021$ &  & $0.013$
& $0.018$ & $0.020$ & $0.025$ \\ 
$0.75$ &  &  & $0.035$ & $0.040$ & $0.041$ & $0.043$ &  & $0.031$ & $0.050$
& $0.043$ & $0.045$ &  & $0.035$ & $0.046$ & $0.047$ & $0.043$ &  & $0.036$
& $0.044$ & $0.040$ & $0.040$ \\ 
$0.85$ &  &  & $0.040$ & $0.046$ & $0.043$ & $0.043$ &  & $0.035$ & $0.055$
& $0.046$ & $0.048$ &  & $0.040$ & $0.058$ & $0.053$ & $0.046$ &  & $0.043$
& $0.050$ & $0.042$ & $0.041$ \\ 
$1$ &  &  & $0.044$ & $0.045$ & $0.045$ & $0.044$ &  & $0.038$ & $0.058$ & $0.045$ & $0.047$ &  & $0.042$ & $0.063$ & $0.051$ & $0.047$ &  & $0.047$ & $0.054$ & $0.045$ & $0.041$ \\ 
&  &  &  &  &  &  &  &  &  &  &  &  &  &  &  &  &  &  &  &  &  \\ 
\hline\hline
&  &  &  &  &  &  &  &  &  &  &  &  &  &  &  &  &  &  &  &  &  \\ 
&  & $\beta $ & \multicolumn{4}{c}{$0.5$} & \multicolumn{1}{c}{} & 
\multicolumn{4}{c}{$0.75$} & \multicolumn{1}{c}{} & \multicolumn{4}{c}{$1$}
& \multicolumn{1}{c}{} & \multicolumn{4}{c}{$1.05$} \\ 
&  &  &  &  &  &  &  &  &  &  &  &  &  &  &  &  &  &  &  &  &  \\ 
& $N$ &  & $200$ & $400$ & $800$ & $1600$ &  & $200$ & $400$ & $800$ & $1600$
&  & $200$ & $400$ & $800$ & $1600$ &  & $200$ & $400$ & $800$ & $1600$ \\ 
$\kappa $ &  &  &  &  &  &  &  &  &  &  &  &  &  &  &  &  &  &  &  &  &  \\ 
&  &  &  &  &  &  &  &  &  &  &  &  &  &  &  &  &  &  &  &  &  \\ 
$0$ &  &  & $0.053$ & $0.060$ & $0.046$ & $0.059$ &  & $0.062$ & $0.043$ & $0.057$ & $0.052$ &  & $0.057$ & $0.060$ & $0.060$ & $0.057$ &  & $0.058$ & $0.054$ & $0.062$ & $0.059$ \\ 
$0.25$ &  &  & $0.047$ & $0.059$ & $0.044$ & $0.055$ &  & $0.060$ & $0.044$
& $0.058$ & $0.046$ &  & $0.054$ & $0.053$ & $0.060$ & $0.060$ &  & $0.051$
& $0.050$ & $0.062$ & $0.059$ \\ 
$0.45$ &  &  & $0.026$ & $0.029$ & $0.030$ & $0.035$ &  & $0.036$ & $0.029$
& $0.038$ & $0.033$ &  & $0.040$ & $0.040$ & $0.047$ & $0.042$ &  & $0.027$
& $0.036$ & $0.038$ & $0.045$ \\ 
$0.5$ &  &  & $0.030$ & $0.033$ & $0.027$ & $0.041$ &  & $0.040$ & $0.029$ & 
$0.039$ & $0.031$ &  & $0.034$ & $0.027$ & $0.038$ & $0.038$ &  & $0.036$ & $0.035$ & $0.039$ & $0.050$ \\ 
$0.51$ &  &  & $0.014$ & $0.007$ & $0.020$ & $0.024$ &  & $0.007$ & $0.017$
& $0.023$ & $0.022$ &  & $0.010$ & $0.013$ & $0.018$ & $0.021$ &  & $0.013$
& $0.018$ & $0.020$ & $0.025$ \\ 
$0.75$ &  &  & $0.035$ & $0.040$ & $0.040$ & $0.045$ &  & $0.029$ & $0.049$
& $0.046$ & $0.043$ &  & $0.032$ & $0.041$ & $0.041$ & $0.040$ &  & $0.036$
& $0.044$ & $0.040$ & $0.040$ \\ 
$0.85$ &  &  & $0.041$ & $0.044$ & $0.045$ & $0.046$ &  & $0.035$ & $0.056$
& $0.050$ & $0.044$ &  & $0.036$ & $0.047$ & $0.045$ & $0.042$ &  & $0.043$
& $0.050$ & $0.042$ & $0.041$ \\ 
$1$ &  &  & $0.045$ & $0.050$ & $0.048$ & $0.044$ &  & $0.035$ & $0.060$ & $0.052$ & $0.049$ &  & $0.039$ & $0.049$ & $0.047$ & $0.042$ &  & $0.047$ & $0.054$ & $0.045$ & $0.041$ \\ 
&  &  &  &  &  &  &  &  &  &  &  &  &  &  &  &  &  &  &  &  &  \\ 
\hline\hline
\end{tabular}
}
\par
\begin{tablenotes}
      \tiny
            \item Empirical rejection frequencies under the null of no change - in the first panel we consider heteroskedasticity in $\epsilon_{i,1}$ only; in the second panel we consider heteroskedasticity in both $\epsilon_{i,1}$ and $\epsilon_{i,2}$ only.
            
\end{tablenotes}
\end{table*}

Empirical rejection frequencies under the null are in Tables \ref%
{tab:SizeHet1} and \ref{tab:SizeHet2}. We have used asymptotic critical
values for R\'{e}nyi statistics, and the method described in Section \ref%
{feasible} for the cases where $\kappa <0.5$. When using the Darling-Erd\H{o}%
s statistic ($\kappa =0.5$), asymptotic critical values yield hugely
undersized tests; we have therefore used the critical values in Table I in %
\citet{gombay}. From Tables \ref{tab:SizeHet1} and \ref{tab:SizeHet2}, all
tests work very well in all cases considered, possibly being slightly worse
in the fully homoskedastic case. Tests never over-reject, not even in small
samples - conversely, there are some cases of (severe) under-rejection in
small samples, especially when $\kappa $ is around $0.5$. As $N$ increases,
however, this vanishes and the empirical rejection frequencies all lie
within their $95\%$ confidence interval. The only exception is the R\'{e}nyi
statistic with $\kappa =0.51$, which is severely undersized even in large
samples.

\bigskip

The empirical power of the tests is reported in Figures \ref{fig:FigHo}-\ref%
{fig:FigHeEB}, where we only consider a sample size of $N=400$ to save
space. The figures illustrate the robustness of the approach proposed in
Section \ref{feasible}, showing, essentially, the same pattern: tests work
well in all cases considered, with the power increasing monotonically in $%
\Delta $. Test statistics with lower $\kappa $ exhibit more power versus
alternatives with \textquotedblleft small\textquotedblright\ values of $%
\Delta $: in this case, the power is monotonically decreasing in $\kappa $,
with virtually no exceptions. Figures \ref{fig:FigHo}-\ref{fig:FigHeE},
compared with Figures \ref{fig:FigHeB}-\ref{fig:FigHeEB}, show an
interesting feature: the power of the test is virtually unaffected by the
presence or absence in heteroskedasticity in $\epsilon _{i,2}$; conversely,
heteroskedasticity in $\epsilon _{i,1}$ does have an impact. This is
particularly apparent in the cases where $\beta _{0}\geq 1$, which could be
explained by noting that, in the nonstationary case the asymptotics of the
WLS\ estimator is driven only by $\epsilon _{i,1}$. This can be read in
conjunction with the fact that, as shown in Figures \ref{fig:FigHo}-\ref%
{fig:FigHeE}, the value of $\beta _{0}$ has vitually no impact on the power
of our tests when $\epsilon _{i,1}$ is constant.

We also consider the case of end-of-sample breaks (\ref{po2}). Results are
in Figures \ref{fig:FigHo2}-\ref{fig:FigHeEB2}. The results show,
essentially, the same pattern as above: all test statistics have monotonic
power in $\Delta $, and whilst heteroskedasticity in $\epsilon _{i,2}$ does
not affect the whole picture, heteroskedasticity in $\epsilon _{i,1}$ gives
very different results, with its presence increasing power especially for $%
\beta _{0}\geq 1$. However, the impact of $\kappa $ here is, as expected,
completely reversed: the power versus breaks that occur at the end of the
sample increases monotonically, \textit{ceteris paribus}, with $\kappa $.
This makes a difference particularly in the case of medium-sized changes -
e.g., when $\Delta =0.35$, increases in power from $\kappa =0$ to $\kappa =1$
are in the region of $10-15\%$.

\bigskip

Finally, in Figures \ref{fig:bubb1}-\ref{fig:bubb2} we report a small scale
exercise where we evaluate the empirical rejection frequencies when $\beta
_{0}$ is close to unity. We only consider heteroskedasticity in $\epsilon
_{i,2}$: results for other cases are available upon request and, in general,
no major differences are noted compared to the other results. These
\textquotedblleft boundary\textquotedblright\ cases should be helpful to
shed more light on the performance of our procedure when detecting changes
from stationarity to nonstationarity (when $\beta _{0}<1$ and changes are
positive), and vice versa (when $\beta _{0}>1$ and changes are negative).
The main message of Figures \ref{fig:bubb1}-\ref{fig:bubb2} is that our
tests work very well in these boundary cases. In particular, the tests are
very effective in detecting changes from stationarity to explosive
behaviour, and vice versa. The power is especially high when $\beta _{0}>1$
- i.e. when the RCA\ process changes from an explosive to a stationary
behaviour. This suggests a possible, effective test to detect e.g. the
collapse of a bubble in financial econometrics applications.

\section{Empirical applications\label{empirics}}

We illustrate our approach through three applications to real data. In
Sections \ref{inflation}-\ref{ibm}, we use economic and financial time
series; in Section \ref{covid}, we use Covid-19 data.

\subsection{Application I: changes in the persistence of US CPI\label%
{inflation}}

In his landmark paper, \citet{engle1982} applies an ARCH(1) specification to
monthly inflation data, showing that these exhibit conditional
heteroskedasticity. Inspired by this, and by the fact that the RCA model is
a second-order equivalent to the ARCH\ model, in this section we test for
the presence of changepoints in the dynamics of US CPI over the last
century. There is an increasing literature on testing for changes in the
persistence of inflation: in particular, \citet{benati} carry out a
systematic study, applying tests for structural breaks to an AR$(p)$ model
for inflation for several countries. Their analysis shows that not only the
average level of inflation (as is well-documented), but also its serial
correlation, may be subject to numerous changes.

We use monthly CPI data taken from the FRED dataset over a period spanning
from January 1913 until January 2021, with $N=1297$. We use monthly
inflation rates, calculated as the month-on-month log differences of the
series. Given that the series is quite long, we expect to see more than one
break; hence, we use binary segmentation (as suggested in %
\citealp{vostrikova}), reporting the point in time at which the relevant
test statistic is maximised as the breakdate estimate.

\begin{table*}[h]
\caption{Changepoint detection with US CPI data.}
\label{tab:TableCPI}%
\resizebox{\textwidth}{!}{

\begin{tabular}{llllllllllllllllllll}
\hline\hline
&  &  &  &  &  &  &  &  &  &  &  &  &  &  &  &  &  &  &  \\ 
\textbf{Breakdate} &  & \multicolumn{18}{l}{\textbf{Notes}} \\ 
&  &  &  &  &  &  &  &  &  &  &  &  &  &  &  &  &  &  &  \\ 
Jan 1918 &  & \multicolumn{18}{l}{Date found with $\kappa =0.85$, $1$ (found
at Apr 1918 with other $\kappa >0.51$; not found with $\kappa =0.51$). Found
at $10\%$ level with $\kappa =0.25$ and $0.5$} \\ 
&  &  &  &  &  &  &  &  &  &  &  &  &  &  &  &  &   &  &  \\ 
Sep 1921 &  & \multicolumn{18}{l}{Date found with $\kappa =0.85,1$ (found
Jan 1922 with $\kappa >0.65$; not found with other values of $\kappa $)} \\ 
&  &  &  &  &  &  &  &  &  &  &  &  &  &  &  &  &  &  &  \\ 
May 1929 &  & \multicolumn{18}{l}{Break found at $10\%$ nominal level only,
with $\kappa <0.5$ only} \\ 
&  &  &  &  &  &  &  &  &  &  &  &  &  &  &  &  &  &  &  \\ 
Jul 1957 &  & \multicolumn{18}{l}{Same date found by all R\'{e}nyi-type stats
(also found at Jan 1957 with $\kappa =0.5$). Break not found with $\kappa
<0.5$} \\ 
&  &  &  &  &  &  &  &  &  &  &  &  &  &  &  &  &  &  &  \\ 
Nov 1966 &  & \multicolumn{18}{l}{Same date found by all tests - except $\kappa =0.5$ (found at Feb 1966), and $\kappa =0.25$ (at $10\%$ level)} \\ 
&  &  &  &  &  &  &  &  &  &  &  &  &  &  &  &  &  &  &  \\ 
Jun 1982 &  & \multicolumn{18}{l}{Same date found by all tests (found at Dec
1981 with $\kappa =0.5$) - R\'{e}nyi-type stats at $5\%$, nominal level, all
others at $10\%$} \\ 
&  &  &  &  &  &  &  &  &  &  &  &  &  &  &  &  &  &  &  \\ 
Nov 1989 &  & \multicolumn{18}{l}{Same date found by all R\'{e}nyi-type stats
(found at Mar 1989 with $\kappa =1$). Break not found with $\kappa \leq 0.5$}
\\ 
&  &  &  &  &  &  &  &  &  &  &  &  &  &  &  &  &  &  &  \\ \hline\hline
\end{tabular}
}
\par
\begin{tablenotes}
      \tiny
            \item We use the monthly log differences of CPI. 
            \item Detected changepoints, and their estimated date, are presented in \textit{chronological} order; as mentioned in the text, breakdates have been estimated as the earliest detection date across different values of $\kappa$. Whilst details are available upon request, we note that breaks were detected with this order (from the first to be detected to the last one): November 1966; July 1957 and January 1918; June 1982 and September 1921; March 1989 and May 1929.

\end{tablenotes}
\end{table*}

Results in Table XXX differ across tests only marginally, and suggest the
presence of several changepoints in the autoregressive coefficient - see
also Figure \ref{fig:FigCPI} in the Supplement. Some of the estimated
breakdates have a clear economic interpretation. In chronological order, the
first break is found (only by R\'{e}nyi statistics) around January 1918,
which should reflect not only the war effort, but also the increasingly more
comprehensive data collection from the Bureau of Labor Statistics; evidence
in favour of the changepoint increases as $\kappa $ increases, as the theory
would suggest given that this is an early change (occurring circa at $5\%$
of the sample). Indeed, when $\kappa \leq 0.5$, the break is found at $10\%$%
, but not at $5\%$ level. Similar considerations, on the detection ability
and timing corresponding to different values of $\kappa $, can be made for
the break found in 1921; in this case, a possible cause is the impact of the
severe recession at the beginning of the decade, as well as the very rapid
deflation which had occurred in 1920. Conversely, there is limited evidence
for a break in the autoregressive parameter of inflation around the Great
Depression - looking at Figure \ref{fig:FigCPI}, this may be explained as a
shift which occurred only in the mean as opposed to the persistence. The
break occurring in 1957 may be viewed as related to inflation reemerging,
albeit modestly, in the spring of 1956 after a long period of price
stability, with the All-Items CPI increasing by 3.6 percent from April 1956
to April 1957 (comparing with the previous period, the All-Items CPI had
risen by $0.2$ percent annualized rate from July 1952 to April 1956). The
changepoint in 1966 (which is the first one to be found, by all tests) can
be explained by noting that food prices had started accelerating early at
the end of 1965; and, by October 1966, the change in the All-Items CPI
reached its highest since 1957. The change in 1981 is documented also in
other studies (\citealp{eo}), and it corresponds to the beginning of an
aggressive FED policy to rein in inflation after the 1970s. Finally, the
evidence for the break in 1989 (which is not picked up by all tests) is less
clear, but it may be the outcome of the cooling off in FED policy towards
the end of the 1980s.

\subsection{Application II: monthly IBM\ returns\label{ibm}}

We use IBM\ monthly returns, over a period spanning January 1962 till March
2021 (corresponding to $T=710$). The data used in this application has also
been employed, in the context of testing for changepoints, by \citet{yauzhao}%
, who find evidence of two breaks: one around June 1987 (with confidence
interval between June 1986 and June 1988), and another around October 2002
(with confidence interval between April 2001 and April 2004). As in the
previous section, we have applied the tests using binary segmentation, using
all the test statistics developed above. None of them rejected the null of
no break at $5\%$ level; when applying the tests at $10\%$ level, though,
several breaks were found.

\bigskip

\begin{table}[h!]
\centering
\begin{threeparttable}
{\tiny

\caption{Changepoint detection with IBM data.}
\label{tab:TableIBM}\centering
\par

\begin{tabular}{lllllll}
\hline\hline
&  &  &  &  &  &  \\ 
&  & \multicolumn{1}{c}{\textbf{Changepoint 1}} & \multicolumn{1}{c}{} & 
\multicolumn{1}{c}{\textbf{Changepoint 2}} & \multicolumn{1}{c}{} & 
\multicolumn{1}{c}{\textbf{Changepoint 3}} \\ 
&  &  &  &  &  &  \\ 
\textit{Date} &  & \multicolumn{1}{c}{Sep 1973} & \multicolumn{1}{c}{} & 
\multicolumn{1}{c}{Nov 1987} & \multicolumn{1}{c}{} & \multicolumn{1}{c}{Oct
1999} \\ 
&  & \multicolumn{1}{c}{} & \multicolumn{1}{c}{} & \multicolumn{1}{c}{} & 
\multicolumn{1}{c}{} & \multicolumn{1}{c}{} \\ 
\textit{Notes} &  & \multicolumn{1}{c}{R\'{e}nyi stats ($10\%$ nominal level)} & 
\multicolumn{1}{c}{} & \multicolumn{1}{c}{R\'{e}nyi stats ($10\%$ nominal level)}
& \multicolumn{1}{c}{} & \multicolumn{1}{c}{Weighted stats ($10\%$ nominal
level)} \\ 
&  &  &  &  &  &  \\ \hline\hline
\end{tabular}

\begin{tablenotes}
      \tiny
            \item We use the logs of the original data, with no further transformations.
            \item For each estimated changepoint, we indicate which statistic has detected it, and at which (nominal) level. When more than one statistic finds a break, the estimated date is computed as the majority vote across statistics, using the point in time at which the relevant statistic finds a break as estimator. Whilst details are available upon request, we note that breaks were detected with this order (from the first to be detected to the last one): break in 1973; break in 1987; break in 1999.
            
\end{tablenotes}
}  
\end{threeparttable}
\end{table}

We found three changepoints in the whole series (see Table \ref{tab:TableIBM}%
). The first one, whose date corresponds to the well-known 1973-74 market
crash (due to the collapse of the Bretton-Woods system, and compounded by
the oil shock), is relatively close to the beginning of the sample, and
indeed it has been identified by the R\'{e}nyi statistics (the other tests
do not identify such break). The second changepoint can also be related to a
specific event, i.e. the Black Monday (the break is found in November 1987,
i.e. one month later the actual event). Finally, R\'{e}nyi statistics do not
find the third changepoint, which occurs mid-sample, confirming the idea
that mid-sample breaks are better detected using milder weight functions
(indeed, not even the Darling-Erd\H{o}s test finds evidence of such a
break); the break is found before the collapse of the dot-com bubble
(traditionally dated around March 2000), reflecting the trouble brewing in
the months leading to the event.

\subsection{Application III: Covid-19 UK hospitalisation data\label{covid}}

In this section, we consider UK data on Covid-19 - in particular, we use
data on hospitalisations rather than cases, as the latter may be less
reliable due to the change in number of tests administered. \citet{shtatland}
\textit{inter alia }advocate using a low-order autoregression as an
approximation of the popular SIR\ model, especially as a methodology for the
early detection of outbreaks. In this context, the autoregressive root is of
crucial importance since, as the authors put it, if \textquotedblleft the
parameter is greater than one, we have an explosive case (an outbreak of
epidemic)\textquotedblright . It is therefore important to check whether the
observations change from an explosive to a stationary regime (meaning that
the epidemic is slowing down), or vice versa whether the change occurs from
a stationary to an explosive regime (i.e., the epidemic undergoes a surge,
or \textquotedblleft wave\textquotedblright ). In this respect, the
empirical exercise in this section should be read in conjunction with
Figures \ref{fig:bubb1}-\ref{fig:bubb2}.

We use (logs of) UK daily data, for the four UK\ nations, and for the
various regions of England\footnote{%
The data are available from
\par
https://ourworldindata.org/grapher/uk-daily-covid-admissions?tab=chart%
\&stackMode=absolute\&time=2020-03-29..latest\&region=World}, again using
binary segmentation to detect multiple breaks. We only report results
obtained using R\'{e}nyi statistics (with $\kappa =0.51$, $0.55$, $0.65$, $%
0.75$, $0.85$ and $1$); the other tests give very similar results, available
upon request. As far as breakdates are concerned, we pick the ones
corresponding to the \textquotedblleft majority vote\textquotedblright\
across $\kappa $, although discrepancies are, when present, in the region of
few days ($2-5$ at most).

\begin{table*}[h!]
\caption{Changepoint analysis for Covid-19 daily hospitalisation UK data.}
\label{tab:TabCovid}
\par
\centering
\resizebox{\textwidth}{!}{
\begin{tabular}{lllllllllllllll}
\hline\hline
&  &  &  &  &  &  &  &  &  &  &  &  &  &  \\ 
\textbf{Region} &  & \multicolumn{1}{c}{\textit{Start Date}} & \textit{Sample size} &  &  & \textbf{Changepoint 1} &  & \textbf{Changepoint 2} &  & 
\textbf{Changepoint 3} &  & \textbf{Changepoint 4} &  & \textbf{Changepoint 5} \\ 
&  & \multicolumn{1}{c}{} &  &  &  &  &  &  &  &  &  &  &  &  \\ 
East of England &  & \multicolumn{1}{c}{19/3/20} & \multicolumn{1}{c}{$317 $} &  &  & \multicolumn{1}{c}{$\underset{\left[ 1.015;0.990\right] }{12\text{ Apr}}$} & \multicolumn{1}{c}{} & \multicolumn{1}{c}{$\underset{\left[
0.993;1.008\right] }{25\text{ Aug}}$} & \multicolumn{1}{c}{} & 
\multicolumn{1}{c}{} &  & \multicolumn{1}{c}{} &  & \multicolumn{1}{c}{$\underset{\left[ 1.011;1.000\right] }{08\text{ Jan}}$} \\ 
London &  & \multicolumn{1}{c}{19/3/20} & \multicolumn{1}{c}{$317$} &  & 
& \multicolumn{1}{c}{$\underset{\left[ 1.014;0.991\right] }{10\text{ Apr}}$}
& \multicolumn{1}{c}{} & \multicolumn{1}{c}{$\underset{\left[ 0.991;1.006\right] }{08\text{ Aug}}$} & \multicolumn{1}{c}{} & \multicolumn{1}{c}{} & 
& \multicolumn{1}{c}{} &  & \multicolumn{1}{c}{$\underset{\left[ 1.008;0.999\right] }{12\text{ Jan}}$} \\ 
Midlands &  & \multicolumn{1}{c}{19/3/20} & \multicolumn{1}{c}{$317$} & 
&  & \multicolumn{1}{c}{$\underset{\left[ 1.013;0.992\right] }{05\text{ Apr}}
$} & \multicolumn{1}{c}{} & \multicolumn{1}{c}{$\underset{\left[ 0.994;1.005\right] }{12\text{ Aug}}$} & \multicolumn{1}{c}{} & \multicolumn{1}{c}{$\underset{\left[ 1.011;1.000\right] }{14\text{ Nov}}$} &  & 
\multicolumn{1}{c}{$\underset{\left[ 0.997;1.001\right] }{12\text{ Dec}}$} & 
& \multicolumn{1}{c}{$\underset{\left[ 1.03;1.000\right] }{13\text{ Jan}}$}
\\ 
North East &  & \multicolumn{1}{c}{19/3/20} & \multicolumn{1}{c}{$317$} & 
&  & \multicolumn{1}{c}{$\underset{\left[ 1.018;0.992\right] }{07\text{ Apr}}
$} & \multicolumn{1}{c}{} & \multicolumn{1}{c}{$\underset{\left[ 0.995;1.005\right] }{19\text{ Aug}}$} & \multicolumn{1}{c}{} & \multicolumn{1}{c}{$\underset{\left[ 1.013;1.000\right] }{13\text{ Nov}}$} &  & 
\multicolumn{1}{c}{$\underset{\left[ 0.996;1.000\right] }{13\text{ Dec}}$} & 
& \multicolumn{1}{c}{$\underset{\left[ 1.002;1.000\right] }{12\text{ Jan}}$}
\\ 
North West &  & \multicolumn{1}{c}{19/3/20} & \multicolumn{1}{c}{$317$} & 
&  & \multicolumn{1}{c}{$\underset{\left[ 1.023;0.993\right] }{09\text{ Apr}}
$} & \multicolumn{1}{c}{} & \multicolumn{1}{c}{$\underset{\left[ 1.013;1.000\right] }{18\text{ Aug}}$} & \multicolumn{1}{c}{} & \multicolumn{1}{c}{$\underset{\left[ 1.015,1.000\right] }{29\text{ Oct}}$} &  & 
\multicolumn{1}{c}{$\underset{\left[ 0.997;1.001\right] }{13\text{ Dec}}$} & 
& \multicolumn{1}{c}{$\underset{\left[ 1.003;1.001\right] }{12\text{ Jan}}$}
\\ 
South East &  & \multicolumn{1}{c}{19/3/20} & \multicolumn{1}{c}{$317$} & 
&  & \multicolumn{1}{c}{$\underset{\left[ 1.016;0.992\right] }{22\text{ Apr}}
$} & \multicolumn{1}{c}{} & \multicolumn{1}{c}{$\underset{\left[ 0.993;1.007\right] }{21\text{ Aug}}$} & \multicolumn{1}{c}{} & \multicolumn{1}{c}{} & 
& \multicolumn{1}{c}{} &  & \multicolumn{1}{c}{$\underset{\left[ 1.009;1.000\right] }{07\text{ Jan}}$} \\ 
South West &  & \multicolumn{1}{c}{19/3/20} & \multicolumn{1}{c}{$317$} & 
&  & \multicolumn{1}{c}{$\underset{\left[ 1.013;0.989\right] }{12\text{ Apr}}
$} & \multicolumn{1}{c}{} & \multicolumn{1}{c}{$\underset{\left[ 0.992;1.010\right] }{01\text{ Sep}}$} & \multicolumn{1}{c}{} & \multicolumn{1}{c}{$\underset{\left[ 1.027;1.002\right] }{13\text{ Nov}}$} &  & 
\multicolumn{1}{c}{} &  & \multicolumn{1}{c}{$\underset{\left[ 1.002;1.000\right] }{21\text{ Jan}}$} \\ 
&  & \multicolumn{1}{c}{} & \multicolumn{1}{c}{} &  &  & \multicolumn{1}{c}{}
& \multicolumn{1}{c}{} & \multicolumn{1}{c}{} & \multicolumn{1}{c}{} & 
\multicolumn{1}{c}{} &  & \multicolumn{1}{c}{} &  & \multicolumn{1}{c}{} \\ 
\hline
&  & \multicolumn{1}{c}{} & \multicolumn{1}{c}{} &  &  &  &  &  &  & 
\multicolumn{1}{c}{} &  & \multicolumn{1}{c}{} &  &  \\ 
England &  & \multicolumn{1}{c}{19/3/20} & \multicolumn{1}{c}{$317$} &  & 
& \multicolumn{1}{c}{$\underset{\left[ 1.009;0.995\right] }{10\text{ Apr}}$}
& \multicolumn{1}{c}{} & \multicolumn{1}{c}{$\underset{\left[ 0.996;1.004\right] }{26\text{ Aug}}$} & \multicolumn{1}{c}{} & \multicolumn{1}{c}{$\underset{\left[ 1.010;1.001\right] }{29\text{ Oct}}$} &  & 
\multicolumn{1}{c}{} &  & \multicolumn{1}{c}{$\underset{\left[ 1.002;1.000\right] }{12\text{ Jan}}$} \\ 
Northern Ireland &  & \multicolumn{1}{c}{02/3/20} & \multicolumn{1}{c}{$325$} &  &  & \multicolumn{1}{c}{$\underset{\left[ 1.071;1.000\right] }{04\text{ Apr}}$} & \multicolumn{1}{c}{} & \multicolumn{1}{c}{$\underset{\left[
0.984;1.008\right] }{23\text{ Jul}}$} & \multicolumn{1}{c}{} & 
\multicolumn{1}{c}{} &  & \multicolumn{1}{c}{} &  & \multicolumn{1}{c}{$\underset{\left[ 1.009;1.000\right] }{12\text{ Jan}}$} \\ 
Scotland &  & \multicolumn{1}{c}{01/3/20} & \multicolumn{1}{c}{$326$} & 
&  & \multicolumn{1}{c}{$\underset{\left[ 1.026;0.999\right] }{04\text{ Apr}}
$} & \multicolumn{1}{c}{} & \multicolumn{1}{c}{$\underset{\left[ 0.984;1.007\right] }{12\text{ Aug}}$} & \multicolumn{1}{c}{} & \multicolumn{1}{c}{} & 
& \multicolumn{1}{c}{} &  & \multicolumn{1}{c}{$\underset{\left[ 1.009;1.001\right] }{12\text{ Jan}}$} \\ 
Wales &  & \multicolumn{1}{c}{23/3/20} & \multicolumn{1}{c}{$313$} &  & 
& \multicolumn{1}{c}{} & \multicolumn{1}{c}{} & \multicolumn{1}{c}{$\underset{\left[ 0.998;1.000\right] }{21\text{ Aug}}$} & \multicolumn{1}{c}{} &  &  & 
&  & \multicolumn{1}{c}{} \\ 
&  &  &  &  &  &  &  &  &  &  &  &  &  &  \\ \hline\hline
\end{tabular}
}
\par
\begin{tablenotes}
      \tiny
            \item All series end at 30 January 2021. We use the logs of the original data (plus one, given that, in some days, hospitalisations are equal to zero): no further transformations are used.
            \item All changepoints have been detected by all R\'{e}nyi-type tests - no discrepancies were noted. Detected changepoints, and their estimated date, are presented in \textit{chronological} order; breakdates have been estimated as the points in time where the majority of tests identifies a changepoint. Whilst details are available upon request, we note that breaks were detected with this order (from the first to be detected to the last one): breaks in August; breaks in April and January; breaks in October-November; breaks in December.
            \item For each changepoint, we report in square brackets, for reference, the left and right WLS estimates of $\beta_0$.
            
\end{tablenotes}
\end{table*}

The results in Table \ref{tab:TabCovid} suggest that, with the exception of
Wales, there were multiple breaks in all series considered\footnote{%
Figure \ref{fig:FigCovid} contains the same information, albeit limited to
the four UK nations only to save space}; we note that Wales is an outlier as
regards hospital admissions, because these are counted in a different way
than the rest of the UK\footnote{%
Specifically, Wales reports also \textit{suspected} Covid-19 cases, whereas
all the other nations only report \textit{confirmed} cases; see
https://www.cebm.net/covid-19/the-flaw-in-the-reporting-of-welsh-data-on-covid-hospital-admissions/%
}.

\begin{figure}[!b]
\caption{Daily Covid-19 hospitalisations for the four UK nations }
\label{fig:FigCovid}\centering
\hspace{-2.5cm} 
\begin{minipage}{0.4\textwidth}
\centering
    \includegraphics[scale=0.4]{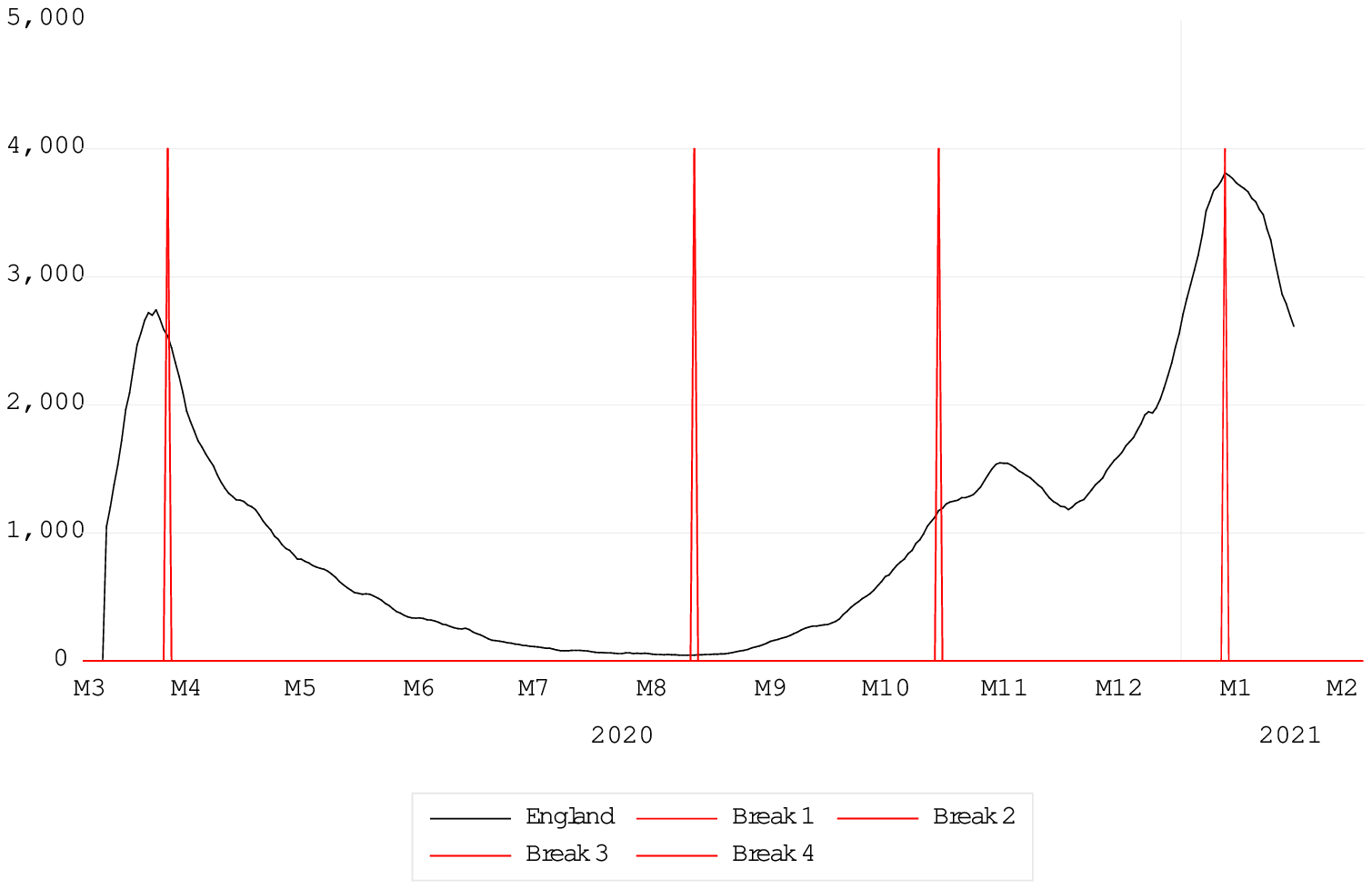}
    \label{fig:t1}
\end{minipage}%
\begin{minipage}{0.4\textwidth}
\centering
   \includegraphics[scale=0.4]{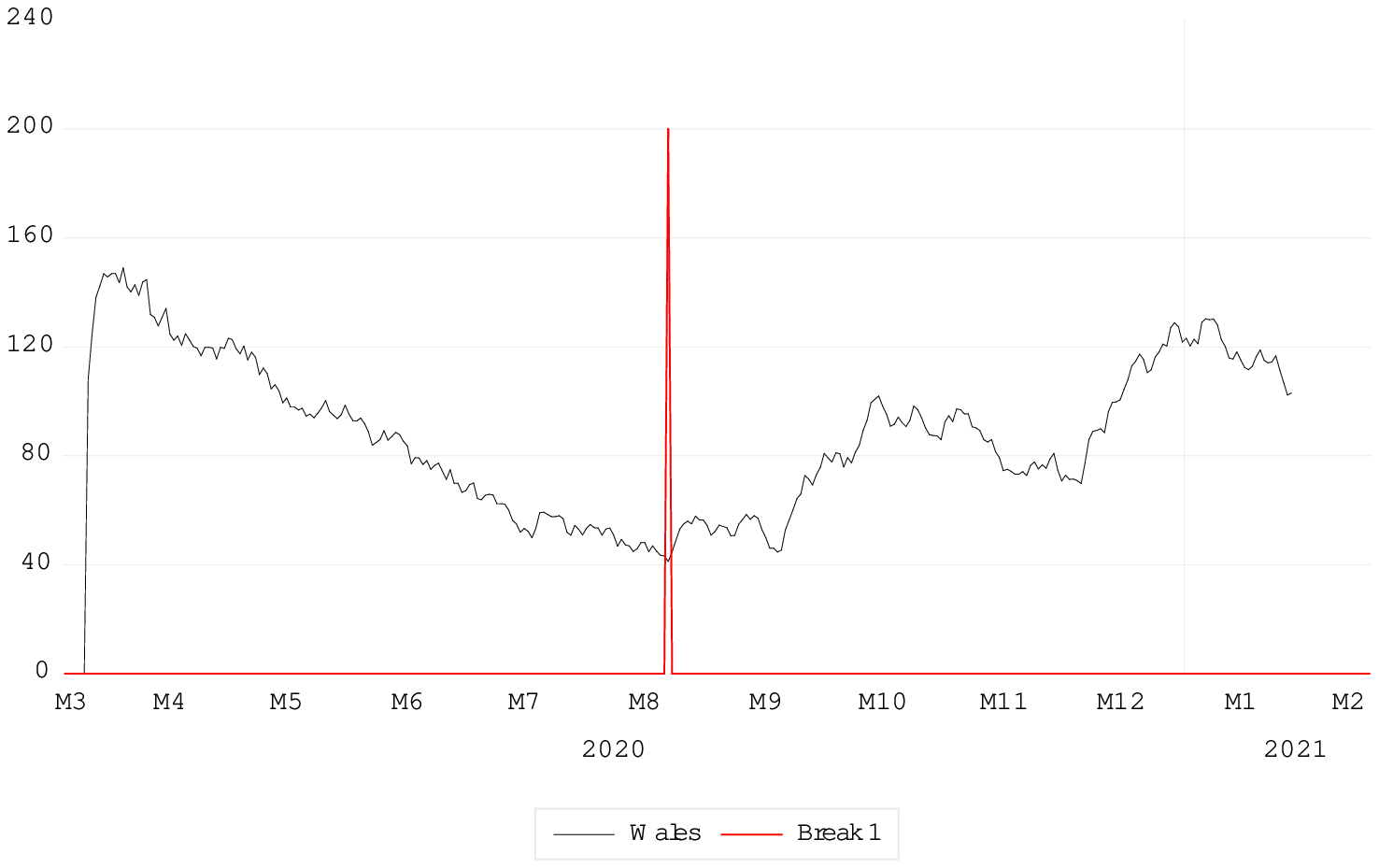}
    \label{fig:t2}
\end{minipage} \\[0.25cm]
\par
\hspace{-2.5cm} 
\begin{minipage}{0.4\textwidth}
\centering
    \includegraphics[scale=0.4]{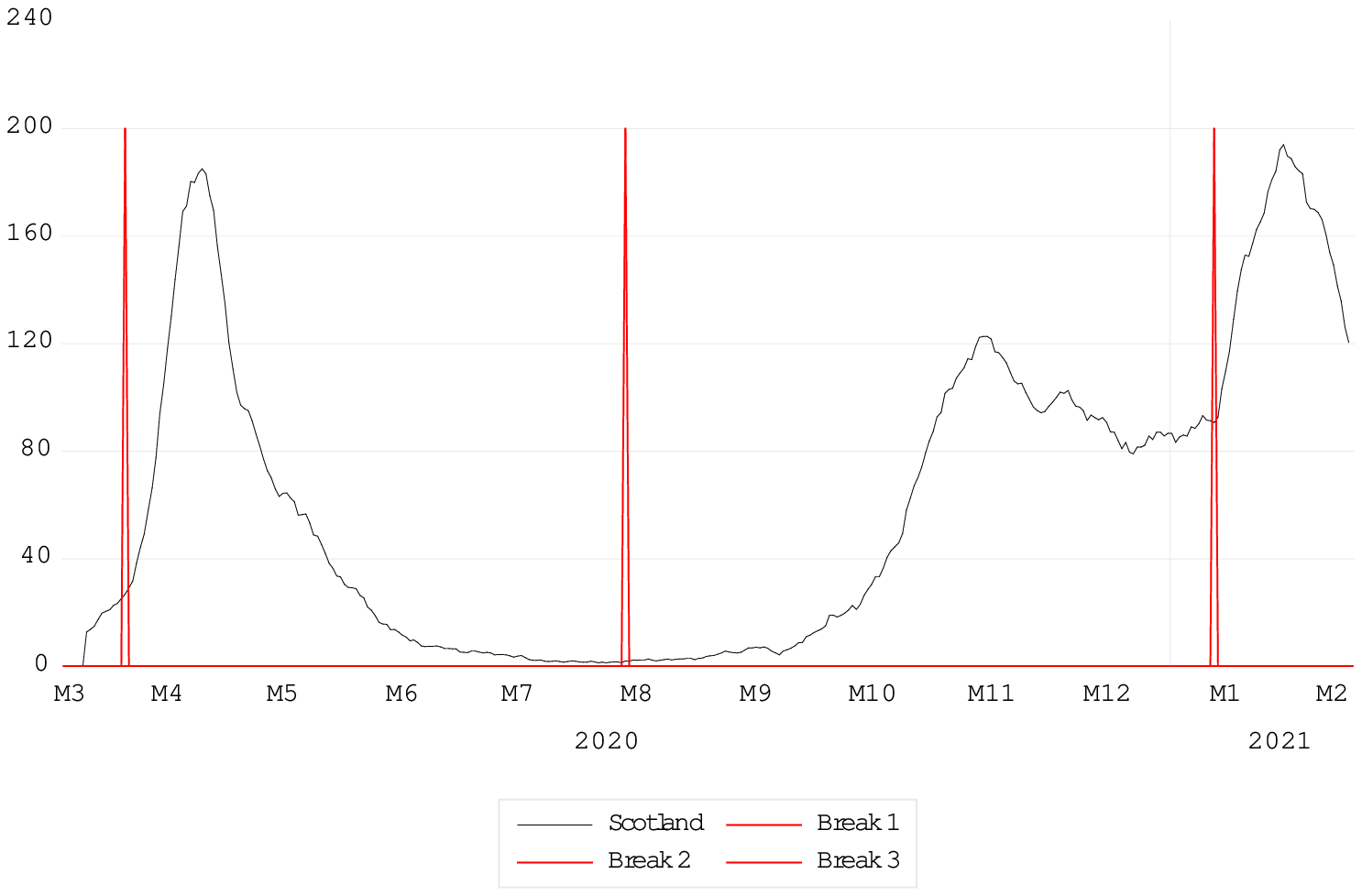}
    \label{fig:t3}
\end{minipage}%
\begin{minipage}{0.4\textwidth}
\centering
   \includegraphics[scale=0.4]{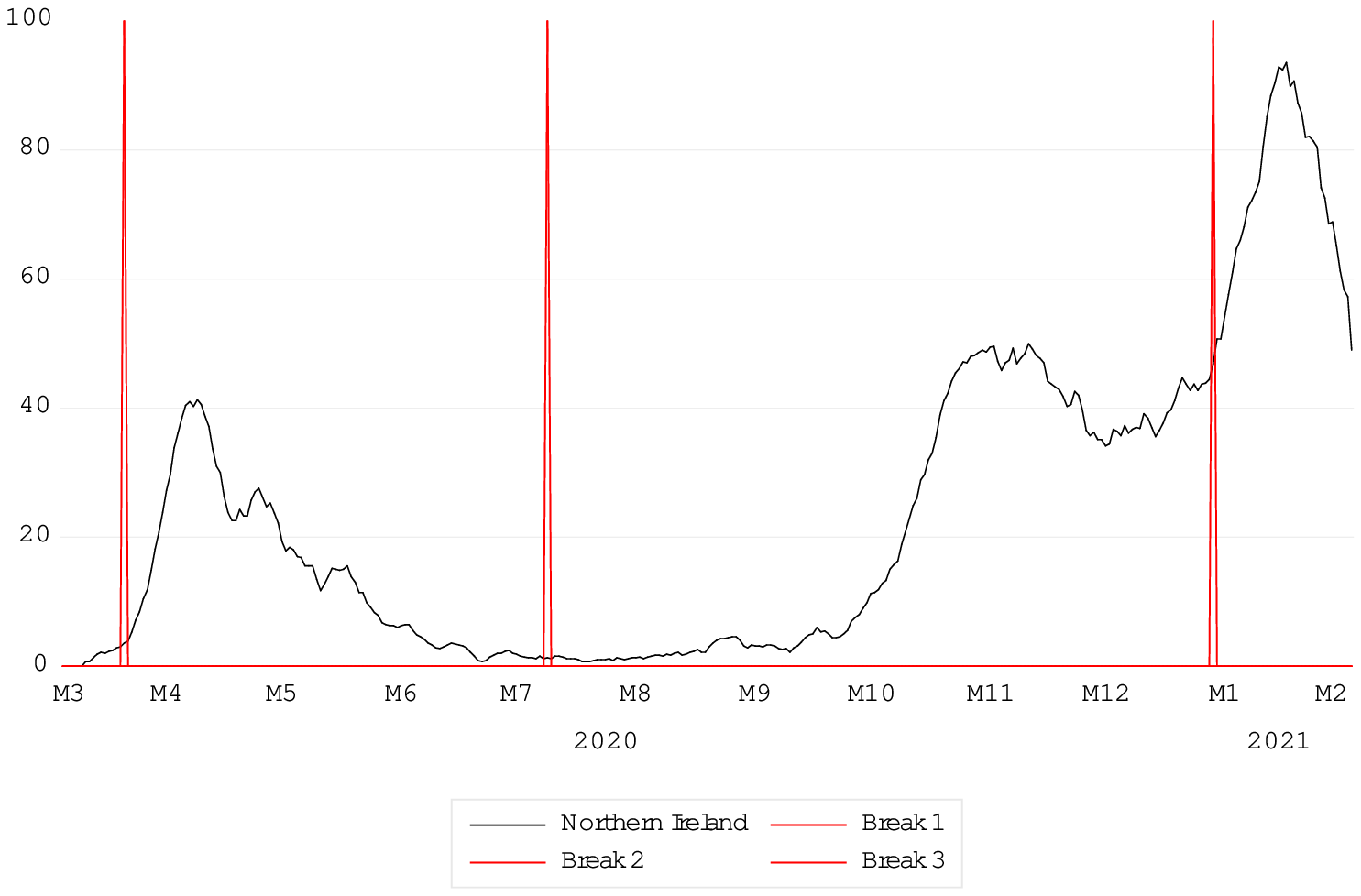}
    \label{fig:t4}
\end{minipage} \\[0.25cm]
\par
\end{figure}

Some breaks occur closely to the sample endpoints, highlighting the
importance of using R\'{e}nyi statistics. Also, all changepoints indicate a
transition of the autoregressive coefficient $\beta _{0}$ around unity.
Differences between pre- and post-break values of $\beta _{0}$ are small,
but sufficient to trigger, or quench, an outbreak - on account of the Monte
Carlo evidence contained in Figures \ref{fig:bubb1}-\ref{fig:bubb2}, we
would not expect spurious detection of breaks when these are absent.

Considering first regions of England, all of these experience a break in
early April as a consequence of the first national lockdown, which started
on March 23rd, 2020, but was preceded by growing concerns, and closures in
the education and hospitality sectors, the week before. Similarly, all
series have a subsequent change (with $\beta _{0}$ exceeding unity after the
breaks) in late August - one exception is London, where the change occurred
in early August. These breaks indicate the beginning of the
\textquotedblleft second wave\textquotedblright\ in the UK, which has been
ascribed (also) to an increase in travelling during the holiday season and
which was officially acknowledge by the PM\ on September 18th, 2020. The
breaks in autumn, where present, can be explained as the effect of the local
and national lockdowns which were implemented at the end of October, and of
the easing of restrictions in early December. Finally, all series have a
change towards stationarity around mid-January, which again can be explained
as the effect of the national lockdown announced on January 4th, 2021, and
of the growing concerns about a third wave voiced before and during the
Christmas holidays.

The same picture applies to England as a whole. Conversely, the other UK\
nations experienced slightly different patterns, likely as a consequence of
different policies implemented by local governments. With the exception of
Wales, which seems to have only one break (but note the caveat about Welsh
data mentioned above), Scotland and Northern Ireland are essentially aligned
with the results for England in terms of the effects of the first lockdown,
the summer holiday, and the third lockdown.

\section{Discussion and conclusions\label{conclusions}}

In this paper, we study changepoint detection in the deterministic part of
the autoregression coefficient of a Random Coefficient AutoRegressive model.
We use the CUSUM\ process based on comparing the left and right WLS
estimators. In order to be able to detect changepoints close to the sample
endpoints, we study \textit{weighted} statistics, where more weight is
placed at the sample endpoints. We consider a very wide class of weighing
functions, studying: \textit{(i)} weighing schemes based on the functions $%
w\left( t\right) $, which drift to zero, at sample endpoints, more slowly
than $\left( t\left( 1-t\right) \right) ^{1/2}$; \textit{(ii)} standardised
statistics, with weighting $\left( t\left( 1-t\right) \right) ^{1/2}$; and 
\textit{(iii)} R\'{e}nyi statistics, where heavier weights are used. The
last class of statistics is still not fully studied (with the notable
exception of \citealp{horvathmiller}), and looks extremely promising in the
detection of early or late breaks.

From a practical point of view, our tests can be applied in the presence of
heteroskedasticity (requiring no knowledge as to the actual presence, or the
form, thereof), and simulations show that our procedures work very well in
practice - indeed, they work even better than procedures based on asymptotic
critical values in the baseline case of homoskedasticity. Technically, all
our results are based on a (strong) approximation of the weighted maximum of
partial sums. We have developed these both in the stationary and in the
nonstationary case: in the latter case (nonstationary data), our
approximations are entirely novel, and yield the same results as in the
stationary case. This, too, has important practical implications: our tests
can be applied with no prior knowledge as to the stationarity or lack
thereof of the data. This robustness reinforces the case made by %
\citet{aue2011} for RCA models, where the authors advocate the use of these
models as an alternative to the AR(1) model, which does not possess the same
property and may require differencing, with the well-known problems attached
to this transformation (\citealp{leybourne1996}). Hence, our procedures lend
themselves to several interesting applications and extensions. As a leading
example, our theory could be used as the building block to develop
procedures for the sequential detection of bubbles starting or collapsing.
This, and other extensions, are under investigation by the authors.

{\footnotesize {\ 
\bibliographystyle{chicago}
\bibliography{LTbiblio}
} }\newpage

\appendix\setcounter{section}{0} \setcounter{subsection}{-1} %
\setcounter{equation}{0} \setcounter{lemma}{0} \renewcommand{\thelemma}{A.%
\arabic{lemma}} \renewcommand{\theequation}{A.\arabic{equation}}

\section{Further results\label{extend}}

We discuss the computation of asymptotic critical values, and report some of
them (Section \ref{crval}). Further, in Section \ref{mbreaks}, we study the
power versus more general alternatives that the AMOC one considered in
Section \ref{consistency}.

\subsection{Computation of critical values and further simulations under
homoskedasticity\label{crval}}

The asymptotic critical values fo the homoskedastic case are in Table \ref%
{tab:TableCrVal}.

\begin{table}[h!]
\caption{Asymptotic critical values}
\label{tab:TableCrVal}\centering
{\tiny \centering
}
\par
{\tiny 
\begin{tabular}{llllll}
\hline\hline
&  &  &  &  &  \\ 
&  & \multicolumn{4}{c}{Asymptotic critical values} \\ 
&  &  &  &  &  \\ 
&  & \multicolumn{1}{c}{$5\%$} & \multicolumn{1}{c}{} & \multicolumn{1}{c}{$%
10\%$} &  \\ 
$\kappa $ &  & \multicolumn{1}{c}{} & \multicolumn{1}{c}{} & 
\multicolumn{1}{c}{} &  \\ 
&  & \multicolumn{1}{c}{} & \multicolumn{1}{c}{} & \multicolumn{1}{c}{} & 
\\ 
$0.0$ &  & \multicolumn{1}{c}{$1.3700$} & \multicolumn{1}{c}{} & 
\multicolumn{1}{c}{$1.2238$} &  \\ 
$0.25$ &  & \multicolumn{1}{c}{$2.0142$} & \multicolumn{1}{c}{} & 
\multicolumn{1}{c}{$1.8106$} &  \\ 
$0.45$ &  & \multicolumn{1}{c}{$3.0320$} & \multicolumn{1}{c}{} & 
\multicolumn{1}{c}{$2.8988$} &  \\ 
$0.51$ &  & \multicolumn{1}{c}{$3.2944$} & \multicolumn{1}{c}{} & 
\multicolumn{1}{c}{$3.0722$} &  \\ 
$0.55$ &  & \multicolumn{1}{c}{$3.0144$} & \multicolumn{1}{c}{} & 
\multicolumn{1}{c}{$2.7992$} &  \\ 
$0.65$ &  & \multicolumn{1}{c}{$2.7394$} & \multicolumn{1}{c}{} & 
\multicolumn{1}{c}{$2.5050$} &  \\ 
$0.75$ &  & \multicolumn{1}{c}{$2.6396$} & \multicolumn{1}{c}{} & 
\multicolumn{1}{c}{$2.3860$} &  \\ 
$0.85$ &  & \multicolumn{1}{c}{$2.5475$} & \multicolumn{1}{c}{} & 
\multicolumn{1}{c}{$2.2996$} &  \\ 
$1.0$ &  & \multicolumn{1}{c}{$2.4948$} & \multicolumn{1}{c}{} & 
\multicolumn{1}{c}{$2.2365$} &  \\ 
&  &  &  &  &  \\ \hline\hline
\end{tabular}
}
\par
{\tiny 
\begin{tablenotes}
      \tiny
            \item Critical values at $5\%$ and $10\%$ nominal levels, for the limiting distributions in Theorems \ref{thrca1}.  
            
\end{tablenotes}
}
\end{table}

When $\kappa <\frac{1}{2}$, we have simulated the critical values using the
algorithm proposed in \citet{schwaar}; our results differ marginally from
the values reported in the original paper, but unreported experiments show
that our critical values yield less undersizement than the original ones, at
least in small samples.

In the case $\kappa >\frac{1}{2}$, we know that critical values are the same
for both the homoskedastic and the heteroskedastic case. In all experiments
(and in the computation of critical values), we use symmetric trimming -
i.e., $r_{1}(N)=r_{2}(N)$; in this case, it is easy to see that%
\begin{equation*}
P\left[ \max \left( \mathfrak{a}_{1}(\kappa ),\mathfrak{a}_{2}(\kappa
)\right) \leq c_{\alpha }\right] =\left( P\left[ \mathfrak{a}(\kappa )\leq
c_{\alpha }\right] \right) ^{2},
\end{equation*}%
and our critical values are based on Table 1 in \citet{lajos04}.

The most critical case is the case $\kappa =\frac{1}{2}$. For a given
nominal level $\alpha $, asymptotic critical values are given by $c_{\alpha
}=-\ln \left( -0.5\ln \left( 1-\alpha \right) \right) $, and Theorems \ref%
{thrca1}-\ref{thrca5} state that the limiting distribution of the max-type
statistics is the same in both the homoskedastic and the heteroskedastic
cases. Interestingly (and contrary to the heteroskedastic case), our
simulations show that in the homoskedastic case, asymptotic critical values
work well, with no under-rejection and good power.

\bigskip

To complement Section \ref{simulations}, we also report, as a benchmark,
some evidence on the size of our tests under homoskedasticity, using the
theory in Section \ref{homosk}. Empirical rejection frequencies under the
null are reported in Table \ref{tab:SizeHomo1}.\footnote{%
We do not discuss the power for brevity, also on account of the fact that,
in practice, one would apply the tests discussed in Section \ref{feasible}.
Results are anyway in line with the rest of the simulations, and available
upon request.}

\bigskip

\begin{table*}[h]
\caption{Empirical rejection frequencies, homoskedastic case using
asymptotic critical values}
\label{tab:SizeHomo1}\centering
\resizebox{\textwidth}{!}{

\begin{tabular}{llllllllllllllllllllll}
\hline\hline
&  &  &  &  &  &  &  &  &  &  &  &  &  &  &  &  &  &  &  &  &  \\ 
&  & $\beta $ & \multicolumn{4}{c}{$0.5$} & \multicolumn{1}{c}{} & 
\multicolumn{4}{c}{$0.75$} & \multicolumn{1}{c}{} & \multicolumn{4}{c}{$1$}
& \multicolumn{1}{c}{} & \multicolumn{4}{c}{$1.05$} \\ 
&  &  &  &  &  &  &  &  &  &  &  &  &  &  &  &  &  &  &  &  &  \\ 
& $N$ &  & $200$ & $400$ & $800$ & $1600$ &  & $200$ & $400$ & $800$ & $1600$
&  & $200$ & $400$ & $800$ & $1600$ &  & $200$ & $400$ & $800$ & $1600$ \\ 
$\kappa $ &  &  &  &  &  &  &  &  &  &  &  &  &  &  &  &  &  &  &  &  &  \\ 
&  &  &  &  &  &  &  &  &  &  &  &  &  &  &  &  &  &  &  &  &  \\ 
$0$ &  &  & $0.048$ & $0.044$ & $0.040$ & $0.037$ &  & $0.044$ & $0.053$ & $0.037$ & $0.039$ &  & $0.048$ & $0.052$ & $0.054$ & $0.045$ &  & $0.057$ & $0.045$ & $0.038$ & $0.037$ \\ 
$0.25$ &  &  & $0.060$ & $0.052$ & $0.045$ & $0.043$ &  & $0.061$ & $0.062$
& $0.046$ & $0.048$ &  & $0.069$ & $0.069$ & $0.058$ & $0.049$ &  & $0.070$
& $0.059$ & $0.042$ & $0.045$ \\ 
$0.45$ &  &  & $0.073$ & $0.059$ & $0.034$ & $0.030$ &  & $0.062$ & $0.071$
& $0.054$ & $0.039$ &  & $0.086$ & $0.090$ & $0.079$ & $0.067$ &  & $0.079$
& $0.081$ & $0.062$ & $0.060$ \\ 
$0.5$ &  &  & $0.060$ & $0.044$ & $0.023$ & $0.026$ &  & $0.047$ & $0.056$ & 
$0.035$ & $0.030$ &  & $0.062$ & $0.077$ & $0.057$ & $0.061$ &  & $0.058$ & $0.065$ & $0.051$ & $0.047$ \\ 
$0.51$ &  &  & $0.018$ & $0.019$ & $0.020$ & $0.028$ &  & $0.029$ & $0.026$
& $0.034$ & $0.034$ &  & $0.034$ & $0.028$ & $0.037$ & $0.042$ &  & $0.028$
& $0.041$ & $0.037$ & $0.037$ \\ 
$0.75$ &  &  & $0.045$ & $0.037$ & $0.038$ & $0.039$ &  & $0.058$ & $0.049$
& $0.051$ & $0.051$ &  & $0.070$ & $0.060$ & $0.050$ & $0.060$ &  & $0.067$
& $0.072$ & $0.058$ & $0.053$ \\ 
$0.85$ &  &  & $0.051$ & $0.035$ & $0.043$ & $0.041$ &  & $0.066$ & $0.052$
& $0.055$ & $0.051$ &  & $0.076$ & $0.061$ & $0.057$ & $0.052$ &  & $0.074$
& $0.076$ & $0.059$ & $0.057$ \\ 
$1$ &  &  & $0.053$ & $0.036$ & $0.044$ & $0.042$ &  & $0.066$ & $0.051$ & $0.053$ & $0.050$ &  & $0.077$ & $0.061$ & $0.053$ & $0.053$ &  & $0.075$ & $0.076$ & $0.059$ & $0.055$ \\ 
&  &  &  &  &  &  &  &  &  &  &  &  &  &  &  &  &  &  &  &  &  \\ 
\hline\hline
\end{tabular}
}
\par
\begin{tablenotes}
      \tiny
            \item The table contains the empirical rejection frequencies under the null of no changepoint for different sample sizes and different values of $\kappa$, in the homoskedastic case. 
            \item Asymptotic critical values have been used, based on the limiting distributions in Theorems \ref{thrca1} (see Table \ref{tab:TableCrVal}).
            
\end{tablenotes}
\end{table*}

\bigskip

Broadly speaking, all test statistics have the correct size for large
samples; as mentioned above, this also includes the Darling-Erd\H{o}s
statistic, based on asymptotic critical values, despite the notoriously slow
convergence to the extreme value distribution. When $N=200$ (i.e., in small
samples), tests are, occasionally, mildly oversized. This also happens when $%
\kappa =0.45$ (and $\kappa >0.5$ with $\beta _{0}=1.05$); conversely, the
test is grossly undersized almost under any circumstances when $\kappa =0.51$%
, although this seems to impove as $N$ increases. We note that, with the few
exceptions mentioned above, the value of $\beta _{0}$ does not affect the
empirical rejection frequencies in any obvious way (the case $\beta _{0}=1$
is marginally worse than the other ones for small $N$, but this vanishes as $%
N$ increases). \newline


\subsection{Consistency versus multiple breaks\label{mbreaks}}

As a complement to the results in Section \ref{consistency}, we briefly
discuss the power of our tests against the alternative of $R$ changes: 
\begin{equation*}
y_{i}=\left\{ 
\begin{array}{ll}
(\beta _{1}+\epsilon _{i,1})y_{i-1}+\epsilon _{i,2},\quad \mbox{if}%
\;\;\;1\leq k\leq k_{1},\vspace{0.3cm} &  \\ 
(\beta _{2}+\epsilon _{i,1})y_{i-1}+\epsilon _{i,2},\quad \mbox{if}%
\;\;\;k_{1}<k\leq k_{2},\vspace{0.3cm} &  \\ 
\vdots \vspace{0.3cm} &  \\ 
(\beta _{R+1}+\epsilon _{i,1})y_{i-1}+\epsilon _{i,2},\quad \mbox{if}%
\;\;\;k_{R}<k\leq k_{R+1}, & 
\end{array}%
\right. ,
\end{equation*}%
with $k_{0}=0$ and $k_{R+1}=N$. For the sake of simplicity we require

\begin{assumption}
\label{rcaaltco} It holds that $k_{\ell }=\lfloor N\tau _{\ell }\rfloor \;$%
where$\;0<\tau _{1}<\tau _{2}<\ldots <\tau _{R}<1$, with $\tau _{0}=0$ and $%
\tau _{R+1}=1$.
\end{assumption}

Extending the theory developed above, it can be shown that, for all $1\leq
\ell \leq R$ and $1\leq i\leq N$ whereby $-\infty \leq E\ln |\epsilon _{\ell
}+\epsilon _{i,1}|<\infty $, the following limits exist: 
\begin{equation}
\frac{1}{N}\sum_{i=k_{\ell -1}+1}^{k_{\ell }}\frac{y_{i-1}^{2}}{1+y_{i-1}^{2}%
}\;\overset{P}{\rightarrow }\;\mathfrak{h}_{\ell }>0,\quad 1\leq \ell \leq
R+1.  \label{rcaaltco1}
\end{equation}%
Elementary arguments give that \eqref{rcaaltco1} implies 
\begin{equation*}
\widehat{\beta }_{k_{\ell },1}\;\overset{P}{\rightarrow }\;\beta _{\ell
,1}=\sum_{i=1}^{\ell }\beta _{i}\mathfrak{h}_{i}\left( \sum_{i=1}^{\ell }%
\mathfrak{h}_{i}\right) ^{-1},1\leq \ell \leq R+1
\end{equation*}%
and 
\begin{equation*}
\widehat{\beta }_{k_{\ell },2}\;\overset{P}{\rightarrow }\;\beta _{\ell
,2}=\sum_{i=\ell +1}^{R+1}\beta _{i}\mathfrak{h}_{i}\left( \sum_{i=\ell
+1}^{R+1}\mathfrak{h}_{i}\right) ^{-1},1\leq \ell \leq R+1.
\end{equation*}%
Let 
\begin{equation*}
\Delta _{N}\left( R\right) =\max_{1\leq \ell \leq R}|{\beta }_{\ell ,1}-{%
\beta }_{\ell ,2}|.
\end{equation*}

\begin{corollary}
\label{power}We assume that $H_{0}$ of (\ref{rcanu}) holds. Under the same
Assumptions as Theorem \ref{amoc} and Assumption \ref{rcaaltco}, if, as $%
N\rightarrow \infty $%
\begin{equation*}
N^{1/2}\Delta _{N}\left( R\right) \rightarrow \infty ,
\end{equation*}%
then it holds that%
\begin{equation*}
\sup_{0<t<1}\frac{\left\vert \overline{Q}_{N}(t)\right\vert }{w(t)}\;\overset%
{P}{\rightarrow }\;\infty .
\end{equation*}%
If 
\begin{equation*}
\left( \frac{N}{\ln \ln N}\right) ^{1/2}\Delta _{N}\left( R\right)
\rightarrow \;\infty ,
\end{equation*}%
then 
\begin{equation*}
a(\ln N)\sup_{0<t<1}\frac{\left\vert \overline{Q}_{N}(t)\right\vert }{%
\widehat{\mathfrak{g}}^{1/2}\left( t,t\right) }-b(\ln N)\overset{P}{%
\rightarrow }\infty .
\end{equation*}%
If%
\begin{equation*}
\left( \frac{r_{N}}{N}\right) ^{\kappa -1/2}N^{1/2}\Delta _{N}\left(
R\right) \left( \frac{k^{\ast }}{N}\left( \frac{N-k^{\ast }}{N}\right)
\right) ^{1-\kappa }\rightarrow \infty ,
\end{equation*}%
then%
\begin{equation*}
\left( \frac{r_{N}}{N}\right) ^{\kappa -1/2}\sup_{t_{1}<t<t_{2}}\frac{%
(t(1-t))^{-\kappa +1/2}}{\widehat{\mathfrak{g}}^{1/2}\left( t,t\right) }%
\left\vert \overline{Q}_{N}(t)\right\vert \overset{P}{\rightarrow }\infty .
\end{equation*}
\end{corollary}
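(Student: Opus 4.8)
The plan is to reduce this corollary to the single-break argument behind Theorem \ref{amoc} by exploiting the fact that all three test functionals are suprema over $t$: evaluating the statistic at one judiciously chosen time point furnishes a lower bound, and it suffices to show that this lower bound already diverges. First I would let $\ell^{*}$ be an index attaining the maximum in $\Delta_{N}(R)=\max_{1\le\ell\le R}|\beta_{\ell,1}-\beta_{\ell,2}|$, and set $t^{\dagger}$ to be the value of $t$ for which $\lfloor(N+1)t\rfloor=k_{\ell^{*}}$, so that $t^{\dagger}\to\tau_{\ell^{*}}\in(0,1)$. Since $\tau_{\ell^{*}}$ is a fixed interior point, $t^{\dagger}$ is eventually bounded away from both endpoints, lies in the nonzero branch of $\overline{Q}_{N}$, and lies in the trimmed interval $(r_{N}/N,1-r_{N}/N)$ once $N$ is large, because $r_{N}/N\to0$ under Assumption \ref{thrcaas11}.

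Next I would assemble the ingredients already available at $t^{\dagger}$. By the convergence \eqref{rcaaltco1} and the two displays following it, $\widehat{\beta}_{k_{\ell^{*}},1}-\widehat{\beta}_{k_{\ell^{*}},2}\overset{P}{\rightarrow}\beta_{\ell^{*},1}-\beta_{\ell^{*},2}$, whose modulus is exactly $\Delta_{N}(R)$; and by the arguments underlying Corollaries \ref{rcaco1} and \ref{cnlalpha}, the normalisers converge at this interior point, $\widehat{\mathfrak{c}}_{N,1}(t^{\dagger})\widehat{\mathfrak{c}}_{N,2}(t^{\dagger})\overset{P}{\rightarrow}\mathfrak{c}_{1}(t^{\dagger})\mathfrak{c}_{2}(t^{\dagger})>0$ and $\widehat{\mathfrak{g}}(t^{\dagger},t^{\dagger})\overset{P}{\rightarrow}\mathfrak{g}(t^{\dagger},t^{\dagger})>0$, the positivity following from interiority together with Assumption \ref{rcaas1}. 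Combining these yields, with probability tending to one and for some constant $c_{0}>0$,
\[
|\overline{Q}_{N}(t^{\dagger})|=N^{1/2}\widehat{\mathfrak{c}}_{N,1}(t^{\dagger})\widehat{\mathfrak{c}}_{N,2}(t^{\dagger})\,|\widehat{\beta}_{k_{\ell^{*}},1}-\widehat{\beta}_{k_{\ell^{*}},2}|\geq c_{0}\,N^{1/2}\Delta_{N}(R).
\]

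It then remains to feed this bound into each functional and match the stated rates. For the first assertion, $w(t^{\dagger})$ is bounded above at the fixed interior point, so $\sup_{t}|\overline{Q}_{N}(t)|/w(t)\geq c_{1}N^{1/2}\Delta_{N}(R)$, diverging under $N^{1/2}\Delta_{N}(R)\to\infty$. For the second, using $a(\ln N)=(2\ln\ln N)^{1/2}$ and $b(\ln N)\sim2\ln\ln N$, the bound gives a lower estimate of order $c_{2}(\ln\ln N)^{1/2}N^{1/2}\Delta_{N}(R)-O(\ln\ln N)$, which tends to $+\infty$ precisely when $(N/\ln\ln N)^{1/2}\Delta_{N}(R)\to\infty$. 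For the third, the factor $(t^{\dagger}(1-t^{\dagger}))^{-\kappa+1/2}/\widehat{\mathfrak{g}}^{1/2}(t^{\dagger},t^{\dagger})$ converges to a positive constant, so the functional is at least $c_{3}(r_{N}/N)^{\kappa-1/2}N^{1/2}\Delta_{N}(R)$; since the dominant break is interior, the location factor in \eqref{restr-3} is bounded away from $0$ and $\infty$, and the stated condition reduces to exactly this divergence.

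The one genuinely non-mechanical point, and where care is needed, is the reduction itself: one must confirm that the dominant-break evaluation captures a nonnegligible signal, i.e.\ that $\Delta_{N}(R)$ is the correct \emph{effective} break size. This is why the corollary is phrased through the contrasts $\beta_{\ell,1}-\beta_{\ell,2}$ rather than the raw jumps: with several breaks the left/right WLS limits are weighted averages of the segment parameters, so the raw jumps can partially cancel and $\Delta_{N}(R)$ may be strictly smaller than $\max_{\ell}|\beta_{\ell+1}-\beta_{\ell}|$, and can even vanish in degenerate configurations. Once $\Delta_{N}(R)$ is identified as the signal, divergence is driven entirely by the single favorable point $t^{\dagger}$, so no uniform control over $t$ and no further limiting-distribution analysis are required beyond the convergences of the normalisers already established.
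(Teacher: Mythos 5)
Your proposal is correct and follows essentially the same route as the paper, whose entire proof of this corollary is the remark that it ``follows from the same arguments as that of Theorem \ref{amoc}'': evaluate the statistic at the break point with the largest contrast $|\beta_{\ell,1}-\beta_{\ell,2}|$, invoke the probability limits of the left/right WLS estimators from \eqref{rcaaltco1} and the convergence of the normalisers $\widehat{\mathfrak{c}}_{N,1}$, $\widehat{\mathfrak{c}}_{N,2}$, $\widehat{\mathfrak{g}}$, and match rates for each of the three functionals. Your closing observation that the relevant signal is $\Delta_N(R)$ (the contrast of weighted-average limits, which can be smaller than the largest raw jump due to cancellation) is exactly the point the paper makes when it says consistency ``depends on the largest break only'' in this averaged sense.
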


Corollary \ref{power} states that our test has power even in the presence of
many breaks, and that the consistency (or lack thereof) depends on the
largest break only, irrespective of the magnitude and number of all the
other ones. The proof follows from the same arguments as that of Theorem \ref%
{amoc}.

\clearpage\appendix\setcounter{section}{1} \setcounter{subsection}{-1} %
\setcounter{equation}{0} \setcounter{lemma}{0} \renewcommand{\thelemma}{B.%
\arabic{lemma}} \renewcommand{\theequation}{B.\arabic{equation}}

\section{Technical lemmas\label{lemmas}}

We begin by stating some preliminary facts.

If $H_{0}$ holds, then 
\begin{align}
\widehat{\beta }_{k,1}-\widehat{\beta }_{k,2} =&\left( \sum_{i=2}^{k}\frac{%
y_{i-1}^{2}}{1+y_{i-1}^{2}}\right) ^{-1}\left( \sum_{i=2}^{k}\frac{%
y_{i-1}^{2}\epsilon _{i,1}+y_{i-1}\epsilon _{i,2}}{1+y_{i-1}^{2}}\right)
\label{rcadeco1} \\
&-\left( \sum_{i=k+1}^{N}\frac{y_{i-1}^{2}}{1+y_{i-1}^{2}}\right)
^{-1}\left( \sum_{i=k+1}^{N}\frac{y_{i-1}^{2}\epsilon _{i,1}+y_{i-1}\epsilon
_{i,2}}{1+y_{i-1}^{2}}\right) .  \notag
\end{align}%
Under the null hypothesis the recursion is 
\begin{equation}
y_{i}=\rho _{i}y_{i-1}+\epsilon _{i,2},\quad 1\leq i<\infty ,  \label{rcadef}
\end{equation}%
where $\rho _{i}=\beta _{0}+\epsilon _{i,1}.$ We can solve the recursion in (%
\ref{rcadef}) explicitly 
\begin{equation*}
y_{i}=\sum_{\ell =1}^{i}\epsilon _{\ell ,2}\prod_{j=\ell +1}^{i}\rho
_{j}+y_{0}\prod_{j=1}^{i}\rho _{j}=\sum_{\ell =0}^{i-1}\epsilon _{i-\ell
,2}\prod_{j=1}^{\ell }\rho _{i-j+1}+y_{0}\prod_{j=1}^{i}\rho _{j}.
\end{equation*}%
If it holds that 
\begin{equation}
-\infty \leq E\ln |\rho _{0}|=-\overline{\kappa },\quad \mbox{where}\;\;%
\overline{\kappa }>0,  \label{rca-1}
\end{equation}%
then the unique anticipative solution of (\ref{rcasta}) is (see %
\citealp{aue2006})

\begin{equation}
\overline{y}_{i}=\sum_{\ell =0}^{\infty }\epsilon _{i-\ell
,2}\prod_{j=1}^{\ell }\rho _{i-j+1}.  \label{rcastsol}
\end{equation}%
We note that 
\begin{equation}
\overline{y}_{i}=\sum_{\ell =0}^{i-1}\epsilon _{i-\ell ,2}\prod_{j=1}^{\ell
}\rho _{i-j+1}+\overline{y}_{0}\prod_{j=1}^{i}\rho _{j}.  \label{rcastso2}
\end{equation}

We are now ready to state our technical lemmas. The first one states that we
can replace $y_{i}$ with $\bar{y}_{i}$ in the sums in (\ref{rcadeco1}), and
the difference will be small.

\begin{lemma}
\label{rcale1} If $H_{0}$ of \eqref{rcanu} and Assumption \ref{rcaas} are
satisfied, and (\ref{rca-1}) holds, then 
\begin{equation}
\sum_{i=2}^{\infty }\left\vert \frac{y_{i-1}^{2}\epsilon _{i,1}}{%
1+y_{i-1}^{2}}-\frac{\overline{y}_{i-1}^{2}\epsilon _{i,1}}{1+\overline{y}%
_{i-1}^{2}}\right\vert <\infty \quad \mbox{a.s.},  \label{rcale1-1}
\end{equation}%
\begin{equation}
\sum_{i=2}^{\infty }\left\vert \frac{y_{i-1}\epsilon _{i,2}}{1+y_{i-1}^{2}}-%
\frac{\overline{y}_{i-1}\epsilon _{i,2}}{1+\overline{y}_{i-1}^{2}}%
\right\vert <\infty \quad \mbox{a.s.},  \label{rcale1-2}
\end{equation}%
and 
\begin{equation}
\sum_{i=2}^{\infty }\left\vert \frac{y_{i-1}^{2}}{1+y_{i-1}^{2}}-\frac{%
\overline{y}_{i-1}^{2}}{1+\overline{y}_{i-1}^{2}}\right\vert <\infty \quad %
\mbox{a.s.}  \label{rcale1-3}
\end{equation}
\end{lemma}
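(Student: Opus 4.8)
The plan is to reduce everything to a single random product. Subtracting the explicit solution for $y_i$ displayed just above \eqref{rcastso2} from the representation \eqref{rcastso2} for $\overline{y}_i$, the two transient sums $\sum_{\ell=0}^{i-1}\epsilon_{i-\ell,2}\prod_{j=1}^{\ell}\rho_{i-j+1}$ coincide and cancel, leaving the clean identity
\begin{equation*}
y_i-\overline{y}_i=(y_0-\overline{y}_0)\prod_{j=1}^{i}\rho_j,\qquad i\ge 1.
\end{equation*}
Thus the entire discrepancy between the actual and the stationary sequence is carried by $\prod_{j=1}^{i}\rho_j$, and the whole lemma will follow once this product is shown to decay fast enough, almost surely.

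First I would establish exponential decay of $D_i:=y_i-\overline{y}_i$. The variables $\{\ln|\rho_j|\}$ are i.i.d., and $E(\ln|\rho_0|)^+\le E|\rho_0|<\infty$ under Assumption \ref{rcaas} (since $E|\epsilon_{0,1}|<\infty$), so the strong law applies in the extended sense and gives $i^{-1}\sum_{j=1}^{i}\ln|\rho_j|\to E\ln|\rho_0|=-\overline{\kappa}$ a.s., with $\overline{\kappa}>0$ by \eqref{rca-1}. Hence, fixing any $0<\delta<\overline{\kappa}$, there is a.s.\ a finite random $i_0$ such that $\bigl|\prod_{j=1}^{i}\rho_j\bigr|\le e^{-(\overline{\kappa}-\delta)i}$ for all $i\ge i_0$, and therefore $|D_i|\le|y_0-\overline{y}_0|\,e^{-(\overline{\kappa}-\delta)i}$ eventually. (When $\overline{\kappa}=\infty$ the decay is only faster, so this bound holds with any fixed rate.)

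Next I would bound each summand by $|D_{i-1}|$ via elementary inequalities for $f(x)=x^2/(1+x^2)$ and $g(x)=x/(1+x^2)$. Using $|x|/(1+x^2)\le\frac12$, a direct computation yields
\begin{equation*}
|f(y)-f(\overline{y})|=\frac{|y-\overline{y}|\,|y+\overline{y}|}{(1+y^2)(1+\overline{y}^2)}\le|y-\overline{y}|,\qquad
|g(y)-g(\overline{y})|=\frac{|y-\overline{y}|\,|1-y\overline{y}|}{(1+y^2)(1+\overline{y}^2)}\le\tfrac{5}{4}|y-\overline{y}|.
\end{equation*}
Taking $y=y_{i-1}$, $\overline{y}=\overline{y}_{i-1}$ bounds the summand of \eqref{rcale1-3} by $|D_{i-1}|$, that of \eqref{rcale1-1} by $|\epsilon_{i,1}|\,|D_{i-1}|$, and that of \eqref{rcale1-2} by $\tfrac{5}{4}|\epsilon_{i,2}|\,|D_{i-1}|$.

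Finally I would assemble the three bounds. Claim \eqref{rcale1-3} is immediate, since $\sum_i|D_{i-1}|\le|y_0-\overline{y}_0|\sum_i e^{-(\overline{\kappa}-\delta)(i-1)}<\infty$ a.s. For \eqref{rcale1-1} and \eqref{rcale1-2} the only delicate point—and the main (if mild) obstacle—is the interaction of the factor $|\epsilon_{i,k}|$ with the exponentially small $|D_{i-1}|$, since a priori $|\epsilon_{i,k}|$ is unbounded. This is controlled by a growth bound: as $E|\epsilon_{i,k}|<\infty$, Markov's inequality gives $\sum_i P\{|\epsilon_{i,k}|>c^i\}\le E|\epsilon_{1,k}|\sum_i c^{-i}<\infty$ for any $c>1$, so by the (easy half of the) Borel--Cantelli lemma $|\epsilon_{i,k}|\le c^i$ holds eventually, a.s. Choosing $1<c<e^{\overline{\kappa}-\delta}$ then gives $|\epsilon_{i,k}|\,|D_{i-1}|\le|y_0-\overline{y}_0|\,c^i e^{-(\overline{\kappa}-\delta)(i-1)}$ eventually, which is summable because $c\,e^{-(\overline{\kappa}-\delta)}<1$. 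This establishes \eqref{rcale1-1} and \eqref{rcale1-2} and completes the argument.
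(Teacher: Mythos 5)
Your proof is correct, but it takes a genuinely different route from the paper's. Both arguments start from the same cancellation identity $y_i-\overline{y}_i=(y_0-\overline{y}_0)\prod_{j=1}^{i}\rho_j$, but the paper controls this difference in the mean rather than pathwise: it invokes Lemma 2 of \citet{aue2006} to obtain $\nu_1>0$ with $E|e^{\overline{\kappa}/2}\rho_0|^{\nu_1}<1$ and a fractional moment of $\overline{y}_0$, bounds each summand by $2|\epsilon_i|\,|y_{i-1}+\overline{y}_{i-1}|\,|y_{i-1}-\overline{y}_{i-1}|$, and then combines subadditivity of $x\mapsto x^{\nu_3}$ (with $\nu_3=\min(\nu_1,\nu_2)/3$) and a three-factor H\"{o}lder inequality to show $E\bigl(\sum_i\cdots\bigr)^{\nu_3}\leq c_3\sum_i e^{-3i\overline{\kappa}\nu_3/2}<\infty$, whence the sum is finite a.s. You instead work entirely pathwise: the extended SLLN for $\ln|\rho_j|$ (valid since $E(\ln|\rho_0|)^{+}\leq E|\rho_0|<\infty$) yields a.s.\ geometric decay of the product; your sharper Lipschitz-type bounds, $|f(y)-f(\overline{y})|\leq|y-\overline{y}|$ and $|g(y)-g(\overline{y})|\leq\frac{5}{4}|y-\overline{y}|$, eliminate the factor $|y_{i-1}+\overline{y}_{i-1}|$ and hence any need for moments of $\overline{y}_0$; and Markov plus Borel--Cantelli tames the unbounded factors $|\epsilon_{i,k}|$. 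What each buys: your argument is more elementary and self-contained, needing only first moments of the innovations (no appeal to \citet{aue2006}, no fourth moments), and your parenthetical remark disposes of the boundary case $\overline{\kappa}=\infty$ cleanly; the paper's moment-based route is heavier but produces quantitative, exponentially decaying moment bounds of exactly the kind reused in Lemma \ref{rcale2} and in the segment-wise arguments for the heteroskedastic case, so it keeps one uniform toolkit throughout the Supplement, whereas an a.s.-finiteness statement alone would not suffice there.
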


\begin{proof}
Using (\ref{rca-1}), $E(\ln |\rho _{j}|+\bar{\kappa}/2)=-\bar{\kappa}/2<0$;
hence, by Lemma 2 in \citet{aue2006}, there are $\nu _{1}>0$ and $c_{1}<1$
such that $E|e^{\bar{\kappa}/2}\rho _{0}|^{\nu _{1}}=c_{1}<1$. Lemma 2 in %
\citet{aue2006} also entails that 
\begin{equation}
E|\overline{y}_{0}^{2}|^{\nu _{2}}<\infty \quad \mbox{and}\quad E|\overline{y%
}_{0}^{2}|^{\nu _{2}}\leq c_{2}  \label{lemma2aue}
\end{equation}%
with some $\nu _{2}>0$. Hence 
\begin{equation}
\prod_{j=1}^{i}|\rho _{j}|=\exp \left( \sum_{j=1}^{i}\ln |\rho _{j}|\right)
=e^{-i\overline{\kappa }/2}\exp \left( \sum_{j=1}^{i}(\ln |\rho _{j}|+%
\overline{\kappa }/2)\right)  \label{rcam1}
\end{equation}%
and 
\begin{equation}
E\left( \exp \left( \sum_{j=1}^{i}(\ln |\rho _{j}|+\overline{\kappa }%
/2)\right) \right) ^{\nu _{1}}\leq c_{2}\quad \mbox{for all}\;\;1\leq
i<\infty .  \label{rcam2}
\end{equation}%
Given that 
\begin{equation*}
\left\vert \frac{y_{i}^{2}}{1+y_{i}^{2}}-\frac{\overline{y}_{i}^{2}}{1+%
\overline{y}_{i}^{2}}\right\vert \leq 2|y_{i}-\overline{y}_{i}||y_{i}+%
\overline{y}_{i}|,
\end{equation*}%
we have 
\begin{equation*}
\sum_{i=2}^{\infty }|\epsilon _{i}|\left\vert \frac{y_{i}^{2}}{1+y_{i-1}^{2}}%
-\frac{\overline{y}_{i}^{2}}{1+\overline{y}_{i}^{2}}\right\vert \leq
2\sum_{i=2}^{\infty }|\epsilon _{i}||y_{i-1}+\overline{y}_{i-1}||y_{i-1}-%
\overline{y}_{i-1}|.
\end{equation*}%
We can assume that $0<\nu _{3}=\min (\nu _{1},\nu _{2})/3<1$ and we conclude 
\begin{align*}
E\left( \sum_{i=2}^{\infty }|\epsilon _{i}|\left\vert \frac{y_{i}^{2}}{%
1+y_{i-1}^{2}}-\frac{\overline{y}_{i}^{2}}{1+\overline{y}_{i-1}^{2}}%
\right\vert \right) ^{\nu _{3}}& \leq 2^{\nu _{3}}\sum_{i=2}^{\infty
}E\left( |\epsilon _{i}||y_{i-1}+\overline{y}_{i-1}||y_{i-1}-\overline{y}%
_{i-1}|\right) ^{\nu _{3}} \\
& \leq 2^{\nu _{3}}\sum_{i=2}^{\infty }\left( E|\epsilon _{i}|^{3\nu
_{3}}E|y_{i-1}+\overline{y}_{i-1}|^{3\nu _{3}}E|y_{i-1}-\overline{y}%
_{i-1}|^{3\nu _{3}}\right) ^{1/3} \\
& \leq c_{3}\sum_{i=2}^{\infty }e^{-3i\overline{\kappa }\nu _{3}/2},
\end{align*}%
completing the proof of (\ref{rcale1-1}). The same arguments give (\ref%
{rcale1-2}) and (\ref{rcale1-3}).
\end{proof}

Lemma \ref{rcale1} entails that we only need to work with the stationary
solution. Equation (\ref{rcastsol}) entails that there is a function $%
g\left( \cdot \right) :R^{\infty \times \infty }\rightarrow R$ such that 
\begin{equation}
\overline{y}_{i}=g(\mbox{\boldmath${\epsilon}$}_{i},\mbox{\boldmath${%
\epsilon}$}_{i-1},\ldots ),\quad \mbox{\boldmath${\epsilon}$}_{i}=(\epsilon
_{i,1},\epsilon _{i,2})^{\prime }.  \label{rcaberd1}
\end{equation}%
The representation in (\ref{rcaberd1}) means that $\{y_{i},-\infty <i<\infty
\}$ is a Bernoulli shift. Let $\{\mbox{\boldmath${\epsilon}$}_{i}^{\ast
},-\infty <i<\infty \}$ be independent copies of $\mbox{\boldmath${%
\epsilon}$}_{0}$ with $\mbox{\boldmath${\epsilon}$}_{i}^{\ast }\overset{%
\emph{D}}{=}\mbox{\boldmath${\epsilon}$}_{0}$, independent of $\{%
\mbox{\boldmath${\epsilon}$}_{i},-\infty <i<\infty \}$ and define the
construction 
\begin{equation}
y_{i,\ell }=g(\mbox{\boldmath${\epsilon}$}_{i},\mbox{\boldmath${\epsilon}$}%
_{i-1},\ldots ,\mbox{\boldmath${\epsilon}$}_{i-\ell -1},\mbox{\boldmath${%
\epsilon}$}_{i-\ell }^{\ast },\mbox{\boldmath${\epsilon}$}_{i-\ell -1}^{\ast
},\ldots ),\quad \ell \geq 1.  \label{shift}
\end{equation}

\begin{lemma}
\label{rcale2} If $H_{0}$ of \eqref{rcanu} and Assumption \ref{rcaas} are
satisfied, and (\ref{rca-1}) holds, then 
\begin{equation}
E\left\vert \frac{\epsilon _{i+1,1}\overline{y}_{i}^{2}}{1+\overline{y}%
_{i}^{2}}-\frac{\epsilon _{i+1,1}{y}_{i,\ell }^{2}}{1+{y}_{i,\ell }^{2}}%
\right\vert ^{4}\leq c\ell ^{-\alpha },  \label{rca-dr1}
\end{equation}%
\begin{equation}
E\left\vert \frac{\epsilon _{i+1,2}\overline{y}_{i}}{1+\overline{y}_{i}^{2}}-%
\frac{\epsilon _{i+1,2}{y}_{i,\ell }}{1+{y}_{i,\ell }^{2}}\right\vert
^{4}\leq c\ell ^{-\alpha },  \label{rca-dr2}
\end{equation}%
and 
\begin{equation}
E\left\vert \frac{\overline{y}_{i}^{2}}{1+\overline{y}_{i}^{2}}-\frac{{y}%
_{i,\ell }^{2}}{1+{y}_{i,\ell }^{2}}\right\vert ^{4}\leq c\ell ^{-\alpha },
\label{rca-dr3}
\end{equation}%
for all $\alpha >0$ with some $c>0$.
\end{lemma}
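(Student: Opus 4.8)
The plan is to reduce all three estimates to a single coupling bound on the two \emph{bounded} functionals and then to invoke the geometric decay of the products $\prod_{j}|\rho_{i-j+1}|$ already used in the proof of Lemma \ref{rcale1}. The first observation is that in each of \eqref{rca-dr1}--\eqref{rca-dr3} the prefactor ($\epsilon_{i+1,1}$, $\epsilon_{i+1,2}$, or none) carries the date $i+1$, whereas both $\overline{y}_i$ and $y_{i,\ell}$ are functions of innovations dated at most $i$ together with the independent copies $\{\boldsymbol{\epsilon}^{\ast}_j\}$; hence the prefactor is independent of the bracketed increment. Since $E|\epsilon_{i+1,1}|^{4}<\infty$ and $E|\epsilon_{i+1,2}|^{4}<\infty$ by Assumption \ref{rcaas}, factoring it out reduces everything to proving, with $f(x)=x^{2}/(1+x^{2})$ and $h(x)=x/(1+x^{2})$,
\begin{equation*}
E\left| f(\overline{y}_i)-f(y_{i,\ell})\right|^{4}\le c\,\ell^{-\alpha}
\qquad\text{and}\qquad
E\left| h(\overline{y}_i)-h(y_{i,\ell})\right|^{4}\le c\,\ell^{-\alpha}.
\end{equation*}

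Second, I would use that $f$ and $h$ are globally Lipschitz (both have derivative bounded by $1$) and take values in $[0,1]$ and $[-\tfrac12,\tfrac12]$ respectively, so that each increment is at most $1$ in absolute value. Consequently, for any $0<\nu_{3}\le 4$,
\begin{equation*}
\left| f(\overline{y}_i)-f(y_{i,\ell})\right|^{4}
\le\left| f(\overline{y}_i)-f(y_{i,\ell})\right|^{\nu_{3}}
\le\left|\overline{y}_i-y_{i,\ell}\right|^{\nu_{3}},
\end{equation*}
and likewise for $h$. This interpolation is the crux of the argument: because $\overline{y}_0$ has only a (possibly small) finite moment of order $\nu_{2}$ by \eqref{lemma2aue}, one cannot take a fourth moment of $\overline{y}_i-y_{i,\ell}$ directly, so boundedness is what permits descending to a small exponent $\nu_{3}\in(0,1)$ with $\nu_{3}\le\nu_{1}$ (e.g.\ $\nu_{3}=\min(\nu_{1},\nu_{2})/3$, as in the proof of Lemma \ref{rcale1}) at which the needed moments exist.

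Third, I would expand the coupling difference using the explicit series \eqref{rcastsol}--\eqref{rcastso2}. By the construction \eqref{shift} via the Bernoulli-shift map \eqref{rcaberd1}, $\overline{y}_i$ and $y_{i,\ell}$ share every innovation of lag smaller than $\ell$, so each summand of lag $m<\ell$ is reproduced identically and cancels, leaving only the tail
\begin{equation*}
\overline{y}_i-y_{i,\ell}
=\sum_{m\ge\ell}\left(\epsilon_{i-m,2}\prod_{j=1}^{m}\rho_{i-j+1}
-\epsilon^{\ast}_{i-m,2}\prod_{j=1}^{m}\widetilde{\rho}_{i-j+1}\right),
\end{equation*}
where $\widetilde{\rho}_{i-j+1}$ is built from the starred shock for the lags ($\ge\ell$) that are resampled in $y_{i,\ell}$. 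Since $\nu_{3}<1$, subadditivity gives $E|\overline{y}_i-y_{i,\ell}|^{\nu_{3}}\le\sum_{m\ge\ell}E|\cdots|^{\nu_{3}}$, and I would split each summand into its two pieces. In each piece the additive shock (lag $m$) is independent of the accompanying product (lags $0,\dots,m-1$) by Assumption \ref{rcaas}(i), so $E|\epsilon_{i-m,2}\prod_{j=1}^{m}\rho_{i-j+1}|^{\nu_{3}}=E|\epsilon_{0,2}|^{\nu_{3}}\,E|\prod_{j=1}^{m}\rho_{i-j+1}|^{\nu_{3}}$, and the starred piece has the same law. Arguing exactly as in \eqref{rcam1}--\eqref{rcam2} and applying Lyapunov's inequality to pass from exponent $\nu_{1}$ down to $\nu_{3}\le\nu_{1}$ yields $E|\prod_{j=1}^{m}\rho_{i-j+1}|^{\nu_{3}}\le c\,\tilde q^{\,m}$ with $\tilde q=e^{-\overline{\kappa}\nu_{3}/2}\in(0,1)$. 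Summing this geometric tail gives $E|\overline{y}_i-y_{i,\ell}|^{\nu_{3}}\le c\,\tilde q^{\,\ell}$, and since $\tilde q^{\,\ell}=o(\ell^{-\alpha})$ for every $\alpha>0$, all of \eqref{rca-dr1}--\eqref{rca-dr3} follow.

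The main obstacle is the bookkeeping in the third step: the replacement in \eqref{shift} perturbs the multiplicative factors $\rho_{i-j+1}$ and the additive shocks $\epsilon_{i-m,2}$ simultaneously, so one must check carefully that every lag-$(<\ell)$ contribution cancels and that each surviving summand carries at least $\ell$ factors of $\rho$ — this is exactly what produces the decay in $\ell$. The only genuine analytic subtlety is that higher moments of $\overline{y}_i$ need not be finite, which is precisely why the boundedness-plus-Lipschitz interpolation of the second step, rather than a naive fourth-moment estimate on $\overline{y}_i-y_{i,\ell}$, is indispensable.
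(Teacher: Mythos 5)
Your proof is correct, and it reaches the conclusion by a genuinely different device at the one point where the argument is delicate. Both proofs share the same skeleton: the prefactors $\epsilon_{i+1,1}$, $\epsilon_{i+1,2}$ are factored out by independence under Assumption \ref{rcaas}; the coupling difference reduces to the two tails $u_{\ell}$, $u_{\ell}^{\ast}$ once the common part $z_{\ell}$ cancels; and the decay in $\ell$ comes from the geometric bound on fractional moments of $\prod_{j}|\rho_{i-j+1}|$, exactly as in \eqref{rcam1}--\eqref{rcam2}. The difference is how one tames the fourth moment of a difference of bounded functionals of variables that possess only small fractional moments. The paper truncates: it introduces the event $U_{\ell}=\{|u_{\ell}|\le \ell^{-\beta},\,|u_{\ell}^{\ast}|\le \ell^{-\beta}\}$, proves the pointwise bound $c_{4}\ell^{-\beta}$ on $U_{\ell}$ via an algebraic expansion (which requires controlling $|z_{\ell}|/(1+\bar{y}_{0}^{2})$ on the good set, cf.\ \eqref{markov2}), and on the complement combines boundedness of the integrand with Markov's inequality applied to fractional moments, giving $P(U_{\ell}^{c})\le c\,e^{-\ell \nu _{3}}$; the two pieces yield $c\bigl(\ell^{-4\beta}+e^{-\ell \nu _{3}}\bigr)$, hence $\ell^{-\alpha}$ for every $\alpha$ since $\beta$ is free. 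You avoid truncation entirely: because $x^{2}/(1+x^{2})$ and $x/(1+x^{2})$ are bounded (ranges of length at most one) and $1$-Lipschitz ($\sup |f^{\prime}|=3\sqrt{3}/8$ and $\sup |h^{\prime}|=1$), the interpolation $|d|^{4}\le |d|^{\nu _{3}}\le |\bar{y}_{i}-y_{i,\ell}|^{\nu _{3}}$ reduces everything to a single fractional moment of the raw coupling error, which subadditivity (valid since $\nu _{3}<1$) plus the geometric moment bound control by $c\,\tilde{q}^{\,\ell}$. What each buys: your route is shorter, never needs to estimate anything involving $z_{\ell}$, and delivers an exponential rate, strictly stronger than the stated $c\ell^{-\alpha}$; the paper's truncation scheme is the more robust template, since it requires only boundedness of the functionals together with a local estimate on a good event, and would survive in situations where no global Lipschitz bound is available. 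Your bookkeeping of the mixed (shared/starred) factors in the surviving tail terms is handled correctly, and the independence needed to factorize $E|\epsilon_{i-m,2}\prod_{j}\tilde{\rho}_{i-j+1}|^{\nu_{3}}$ does follow from Assumption \ref{rcaas}(i), so there is no gap there.
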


\begin{proof}
We begin by showing (\ref{rca-dr3}). Using (\ref{rcastso2}), we have $\bar{y}%
_{0}=z_{\ell }+u_{\ell }$, and $y_{0,\ell }=z_{\ell }+u_{\ell }^{\ast }$,
where 
\begin{align*}
z_{\ell } =&\sum_{j=0}^{\ell -1}\epsilon _{-j,2}\prod_{k=1}^{j}\rho _{-k+1},
\\
u_{\ell } =&\overline{y}_{-\ell }\prod_{j=0}^{\ell -1}\rho _{-j}, \\
u_{\ell }^{\ast } =&\overline{y}_{-\ell }^{\ast }\prod_{j=0}^{\ell -1}\rho
_{-j}^{\ast },
\end{align*}%
with 
\begin{equation*}
\overline{y}_{-\ell }^{\ast }=g(\mbox{\boldmath${\epsilon}$}_{-\ell }^{\ast
},\mbox{\boldmath${\epsilon}$}_{-\ell -1}^{\ast },\cdots )\;\;\;\mbox{and}%
\;\;\;\rho _{-j}^{\ast }=\beta _{0}+\epsilon _{-j,1}^{\ast }.
\end{equation*}%
Consider now the set $U_{\ell }=\left\{ u_{\ell }:|u_{\ell }|\leq \ell
^{-\beta },u_{\ell }^{\ast }:|u_{\ell }^{\ast }|\leq \ell ^{-\beta }\right\} 
$, with $\beta >0$. It holds that%
\begin{align*}
& E\left\vert \frac{\overline{y}_{i}^{2}}{1+\overline{y}_{i}^{2}}-\frac{{y}%
_{i,\ell }^{2}}{1+{y}_{i,\ell }^{2}}\right\vert ^{4} \\
=& E\left( \left\vert \frac{\overline{y}_{i}^{2}}{1+\overline{y}_{i}^{2}}-%
\frac{{y}_{i,\ell }^{2}}{1+{y}_{i,\ell }^{2}}\right\vert ^{4}|\left( u_{\ell
},u_{\ell }^{\ast }\right) \in U_{\ell }\right) P\left( \left( u_{\ell
},u_{\ell }^{\ast }\right) \in U_{\ell }\right) \\
& +E\left( \left\vert \frac{\overline{y}_{i}^{2}}{1+\overline{y}_{i}^{2}}-%
\frac{{y}_{i,\ell }^{2}}{1+{y}_{i,\ell }^{2}}\right\vert ^{4}|\left( u_{\ell
},u_{\ell }^{\ast }\right) \notin U_{\ell }\right) P\left( \left( u_{\ell
},u_{\ell }^{\ast }\right) \notin U_{\ell }\right) \\
\leq & E\left( \left\vert \frac{\overline{y}_{i}^{2}}{1+\overline{y}_{i}^{2}}%
-\frac{{y}_{i,\ell }^{2}}{1+{y}_{i,\ell }^{2}}\right\vert ^{4}|\left(
u_{\ell },u_{\ell }^{\ast }\right) \in U_{\ell }\right) +P\left( \left(
u_{\ell },u_{\ell }^{\ast }\right) \notin U_{\ell }\right) =I+II.
\end{align*}%
Consider $II$ first. Using (\ref{lemma2aue}), (\ref{rcam1}) and (\ref{rcam2}%
) we get via Markov's inequality, 
\begin{equation}
P\{|u_{\ell }|>\ell ^{-\beta }\}\leq c_{1}\ell ^{\beta \nu _{1}}E|u_{\ell
}|^{\nu _{1}}e^{-\ell \nu _{2}}\leq c_{2}e^{-\ell \nu _{3}},  \label{markov1}
\end{equation}%
with some constants $\nu _{1}>0,\nu _{2}>0,\nu _{3}>0$ and $c_{1}$, $%
c_{2}=c_{2}(\beta )$, and similarly it can be shown that 
\begin{equation}
P\{|u_{\ell }^{\ast }|>\ell ^{-\beta }\}\leq c_{2}e^{-\ell \nu _{3}}.
\label{markov11}
\end{equation}%
Combining (\ref{markov1}) and (\ref{markov11}), it follows that 
\begin{equation}
II\leq c_{2}e^{-\ell \nu _{3}}.  \label{markov12}
\end{equation}%
Turning to $I$, elementary calculations give 
\begin{equation*}
\left\vert \frac{\bar{y}_{0}^{2}}{1+\bar{y}_{0}^{2}}-\frac{{y}_{0,\ell }^{2}%
}{1+{y}_{0,\ell }^{2}}\right\vert \leq \frac{4|z_{\ell }|(|u_{\ell
}|+|u_{\ell }^{\ast }|)}{1+\bar{y}_{0}^{2}}+\frac{2(u_{\ell }^{2}+(u_{\ell
}^{\ast })^{2})}{1+\bar{y}_{0}^{2}}.
\end{equation*}%
In (\ref{shift}), we can assume $\ell $ is large enough that $\ell ^{-\beta
}<1/8$ and therefore 
\begin{equation*}
\frac{|z_{\ell }|}{1+(z_{\ell }+u_{\ell })^{2}}\leq c_{3}.
\end{equation*}%
Therefore, on the set $U_{\ell }$, we have%
\begin{equation}
\left\vert \frac{\bar{y}_{i}^{2}}{1+\bar{y}_{i}^{2}}-\frac{{y}_{i,\ell }^{2}%
}{1+{y}_{i,\ell }^{2}}\right\vert \leq c_{4}\ell ^{-\beta }.  \label{markov2}
\end{equation}%
Putting together (\ref{markov1}), (\ref{markov11}) and (\ref{markov2}), (\ref%
{rca-dr3}) follows immediately. Also, noting that $\epsilon _{1}$ and $(\bar{%
y}_{0},y_{0,\ell })$ are independent, (\ref{rca-dr1}) also obtains (note
that the equation does not depend on $i$). Similar arguments give (\ref%
{rca-dr2}).
\end{proof}

\appendix\setcounter{section}{2} \setcounter{subsection}{2} %
\setcounter{equation}{0} \setcounter{lemma}{0} \renewcommand{\thelemma}{C.%
\arabic{lemma}} \renewcommand{\theequation}{C.\arabic{equation}}

\section{Proofs\label{proofs}}

\textit{Proof of Theorem \ref{thrca1}.} Let 
\begin{equation*}
a_{0}=E\frac{\overline{y}_{0}^{2}}{1+\overline{y}_{0}^{2}}.
\end{equation*}%
Combining equation (\ref{rca-dr3}) with the approximations in \citet{aue2014}
and \citet{berkesliuwu}, we can define a sequence of Wiener processes $%
\{W_{N,1}(x),x\geq 0\}$ such that 
\begin{equation}
\sup_{1\leq k\leq N}\frac{1}{k^{\zeta _{1}}}\left\vert \sum_{i=1}^{k}\left( 
\frac{\overline{y}_{i}^{2}}{1+\overline{y}_{i}^{2}}-a_{0}\right)
-c_{1}W_{N,1}\left( k\right) \right\vert =O_{P}(1),  \label{rca-ap1}
\end{equation}%
with some $c_{1}>0$ and $\zeta _{1}<1/2$. It follows from (\ref{rca-ap1})
and the Law of the Iterated Logarithm (LIL henceforth) that 
\begin{equation}
\max_{1\leq k\leq N}\frac{1}{k^{\zeta _{2}}}\left\vert \sum_{i=1}^{k}\left( 
\frac{\overline{y}_{i}^{2}}{1+\overline{y}_{i}^{2}}-a_{0}\right) \right\vert
=O_{P}(1),  \label{raclil}
\end{equation}%
for all $\zeta _{2}<1/2$. Therefore, using Taylor's expansion, we obtain
from (\ref{raclil}) that

\begin{equation*}
\max_{1\leq k\leq N}{k^{\zeta _{2}}}\left\vert \left( \frac{1}{k}%
\sum_{i=1}^{k}\frac{\overline{y}_{i}^{2}}{1+\overline{y}_{i}^{2}}\right)
^{-1}-\frac{1}{a_{0}}\right\vert =O_{P}(1).
\end{equation*}%
Since the proofs of the approximations in \citet{aue2014} and %
\citet{berkesliuwu} are based on the blocking technique, we can define
Wiener processes $\{W_{N,2}\left( x\right) ,x\geq 0\}$, independent of $%
W_{N,1}\left( x\right) $, such that 
\begin{equation}
\sup_{1\leq k<N}\frac{1}{(N-k)^{\zeta _{1}}}\left\vert
\sum_{i=k+1}^{N}\left( \frac{\overline{y}_{i}^{2}}{1+\overline{y}_{i}^{2}}%
-a_{0}\right) -c_{1}W_{N,2}\left( N-k\right) \right\vert =O_{P}(1)
\label{rca-ap2}
\end{equation}%
which implies 
\begin{equation*}
\max_{1\leq k<N}{(N-k)^{\zeta _{2}}}\left\vert \left( \frac{1}{N-k}%
\sum_{i=k+1}^{N}\frac{\overline{y}_{i}^{2}}{1+\overline{y}_{i}^{2}}\right)
^{-1}-\frac{1}{a_{0}}\right\vert =O_{P}(1).
\end{equation*}%
Using the decomposability of the Bernoulli shifts in \eqref{rca-dr2} and %
\eqref{rca-dr3} and the approximations of \citet{aue2014} and %
\citet{berkesliuwu}, we can define two independent Wiener processes $%
\{W_{N,3}(x),0\leq x\leq N/2\}$ and $\{W_{N,4}(x),0\leq x\leq N/2\}$ such
that 
\begin{equation}
\max_{1\leq x\leq N/2}\frac{1}{x^{\zeta _{3}}}\Biggl|\sum_{i=2}^{\lfloor
x\rfloor }\frac{\overline{y}_{i-1}^{2}\epsilon _{i,1}+\overline{y}%
_{i-1}\epsilon _{i,2}}{1+\overline{y}_{i-1}^{2}}-a_{0}\eta W_{N,3}(x)\Biggl|%
=O_{P}(1),  \label{kmt-1}
\end{equation}%
and 
\begin{equation}
\max_{N/2\leq x\leq N-1}\frac{1}{(N-x)^{\zeta _{3}}}\Biggl|\sum_{i=\lfloor
x\rfloor +1}^{N}\frac{\overline{y}_{i-1}^{2}\epsilon _{i,1}+\overline{y}%
_{i-1}\epsilon _{i,2}}{1+\overline{y}_{i-1}^{2}}-a_{0}\eta W_{N,4}(N-x)%
\Biggl|=O_{P}(1),  \label{kmt-2}
\end{equation}%
with some $\zeta _{3}<1/2$. Putting together we get 
\begin{equation*}
\max_{1\leq k\leq N}\frac{1}{k^{\zeta _{4}}}\left\vert \left( \frac{1}{k}%
\sum_{i=2}^{k}\frac{\overline{y}_{i-1}^{2}}{1+\overline{y}_{i-1}^{2}}\right)
^{-1}-\frac{1}{a_{0}}\right\vert \left\vert \sum_{i=2}^{k}\frac{\overline{y}%
_{i-1}^{2}\epsilon _{i,1}+\overline{y}_{i-1}\epsilon _{i,2}}{1+\overline{y}%
_{i-1}^{2}}\right\vert =O_{P}(1),
\end{equation*}%
and 
\begin{equation*}
\max_{1\leq k\leq N-1}\frac{1}{(N-k)^{\zeta _{4}}}\left\vert \left( \frac{1}{%
N-k}\sum_{i=k+1}^{N}\frac{\overline{y}_{i-1}^{2}}{1+\overline{y}_{i-1}^{2}}%
\right) ^{-1}-\frac{1}{a_{0}}\right\vert \left\vert \sum_{i=k+1}^{N}\frac{%
\overline{y}_{i-1}^{2}\epsilon _{i,1}+\overline{y}_{i-1}\epsilon _{i,2}}{1+%
\overline{y}_{i-1}^{2}}\right\vert =O_{P}(1),
\end{equation*}%
for all $\zeta _{4}>0$. It follows from (\ref{rcadeco1}), (\ref{kmt-1}) and (%
\ref{kmt-2}) 
\begin{align}
\max_{2\leq x\leq N/2}\frac{1}{x^{\zeta _{5}}}& \Biggl|\frac{x(N-x)}{N}%
\left( \widehat{\beta }_{\lfloor x\rfloor ,1}-\widehat{\beta }_{\lfloor
x\rfloor ,2}\right)  \label{rcabo1} \\
& -\eta \left( W_{N,3}(x)-\frac{x}{N}\left[ W_{N,4}(N/2)+W_{N,3}(N/2)\right]
\right) \Biggl|=O_{P}(1),  \notag
\end{align}%
and 
\begin{align}
\max_{N/2\leq x\leq N-1}& \frac{1}{(N-x)^{\zeta _{5}}}\Biggl|\frac{x(N-x)}{N}%
\left( \widehat{\beta }_{\lfloor x\rfloor ,1}-\widehat{\beta }_{\lfloor
x\rfloor ,2}\right)  \label{rcabo2} \\
& -\eta \left( -W_{N,4}(N-x)+\frac{N-x}{N}\left[ W_{N,4}(N/2)+W_{N,3}(N/2)%
\right] \right) \Biggl|=O_{P}(1),  \notag
\end{align}%
with some $\zeta _{5}<1/2$. The computation of the covariance function shows
that 
\begin{equation*}
B_{N}(t)=\left\{ 
\begin{array}{l}
\displaystyle N^{-1/2}\left( W_{N,3}(Nt)-t\left[ W_{N,4}(N/2)+W_{N,3}(N/2)%
\right] \right) ,\;\;\;0\leq t\leq 1/2, \\ 
\displaystyle N^{-1/2}\left( -W_{N,4}(N(1-t))+(1-t)\left[
W_{N,4}(N/2)+W_{N,3}(N/2)\right] \right) ,\text{ \ \ }1/2\leq t\leq 1,%
\end{array}%
\right.
\end{equation*}%
is a Brownian bridge. \newline
Let $0<\delta <1/2$. Recalling Assumption \ref{as-wc-1}\textit{(i)}, it
follows from (\ref{rcabo1}) and (\ref{rcabo2}) that 
\begin{equation*}
\sup_{\delta \leq t\leq 1-\delta }\frac{\left\vert Q_{N}(t)-\eta
B_{N}(t)\right\vert }{w(t)}\leq c_{0}\sup_{\delta \leq t\leq 1-\delta
}\left\vert Q_{N}(t)-\eta B_{N}(t)\right\vert =O_{P}\left( N^{-1/2+\zeta
_{5}}\right) =o_{P}\left( 1\right) .
\end{equation*}%
It follows from (\ref{rcabo1}) that 
\begin{align*}
\sup_{2/(N+1)\leq t\leq \delta }\frac{\left\vert Q_{N}(t)-\eta
B_{N}(t)\right\vert }{w(t)}& \leq \sup_{0<t\leq \delta }\frac{t^{1/2}}{w(t)}%
\sup_{2/(N+1)\leq t\leq \delta }\frac{\left\vert Q_{N}(t)-\eta
B_{N}(t)\right\vert }{t^{1/2}} \\
& =\sup_{0<t\leq \delta }\frac{t^{1/2}}{w(t)}\sup_{2/(N+1)\leq t\leq \delta }%
\frac{(Nt)^{\zeta _{5}}}{t^{1/2}}\frac{N^{-1/2}}{(Nt)^{\zeta _{5}}}%
|Q_{N}(t)-B_{N}(t)| \\
& =O_{P}(1)\sup_{0<t\leq \delta }\frac{t^{1/2}}{w(t)},
\end{align*}%
and (\ref{rcabo2}) implies 
\begin{equation*}
\sup_{1-\delta \leq t\leq 1-2/(N+1)}\frac{\left\vert
Q_{N}(t)-B_{N}(t)\right\vert }{w(t)}=O_{P}(1)\sup_{1-\delta \leq t<1}\frac{%
(1-t)^{1/2}}{w(t)}.
\end{equation*}%
\citet{csorgo1986} proved that, if $I(w,c)<\infty $ for some $c>0$, then 
\begin{equation}
\lim_{t\rightarrow 0^{+}}\frac{t^{1/2}}{w(t)}=0\quad \mbox{and}\quad
\lim_{t\rightarrow 1^{-}}\frac{(1-t)^{1/2}}{w(t)}=0.  \label{CCHM}
\end{equation}%
Hence for every $x>0$ we have 
\begin{equation*}
\lim_{\delta \rightarrow 0^{+}}\limsup_{N\rightarrow \infty }P\left\{
\sup_{2/(N+1)\leq t\leq \delta }\frac{\left\vert
Q_{N}(t)-B_{N}(t)\right\vert }{w(t)}\geq x\right\} =0,
\end{equation*}%
and 
\begin{equation*}
\lim_{\delta \rightarrow 0^{+}}\limsup_{N\rightarrow \infty }P\left\{
\sup_{1-\delta \leq t\leq 1-2/(N+1)}\frac{\left\vert
Q_{N}(t)-B_{N}(t)\right\vert }{w(t)}\geq x\right\} =0.
\end{equation*}%
Using again \citet{csorgo1986} we obtain for all $N$ 
\begin{equation*}
\sup_{\delta \leq t\leq 1-\delta }\frac{|B_{N}(t)|}{w(t)}\overset{{\mathcal{D%
}}}{=}\sup_{\delta \leq t\leq 1-\delta }\frac{|B(t)|}{w(t)}\overset{a.s.}{%
\rightarrow }\sup_{0<t<1}\frac{|B(t)|}{w(t)},
\end{equation*}%
as $\delta \rightarrow 0$, where $\{B(t),0\leq t\leq 1\}$ is a Brownian
bridge. The first part of Theorem \ref{thrca1} now follows from putting
everything together.\newline
We now turn to proving part \textit{(ii)} of the theorem. Let $c(N)=(\ln
N)^{4}$. First we observe that according to \eqref{rcabo1} 
\begin{align*}
\max_{2\leq k\leq N/2}& \frac{N^{1/2}}{(k(N-k))^{1/2}}\Biggl|\frac{k(N-k)}{N}%
\left( \widehat{\beta }_{k,1}-\widehat{\beta }_{k,2}\right) -\eta \left(
W_{N,3}(k)-\frac{k}{N}\left[ W_{N,4}(N/2)+W_{N,3}(N/2)\right] \right) \Biggl|
\\
& \leq 2\max_{2\leq k\leq N/2}k^{\zeta _{5}-1/2}\frac{1}{k^{\zeta _{5}}}%
\Biggl|\frac{k(N-k)}{N}(\hat{\beta}_{k,1}-\hat{\beta}_{k,2})-\eta \left(
W_{N,3}(k)-\frac{k}{N}\left[ W_{N,4}(N/2)+W_{N,3}(N/2)\right] \right) \Biggl|
\\
& =O_{P}(1),
\end{align*}%
and by (\ref{rcabo2}) 
\begin{align*}
&\max_{N/2\leq k\leq N-1}\frac{N^{1/2}}{(k(N-k))^{1/2}}\left\vert \frac{%
k(N-k)}{N}\left( \widehat{\beta }_{k,1}-\widehat{\beta }_{k,2}\right) \right.
\\
&-\left. \eta \left( -W_{N,4}(N-k)+\frac{N-k}{N}\left[
W_{N,4}(N/2)+W_{N,3}(N/2)\right] \right) \right\vert \\
=&O_{P}(1).
\end{align*}%
Hence the LIL for the Wiener process implies 
\begin{equation*}
\frac{1}{(2\ln \ln N)^{1/2}}\max_{2\leq k\leq N/2}\frac{N^{1/2}}{%
(k(N-k))^{1/2}}\Biggl|\frac{k(N-k)}{N}\left( \widehat{\beta }_{k,1}-\widehat{%
\beta }_{k,2}\right) \Biggl|\overset{P}{\rightarrow }\eta ,
\end{equation*}

\begin{equation*}
\frac{1}{(2\ln \ln N)^{1/2}}\max_{N/2\leq k\leq N-1}\frac{N^{1/2}}{%
(k(N-k))^{1/2}}\Biggl|\frac{k(N-k)}{N}\left( \widehat{\beta }_{k,1}-\widehat{%
\beta }_{k,2}\right) \Biggl|\overset{P}{\rightarrow }\eta ,
\end{equation*}

\begin{equation*}
\max_{2\leq k\leq c(N)}\frac{N^{1/2}}{(k(N-k))^{1/2}}\Biggl|\frac{k(N-k)}{N}%
\left( \widehat{\beta }_{k,1}-\widehat{\beta }_{k,2}\right) \Biggl|%
=O_{P}((\ln \ln \ln N)^{1/2}),
\end{equation*}%
and 
\begin{equation*}
\max_{N-c(N)\leq k\leq N-1}\frac{N^{1/2}}{(k(N-k))^{1/2}}\Biggl|\frac{k(N-k)%
}{N}\left( \widehat{\beta }_{k,1}-\widehat{\beta }_{k,2}\right) \Biggl|%
=O_{P}((\ln \ln \ln N)^{1/2}).
\end{equation*}%
Putting all these results together, we conclude that 
\begin{align}
\lim_{N\rightarrow \infty }P\Biggl\{\max_{2\leq k\leq N-1}& \frac{N^{1/2}}{%
(k(N-k))^{1/2}}\Biggl|\frac{k(N-k)}{N}\left( \widehat{\beta }_{k,1}-\widehat{%
\beta }_{k,2}\right) \Biggl|  \label{rcabb} \\
& =\max_{c(N)\leq k\leq N-c(N)}\frac{N^{1/2}}{(k(N-k))^{1/2}}\Biggl|\frac{%
k(N-k)}{N}\left( \widehat{\beta }_{k,1}-\widehat{\beta }_{k,2}\right) \Biggl|%
\Biggl\}=1.  \notag
\end{align}%
Futher, the approximations in (\ref{rcabo1}) and (\ref{rcabo2}) yield 
\begin{align}
\max_{c(N)\leq k\leq N/2}& \frac{N^{1/2}}{(k(N-k))^{1/2}}\Biggl|\frac{k(N-k)%
}{N}\left( \widehat{\beta }_{k,1}-\widehat{\beta }_{k,2}\right)
\label{rcabb1} \\
& \hspace{1cm}-\eta \left( W_{N,3}(k)-\frac{k}{N}\left[
W_{N,4}(N/2)+W_{N,3}(N/2)\right] \right) \Biggl|  \notag \\
& =O_{P}\left( (\ln N)^{4(\zeta _{5}-1/2)}\right) =o_{P}\left( 1\right) , 
\notag
\end{align}%
and 
\begin{align}
\max_{N/2\leq k\leq 1-c(N)}& \frac{N^{1/2}}{(k(N-k))^{1/2}}\Biggl|\frac{%
k(N-k)}{N}\left( \widehat{\beta }_{k,1}-\widehat{\beta }_{k,2}\right)
\label{rcabb2} \\
& \hspace{1cm}-\eta \left( -W_{N,4}(N-k)+\frac{N-k}{N}\left[
W_{N,4}(N/2)+W_{N,3}(N/2)\right] \right) \Biggl|  \notag \\
& =O_{P}\left( (\ln N)^{4(\zeta _{5}-1/2)}\right) =o_{P}\left( 1\right) , 
\notag
\end{align}%
since $\zeta _{5}<1/2$. Finally, Theorem A.4.2 in Cs\"{o}rg\H{o} and Horv%
\'{a}th (1997) states that 
\begin{equation*}
\lim_{N\rightarrow \infty }P\left\{ a\left( \ln N\right) \max_{c(N)\leq
k\leq N-c(N)}\left( \frac{k}{N}\left( 1-\frac{k}{N}\right) \right)
^{-1/2}|B(k/N)|\leq x+b\left( \ln N\right) \right\} =\exp (-2e^{-x}),
\end{equation*}%
for all $x$, which implies the second part of the theorem. Finally, the
proof of part \textit{(iii)} follows automatically from the approximations
to Wiener processes in \eqref{rcabo1}, \eqref{rcabo2} and \eqref{kmt1} and %
\eqref{kmt2} - see \citet{horvathmiller}. \qed

\medskip \textit{Proof of Theorem \ref{thrca111}.} We begin by showing that
the approximations in (\ref{rcabo1}) and (\ref{rcabo2}) hold in the non
stationary case too. These approximations imply immediately the limit
results in the present theorem, repeating exactly the same passages as in
the proof of Theorem \ref{thrca1}\newline
We begin by noting that Lemma A.4 of \citet{HT2016} implies that there are
two constants, $0<\delta <1$ and $c_{1}>0$, such that 
\begin{equation}
P\{\left\vert y_{i}\right\vert \leq i^{\delta }\}\leq c_{1}i^{-\delta }.
\label{ht1}
\end{equation}%
Equation (\ref{ht1}), in turn, implies that 
\begin{align}
E\left( \frac{1}{1+y_{i}^{2}}\right) =&E\left( \frac{1}{1+y_{i}^{2}}%
I\{\left\vert y_{i}\right\vert \leq i^{\delta }\}\right) +E\left( \frac{1}{%
1+y_{i}^{2}}I\{\left\vert y_{i}\right\vert \geq i^{\delta }\}\right)
\label{ht2} \\
\leq &P\{\left\vert y_{i}\right\vert \leq i^{\delta }\}+(1+i^{2\delta
})^{-1}\leq c_{2}i^{-\delta },  \notag
\end{align}%
with some constant $c_{2}$. Using (\ref{ht2}) and Markov's inequality, we
have for all $x>0$ and $\zeta _{1}>1-\delta $ 
\begin{align*}
P\left\{ \max_{M\leq k<\infty }\frac{1}{k^{\zeta _{1}}}\sum_{i=1}^{k}\frac{1%
}{1+y_{i}^{2}}>x\right\} & \leq P\left\{ \max_{\ln M\leq \ell <\infty
}\max_{e^{\ell }\leq k<e^{\ell +1}}\frac{1}{k^{\zeta _{1}}}\sum_{i=1}^{k}%
\frac{1}{1+y_{i}^{2}}>x\right\} \\
& \leq \sum_{\ell =\ln M}^{\infty }P\left\{ \max_{e^{\ell }\leq k<e^{\ell
+1}}\frac{1}{k^{\zeta _{1}}}\sum_{i=1}^{k}\frac{1}{1+y_{i}^{2}}>x\right\} \\
& \leq \sum_{\ell =\ln M}^{\infty }P\left\{ \max_{e^{\ell }\leq k<e^{\ell
+1}}\frac{1}{k^{\zeta _{1}}}\sum_{i=1}^{k}\frac{1}{1+y_{i}^{2}}>x\right\} \\
& \leq \sum_{\ell =\ln M}^{\infty }P\left\{ \max_{e^{\ell }\leq k<e^{\ell
+1}}\sum_{i=1}^{k}\frac{1}{1+y_{i}^{2}}>xe^{{\zeta _{1}}\ell }\right\} \\
& =\sum_{\ell =\ln M}^{\infty }P\left\{ \sum_{i=1}^{\exp (\ell +1)}\frac{1}{%
1+y_{i}^{2}}>xe^{{\zeta _{1}}\ell }\right\} \\
& \leq \frac{1}{x}\sum_{\ell =\ln M}^{\infty }e^{-{\zeta _{1}}\ell
}\sum_{i=1}^{\exp (\ell +1)}E\frac{1}{1+y_{i}^{2}} \\
& \leq \frac{c_{2}}{x}\sum_{\ell =\ln M}^{\infty }e^{-{\zeta _{1}}\ell
}\sum_{i=1}^{\exp (\ell +1)}i^{-\delta } \\
& \leq \frac{c_{3}}{x}M^{-({\zeta _{1}}-(1-\delta ))}.
\end{align*}%
Hence there is ${\zeta _{2}}<1$ such that for all $x>0$ 
\begin{equation}
\lim_{M\rightarrow \infty }P\left\{ \max_{M\leq k<\infty }\frac{1}{k^{\zeta
_{2}}}\sum_{i=1}^{k}\frac{1}{1+y_{i}^{2}}>x\right\} =0.  \label{ht3}
\end{equation}%
Similar arguments give 
\begin{equation}
\lim_{M\rightarrow \infty }\limsup_{N\rightarrow \infty }P\left\{
\max_{1\leq k\leq N-M}\frac{1}{(N-k)^{\zeta _{2}}}\sum_{i=k+1}^{N}\frac{1}{%
1+y_{i}^{2}}>x\right\} =0.  \label{ht4}
\end{equation}%
We obtain immediately from (\ref{ht3}) and (\ref{ht4}) 
\begin{equation}
\max_{1\leq k\leq N}k^{1-{\zeta _{2}}}\left\vert \left( \frac{1}{k}%
\sum_{i=2}^{k}\frac{y_{i-1}^{2}}{1+y_{i-1}^{2}}\right) ^{-1}-1\right\vert
=O_{P}(1),  \label{ht-a}
\end{equation}%
and 
\begin{equation}
\max_{1\leq k\leq N-1}(N-k)^{1-{\zeta _{2}}}\left\vert \left( \frac{1}{N-k}%
\sum_{i=k+1}^{N}\frac{y_{i-1}^{2}}{1+y_{i-1}^{2}}\right) ^{-1}-1\right\vert
=O_{P}(1).  \label{ht-b}
\end{equation}%
Further, using Assumption \ref{rcaas} we get via the Cauchy--Schwartz
inequality 
\begin{align*}
E& \left( \sum_{\ell =i}^{j}\frac{\epsilon _{\ell ,1}}{1+y_{\ell -1}^{2}}%
\right) ^{4} \\
& =\sum_{\ell =i}^{j}E\epsilon _{\ell ,1}^{4}E\left( \frac{1}{1+y_{\ell
-1}^{2}}\right) ^{4}+\sum_{i\leq \ell \neq \ell ^{\prime }\leq j}E\left[
\left( \frac{\epsilon _{\ell ,1}}{1+y_{\ell -1}^{2}}\right) ^{2}\left( \frac{%
\epsilon _{\ell ^{\prime },1}}{1+y_{\ell ^{\prime }-1}^{2}}\right) ^{2}%
\right] \\
& \leq \sum_{\ell =i}^{j}E\epsilon _{0,1}^{4}E\left( \frac{1}{1+y_{\ell
-1}^{2}}\right) ^{4}+\sum_{i\leq \ell \neq \ell ^{\prime }\leq j}E\epsilon
_{0,1}^{4}\left[ E\left( \frac{1}{1+y_{\ell -1}}\right) ^{4}\right] ^{1/2}%
\left[ E\left( \frac{\epsilon _{\ell ^{\prime },1}}{1+y_{\ell ^{\prime
}-1}^{2}}\right) ^{4}\right] ^{1/2} \\
& \leq E\epsilon _{0,1}^{4}\left[ \left( \sum_{\ell =i}^{j}c_{\ell
-1}^{1/2}\right) ^{2}+\left( \sum_{\ell =i}^{j}c_{\ell -1}\right) ^{2}\right]
\\
& \leq 2E\epsilon _{0,1}^{4}\left( \sum_{\ell =i}^{j}c_{\ell -1}\right) ^{2},
\end{align*}%
where 
\begin{equation*}
c_{\ell }=E\left( \frac{1}{(1+y_{\ell }^{2})^{4}}\right) .
\end{equation*}%
Note that we can assume - without loss of generality - that $c_{\ell }\leq 1$
since, along the lines of the proof of (\ref{ht2}), $c_{\ell }=O(\ell
^{-\delta })$ as $\ell \rightarrow \infty $. Theorem 3.1 of %
\citet{moricz1982} implies 
\begin{equation*}
E\left( \max_{2\leq k\leq j}\left\vert \sum_{i=2}^{k}\epsilon _{i,1}\left( 
\frac{y_{i-1}^{2}}{1+y_{i-1}^{2}}-1\right) \right\vert ^{4}\right) \leq
c_{4}\left( \sum_{\ell =1}^{j}c_{\ell -1}\right) ^{2}.
\end{equation*}%
Hence by Markov's inequality for all $x>0$ 
\begin{align*}
P\left\{ \max_{2\leq k\leq N}\frac{1}{k^{\zeta _{3}}}\left\vert
\sum_{i=2}^{k}\frac{\epsilon _{i,1}y_{i-1}^{2}}{1+y_{i-1}^{2}}%
-\sum_{i=2}^{k}\epsilon _{i,1}\right\vert >x\right\} & \leq P\left\{
\max_{0\leq \ell \leq \ln N}\max_{e^{\ell }\leq k\leq e^{\ell +1}}\frac{1}{%
k^{\zeta _{3}}}\left\vert \sum_{i=2}^{k}\frac{\epsilon _{i,1}}{1+y_{i-1}^{2}}%
\right\vert >x\right\} \\
& \leq \sum_{\ell =0}^{\ln N}P\left\{ \max_{e^{\ell }\leq k\leq e^{\ell +1}}%
\frac{1}{k^{\zeta _{3}}}\left\vert \sum_{i=2}^{k}\frac{\epsilon _{i,1}}{%
1+y_{i-1}^{2}}\right\vert >x\right\} \\
& \leq \sum_{\ell =0}^{\ln N}P\left\{ \max_{e^{\ell }\leq k\leq e^{\ell
+1}}\left\vert \sum_{i=2}^{k}\frac{\epsilon _{i,1}}{1+y_{i-1}^{2}}%
\right\vert >xe^{{\zeta _{3}}\ell }\right\} \\
& \leq \frac{1}{x^{4}}\sum_{\ell =0}^{\ln N}e^{-4{\zeta _{3}}\ell
}E\max_{1\leq k\leq e^{\ell +1}}\left\vert \sum_{i=2}^{k}\frac{\epsilon
_{i,1}}{1+y_{i-1}^{2}}\right\vert ^{4} \\
& \leq \frac{c_{5}}{x^{4}}\sum_{\ell =0}^{\ln N}e^{-4{\zeta _{3}}\ell
}e^{2\ell (1-\delta )} \\
& \leq \frac{c_{6}}{x^{4}},
\end{align*}%
with the choice of ${\zeta _{3}>}\left( 1-\delta \right) /2$. Hence, there
exists a ${\zeta _{4}}<1/2$ such that 
\begin{equation}
\max_{2\leq k\leq N}\frac{1}{k^{\zeta _{4}}}\left\vert \sum_{i=2}^{k}\frac{%
\epsilon _{i,1}y_{i-1}^{2}}{1+y_{i-1}^{2}}-\sum_{i=2}^{k}\epsilon
_{i,1}\right\vert =O_{P}(1);  \label{ht17}
\end{equation}%
similar arguments give 
\begin{equation}
\max_{2\leq k<N}\frac{1}{(N-k)^{\zeta _{4}}}\left\vert \sum_{i=k+1}^{N}\frac{%
\epsilon _{i,1}y_{i-1}^{2}}{1+y_{i-1}^{2}}-\sum_{i=2}^{k}\epsilon
_{i,1}\right\vert =O_{P}(1).  \label{ht18}
\end{equation}%
Following the proof of (\ref{ht17}) and (\ref{ht18}) one also can verify
that 
\begin{equation}
\max_{2\leq k\leq N}\frac{1}{k^{\zeta _{5}}}\left\vert \sum_{i=2}^{k}\frac{%
\epsilon _{i,1}y_{i-1}}{1+y_{i-1}^{2}}\right\vert =O_{P}(1),  \label{htb1}
\end{equation}%
and 
\begin{equation}
\max_{1\leq k<N}\frac{1}{(N-k)^{\zeta _{5}}}\left\vert \sum_{i=k+1}^{N}\frac{%
\epsilon _{i,1}y_{i-1}}{1+y_{i-1}^{2}}\right\vert =O_{P}(1),  \label{htb2}
\end{equation}%
with some ${\zeta _{5}}<1/2$.

Combining (\ref{ht-a}), (\ref{ht-b}), (\ref{ht17}), (\ref{ht18}), (\ref{htb1}%
) and (\ref{htb2}), it is easy to see that it is possible to find a $\zeta
_{6}<1/2$ such that%
\begin{equation}
\max_{1\leq k\leq N/2}\frac{1}{k^{\zeta _{6}}}\left\vert \frac{k\left(
N-k\right) }{N}\left( \widehat{\beta }_{k,1}-\widehat{\beta }_{k,2}\right)
-\left( \sum_{i=1}^{k}\epsilon _{i,1}-\frac{k}{N}\sum_{i=1}^{N}\epsilon
_{i,1}\right) \right\vert =O_{P}(1),  \label{app1}
\end{equation}%
and 
\begin{equation}
\max_{N/2\leq k\leq N-1}\frac{1}{(N-k)^{\zeta _{6}}}\left\vert \frac{k\left(
N-k\right) }{N}\left( \widehat{\beta }_{k,1}-\widehat{\beta }_{k,2}\right)
-\left( -\sum_{i=k+1}^{N}\epsilon _{i,1}+\frac{N-k}{N}\sum_{i=1}^{N}\epsilon
_{i,1}\right) \right\vert =O_{P}(1).  \label{app2}
\end{equation}%
Finally, by the Koml\'{o}s--Major--Tusn\'{a}dy approximation (see %
\citealp{KMT1}, and \citealp{KMT2}), we can define two independent Wiener
processes $\{W_{N,1}(x),0\leq x\leq N/2\}$ and $\{W_{N,2}(x),0\leq x\leq
N/2\}$ such that 
\begin{equation}
\max_{1\leq k\leq N/2}\frac{1}{k^{1/4}}\left\vert \sum_{i=1}^{k}\epsilon
_{i,1}-\sigma _{1}W_{N,1}(k)\right\vert =O_{P}(1),  \label{kmt1}
\end{equation}%
and 
\begin{equation}
\max_{1\leq k\leq N/2}\frac{1}{(N-k)^{1/4}}\left\vert
\sum_{i=k+1}^{N}\epsilon _{i,1}-\sigma _{1}W_{N,2}(N-k)\right\vert =O_{P}(1).
\label{kmt2}
\end{equation}%
Putting together (\ref{app1}), (\ref{app2}), (\ref{kmt1}) and (\ref{kmt2}),
it finally follows that, for some $\zeta _{7}<1/2$%
\begin{align}
& \max_{1\leq k\leq N/2}\frac{1}{k^{\zeta _{7}}}\left\vert \frac{k\left(
N-k\right) }{N}\left( \widehat{\beta }_{k,1}-\widehat{\beta }_{k,2}\right)
\right.  \label{app-a} \\
& \left. -\sigma _{1}\left( W_{N,1}(k)-\frac{k}{N}\left(
W_{N,2}(N/2)+W_{N,1}(N/2)\right) \right) \right\vert  \notag \\
& =O_{P}(1),  \notag
\end{align}%
and 
\begin{align}
& \max_{N/2\leq k\leq N}\frac{1}{k^{\zeta _{7}}}\left\vert \frac{k\left(
N-k\right) }{N}\left( \widehat{\beta }_{k,1}-\widehat{\beta }_{k,2}\right)
\right.  \label{app-b} \\
& \left. -\sigma _{1}\left( -W_{N,2}(k)+\frac{N-k}{N}\left(
W_{N,2}(N/2)+W_{N,1}(N/2)\right) \right) \right\vert  \notag \\
& =O_{P}(1).  \notag
\end{align}%
The desired results now follow from repeating exactly the same passages as
in the proof of Theorem \ref{thrca1}.

\textit{Proof of Corollary \ref{rcaco1}.} We note that 
\begin{align}
(y_{i}-\widehat{\beta }_{N,1}y_{i-1})^{2}& =\left( \left( \left( \beta _{0}-%
\widehat{\beta }_{N,1}\right) y_{i-1}+\epsilon _{i,1}\right)
y_{i-1}+\epsilon _{i,2}\right) ^{2}  \label{res-sq} \\
& =\epsilon _{i,1}^{2}y_{i-1}^{2}+\epsilon _{i,2}^{2}+\left( \beta _{0}-%
\widehat{\beta }_{N,1}\right) ^{2}y_{i-1}^{2}+2\left( \beta _{0}-\widehat{%
\beta }_{N,1}\right) y_{i-1}^{2}\epsilon _{i,1}  \notag \\
& \hspace{1cm}+2\left( \beta _{0}-\widehat{\beta }_{N,1}\right)
y_{i-1}\epsilon _{i,2}+2y_{i-1}\epsilon _{i,1}\epsilon _{i,2}.  \notag
\end{align}%
Hence, we have%
\begin{align*}
\widehat{a}_{N,1}& =\frac{1}{N-1}\sum_{i=2}^{N}\frac{\epsilon
_{i,1}^{2}y_{i-1}^{4}}{\left( 1+y_{i-1}^{2}\right) ^{2}}+\frac{1}{N-1}%
\sum_{i=2}^{N}\frac{\epsilon _{i,2}^{2}y_{i-1}^{2}}{\left(
1+y_{i-1}^{2}\right) ^{2}}+\left( \beta _{0}-\widehat{\beta }_{N,1}\right)
^{2}\frac{1}{N-1}\sum_{i=2}^{N}\frac{y_{i-1}^{4}}{\left(
1+y_{i-1}^{2}\right) ^{2}} \\
& +2\frac{1}{N-1}\left( \beta _{0}-\widehat{\beta }_{N,1}\right)
\sum_{i=2}^{N}\frac{\epsilon _{i,1}y_{i-1}^{3}}{\left( 1+y_{i-1}^{2}\right)
^{2}}+2\frac{1}{N-1}\left( \beta _{0}-\widehat{\beta }_{N,1}\right)
\sum_{i=2}^{N}\frac{\epsilon _{i,2}y_{i-1}^{3}}{\left( 1+y_{i-1}^{2}\right)
^{2}} \\
& +2\frac{1}{N-1}\sum_{i=2}^{N}\frac{y_{i-1}\epsilon _{i,1}\epsilon _{i,2}}{%
\left( 1+y_{i-1}^{2}\right) ^{2}}.
\end{align*}%
The proofs of Theorems \ref{thrca1} and \ref{thrca111} show that,
irrespective of $y_{i}$ being stationary or not, 
\begin{align*}
|\widehat{\beta }_{N}-\beta _{0}| =&O_{P}(N^{-1/2}), \\
\frac{1}{N}\sum_{i=2}^{N}\frac{y_{i-1}^{4}}{(1+y_{i-1}^{2})^{2}}
=&O_{P}(1),\quad \quad \frac{1}{N}\sum_{i=2}^{N}\frac{y_{i-1}^{4}\epsilon
_{i,1}}{(1+y_{i-1}^{2})^{2}}=O_{P}(N^{-1/2}), \\
\frac{1}{N}\sum_{i=2}^{N}\frac{y_{i-1}^{3}\epsilon _{i-2}}{%
(1+y_{i-1}^{2})^{2}} =&O_{P}(N^{-1/2}),\quad \quad \frac{1}{N}\sum_{i=2}^{N}%
\frac{y_{i-1}^{3}\epsilon _{i,1}\epsilon _{i-2}}{(1+y_{i-1}^{2})^{2}}%
=O_{P}(N^{-1/2}).
\end{align*}%
Therefore, it holds that%
\begin{equation}
\widehat{a}_{N,1}=\frac{1}{N-1}\sum_{i=2}^{N}\frac{\epsilon
_{i,1}^{2}y_{i-1}^{4}}{\left( 1+y_{i-1}^{2}\right) ^{2}}+\frac{1}{N-1}%
\sum_{i=2}^{N}\frac{\epsilon _{i,2}^{2}y_{i-1}^{2}}{\left(
1+y_{i-1}^{2}\right) ^{2}}+O_{P}(N^{-1/2}).  \label{an1c}
\end{equation}%
We study the leading term in (\ref{an1c}), showing that, when $E\ln |\beta
_{0}+\epsilon _{0,1}|<0$ 
\begin{equation}
\frac{1}{N-1}\sum_{i=2}^{N}\frac{\epsilon
_{i,1}^{2}y_{i-1}^{4}+y_{i-1}^{2}\epsilon _{i,2}^{2}}{(1+y_{i-1}^{2})^{2}}%
=E\left( \frac{\overline{y}_{0}^{2}}{1+\overline{y}_{0}^{2}}\right)
^{2}\sigma _{1}^{2}+E\left( \frac{\overline{y}_{0}^{2}}{1+\overline{y}%
_{0}^{2}}\right) ^{2}\sigma _{2}^{2}+O_{P}(N^{-\zeta }),  \label{an1c2}
\end{equation}%
for some $\zeta >0$. We begin by showing 
\begin{equation}
\frac{1}{N-1}\sum_{i=2}^{N}\frac{\epsilon _{i,1}^{2}y_{i-1}^{4}}{%
(1+y_{i-1}^{2})^{2}}=E\left( \frac{\overline{y}_{0}^{2}}{1+\overline{y}%
_{0}^{2}}\right) ^{2}\sigma _{1}^{2}+O_{P}(N^{-\zeta }).  \label{an1c1}
\end{equation}%
Note first that%
\begin{align*}
& \frac{1}{N-1}\sum_{i=2}^{N}E\left\vert \frac{\epsilon _{i,1}^{2}y_{i-1}^{4}%
}{(1+y_{i-1}^{2})^{2}}-\frac{\epsilon _{i,1}^{2}\overline{y}_{i-1}^{4}}{(1+%
\overline{y}_{i-1}^{2})^{2}}\right\vert \\
\leq & \frac{1}{N-1}\sum_{i=2}^{N}E\left\vert \epsilon _{i,1}^{2}\right\vert
\left( E\left\vert y_{i-1}-\overline{y}_{i-1}\right\vert ^{2}\right)
^{1/2}\left( E\left\vert y_{i-1}+\overline{y}_{i-1}\right\vert ^{2}\right)
^{1/2}=O\left( \frac{1}{N}\right) ,
\end{align*}%
which follows from the definition of $\overline{y}_{i}$ and Assumption \ref%
{rcaas}\textit{(ii)}. Recall that \citet{aue2006} show that $\theta
_{1}=E\left\vert \beta _{0}+\epsilon _{0,1}\right\vert ^{k}<1$, for some $%
0<k<1$, and consider the construction%
\begin{equation*}
\widehat{y}_{i}\left( N\right) =\sum_{j=i-\left\lfloor \theta \ln
N\right\rfloor }^{i}\epsilon _{j,2}\prod\limits_{z=j}^{i-1}\left( \beta
_{0}+\epsilon _{z,1}\right) +\sum_{j=-\infty }^{i-\left\lfloor \theta \ln
N\right\rfloor -1}\epsilon _{j,2}^{\ast }\prod\limits_{z=j}^{i-1}\left(
\beta _{0}+\epsilon _{z,1}^{\ast }\right) ,
\end{equation*}%
where $\epsilon _{j,1}^{\ast }$ and $\epsilon _{j,2}^{\ast }$ are completely
independent, and independent of $\epsilon _{j,1}$ and $\epsilon _{j,2}$,
with $\epsilon _{j,1}^{\ast }\overset{D}{=}\epsilon _{j,1}$, $\epsilon
_{j,2}^{\ast }\overset{D}{=}\epsilon _{j,2}$, and $\theta =2/\left( 1-\theta
_{1}\right) $. Following the proof of Lemma A.3 in \citet{HT2016}, it
follows that%
\begin{equation*}
E\left\vert \frac{\overline{y}_{i-1}^{4}}{(1+\overline{y}_{i-1}^{2})^{2}}-%
\frac{\widehat{y}_{i-1}^{4}\left( N\right) }{(1+\widehat{y}_{i-1}^{2}\left(
N\right) )^{2}}\right\vert \leq c_{0}\theta _{1}^{\left\lfloor \theta \ln
N\right\rfloor /\left( 1+k\right) },
\end{equation*}%
so that it is easy to see that%
\begin{equation*}
\frac{1}{N-1}\sum_{i=2}^{N}E\left\vert \frac{\epsilon _{i,1}^{2}\overline{y}%
_{i-1}^{4}}{(1+\overline{y}_{i-1}^{2})^{2}}-\frac{\epsilon _{i,1}^{2}%
\widehat{y}_{i-1}^{4}\left( N\right) }{(1+\widehat{y}_{i-1}^{2}\left(
N\right) )^{2}}\right\vert =o\left( \frac{1}{N}\right) .
\end{equation*}%
Finally, consider%
\begin{align*}
E&\left( \frac{1}{N-1}\sum_{i=2}^{N}\epsilon _{i,1}^{2}\left( \frac{\widehat{%
y}_{i-1}^{4}\left( N\right) }{(1+\widehat{y}_{i-1}^{2}\left( N\right) )^{2}}%
-E\left( \frac{\overline{y}_{0}^{2}}{1+\overline{y}_{0}^{2}}\right)
^{2}\right) \right) ^{2} \\
=& \frac{1}{\left( N-1\right) ^{2}}\sum_{i=2}^{N}\sum_{j=2}^{N}E\left[
\epsilon _{i,1}^{2}\epsilon _{j,1}^{2}\left( \frac{\widehat{y}%
_{i-1}^{4}\left( N\right) }{(1+\widehat{y}_{i-1}^{2}\left( N\right) )^{2}}%
-E\left( \frac{\overline{y}_{0}^{2}}{1+\overline{y}_{0}^{2}}\right)
^{2}\right) \left( \frac{\widehat{y}_{j-1}^{4}\left( N\right) }{(1+\widehat{y%
}_{j-1}^{2}\left( N\right) )^{2}}-E\left( \frac{\overline{y}_{0}^{2}}{1+%
\overline{y}_{0}^{2}}\right) ^{2}\right) \right] \\
\leq & \frac{1}{\left( N-1\right) ^{2}}\sum_{i=2}^{N}\sum_{j=2}^{N}E\left[
\epsilon _{i,1}^{2}\left( \frac{\widehat{y}_{i-1}^{4}\left( N\right) }{(1+%
\widehat{y}_{i-1}^{2}\left( N\right) )^{2}}-E\left( \frac{\overline{y}%
_{0}^{2}}{1+\overline{y}_{0}^{2}}\right) ^{2}\right) \epsilon
_{j,1}^{2}\left( \frac{\widehat{y}_{j-1}^{4}\left( N\right) }{(1+\widehat{y}%
_{j-1}^{2}\left( N\right) )^{2}}-E\left( \frac{\overline{y}_{0}^{2}}{1+%
\overline{y}_{0}^{2}}\right) ^{2}\right) \right] \\
& \times I\left( \left\vert i-j\right\vert \leq 2\theta \left\lfloor \ln
N\right\rfloor \right) \\
=& O\left( \frac{\ln N}{N}\right) .
\end{align*}%
having used the fact that $\epsilon _{i,1}^{2}\widehat{y}_{i-1}^{4}\left(
N\right) $ and $\epsilon _{j,1}^{2}\widehat{y}_{j-1}^{4}\left( N\right) $
are independent for $\left\vert i-j\right\vert >2\theta \left\lfloor \ln
N\right\rfloor $. Putting all together, it follows that 
\begin{equation*}
\frac{1}{N-1}\sum_{i=2}^{N}\epsilon _{i,1}^{2}\left( \frac{y_{i-1}^{4}}{%
(1+y_{i-1}^{2})^{2}}-E\left( \frac{\bar{y}_{0}^{2}}{1+\bar{y}_{0}^{2}}%
\right) ^{2}\right) =O_{P}(N^{-\zeta }),
\end{equation*}%
for some $\zeta >0$, whence (\ref{an1c1}) follows immediately. Using exactly
the same logic, it is also possible to show that 
\begin{equation}
\frac{1}{N-1}\sum_{i=2}^{N}\frac{\epsilon _{i,2}^{2}y_{i-1}^{2}}{%
(1+y_{i-1}^{2})^{2}}=E\left( \frac{\bar{y}_{0}}{1+\bar{y}_{0}^{2}}\right)
^{2}\sigma _{2}^{2}+O_{P}(N^{-\zeta }),  \label{b19}
\end{equation}%
which, with (\ref{an1c1}), finally implies (\ref{an1c2}). The same logic
also yields 
\begin{equation*}
\widehat{a}_{N,2}=E\left( \frac{\bar{y}_{0}^{2}}{1+\bar{y}_{0}^{2}}\right)
+O_{P}(N^{-\zeta }),
\end{equation*}%
whence we have finally shown that $\widehat{\eta }_{N}=\eta +O_{P}(N^{-\zeta
})$ when $y_{i}$ is stationary.

It remains to show that the corollary also holds in the nonstationary case $%
E\ln |\beta _{0}+\epsilon _{0,1}|\geq 0$. Note%
\begin{equation*}
\frac{1}{N-1}\sum_{i=2}^{N}\frac{\epsilon _{i,1}^{2}y_{i-1}^{4}}{%
(1+y_{i-1}^{2})^{2}}=\frac{1}{N-1}\sigma _{1}^{2}\sum_{i=2}^{N}\frac{%
y_{i-1}^{4}}{(1+y_{i-1}^{2})^{2}}+\frac{1}{N-1}\sum_{i=2}^{N}\left( \epsilon
_{i,1}^{2}-\sigma _{1}^{2}\right) \frac{y_{i-1}^{4}}{(1+y_{i-1}^{2})^{2}};
\end{equation*}%
following the proof of (\ref{rca-dr3}), it is easy to see that%
\begin{equation*}
\frac{1}{N-1}\sigma _{1}^{2}\sum_{i=2}^{N}\frac{y_{i-1}^{4}}{%
(1+y_{i-1}^{2})^{2}}=\sigma _{1}^{2}+O_{P}\left( \frac{1}{N}\right) ,
\end{equation*}%
and%
\begin{align*}
E&\left( \frac{1}{N-1}\sum_{i=2}^{N}\left( \epsilon _{i,1}^{2}-\sigma
_{1}^{2}\right) \frac{y_{i-1}^{4}}{(1+y_{i-1}^{2})^{2}}\right) ^{2} \\
=&\frac{1}{\left( N-1\right) ^{2}}\sum_{i=2}^{N}\left( E\left( \epsilon
_{i,1}^{2}-\sigma _{1}^{2}\right) ^{4}\right) \left( E\left( \frac{%
y_{i-1}^{4}}{(1+y_{i-1}^{2})^{2}}\right) ^{2}\right) \\
=&O\left( \frac{1}{N}\right) ,
\end{align*}%
so that%
\begin{equation*}
\frac{1}{N-1}\sum_{i=2}^{N}\frac{\epsilon _{i,1}^{2}y_{i-1}^{4}}{%
(1+y_{i-1}^{2})^{2}}=\sigma _{1}^{2}+O_{P}(N^{-1}).
\end{equation*}%
The same logic yields%
\begin{equation*}
\frac{1}{N-1}\sum_{i=2}^{N}\frac{\epsilon _{i,2}^{2}y_{i-1}^{2}}{%
(1+y_{i-1}^{2})^{2}}=O_{P}\left( \frac{1}{N}\right) .
\end{equation*}%
Hence%
\begin{equation*}
\widehat{a}_{N,1}=\sigma _{1}^{2}+O_{P}(N^{-1}),
\end{equation*}%
and, by the same token%
\begin{equation*}
\widehat{a}_{N,2}=1+O_{P}(N^{-1}),
\end{equation*}%
so that, in this case, it holds that $\widehat{\eta }_{N}=\eta
+O_{P}(N^{-1}) $. \qed

\textit{Proof of Theorem \ref{thrca4}.} We assume, without loss of
generality, that $M\geq 1$ - the $M=0$ case is already covered in Theorem %
\ref{thrca1}.

We begin by showing two preliminary sets of results: (i) that the
approximations developed in the proofs of Theorems \ref{thrca1} and \ref%
{thrca111} are valid on each segment $(m_{\ell -1},m_{\ell }],1\leq \ell
\leq M+1$, with only the variances of the approximating Gaussian processes
depending on $\ell $; and (ii) that the approximating process on a segment
is independent of the approximating processes on the other segments. \newline
Let 
\begin{equation*}
z_{i}=\left\{ 
\begin{array}{ll}
\displaystyle\frac{\bar{y}_{\ell ,i-1}^{2}\epsilon _{i,1}+\bar{y}_{\ell
,i-1}\epsilon _{i,2}}{1+\bar{y}_{\ell ,i-1}^{2}},\quad \mbox{if}%
\;\;\;-\infty \leq E\ln |\beta _{0}+\epsilon _{m_{\ell },1}|<0,\vspace{0.3cm}
&  \\ 
\epsilon _{i,1},\quad \mbox{if}\;\;\;E\ln |\beta _{0}+\epsilon _{m_{\ell
},1}|\geq 0, & 
\end{array}%
\right.
\end{equation*}%
if $m_{\ell -1}<i\leq m_{\ell },1\leq \ell \leq M+1$. We define the sums 
\begin{equation*}
S_{\ell }(j)=\sum_{i=m_{\ell -1}+1}^{m_{\ell -1}+j}z_{i},\quad \mbox{if}%
\;\;\;1\leq j\leq m_{\ell }-m_{\ell -1},\;1\leq \ell \leq M+1.
\end{equation*}%
Following the proofs of Lemma \ref{rcale1} (in the case $-\infty \leq \ln
|\beta _{0}+\epsilon _{m_{\ell },1}|<0$), and of Theorem \ref{thrca111} (in
the case $0\leq \ln |\beta _{0}+\epsilon _{m_{\ell },1}|<\infty $), it can
be shown that 
\begin{equation*}
\sum_{i=m_{\ell -1}+1}^{m_{\ell -1}+j}\left\vert \frac{y_{i-1}^{2}\epsilon
_{i,1}+y_{i-1}\epsilon _{i,2}}{1+y_{i-1}^{2}}-S_{\ell }(j)\right\vert
=O_{P}(1),\quad \mbox{if}\;\;\;1\leq j\leq m_{\ell }-m_{\ell -1},\text{ }%
1\leq \ell \leq M+1.
\end{equation*}%
This entails that we can replace the partial sums of $(y_{i-1}^{2}\epsilon
_{i,1}+y_{i-1}\epsilon _{i,2})/(1+y_{i-1}^{2})$ with the partial sums of the 
$z_{i}$'s. \newline
Using the approximations in \citet{aue2014} or \citet{berkesliuwu}, we can
define independent Wiener processes $\{W_{N,\ell ,1}(x),0\leq x\leq (m_{\ell
}-m_{\ell -1})/2\},\{W_{N,\ell ,2}(x),0\leq x\leq (m_{\ell }-m_{\ell
-1})/2\},1\leq \ell \leq M+1$ such that 
\begin{equation*}
\max_{1\leq k\leq (m_{\ell }-m_{\ell -1})/2}\frac{1}{k^{\zeta }}\left\vert
S_{\ell }(k)-\eta _{\ell }W_{N,\ell ,1}(k)\right\vert =O_{P}(1),
\end{equation*}%
and 
\begin{equation*}
\max_{(m_{\ell }-m_{\ell -1})/2<k<m_{\ell }-m_{\ell -1}}\frac{1}{(m_{\ell
}-k)^{\zeta }}\left\vert S_{\ell }(m_{\ell })-S_{\ell }(k)-\eta _{\ell
}W_{N,\ell ,2}(k)\right\vert =O_{P}(1),
\end{equation*}%
with some $\zeta <1/2$ for all $1\leq \ell \leq M+1$. Recall that the
results in \citet{aue2014} and \citet{berkesliuwu} are based on blocking
arguments: thus, $W_{N,\ell ,1}(k)$ and $W_{N,\ell ,2}(k)$ are - as well as
independent of each other - independent across $\ell $.

Let 
\begin{equation*}
W_{N,\ell }(x)=\left\{ 
\begin{array}{ll}
W_{N,\ell ,1}(x),\quad \mbox{if}\;\;\;0\leq x\leq (m_{\ell }-m_{\ell -1})/2,%
\vspace{0.3cm} &  \\ 
W_{N,\ell ,1}((m_{\ell }-m_{\ell -1})/2)+W_{N,\ell ,2}((m_{\ell }-m_{\ell
-1})/2)-W_{N,\ell ,2}((m_{\ell }-m_{\ell -1})-x),\vspace{0.3cm} &  \\ 
\hspace{0.3cm}\mbox{if}\;\;\;(m_{\ell }-m_{\ell -1})/2\leq x\leq m_{\ell
}-m_{\ell -1}, & 
\end{array}%
\right.
\end{equation*}%
$1\leq \ell \leq M+1$. Now we define 
\begin{equation*}
\Delta _{N}(k)=\sum_{j=1}^{\ell -1}\frac{\eta _{j}}{a_{j}}%
W_{N,j}(m_{j}-m_{j-1})+\frac{\eta _{\ell }}{a_{\ell }}W_{N,\ell }(x-m_{\ell
-1}),\quad m_{\ell -1}<x\leq m_{\ell },1\leq \ell \leq M+1.
\end{equation*}%
Thus we obtain the following approximations 
\begin{equation}
\max_{1\leq k\leq N/2}\frac{1}{k^{\zeta }}\left\vert \frac{k(N-k)}{N}(%
\widehat{\beta }_{k,1}-\widehat{\beta }_{k,2})-\left( \Gamma _{N}(k)-\frac{k%
}{N}\Gamma _{N}(N)\right) \right\vert =O_{P}(1),  \label{rcavol1}
\end{equation}%
and 
\begin{align}
\max_{N/2\leq k\leq N-1}\frac{1}{(N-k)^{\zeta }}& \left\vert \frac{k(N-k)}{N}%
(\widehat{\beta }_{k,1}-\widehat{\beta }_{k,2})-\left( -(\Gamma
_{N}(N)-\Gamma _{N}(k))+\frac{N-k}{N}\Gamma _{N}(N)\right) \right\vert
\label{rcavol2} \\
& =O_{P}(1),  \notag
\end{align}%
for some $0<\zeta <1/2$. We further note that 
\begin{equation*}
\left\{ N^{-1/2}\Gamma _{N}(Nt),0\leq t\leq 1\right\} \;\overset{{\mathcal{D}%
}}{=}\;\left\{ \Gamma (t),0\leq t\leq 1\right\} ,
\end{equation*}%
and that $\eta _{0}(t,t)$ is proportional to $t(1-t)$, if $0<t\leq \tau _{1}$
or $\tau _{M}<t<1$.

We now prove part \textit{(i)} of the theorem; the proof is based on the
same arguments used in the proof of Theorem \ref{thrca1}. Let $0<\delta
<\min \left\{ \tau _{1},\tau _{M}\right\} $; then%
\begin{align*}
\sup_{\delta <t<1-\delta }&\frac{\left\vert Q_{N}\left( t\right) -\left(
N^{-1/2}\Gamma _{N}(Nt)-\frac{\left\lfloor Nt\right\rfloor }{N}%
N^{-1/2}\Gamma _{N}(N)\right) \right\vert }{w\left( t\right) } \\
\leq &c_{0}N^{-1/2+\zeta }\sup_{N\delta \leq k<N/2}\frac{1}{k^{\zeta }}%
\left\vert \frac{k(N-k)}{N}(\widehat{\beta }_{k,1}-\widehat{\beta }%
_{k,2})-\left( \Gamma _{N}(k)-\frac{k}{N}\Gamma _{N}(N)\right) \right\vert \\
&+c_{0}N^{-1/2+\zeta }\sup_{N/2\leq k\leq N\left( 1-\delta \right) }\frac{1}{%
\left( N-k\right) ^{\zeta }}\left\vert \frac{k(N-k)}{N}(\widehat{\beta }%
_{k,1}-\widehat{\beta }_{k,2})-\left( \Gamma _{N}(k)-\frac{k}{N}\Gamma
_{N}(N)\right) \right\vert \\
=&o_{P}\left( 1\right) ,
\end{align*}%
having used Assumption \ref{as-wc-1} in the second passage, and (\ref%
{rcavol1})-(\ref{rcavol2}) in the last one. Also note%
\begin{align*}
\sup_{1/\left( N+1\right) <t<\delta }&\frac{\left\vert Q_{N}\left( t\right)
-\left( N^{-1/2}\Gamma _{N}(Nt)-\frac{\left\lfloor Nt\right\rfloor }{N}%
N^{-1/2}\Gamma _{N}(N)\right) \right\vert }{w\left( t\right) } \\
=&\sup_{1/\left( N+1\right) <t<\delta }\frac{\left\vert Q_{N}\left( t\right)
-\left( N^{-1/2}\Gamma _{N}(Nt)-\frac{\left\lfloor Nt\right\rfloor }{N}%
N^{-1/2}\Gamma _{N}(N)\right) \right\vert }{\left[ t\left( 1-t\right) \right]
^{1/2}}\frac{\left[ t\left( 1-t\right) \right] ^{1/2}}{w\left( t\right) } \\
\leq &\sqrt{2}N^{-1/2+\zeta }\frac{\delta ^{1/2}}{w\left( \delta \right) }%
\sup_{1\leq k\leq N\delta }\frac{\left\vert \frac{k(N-k)}{N}(\widehat{\beta }%
_{k,1}-\widehat{\beta }_{k,2})-\left( \Gamma _{N}(k)-\frac{k}{N}\Gamma
_{N}(N)\right) \right\vert }{k^{\zeta }}\sup_{1/\left( N+1\right) <t<\delta
}t^{\zeta -1/2} \\
\leq &c_{0}\frac{\delta ^{1/2}}{w\left( \delta \right) }O_{P}\left( 1\right)
,
\end{align*}%
having used (\ref{rcavol1}). Using (\ref{CCHM}), it follows that%
\begin{equation*}
\lim_{\delta \rightarrow 0^{+}}\lim \sup_{N\rightarrow \infty }P\left\{
\sup_{1/\left( N+1\right) <t<\delta }\frac{\left\vert Q_{N}\left( t\right)
-\left( N^{-1/2}\Gamma _{N}(Nt)-\frac{\left\lfloor Nt\right\rfloor }{N}%
N^{-1/2}\Gamma _{N}(N)\right) \right\vert }{w\left( t\right) }>x\right\} =0,
\end{equation*}%
for all $x>0$. Similar arguments yield%
\begin{equation*}
\lim_{\delta \rightarrow 0^{+}}\lim \sup_{N\rightarrow \infty }P\left\{
\sup_{1-\delta <t<1-1/\left( N+1\right) }\frac{\left\vert Q_{N}\left(
t\right) -\left( N^{-1/2}\Gamma _{N}(Nt)-\frac{\left\lfloor Nt\right\rfloor 
}{N}N^{-1/2}\Gamma _{N}(N)\right) \right\vert }{w\left( t\right) }>x\right\}
=0,
\end{equation*}%
for all $x>0$. Putting all together, it holds that%
\begin{equation*}
\sup_{1/\left( N+1\right) \leq t\leq 1-1/\left( N+1\right) }\frac{\left\vert
Q_{N}\left( t\right) -\left( N^{-1/2}\Gamma _{N}(Nt)-\frac{\left\lfloor
Nt\right\rfloor }{N}N^{-1/2}\Gamma _{N}(N)\right) \right\vert }{w\left(
t\right) }=o_{P}\left( 1\right) ,
\end{equation*}%
which proves part \textit{(i)} of the theorem.\newline
The proof of part \textit{(ii)} is based again on the approximations in (\ref%
{rcavol1}) and (\ref{rcavol2}). First we show that 
\begin{equation}
\lim_{N\rightarrow \infty }P\Biggl\{\sup_{0<t<1}\frac{\left\vert
Q_{N}(t)\right\vert }{\eta _{0}^{1/2}(t,t)}=\max \Biggl(\sup_{t_{1}\leq
t\leq t_{2}}\frac{\left\vert Q_{N}(t)\right\vert }{\eta _{0}^{1/2}(t,t)}%
,\sup_{1-t_{2}\leq t\leq 1-t_{1}}\frac{\left\vert Q_{N}(t)\right\vert }{\eta
_{0}^{1/2}(t,t)}\Biggl)\Biggl\}=1,  \label{erdos-intervals}
\end{equation}%
where $t_{1}=(\ln N)^{4}/N$ and $t_{2}=1/\ln N$. Indeed, recalling that $%
\eta _{0}(t,t)=c_{0}t\left( 1-t\right) $ for $t\leq \tau _{1}$ and $t\geq
\tau _{M}$%
\begin{equation}
\sup_{0<t<1/\left( N+1\right) }\frac{\left\vert Q_{N}(t)\right\vert }{\eta
_{0}^{1/2}(t,t)}=N^{-1/2}|\widehat{\beta }_{N}|\sup_{0<t<1/\left( N+1\right)
}\eta _{0}^{-1/2}(t,t)\leq c_{0}|\widehat{\beta }_{N}|=O_{P}\left( 1\right) .
\label{erdos-1}
\end{equation}%
Also, it holds that%
\begin{align*}
N^{-1/2}&\sup_{1/\left( N+1\right) \leq t<t_{1}}\frac{\left\vert \frac{k(N-k)%
}{N}(\widehat{\beta }_{k,1}-\widehat{\beta }_{k,2})-\left( W_{N,1,1}\left(
Nt\right) -\frac{\left\lfloor Nt\right\rfloor }{N}\Gamma _{N}(N)\right)
\right\vert }{\left( t\left( 1-t\right) \right) ^{1/2}} \\
\leq &N^{-1/2+\zeta }\sup_{1/\left( N+1\right) \leq t<t_{1}}\frac{\left\vert 
\frac{k(N-k)}{N}(\widehat{\beta }_{k,1}-\widehat{\beta }_{k,2})-\left(
W_{N,1,1}\left( Nt\right) -\frac{\left\lfloor Nt\right\rfloor }{N}\Gamma
_{N}(N)\right) \right\vert }{\left( Nt\right) ^{\zeta }}\sup_{1/\left(
N+1\right) \leq t<t_{1}}t^{\zeta -1/2} \\
=&O_{P}\left( 1\right) ,
\end{align*}%
and%
\begin{align*}
N^{-1/2} & \sup_{1/\left( N+1\right) \leq t<t_{1}}\frac{\left\vert \frac{%
\left\lfloor Nt\right\rfloor }{N}\Gamma _{N}(N)\right\vert }{\left( t\left(
1-t\right) \right) ^{1/2}} \leq c_{0}N^{-1/2}\left\vert \Gamma
_{N}(N)\right\vert \sup_{1/\left( N+1\right) \leq t<t_{1}}t^{1/2} \\
&=O_{P}\left( 1\right) t_{1}^{1/2}=O_{P}\left( N^{-1/2}(\ln N)^{2}\right) .
\end{align*}%
Hence%
\begin{equation*}
\sup_{1/\left( N+1\right) \leq t<t_{1}}\frac{\left\vert
Q_{N}(t)-N^{-1/2}W_{N,1,1}\left( Nt\right) \right\vert }{\left( t\left(
1-t\right) \right) ^{1/2}}=O_{P}\left( 1\right) .
\end{equation*}%
Also%
\begin{align*}
\sup_{1/\left( N+1\right) \leq t<t_{1}}\frac{\left\vert
N^{-1/2}W_{N,1,1}\left( Nt\right) \right\vert }{\eta _{0}^{1/2}(t,t)} \leq
&\sup_{1/\left( N+1\right) \leq t<t_{1}}\frac{\left\vert
N^{-1/2}W_{N,1,1}\left( Nt\right) \right\vert }{\left( t\left( 1-t\right)
\right) ^{1/2}}\sup_{1/\left( N+1\right) \leq t<t_{1}}\frac{\left( t\left(
1-t\right) \right) ^{1/2}}{\eta _{0}^{1/2}(t,t)} \\
=&O_{P}\left( \left( \ln \ln \ln N\right) ^{1/2}\right) O\left( 1\right) ,
\end{align*}%
having used the Darling-Erd\H{o}s theorem (see Theorem A.4.2 in %
\citealp{csorgo1997}). The equations above entail that%
\begin{equation}
\sup_{1/\left( N+1\right) \leq t<t_{1}}\frac{|Q_{N}(t)|}{\eta _{0}^{1/2}(t,t)%
}=O_{P}\left( \left( \ln \ln \ln N\right) ^{1/2}\right) .  \label{erdos-2}
\end{equation}%
Finally, note that, by similar passages as above%
\begin{align*}
\sup_{t_{2}\leq t<1/2}&\frac{\left\vert Q_{N}(t)-\left( N^{-1/2}\Gamma
_{N}(Nt)-\frac{\left\lfloor Nt\right\rfloor }{N}N^{-1/2}\Gamma
_{N}(N)\right) \right\vert }{\eta _{0}^{1/2}(t,t)} \\
\leq &\sup_{t_{2}\leq t<1/2}\frac{|t\Gamma (1)|}{\eta _{0}^{1/2}(t,t)}%
+\sup_{t_{2}\leq t\leq \tau _{1}}\frac{|Q_{N}(t)-N^{-1/2}W_{N,1,1}\left(
Nt\right) |}{\eta _{0}^{1/2}(t,t)}+\sup_{\tau _{1}<t\leq \tau _{2}}\frac{%
|Q_{N}(t)-N^{-1/2}W_{N,2,1}\left( Nt\right) |}{\eta _{0}^{1/2}(t,t)}+... \\
=&O_{P}\left( 1\right) ,
\end{align*}%
with $W_{2,1}\left( t\right) =N^{-1/2}W_{N,2,1}\left( Nt\right) $, etc...
Since%
\begin{align*}
&\sup_{t_{2}\leq t<1/2}\frac{|N^{-1/2}W_{N,1,1}\left( Nt\right) |}{\eta
_{0}^{1/2}(t,t)} \leq \sup_{t_{2}\leq t<1/2}\frac{|N^{-1/2}W_{N,1,1}\left(
Nt\right) |}{\left( t\left( 1-t\right) \right) ^{1/2}}\sup_{t_{2}\leq t<1/2}%
\frac{\left( t\left( 1-t\right) \right) ^{1/2}}{\eta _{0}^{1/2}(t,t)}%
=O_{P}\left( \left( \ln \ln \ln N\right) ^{1/2}\right) , \\
&\sup_{\tau _{1}<t\leq \tau _{2}}\frac{|N^{-1/2}W_{N,2,1}\left( Nt\right) |}{%
\eta _{0}^{1/2}(t,t)} \leq \sup_{\tau _{1}<t\leq \tau _{2}}\frac{%
|N^{-1/2}W_{N,2,1}\left( Nt\right) |}{\left( t\left( 1-t\right) \right)
^{1/2}}\sup_{\tau _{1}<t\leq \tau _{2}}\frac{\left( t\left( 1-t\right)
\right) ^{1/2}}{\eta _{0}^{1/2}(t,t)}=O_{P}\left( 1\right) ,
\end{align*}%
and the same for all $W_{N,\ell ,1}\left( Nt\right) $, $1<\ell $, we
conclude that%
\begin{equation}
\sup_{t_{2}\leq t<1/2}\frac{|Q_{N}(t)|}{\eta _{0}^{1/2}(t,t)}=O_{P}\left(
\left( \ln \ln \ln N\right) ^{1/2}\right) .  \label{erdos-3}
\end{equation}%
The same results can be shown, with the same logic, over the intervals $%
1-t_{1}<t<1$ and $\frac{1}{2}<t<1-t_{2}$. Next we note that 
\begin{equation}
\sup_{t_{1}\leq t\leq t_{2}}\left\vert \frac{\eta _{0}(t,t)}{t}-\frac{\eta
_{1}^{2}}{a_{1}^{2}}\right\vert =O(1/\ln N),  \label{de1}
\end{equation}%
and 
\begin{equation}
\sup_{1-t_{1}\leq t\leq 1-t_{2}}\left\vert \frac{\eta _{0}(t,t)}{1-t}-\frac{%
\eta _{M+1}^{2}}{a_{M+1}^{2}}\right\vert =O(1/\ln N).  \label{de2}
\end{equation}%
It thus follows from \eqref{rcavol1} that 
\begin{align*}
\sup_{t_{1}\leq t\leq t_{2}}&\frac{\left\vert Q_{N}\left( t\right)
-N^{-1/2}W_{N,1,1}\left( Nt\right) \right\vert }{\eta _{0}^{1/2}(t,t)} \\
\leq &c_{0}N^{-1/2+\zeta }\sup_{t_{1}\leq t\leq t_{2}}\frac{\displaystyle %
\left\vert \frac{k(N-k)}{N}(\widehat{\beta }_{k,1}-\widehat{\beta }%
_{k,2})-\left( W_{N,1,1}\left( Nt\right) -\frac{\left\lfloor Nt\right\rfloor 
}{N}\Gamma _{N}\left( N\right) \right) \right\vert }{\displaystyle \left(
Nt\right) ^{\zeta }}\sup_{t_{1}\leq t\leq t_{2}}t^{\zeta -1/2} \\
&+c_{0}N^{-1/2}\left\vert \Gamma _{N}\left( N\right) \right\vert
\sup_{t_{1}\leq t\leq t_{2}}t^{1/2} \\
=&O_{P}\left( \left( \ln N\right) ^{4(\zeta -1/2)}\right) .
\end{align*}%
The same result can be shown, with exactly the same logic, in the interval $%
1-t_{2}<t<1-t_{1}$, viz.%
\begin{equation*}
\sup_{1-t_{2}\leq t\leq 1-t_{1}}\frac{\left\vert Q_{N}\left( t\right)
-N^{-1/2}W_{N,M+1,1}\left( Nt\right) \right\vert }{\eta _{0}^{1/2}(t,t)}%
=O_{P}\left( \left( \ln N\right) ^{4(\zeta -1/2)}\right) .
\end{equation*}%
Further, using (\ref{de1})%
\begin{align*}
\sup_{t_{1}\leq t\leq t_{2}}&\left\vert \frac{\left\vert
N^{-1/2}W_{N,1,1}\left( Nt\right) \right\vert }{\eta _{0}^{1/2}(t,t)}-\left( 
\frac{\eta _{1}^{2}}{a_{1}^{2}}\right) ^{-1/2}\frac{\left\vert
N^{-1/2}W_{N,1,1}\left( Nt\right) \right\vert }{\left( t\left( 1-t\right)
\right) ^{1/2})}\right\vert \\
\leq &c_{0}\sup_{t_{1}\leq t\leq t_{2}}\left( \frac{\eta _{1}^{2}}{a_{1}^{2}}%
\right) ^{-1/2}\frac{\left\vert N^{-1/2}W_{N,1,1}\left( Nt\right)
\right\vert }{\left( t\left( 1-t\right) \right) ^{1/2})}\sup_{t_{1}\leq
t\leq t_{2}}\left\vert \frac{t^{1/2}}{\eta _{0}^{1/2}(t,t)}-\left( \frac{%
a_{1}^{2}}{\eta _{1}^{2}}\right) ^{1/2}\right\vert \\
=&O_{P}\left( \frac{\sqrt{2\ln \ln N}}{\ln N}\right) ,
\end{align*}%
and, by the Darling-Erd\H{o}s theorem 
\begin{equation*}
\frac{1}{\sqrt{2\ln \ln N}}\sup_{t_{1}\leq t<t_{2}}\frac{|N^{-1/2}W_{N,1,1}%
\left( Nt\right) |}{\left( t\left( 1-t\right) \right) ^{1/2})}\overset{P}{%
\rightarrow }1.
\end{equation*}%
This proves (\ref{erdos-intervals}). Since $\{W_{N,1,1}(x),Nt_{1}\leq x\leq
Nt_{2}\}$ and $\{W_{N,M+1,2}(x),N(1-t_{2})\leq x\leq N(1-t_{1})\}$ are
independent Wiener processes, Theorem A.4.2 in Cs\"{o}rg\H{o} and Horv\'{a}%
th (1997) implies 
\begin{align*}
& \lim_{N\rightarrow \infty }P\Biggl\{a(\ln N)\max \Biggl(\sup_{(\ln
N)^{4}\leq x\leq N/\ln N}x^{-1/2}|W_{N,1,1}(x)|, \\
& \hspace{2cm}\sup_{N-N/\ln N\leq x\leq N-(\ln
N)^{4}}(N-x)^{-1/2}|W_{N,M+1,2}(N-x)|\Biggl)\leq x+b(\ln N)\Biggl\} \\
& =\lim_{N\rightarrow \infty }P\Biggl\{a(\ln N)\max \Biggl(\sup_{(\ln
N)^{4}\leq x\leq N/\ln N}x^{-1/2}|W_{N,1,1}(x)|\leq x+b(\ln N)\Biggl\} \\
& \hspace{2cm}\times \lim_{N\rightarrow \infty }P\Biggl\{\sup_{(\ln
N)^{4}\leq x\leq N/\ln N}x^{-1/2}|W_{N,M+1,2}(x)|\leq x+b(\ln N)\Biggl\} \\
& =\left[ \exp (-e^{-x})\right] ^{2}.
\end{align*}%
The proof of Theorem \ref{thrca4}\textit{(ii)} is now complete.

Part \textit{(iii)} follows by repeating the proof of part \textit{(ii)}
with minor modifications, and therefore we only report some passages. Let $%
r_{1,N}=r_{2,N}=r_{N}$ for simplicity; it is easy to see that%
\begin{align}
\sup_{r_{N}/N\leq t\leq 1-r_{N}/N}\frac{\left( t\left( 1-t\right) \right)
^{1/2}}{\eta _{0}^{1/2}(t,t)}=& O\left( 1\right) ,  \label{re-1} \\
\sup_{r_{N}/N\leq t\leq 1-r_{N}/N}\frac{\left( t\left( 1-t\right) \right)
^{1/2}}{\left( t\left( 1-t\right) \right) ^{\kappa }}=& O\left( \left( \frac{%
r_{N}}{N}\right) ^{1/2-\kappa }\right) .  \label{re-2}
\end{align}%
Note now that%
\begin{align*}
& r_{N}^{\kappa -1/2}\sup_{r_{N}/N\leq t\leq 1-r_{N}/N}\frac{\left\vert
Q_{N}(t)-\left( N^{-1/2}\Gamma _{N}(Nt)-\frac{\left\lfloor Nt\right\rfloor }{%
N}N^{-1/2}\Gamma _{N}(N)\right) \right\vert }{\left( t\left( 1-t\right)
\right) ^{\kappa }} \\
\leq & r_{N}^{\kappa -1/2}\sup_{r_{N}/N\leq t\leq 1-r_{N}/N}\frac{\left\vert
Q_{N}(t)-\left( N^{-1/2}\Gamma _{N}(Nt)-\frac{\left\lfloor Nt\right\rfloor }{%
N}N^{-1/2}\Gamma _{N}(N)\right) \right\vert }{\left( t\left( 1-t\right)
\right) ^{1/2}}\sup_{r_{N}/N\leq t\leq 1-r_{N}/N}\frac{\left( t\left(
1-t\right) \right) ^{1/2}}{\left( t\left( 1-t\right) \right) ^{\kappa }} \\
\leq & N^{-1/2+\zeta }\left( \frac{r_{N}}{N}\right) ^{\kappa
-1/2}\sup_{r_{N}\leq k\leq N-r_{N}}\frac{\left\vert \frac{k(N-k)}{N}(%
\widehat{\beta }_{k,1}-\widehat{\beta }_{k,2})-\left( \Gamma _{N}(k)-\frac{k%
}{N}\Gamma _{N}(N)\right) \right\vert }{\left( Nt\right) ^{\zeta }} \\
& \times \sup_{r_{N}/N\leq t\leq 1-r_{N}/N}\frac{\left( t\left( 1-t\right)
\right) ^{\zeta }}{\left( t\left( 1-t\right) \right) ^{\kappa }} \\
\leq & N^{\zeta -1/2}\left( \frac{r_{N}}{N}\right) ^{\kappa -1/2}O_{P}\left(
1\right) \left( \frac{r_{N}}{N}\right) ^{\zeta -\kappa }=O_{P}\left(
r_{N}^{\zeta -\kappa }\right) =o_{P}\left( 1\right) .
\end{align*}%
Using (\ref{re-1}), this also entails that%
\begin{eqnarray}
&&\left( \frac{r_{N}}{N}\right) ^{\kappa -1/2}\sup_{r_{N}\leq t\leq 1-r_{N}}%
\frac{\left( t\left( 1-t\right) \right) ^{1/2-\kappa }}{\eta _{0}^{1/2}(t,t)}%
\left\vert Q_{N}(t)-\left( N^{-1/2}\Gamma _{N}(Nt)-\frac{\left\lfloor
Nt\right\rfloor }{N}N^{-1/2}\Gamma _{N}(N)\right) \right\vert  \label{re-3}
\\
&=&o_{P}\left( 1\right) .  \notag
\end{eqnarray}%
We have 
\begin{align*}
& \left( \frac{r_{N}}{N}\right) ^{\kappa -1/2}\sup_{\left( r_{N}/N\right)
^{1/2}\leq t\leq 1/2}\frac{\left\vert N^{-1/2}\Gamma _{N}(Nt)-\frac{%
\left\lfloor Nt\right\rfloor }{N}N^{-1/2}\Gamma _{N}(N)\right\vert }{\left(
t\left( 1-t\right) \right) ^{\kappa }} \\
\leq & 2^{\kappa }\left( \frac{r_{N}}{N}\right) ^{\kappa -1/2}\sup_{\left(
r_{N}/N\right) ^{1/2}\leq t\leq 1/2}\frac{\left\vert N^{-1/2}\Gamma _{N}(Nt)-%
\frac{\left\lfloor Nt\right\rfloor }{N}N^{-1/2}\Gamma _{N}(N)\right\vert }{%
t^{1/2}}t^{1/2-\kappa } \\
\leq & 2^{\kappa }\left( \frac{r_{N}}{N}\right) ^{\kappa -1/2}\left( \left( 
\frac{r_{N}}{N}\right) ^{1/2}\right) ^{1/2-\kappa }\sup_{\left(
r_{N}/N\right) ^{1/2}\leq t\leq 1/2}\frac{\left\vert N^{-1/2}\Gamma _{N}(Nt)-%
\frac{\left\lfloor Nt\right\rfloor }{N}N^{-1/2}\Gamma _{N}(N)\right\vert }{%
t^{1/2}}=o_{P}\left( 1\right) .
\end{align*}%
By using the same logic as in the proof of (\ref{erdos-3}), it can be shown
that%
\begin{equation*}
\sup_{\left( r_{N}/N\right) ^{1/2}\leq t\leq 1/2}\frac{\left\vert
N^{-1/2}\Gamma _{N}(Nt)-\frac{\left\lfloor Nt\right\rfloor }{N}%
N^{-1/2}\Gamma _{N}(N)\right\vert }{t^{1/2}}=O_{P}\left( \sqrt{\ln \ln
\left( \frac{r_{N}}{N}\right) }\right) ,
\end{equation*}%
so that ultimately 
\begin{equation}
\left( \frac{r_{N}}{N}\right) ^{\kappa -1/2}\sup_{\left( r_{N}/N\right)
^{1/2}\leq t\leq 1/2}\frac{\left\vert N^{-1/2}\Gamma _{N}(Nt)-\frac{%
\left\lfloor Nt\right\rfloor }{N}N^{-1/2}\Gamma _{N}(N)\right\vert }{\left(
t\left( 1-t\right) \right) ^{\kappa }}=o_{P}\left( 1\right) .  \label{re-4}
\end{equation}%
By the same token, it follows that%
\begin{equation}
\left( \frac{r_{N}}{N}\right) ^{\kappa -1/2}\sup_{1/2\leq t\leq 1-\left(
r_{N}/N\right) ^{1/2}}\frac{\left\vert N^{-1/2}\Gamma _{N}(Nt)-\frac{%
\left\lfloor Nt\right\rfloor }{N}N^{-1/2}\Gamma _{N}(N)\right\vert }{\left(
t\left( 1-t\right) \right) ^{\kappa }}=o_{P}\left( 1\right) .  \label{re-5}
\end{equation}%
Further, note that%
\begin{align*}
\left( \frac{r_{N}}{N}\right) ^{\kappa -1/2}\sup_{r_{N}/N\leq t<\left(
r_{N}/N\right) ^{1/2}}& \frac{\left\vert \frac{\left\lfloor Nt\right\rfloor 
}{N}N^{-1/2}\Gamma _{N}(N)\right\vert }{\left( t\left( 1-t\right) \right)
^{\kappa }} \\
& \leq N^{-1/2}\Gamma _{N}(N)\left( \frac{r_{N}}{N}\right) ^{\kappa
-1/2}\sup_{r_{N}/N\leq t<\left( r_{N}/N\right) ^{1/2}}t^{1-\kappa } \\
=& O_{P}\left( \left( \frac{r_{N}}{N}\right) ^{\frac{1}{2}\min \left(
1,\kappa \right) }\right) =o_{P}\left( 1\right) ,
\end{align*}%
and the same holds on the interval $1-\left( r_{N}/N\right) ^{1/2}\leq
t<1-r_{N}/N$. Thus, we only need to focus on finding the limiting
distributions of 
\begin{align*}
& \left( \frac{r_{N}}{N}\right) ^{\kappa -1/2}\sup_{r_{N}/N\leq t<\left(
r_{N}/N\right) ^{1/2}}\frac{\left\vert N^{-1/2}\Gamma _{N}(Nt)\right\vert }{%
\left( t\left( 1-t\right) \right) ^{\kappa }}, \\
& \left( \frac{r_{N}}{N}\right) ^{\kappa -1/2}\sup_{1-\left( r_{N}/N\right)
^{1/2}\leq t<1-r_{N}/N}\frac{\left\vert N^{-1/2}\Gamma _{N}(Nt)\right\vert }{%
\left( t\left( 1-t\right) \right) ^{\kappa }}.
\end{align*}%
On the intervals $r_{N}/N\leq t<\left( r_{N}/N\right) ^{1/2}$ and $1-\left(
r_{N}/N\right) ^{1/2}\leq t<1-r_{N}/N$, however, $\Gamma _{N}(Nt)$ is given
by the Wiener processes $W_{N,1,1}(Nt)$ and $W_{N,M+1,1}(Nt)$; the desired
result now follows from \citet{horvathmiller}.\qed

\textit{Proof of Theorem \ref{thrca5}.} Repeating the arguments used in the
proofs of Theorems \ref{thrca1} and \ref{thrca111} on the intervals $%
(m_{\ell -1},m_{\ell }]$, one can show that 
\begin{equation}
\sup_{0<t<1}|\widehat{\mathfrak{c}}_{N,1}(t)-\mathfrak{c}_{1}(t)|=O_{P}%
\left( (\ln N)^{-\zeta }\right) ,  \label{sup-c}
\end{equation}%
with some $\zeta >0$, and 
\begin{equation}
\sup_{0<t<1}|\widehat{\mathfrak{b}}_{N}(t)-\mathfrak{b}(t)|=O_{P}\left( (\ln
N)^{-\zeta _{1}}\right) ,  \label{rcasim1}
\end{equation}%
with some $\zeta _{1}>0$. Thus it holds that

\begin{equation}
\sup_{0<t,s<1}|\hat{\mathfrak{g}}_{N}(t,s)-{\mathfrak{g}}(t,s)|=o_{P}(1),
\label{rcaest1}
\end{equation}%
and we can use the results in the proof of Theorem \ref{thrca4} to establish
Theorem \ref{thrca5}. \qed

\begin{proof}[Proof of Theorem \protect\ref{amoc}]
The proof follows from standard arguments (see e.g. \citealp{csorgo1997}),
and therefore we only report the most important passages. We begin by
considering (\ref{part3}), and show that, under (\ref{restr-1})%
\begin{equation*}
N^{1/2}\left( \frac{k^{\ast }}{N}\left( \frac{N-k^{\ast }}{N}\right) \right)
\left\vert \widehat{\beta }_{k^{\ast },1}-\widehat{\beta }_{k^{\ast
},2}\right\vert \overset{P}{\rightarrow }\infty .
\end{equation*}%
Under our assumptions, the same arguments as above yield%
\begin{align}
&\widehat{\beta }_{k^{\ast },1}\overset{P}{\rightarrow }\beta _{0},
\label{b1alt} \\
&\widehat{\beta }_{k^{\ast },2}\overset{P}{\rightarrow }\beta _{A}.
\label{b2alt}
\end{align}%
Hence (\ref{part3}) follows immediately. As far as (\ref{part33}) is
concerned, we begin by noting that, under the conditions of Theorem \ref%
{thrca5}, one can show following the same passages as in the proof of that
theorem that, if $|\beta _{0}-\beta _{A}|$ is bounded, as $N\rightarrow
\infty $, then there are functions $\mathfrak{c}_{1}^{\ast }(t)$ and $%
\mathfrak{b}^{\ast }(t)$ such that 
\begin{equation}
\sup_{0<t<1}|\widehat{\mathfrak{c}}_{N,1}(t)-\mathfrak{c}_{1}^{\ast
}(t)|=o_{P}\left( 1\right) ,  \label{supc2}
\end{equation}%
\begin{equation}
\sup_{0<t<1}\left\vert \hat{\mathfrak{b}}_{N}(t)-\mathfrak{b}^{\ast
}(t)\right\vert =o_{P}(1).  \label{rcasim2}
\end{equation}%
(\ref{supc2}) and (\ref{rcasim2}) entail that there exists a function ${%
\mathfrak{g}}^{\ast }\left( t,s\right) $ 
\begin{equation}
\sup_{0<t,s<1}\left\vert \hat{\mathfrak{g}}_{N}\left( t,s\right) -{\mathfrak{%
g}}^{\ast }\left( t,s\right) \right\vert =o_{P}(1),  \label{rcaest2}
\end{equation}%
which entails that (\ref{part3}) follows as long as 
\begin{equation*}
N^{1/2}\left( \frac{k^{\ast }}{N}\left( \frac{N-k^{\ast }}{N}\right) \right) 
\frac{r_{N}^{\kappa -1/2}}{\mathfrak{g}^{1/2}(t^{\ast },t^{\ast })}\left( 
\frac{k^{\ast }}{N}\left( \frac{N-k^{\ast }}{N}\right) \right) ^{-\kappa
+1/2}\left\vert \widehat{\beta }_{k^{\ast },1}-\widehat{\beta }_{k^{\ast
},2}\right\vert \overset{P}{\rightarrow }\infty
\end{equation*}%
having let $\left\lfloor Nt^{\ast }\right\rfloor =k^{\ast }$. But this
follows immediately from (\ref{b1alt}) and (\ref{b2alt}), noting that, by
definition $\mathfrak{g}(t^{\ast },t^{\ast })=O\left( \left( \frac{k^{\ast }%
}{N}\left( \frac{N-k^{\ast }}{N}\right) \right) ^{1/2}\right) $. Finally, (%
\ref{part2}) follows from the same logic.
\end{proof}

\newpage

\begin{figure}[t]
\caption{Empirical rejection frequencies under alternatives -
homoskedasticity and mid-sample break}
\label{fig:FigHo}\centering
\hspace{-2.5cm} 
\begin{minipage}{0.4\textwidth}
\centering
    \includegraphics[scale=0.4]{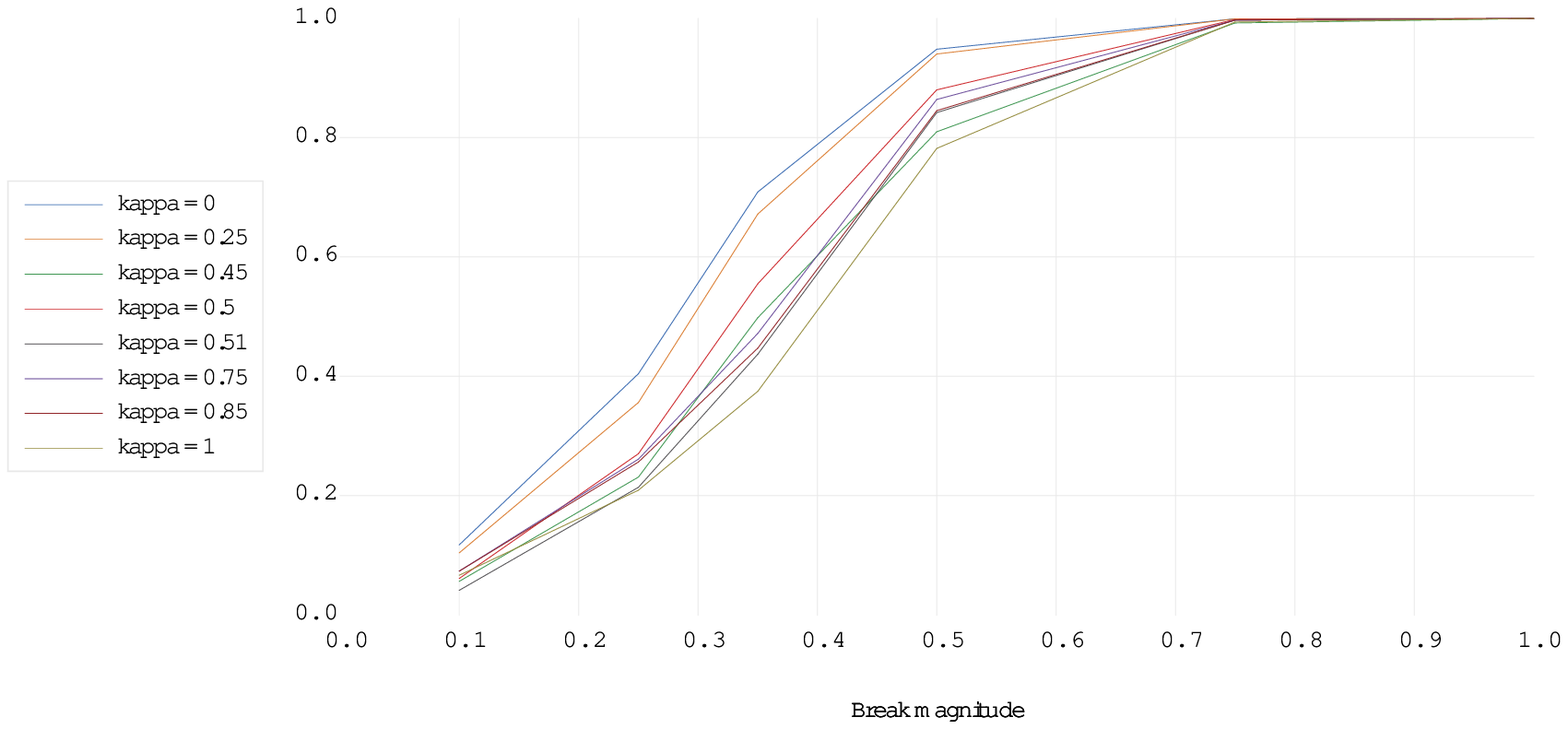}
      \captionof{subfigure}{$\beta_0=0.5$}
    \label{fig:t11}
\end{minipage}%
\begin{minipage}{0.4\textwidth}
\centering
   \includegraphics[scale=0.4]{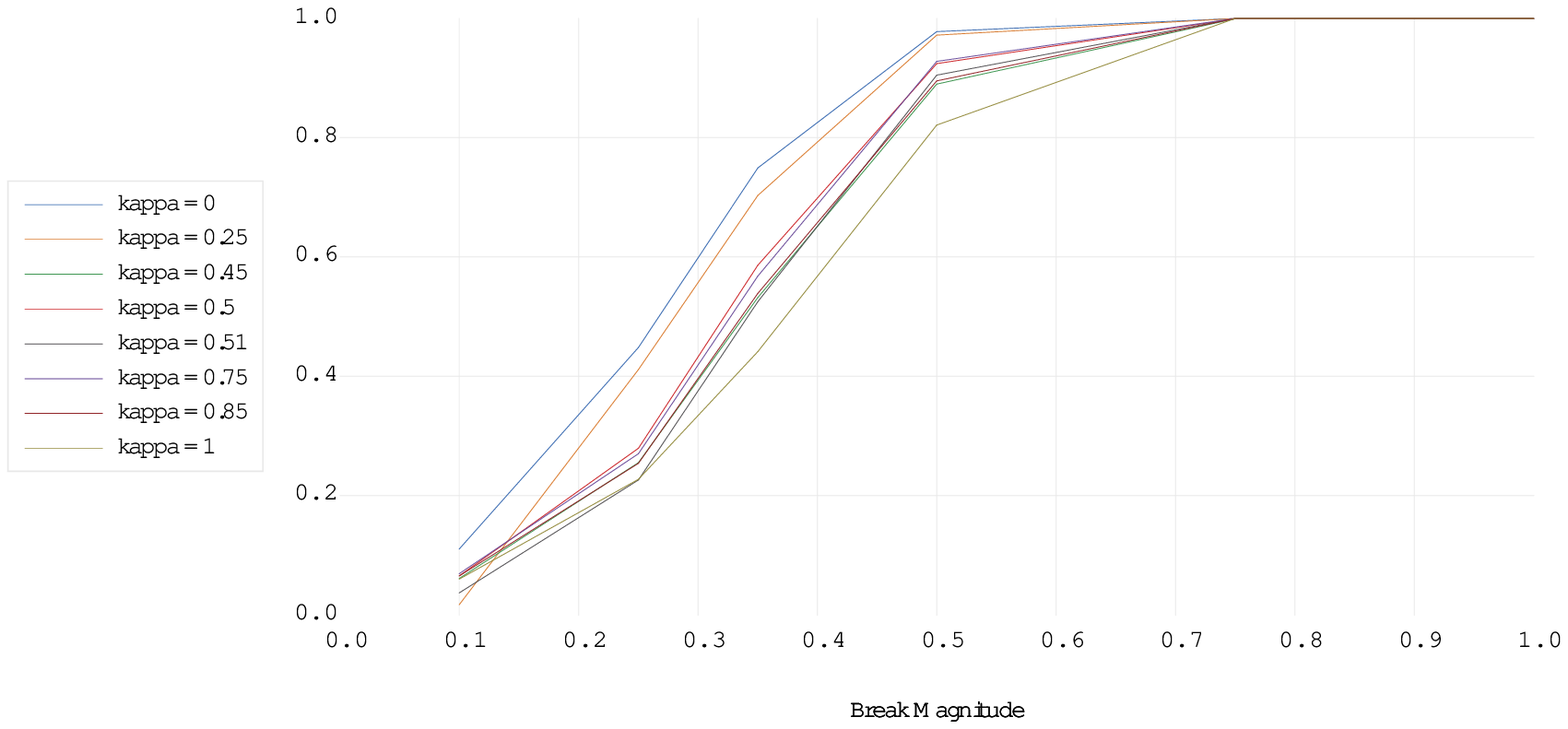}
    \captionof{subfigure}{$\beta_0=0.75$}
    \label{fig:t12}
\end{minipage} \\[0.25cm]
\par
\hspace{-2.5cm} 
\begin{minipage}{0.4\textwidth}
\centering
    \includegraphics[scale=0.4]{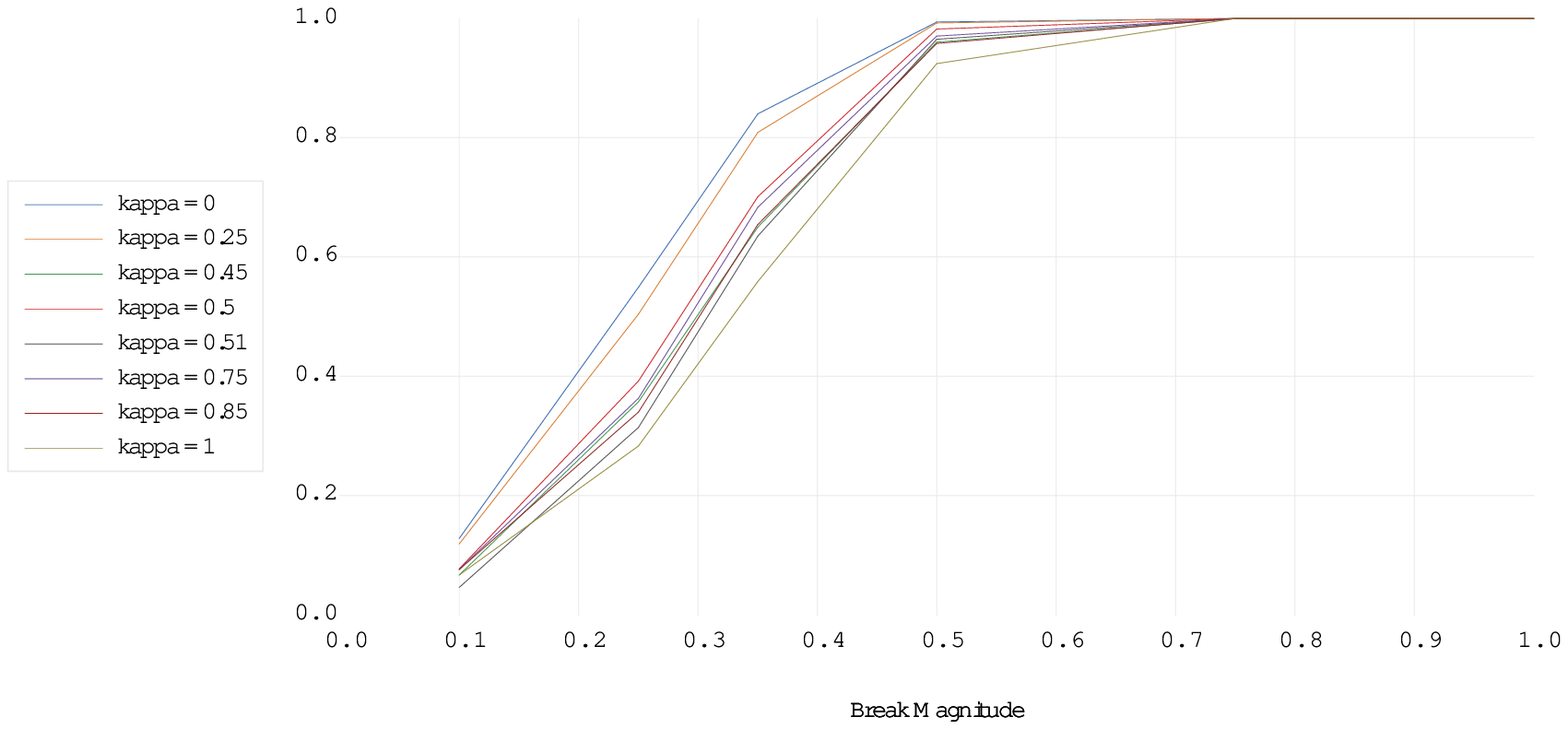}
    \captionof{subfigure}{$\beta_0=1$}
    \label{fig:t13}
\end{minipage}%
\begin{minipage}{0.4\textwidth}
\centering
   \includegraphics[scale=0.4]{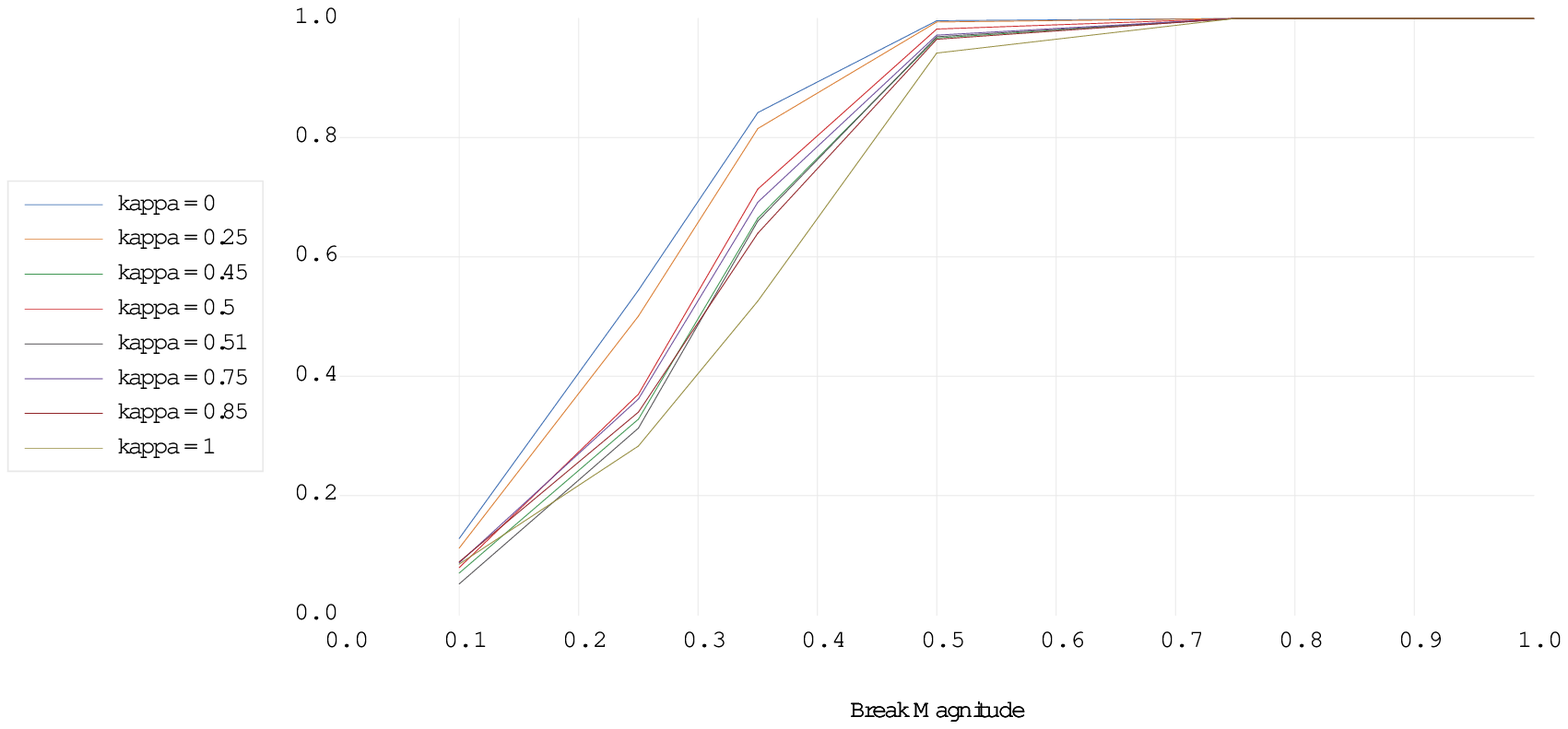}
    \captionof{subfigure}{$\beta_0=1.05$}
    \label{fig:t14}
\end{minipage} \\[0.25cm]
\par
\end{figure}

\begin{figure}[!b]
\caption{Empirical rejection frequencies under alternatives -
heteroskedasticity in $\protect\epsilon_{i,2}$ and mid-sample break}
\label{fig:FigHeE}\centering
\hspace{-2.5cm} 
\begin{minipage}{0.4\textwidth}
\centering
    \includegraphics[scale=0.4]{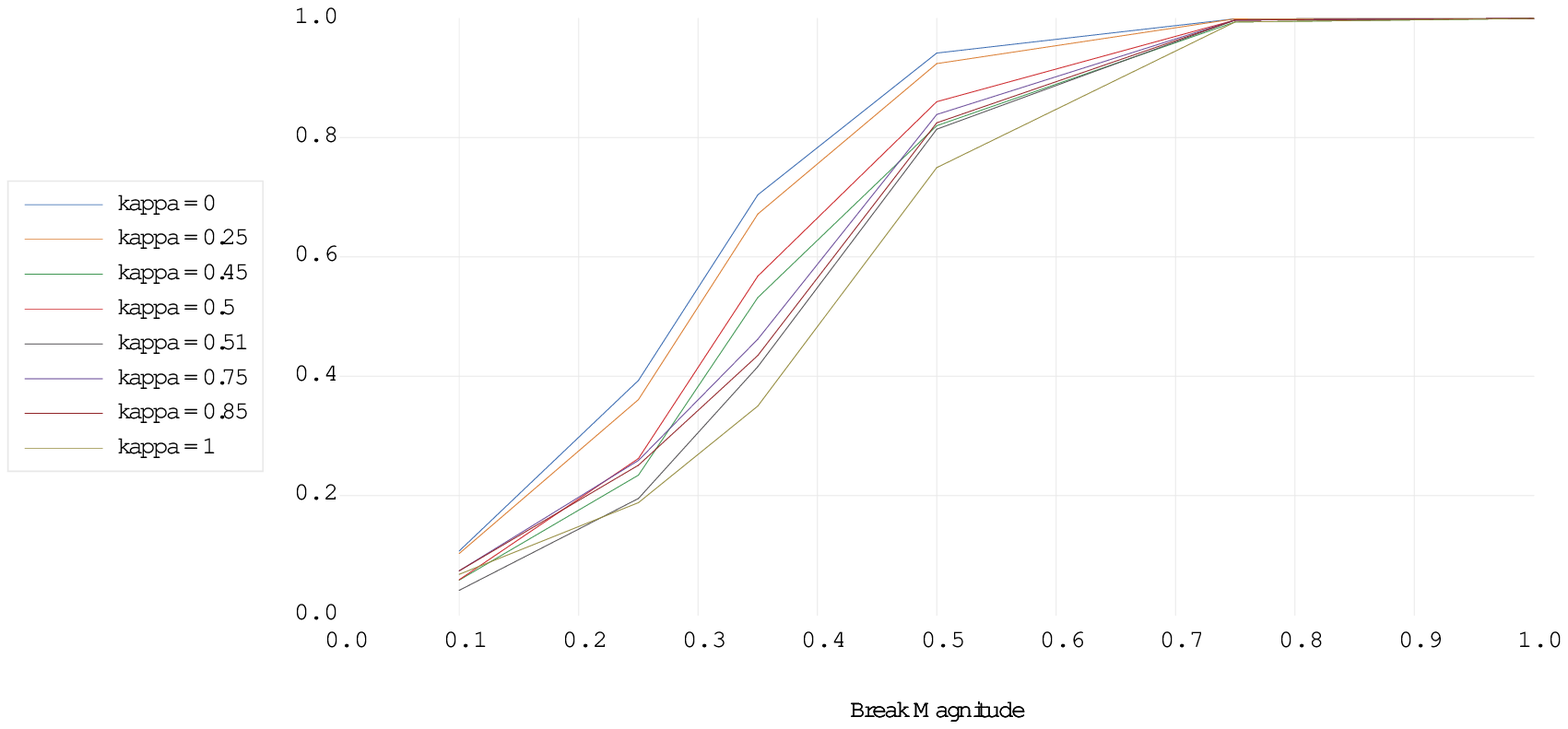}
    \captionof{subfigure}{$\beta_0=0.5$}
    \label{fig:t21}
\end{minipage}%
\begin{minipage}{0.4\textwidth}
\centering
   \includegraphics[scale=0.4]{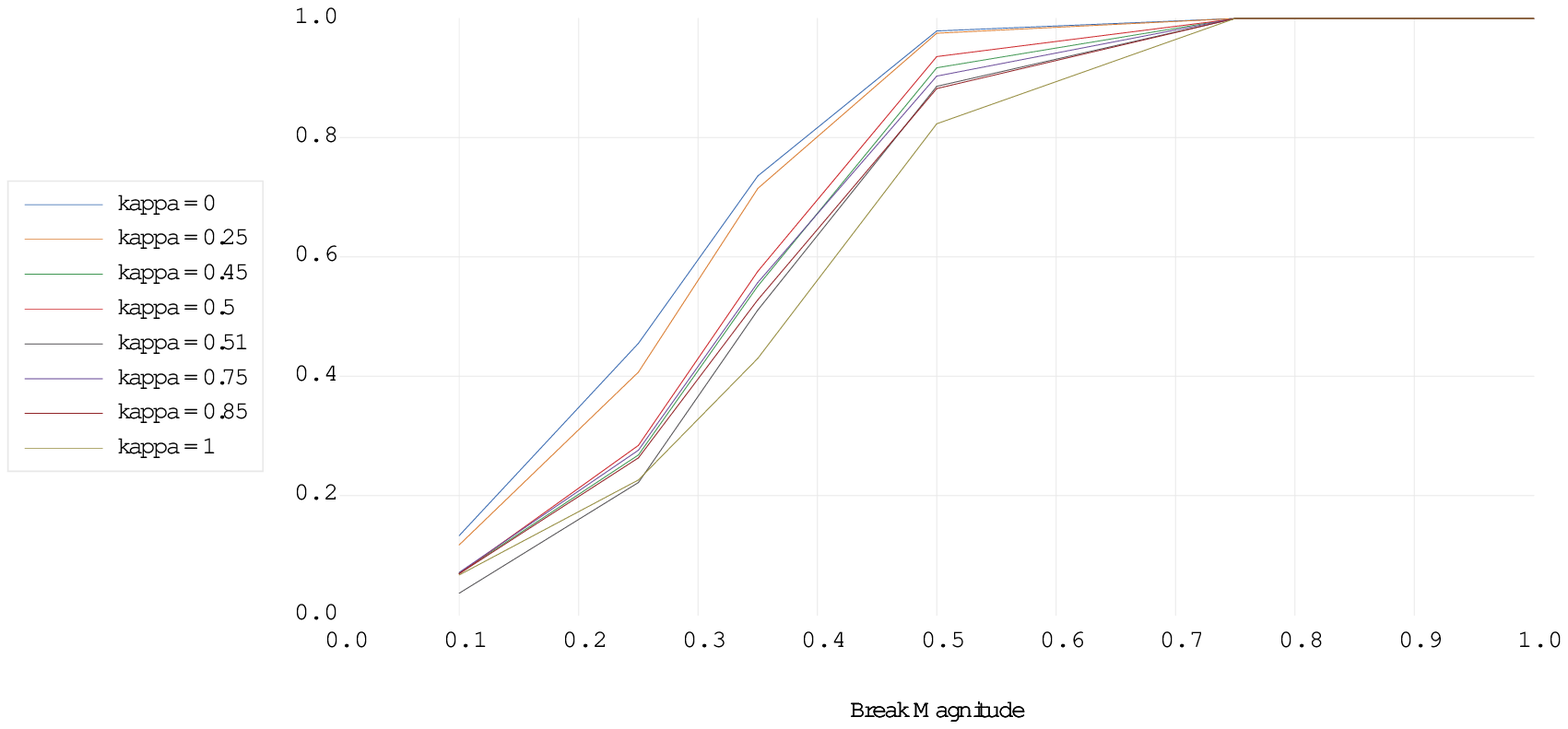}
    \captionof{subfigure}{$\beta_0=0.75$}
    \label{fig:t22}
\end{minipage} \\[0.25cm]
\par
\hspace{-2.5cm} 
\begin{minipage}{0.4\textwidth}
\centering
    \includegraphics[scale=0.4]{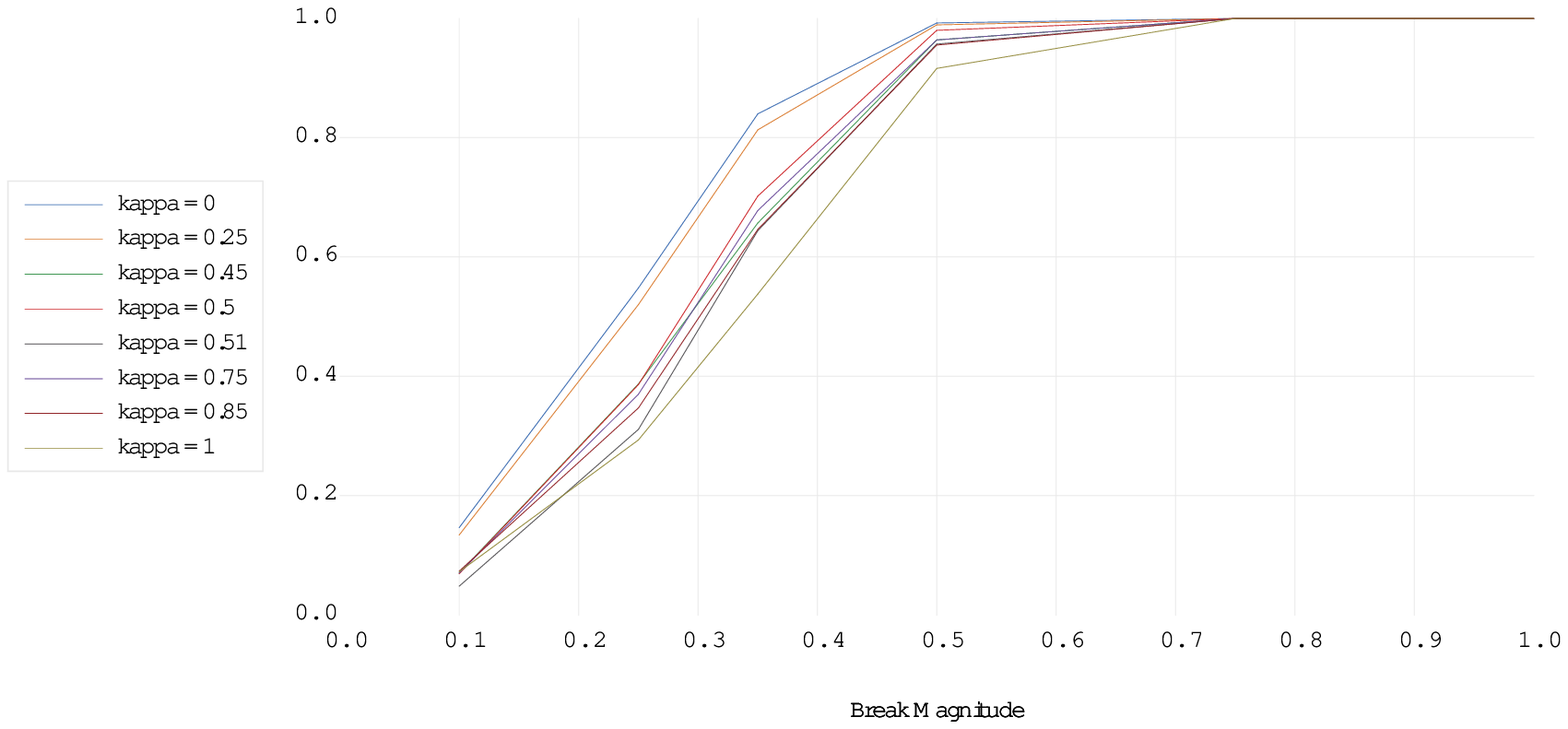}
   \captionof{subfigure}{$\beta_0=1$}
    \label{fig:t23}
\end{minipage}%
\begin{minipage}{0.4\textwidth}
\centering
   \includegraphics[scale=0.4]{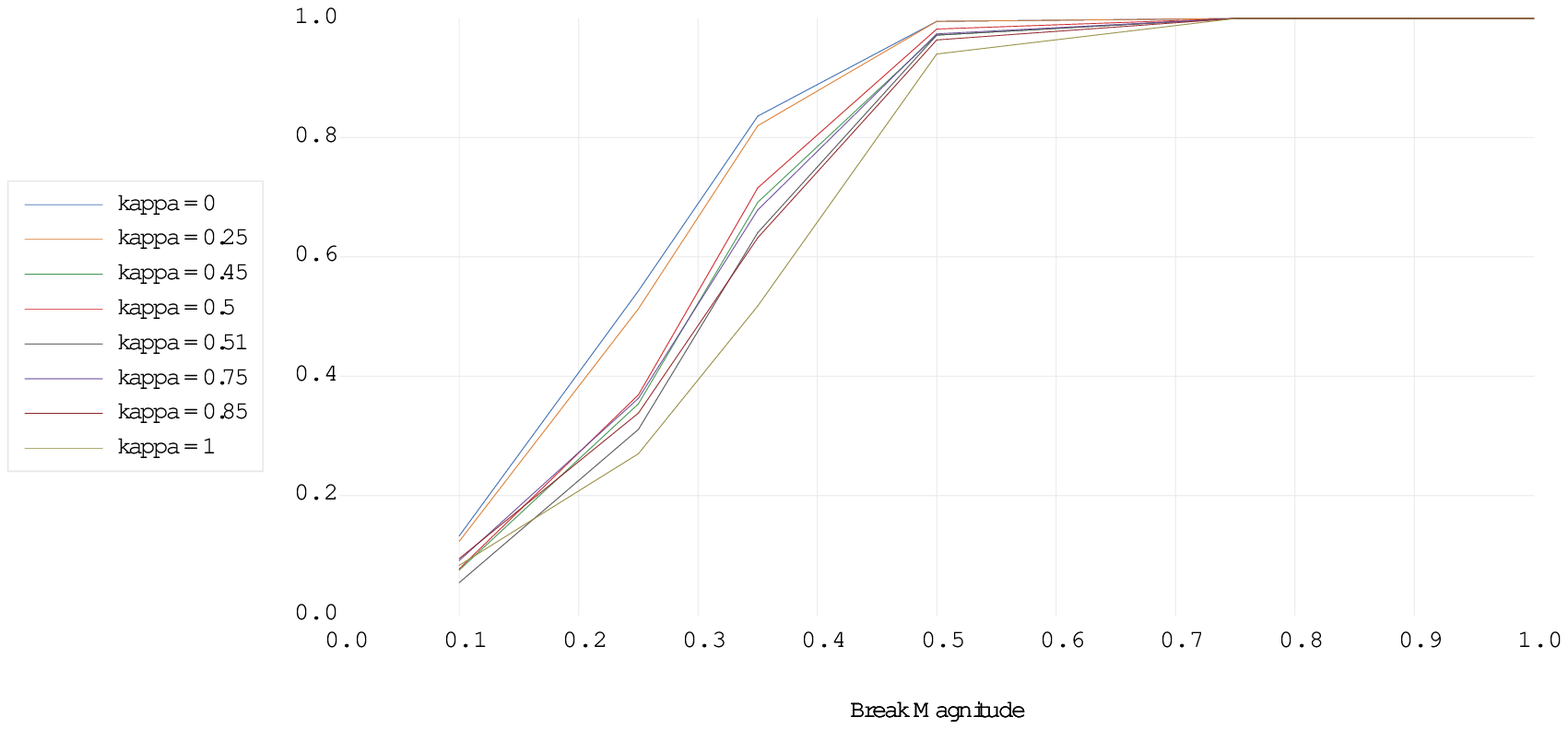}
    \captionof{subfigure}{$\beta_0=1.05$}
    \label{fig:t24}
\end{minipage} \\[0.25cm]
\par
\end{figure}

\begin{figure}[!t]
\caption{Empirical rejection frequencies under alternatives -
heteroskedasticity in $\protect\epsilon_{i,1}$ and mid-sample break}
\label{fig:FigHeB}\centering
\hspace{-2.5cm} 
\begin{minipage}{0.4\textwidth}
\centering
    \includegraphics[scale=0.4]{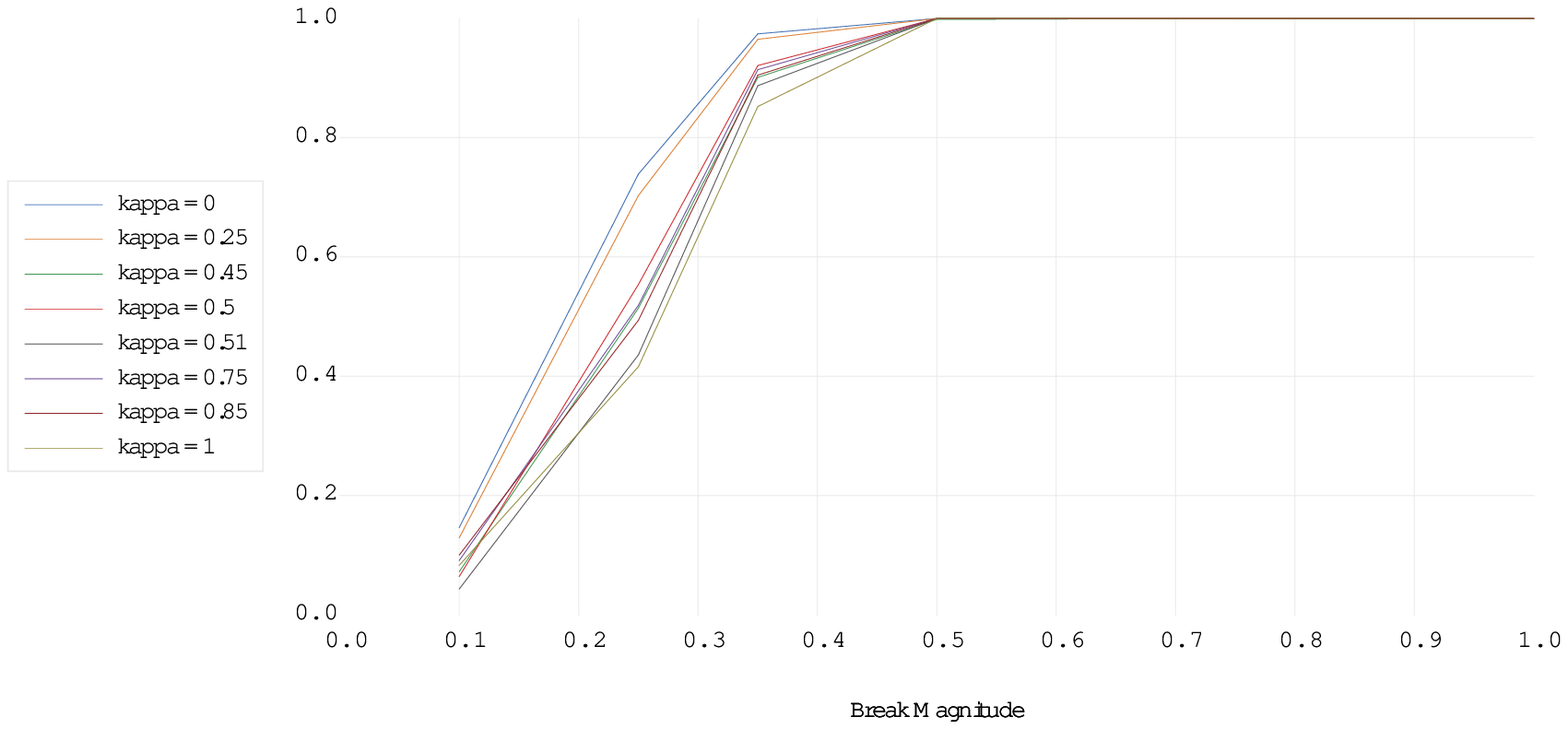}
    \captionof{subfigure}{$\beta_0=0.5$}
    \label{fig:t31}
\end{minipage}%
\begin{minipage}{0.4\textwidth}
\centering
   \includegraphics[scale=0.4]{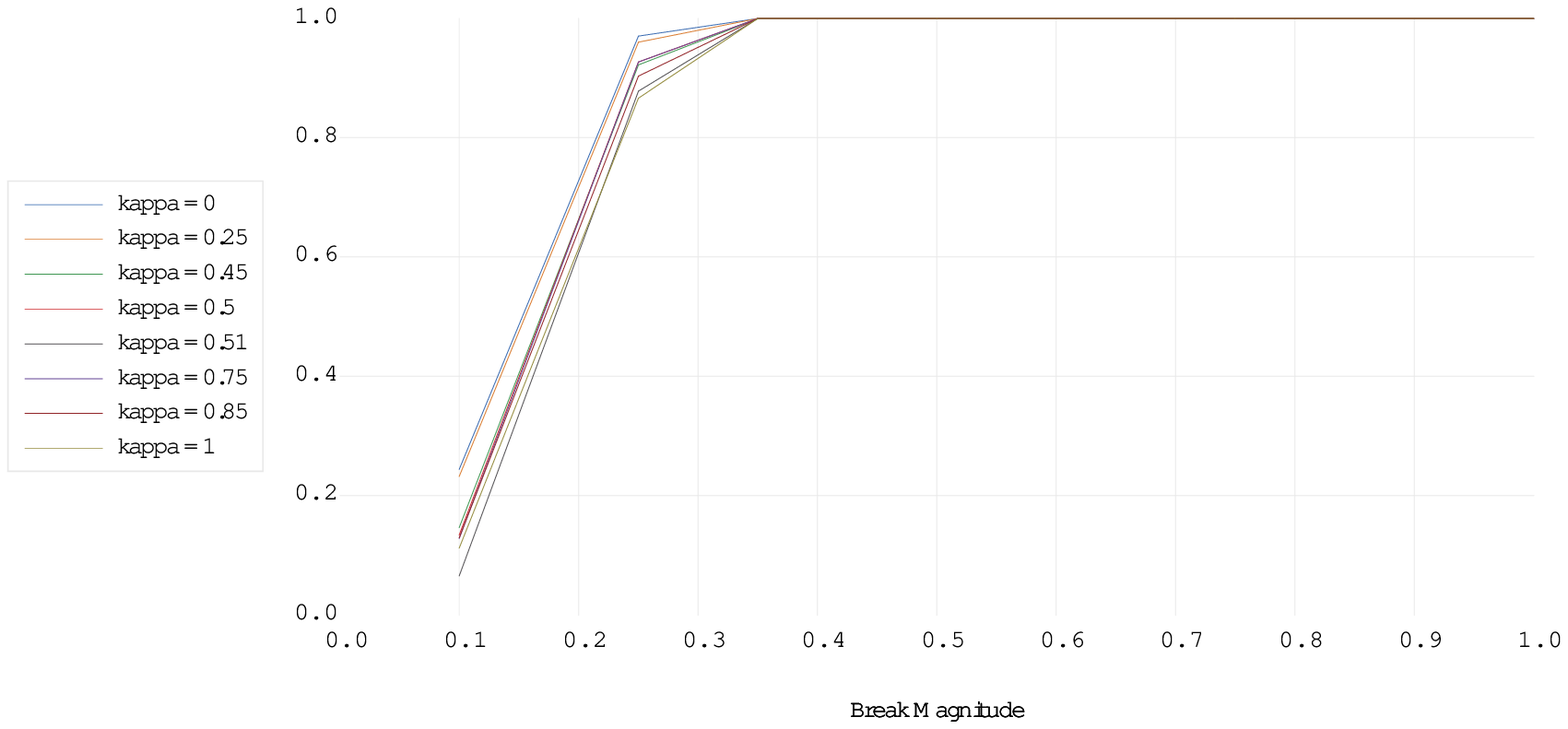}
    \captionof{subfigure}{$\beta_0=0.75$}
    \label{fig:t32}
\end{minipage} \\[0.25cm]
\par
\hspace{-2.5cm} 
\begin{minipage}{0.4\textwidth}
\centering
    \includegraphics[scale=0.4]{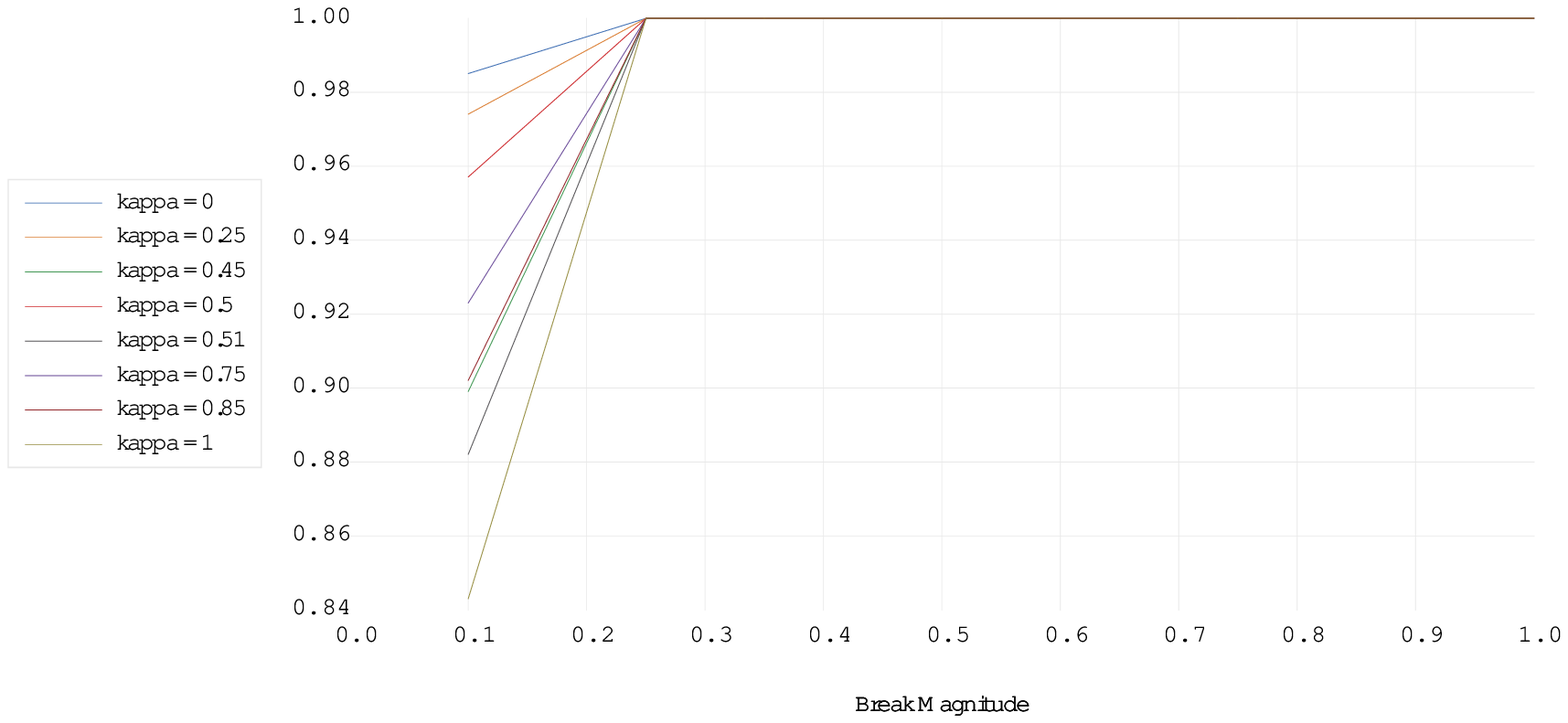}
    \captionof{subfigure}{$\beta_0=1$}
    \label{fig:t33}
\end{minipage}%
\begin{minipage}{0.4\textwidth}
\centering
   \includegraphics[scale=0.4]{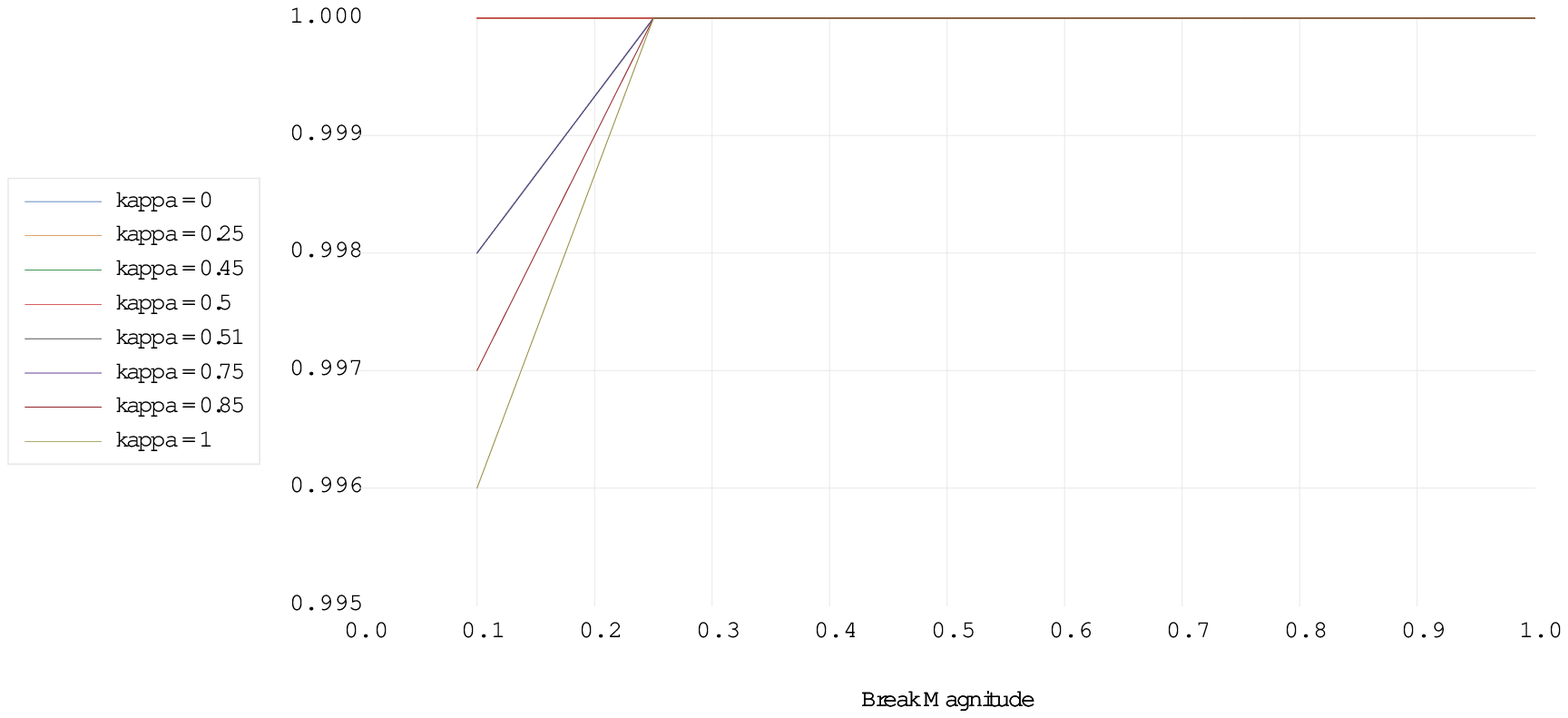}
   \captionof{subfigure}{$\beta_0=1.05$}
    \label{fig:t34}
\end{minipage} \\[0.25cm]
\par
\end{figure}

\begin{figure}[!b]
\caption{Empirical rejection frequencies under alternatives -
heteroskedasticity in $\protect\epsilon_{i,1}$ and $\protect\epsilon_{i,2}$
and mid-sample break}
\label{fig:FigHeEB}\centering
\hspace{-2.5cm} 
\begin{minipage}{0.4\textwidth}
\centering
    \includegraphics[scale=0.4]{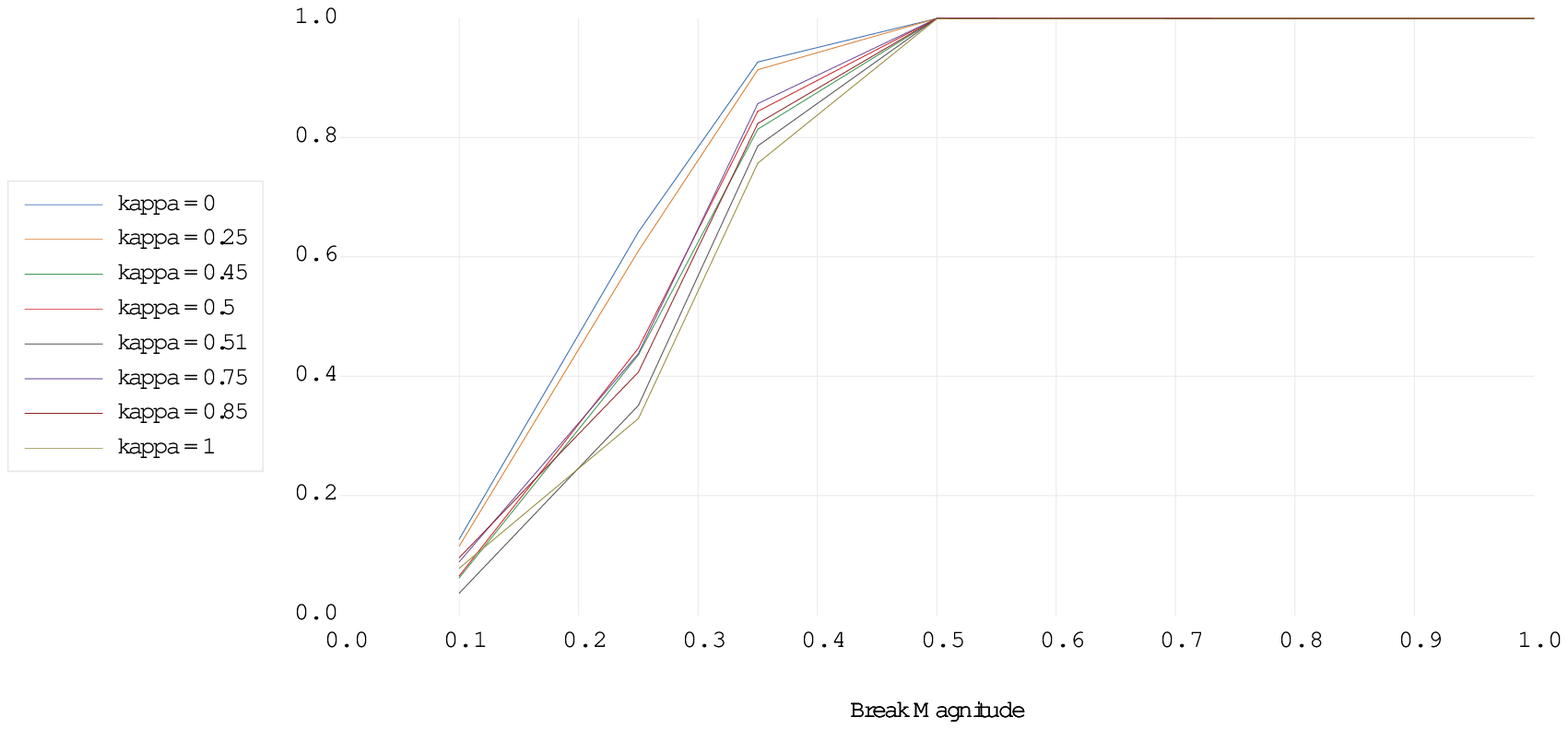}
    \captionof{subfigure}{$\beta_0=0.5$}
    \label{fig:t41}
\end{minipage}%
\begin{minipage}{0.4\textwidth}
\centering
   \includegraphics[scale=0.4]{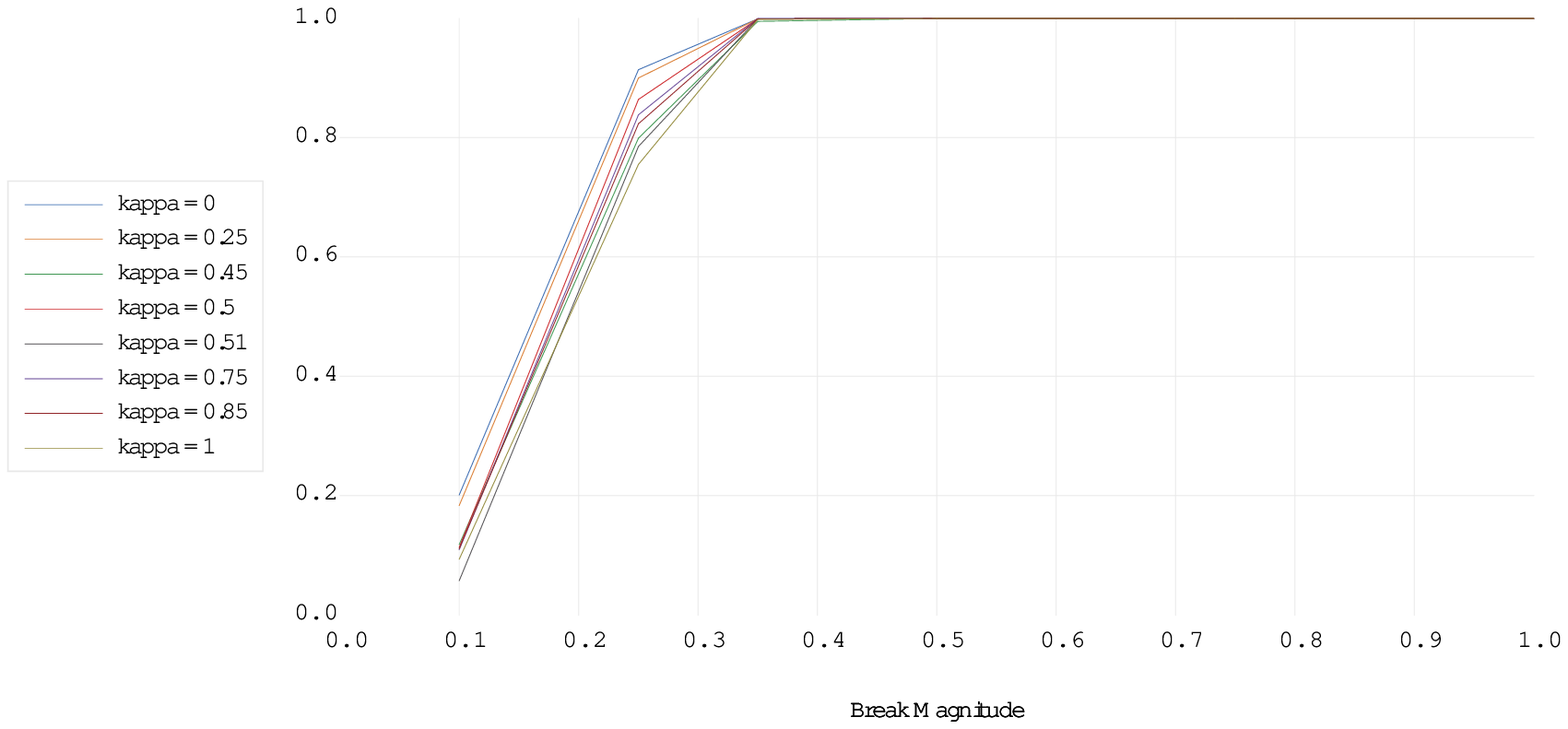}
    \captionof{subfigure}{$\beta_0=0.75$}
    \label{fig:t42}
\end{minipage} \\[0.25cm]
\par
\hspace{-2.5cm} 
\begin{minipage}{0.4\textwidth}
\centering
    \includegraphics[scale=0.4]{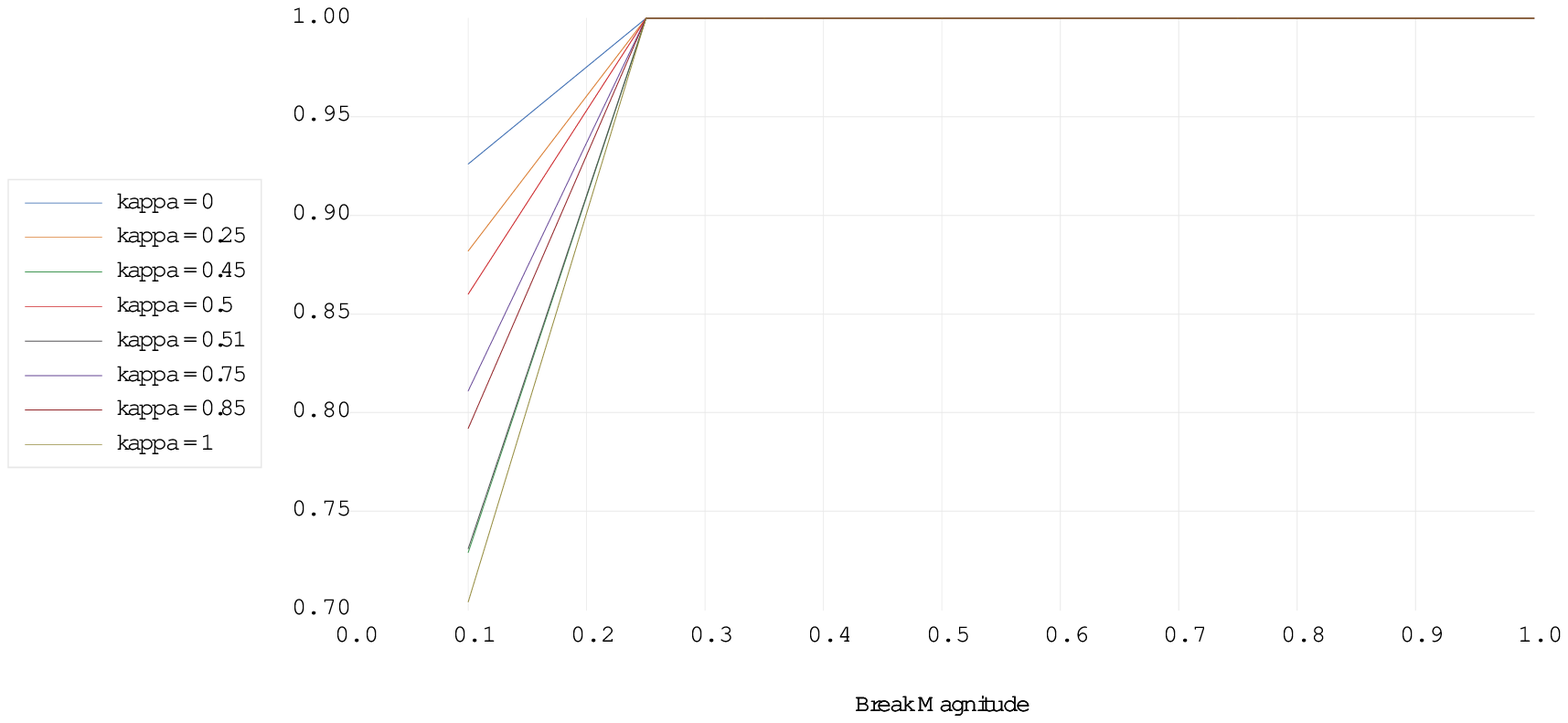}
    \captionof{subfigure}{$\beta_0=1$}
    \label{fig:t43}
\end{minipage}%
\begin{minipage}{0.4\textwidth}
\centering
   \includegraphics[scale=0.4]{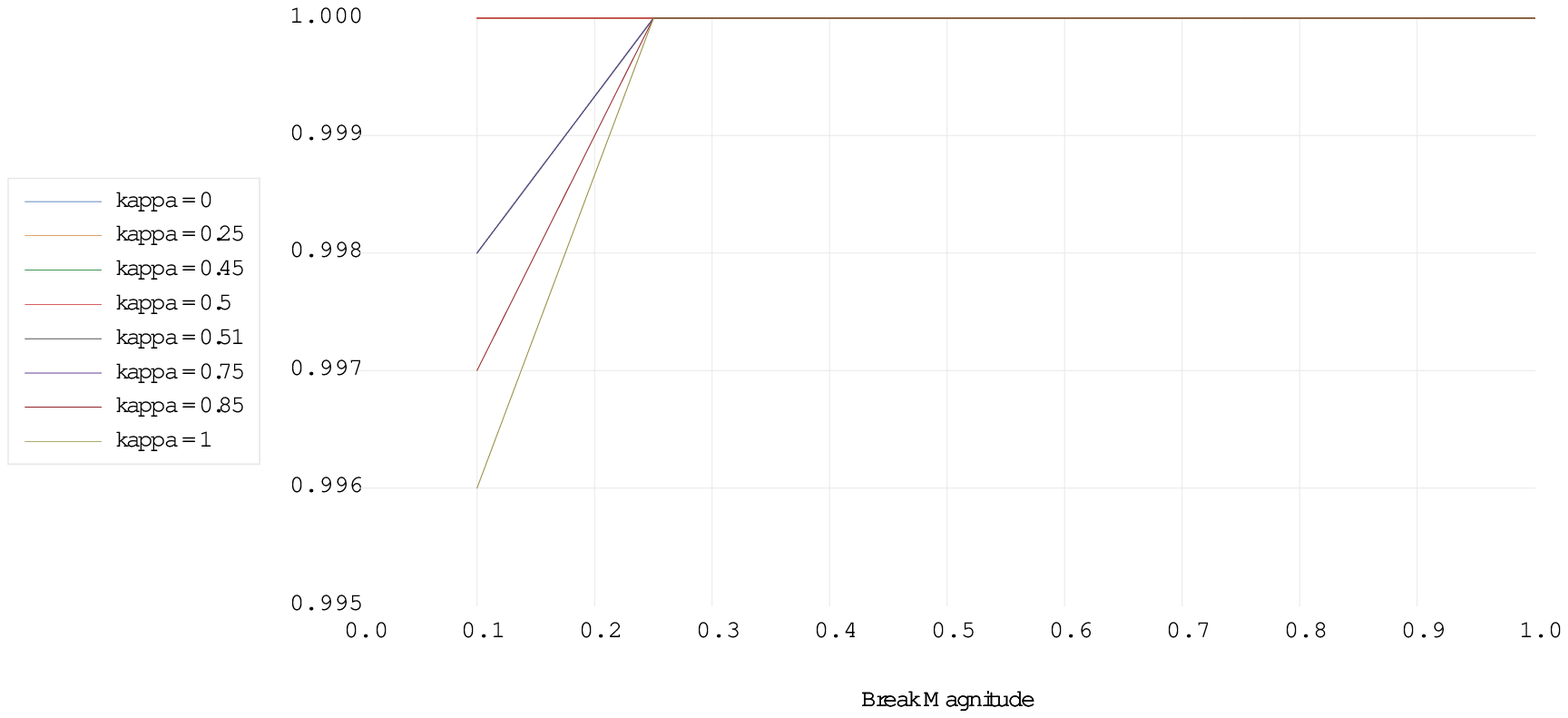}
    \captionof{subfigure}{$\beta_0=1.05$}
    \label{fig:t44}
\end{minipage} \\[0.25cm]
\par
\end{figure}

\begin{figure}[!t]
\caption{Empirical rejection frequencies under alternatives -
homoskedasticity and end-of-sample break}
\label{fig:FigHo2}\centering
\hspace{-2.5cm} 
\begin{minipage}{0.4\textwidth}
\centering
    \includegraphics[scale=0.4]{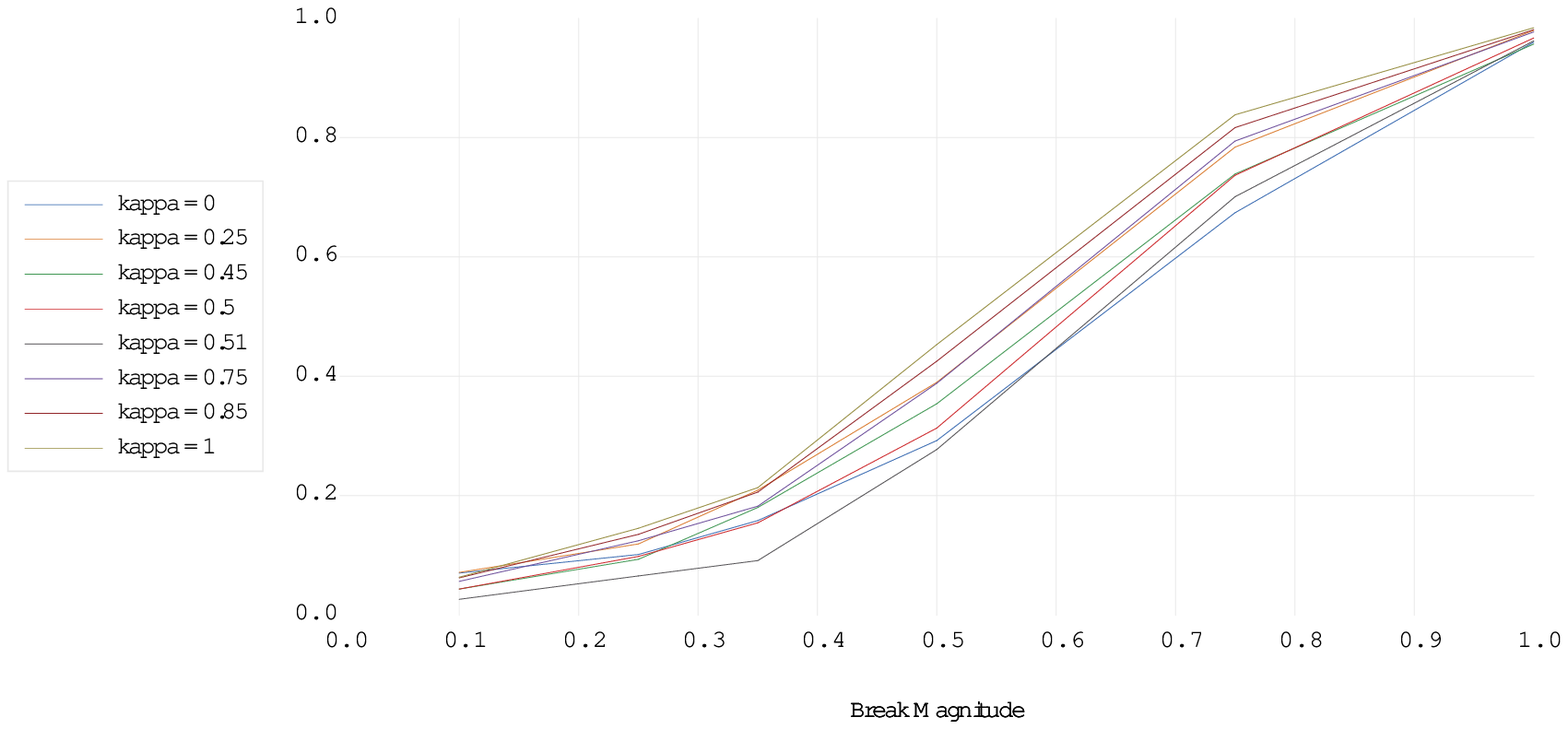}
    \captionof{subfigure}{$\beta_0=0.5$}
    \label{fig:t51}
\end{minipage}%
\begin{minipage}{0.4\textwidth}
\centering
   \includegraphics[scale=0.4]{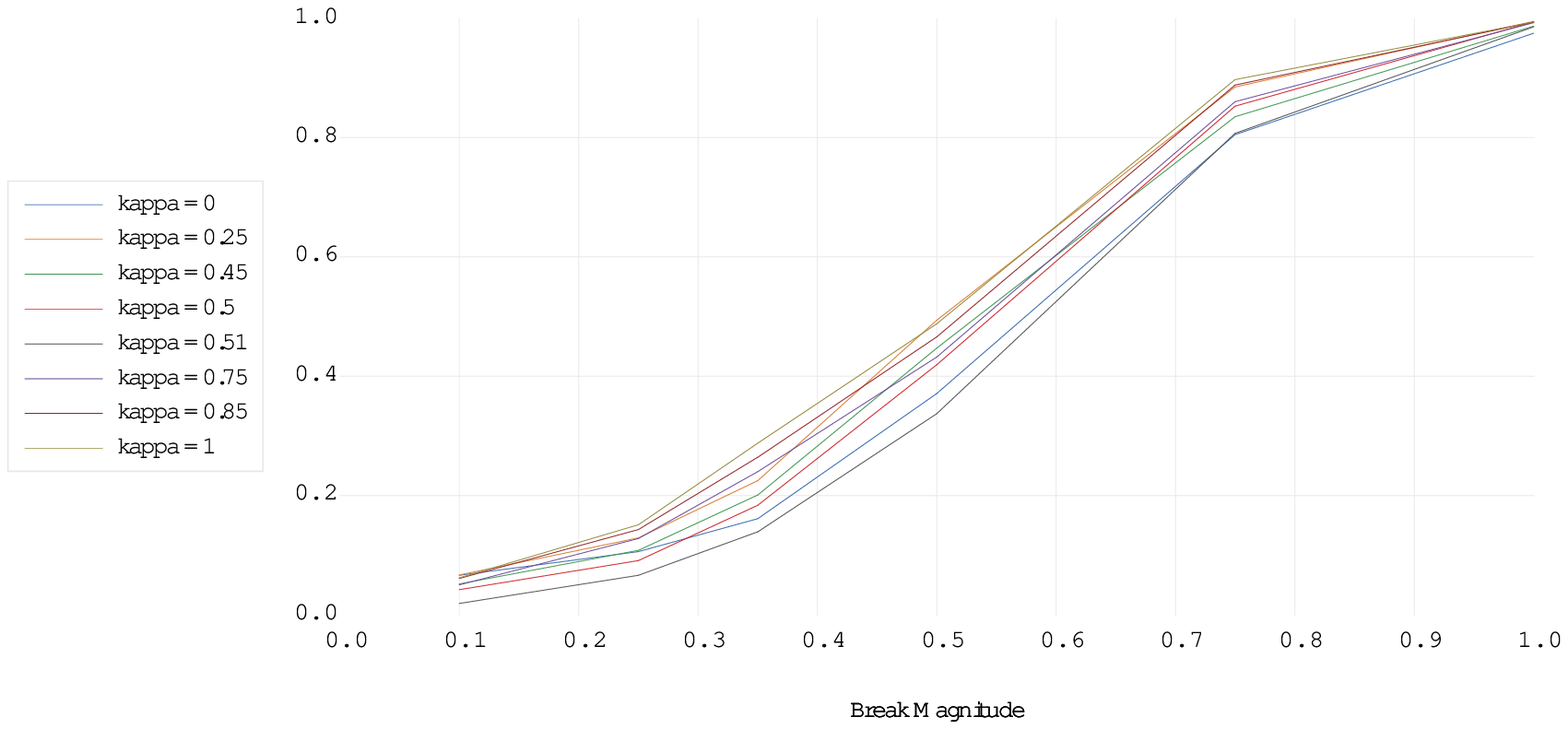}
    \captionof{subfigure}{$\beta_0=0.75$}
    \label{fig:t52}
\end{minipage} \\[0.25cm]
\par
\hspace{-2.5cm} 
\begin{minipage}{0.4\textwidth}
\centering
    \includegraphics[scale=0.4]{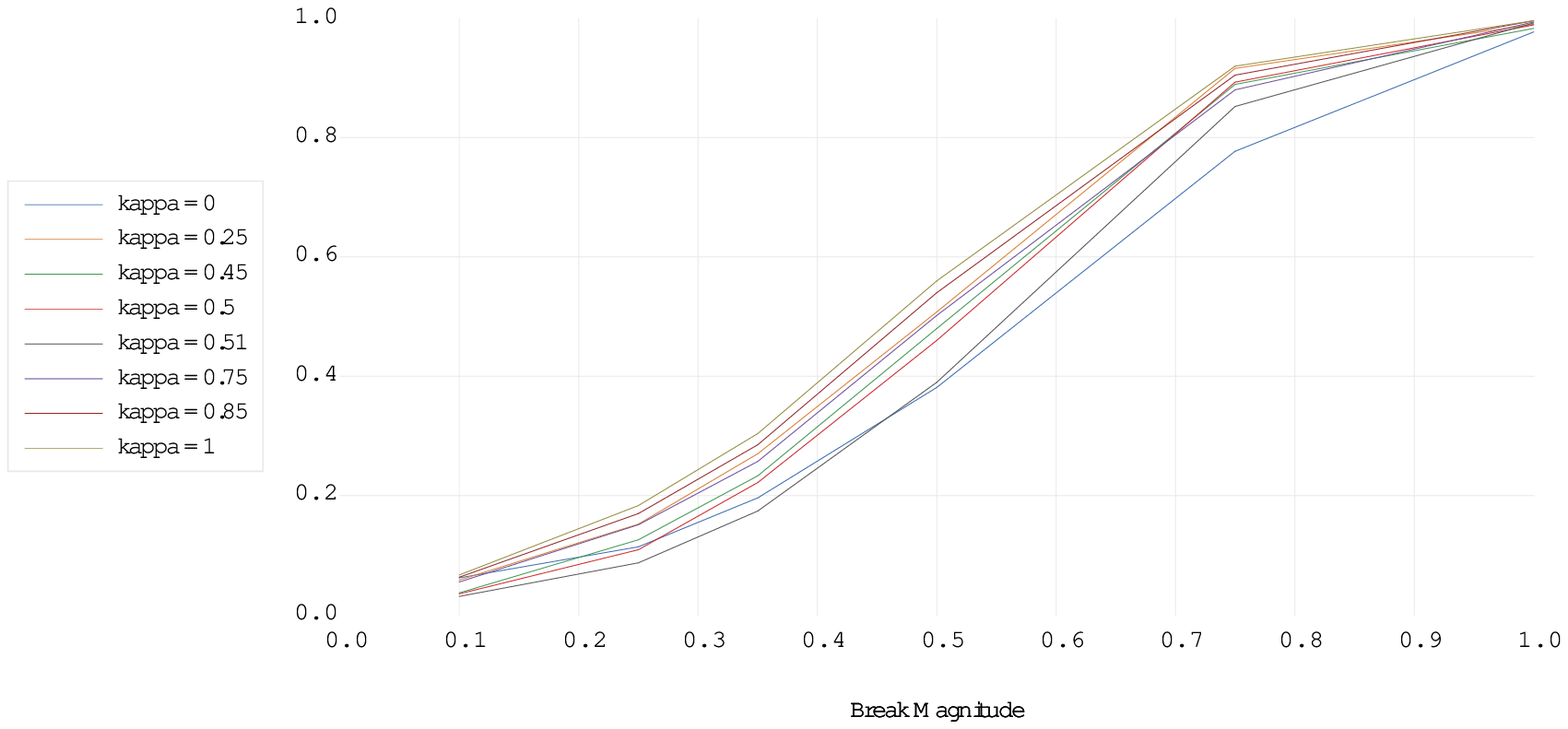}
    \captionof{subfigure}{$\beta_0=1$}
    \label{fig:t53}
\end{minipage}%
\begin{minipage}{0.4\textwidth}
\centering
   \includegraphics[scale=0.4]{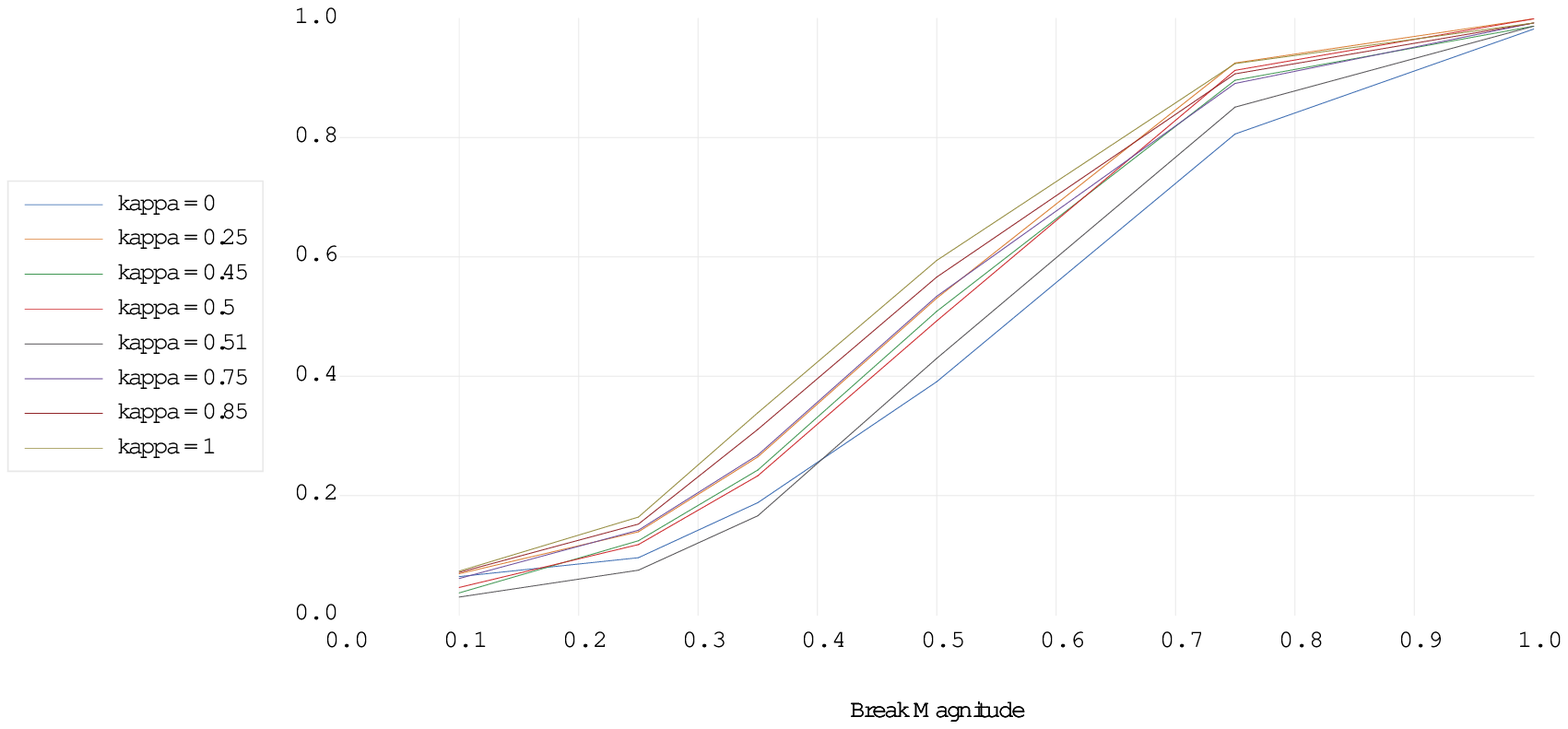}
    \captionof{subfigure}{$\beta_0=1.05$}
    \label{fig:t54}
\end{minipage} \\[0.25cm]
\par
\end{figure}

\begin{figure}[!b]
\caption{Empirical rejection frequencies under alternatives -
heteroskedasticity in $\protect\epsilon_{i,2}$ and end-of-sample break}
\label{fig:FigHeE2}\centering
\hspace{-2.5cm} 
\begin{minipage}{0.4\textwidth}
\centering
    \includegraphics[scale=0.4]{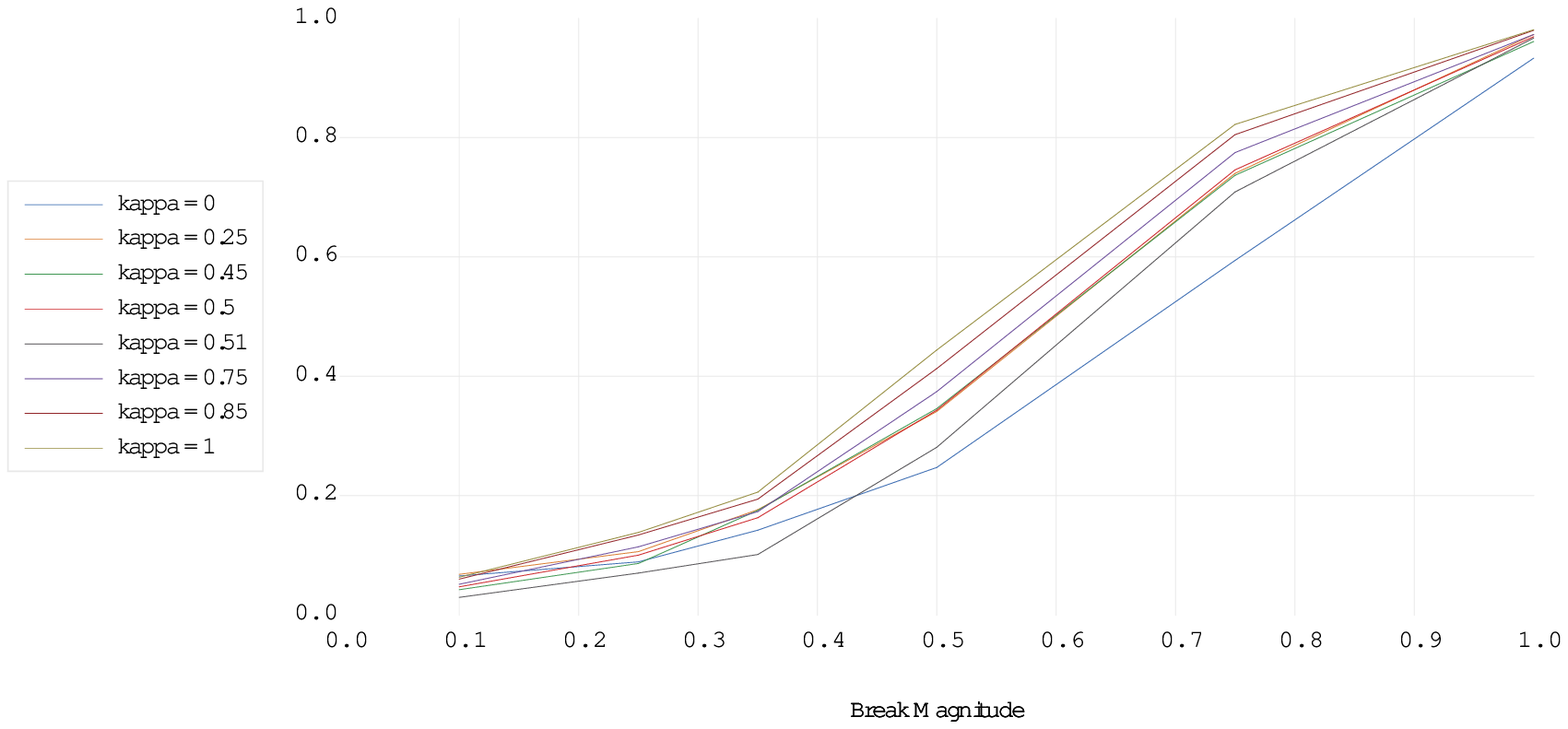}
    \captionof{subfigure}{$\beta_0=0.5$}
    \label{fig:t61}
\end{minipage}%
\begin{minipage}{0.4\textwidth}
\centering
   \includegraphics[scale=0.4]{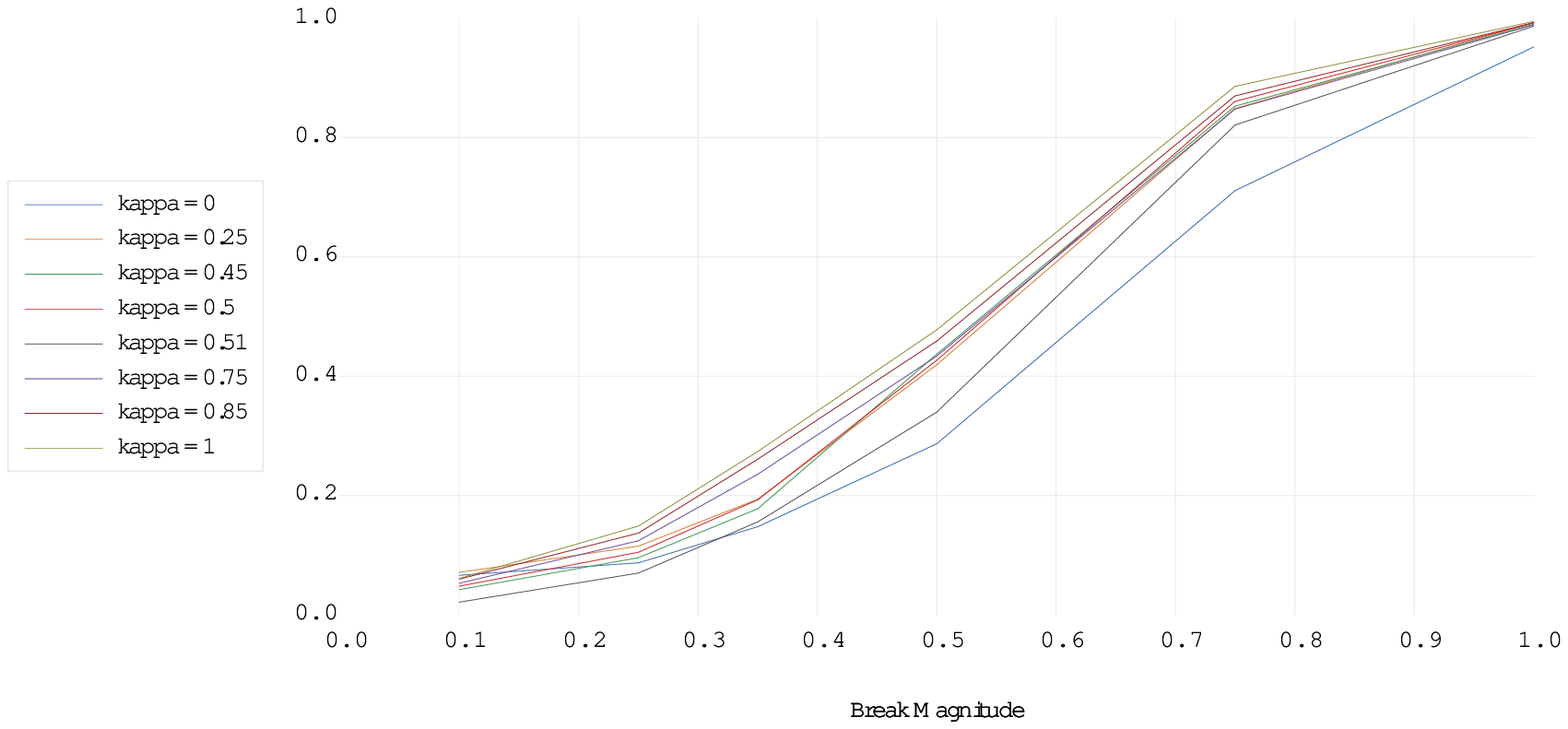}
   \captionof{subfigure}{$\beta_0=0.75$}
    \label{fig:t62}
\end{minipage} \\[0.25cm]
\par
\hspace{-2.5cm} 
\begin{minipage}{0.4\textwidth}
\centering
    \includegraphics[scale=0.4]{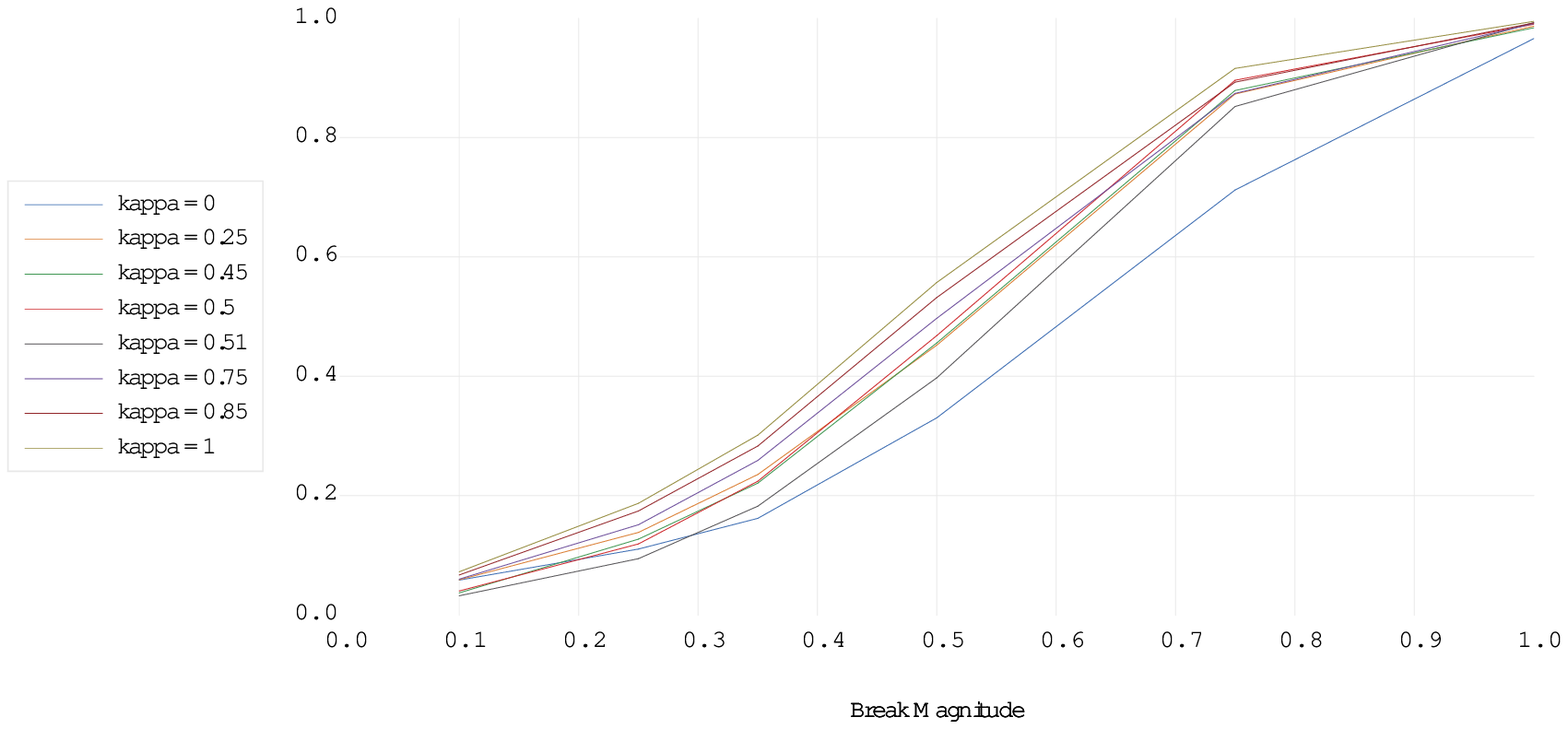}
    \captionof{subfigure}{$\beta_0=1$}
    \label{fig:t63}
\end{minipage}%
\begin{minipage}{0.4\textwidth}
\centering
   \includegraphics[scale=0.4]{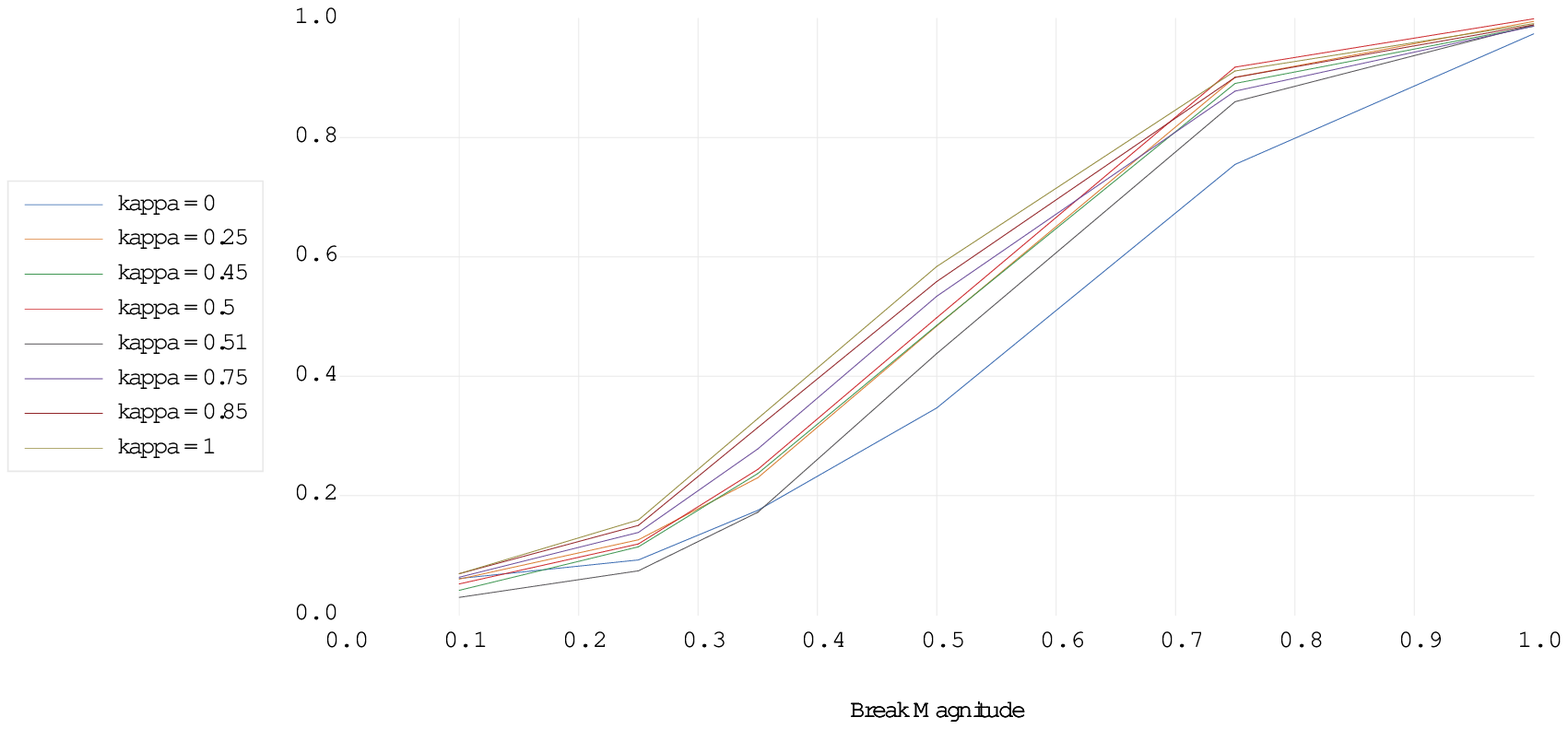}
    \captionof{subfigure}{$\beta_0=1.05$}
    \label{fig:t64}
\end{minipage} \\[0.25cm]
\par
\end{figure}

\begin{figure}[!t]
\caption{Empirical rejection frequencies under alternatives -
heteroskedasticity in $\protect\epsilon_{i,1}$ and end-of-sample break}
\label{fig:FigHeB2}\centering
\hspace{-2.5cm} 
\begin{minipage}{0.4\textwidth}
\centering
    \includegraphics[scale=0.4]{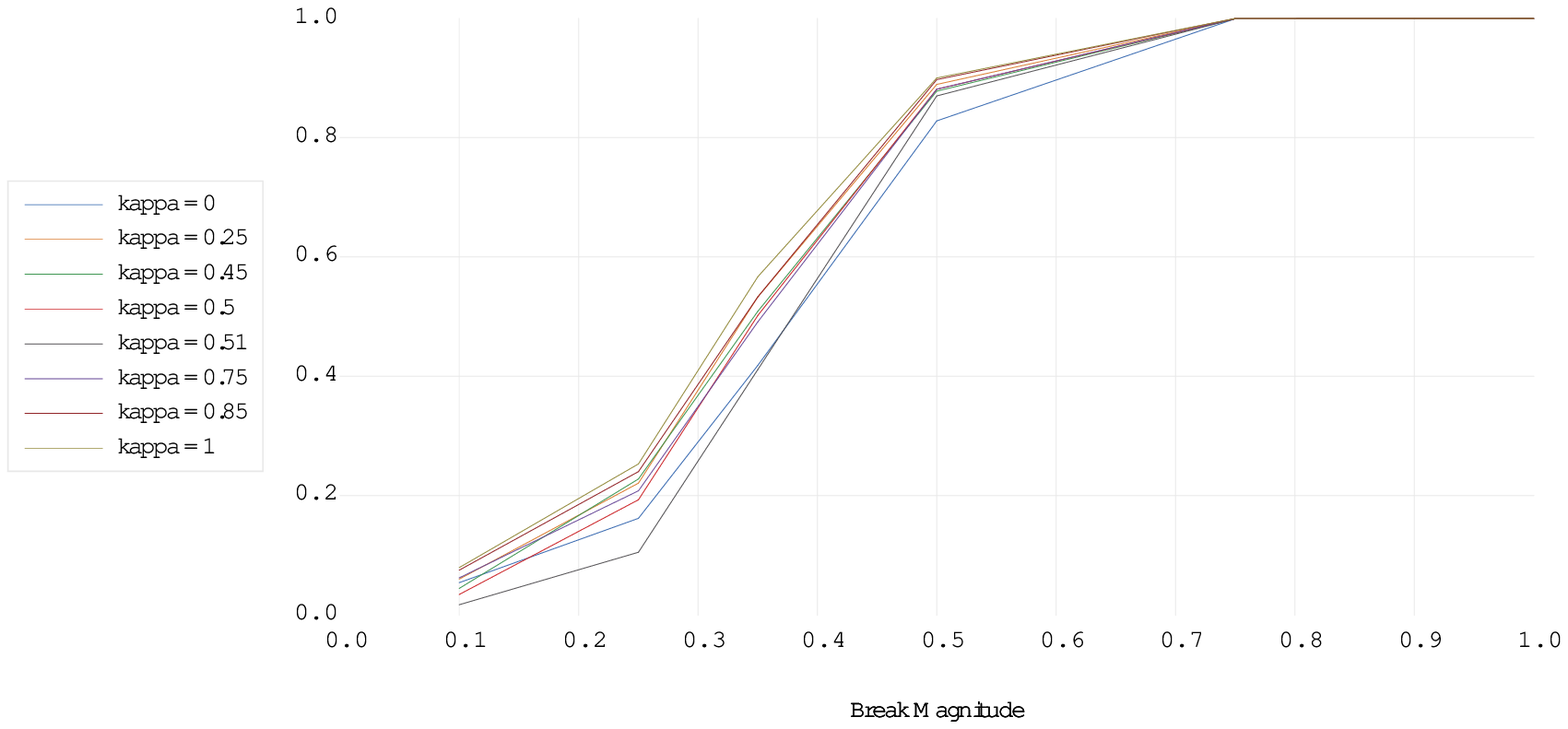}
    \captionof{subfigure}{$\beta_0=0.5$}
    \label{fig:t71}
\end{minipage}%
\begin{minipage}{0.4\textwidth}
\centering
   \includegraphics[scale=0.4]{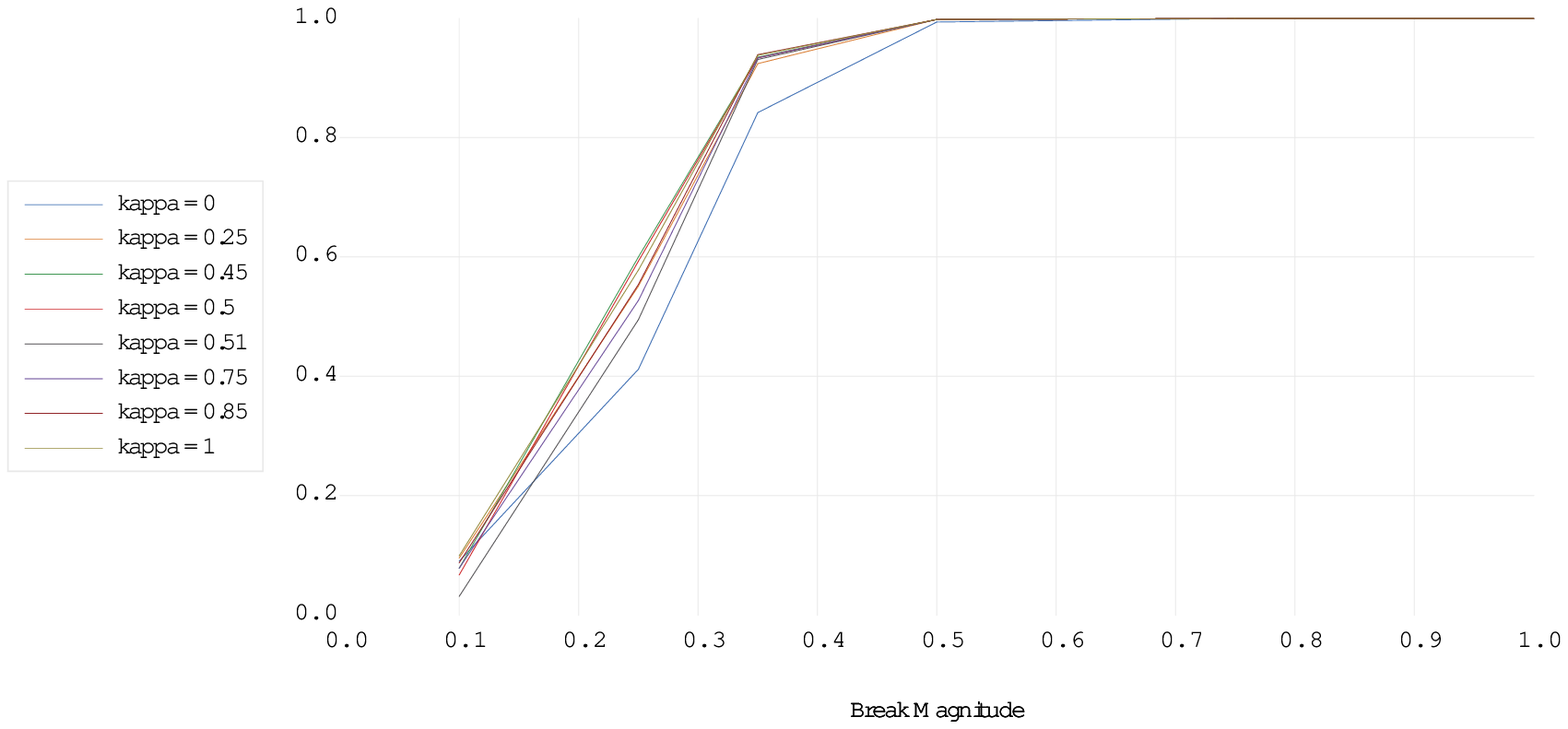}
    \captionof{subfigure}{$\beta_0=0.75$}
    \label{fig:t72}
\end{minipage} \\[0.25cm]
\par
\hspace{-2.5cm} 
\begin{minipage}{0.4\textwidth}
\centering
    \includegraphics[scale=0.4]{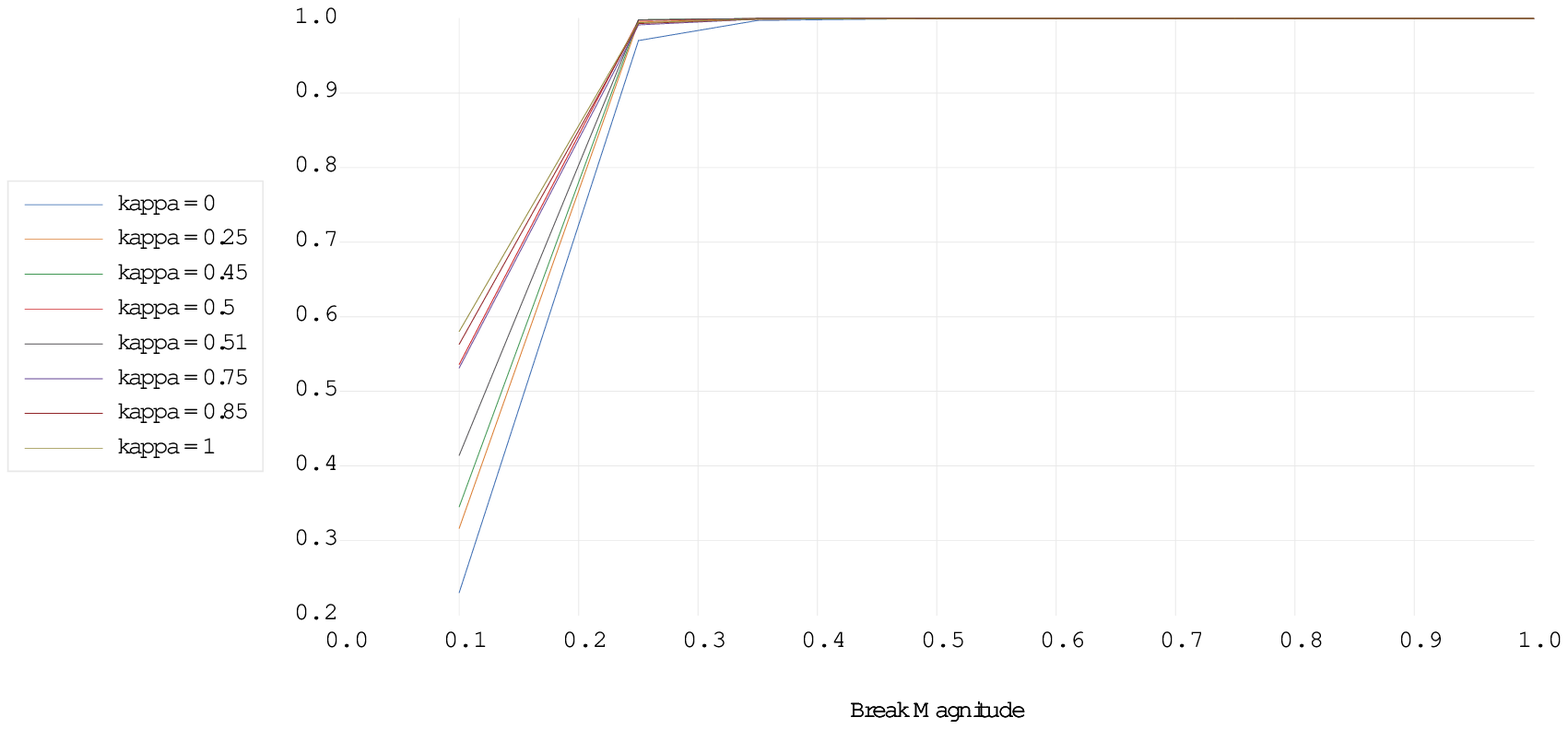}
    \captionof{subfigure}{$\beta_0=1$}
    \label{fig:t73}
\end{minipage}%
\begin{minipage}{0.4\textwidth}
\centering
   \includegraphics[scale=0.4]{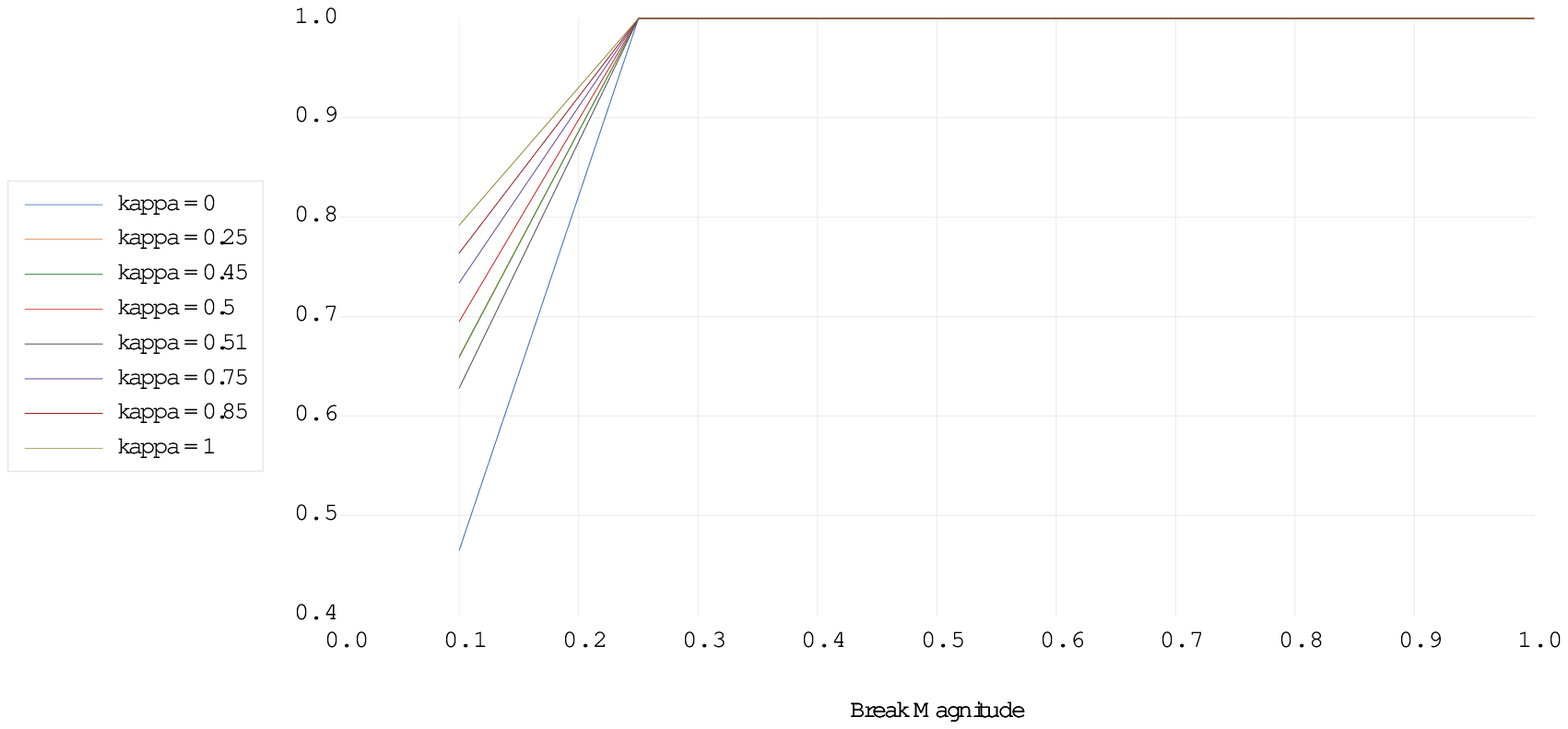}
   \captionof{subfigure}{$\beta_0=1.05$}
    \label{fig:t74}
\end{minipage} \\[0.25cm]
\par
\end{figure}

\begin{figure}[!b]
\caption{Empirical rejection frequencies under alternatives -
heteroskedasticity in $\protect\epsilon_{i,1}$ and $\protect\epsilon_{i,2}$
and end-of-sample break}
\label{fig:FigHeEB2}\centering
\hspace{-2.5cm} 
\begin{minipage}{0.4\textwidth}
\centering
    \includegraphics[scale=0.4]{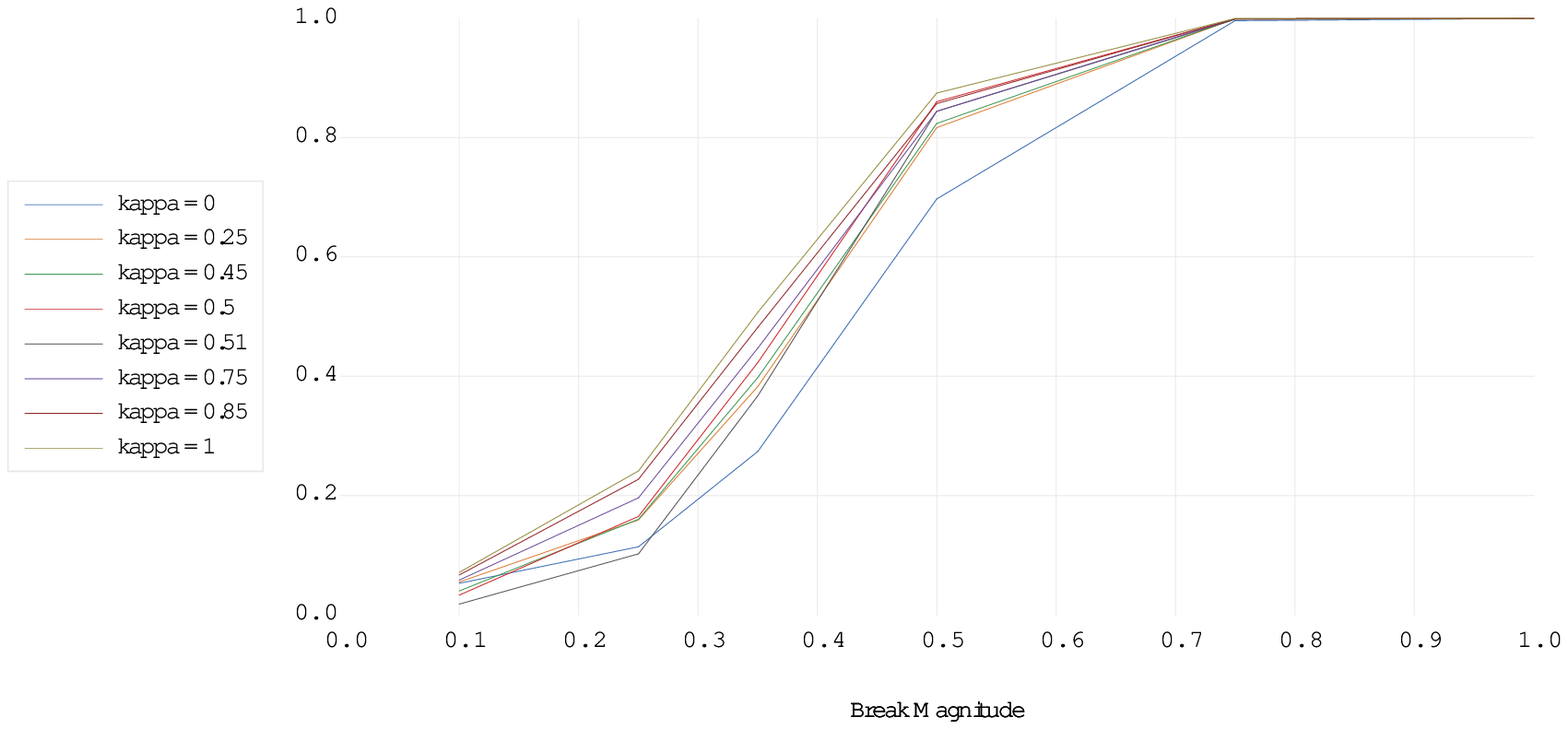}
    \captionof{subfigure}{$\beta_0=0.5$}
    \label{fig:t81}
\end{minipage}%
\begin{minipage}{0.4\textwidth}
\centering
   \includegraphics[scale=0.4]{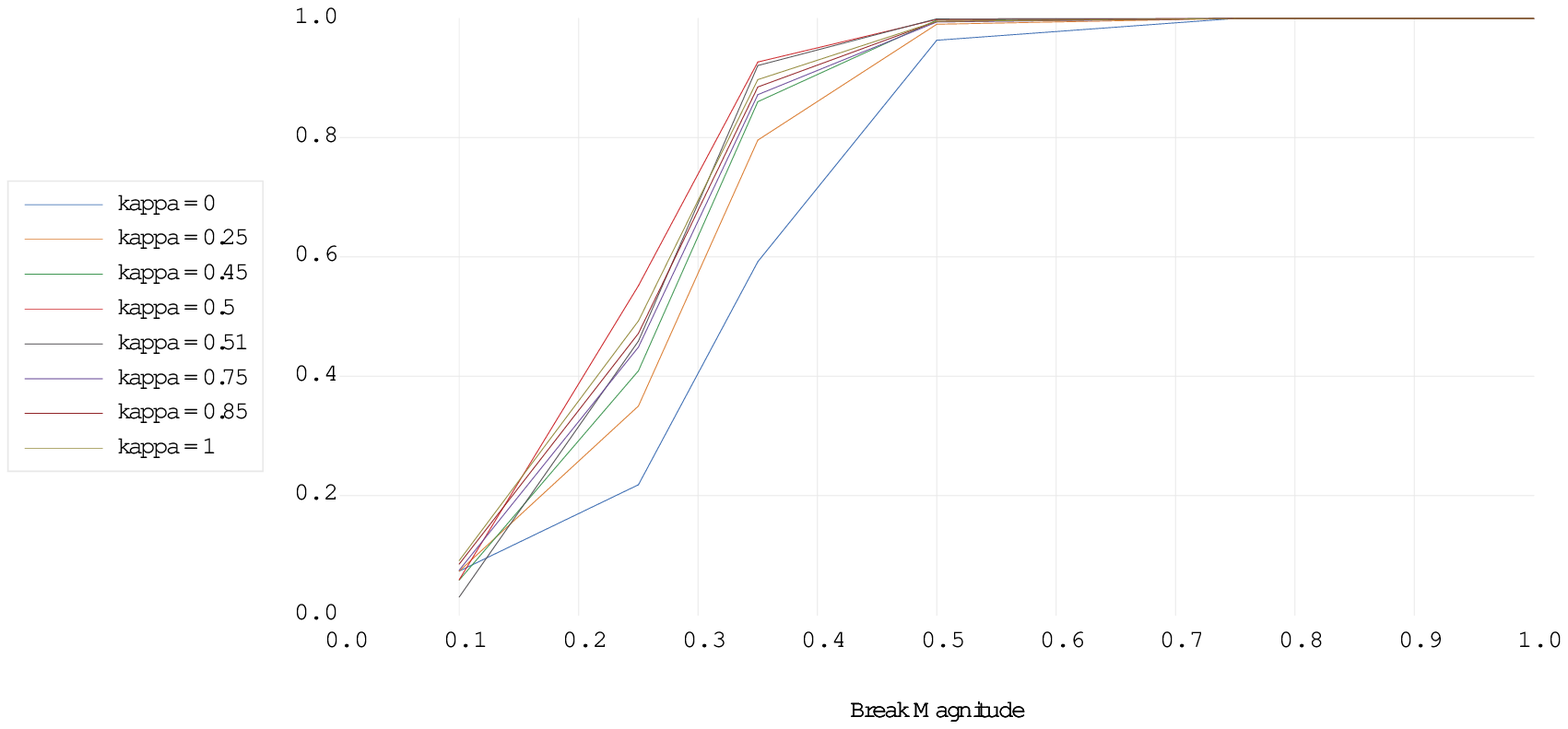}
    \captionof{subfigure}{$\beta_0=0.75$}
    \label{fig:t82}
\end{minipage} \\[0.25cm]
\par
\hspace{-2.5cm} 
\begin{minipage}{0.4\textwidth}
\centering
    \includegraphics[scale=0.4]{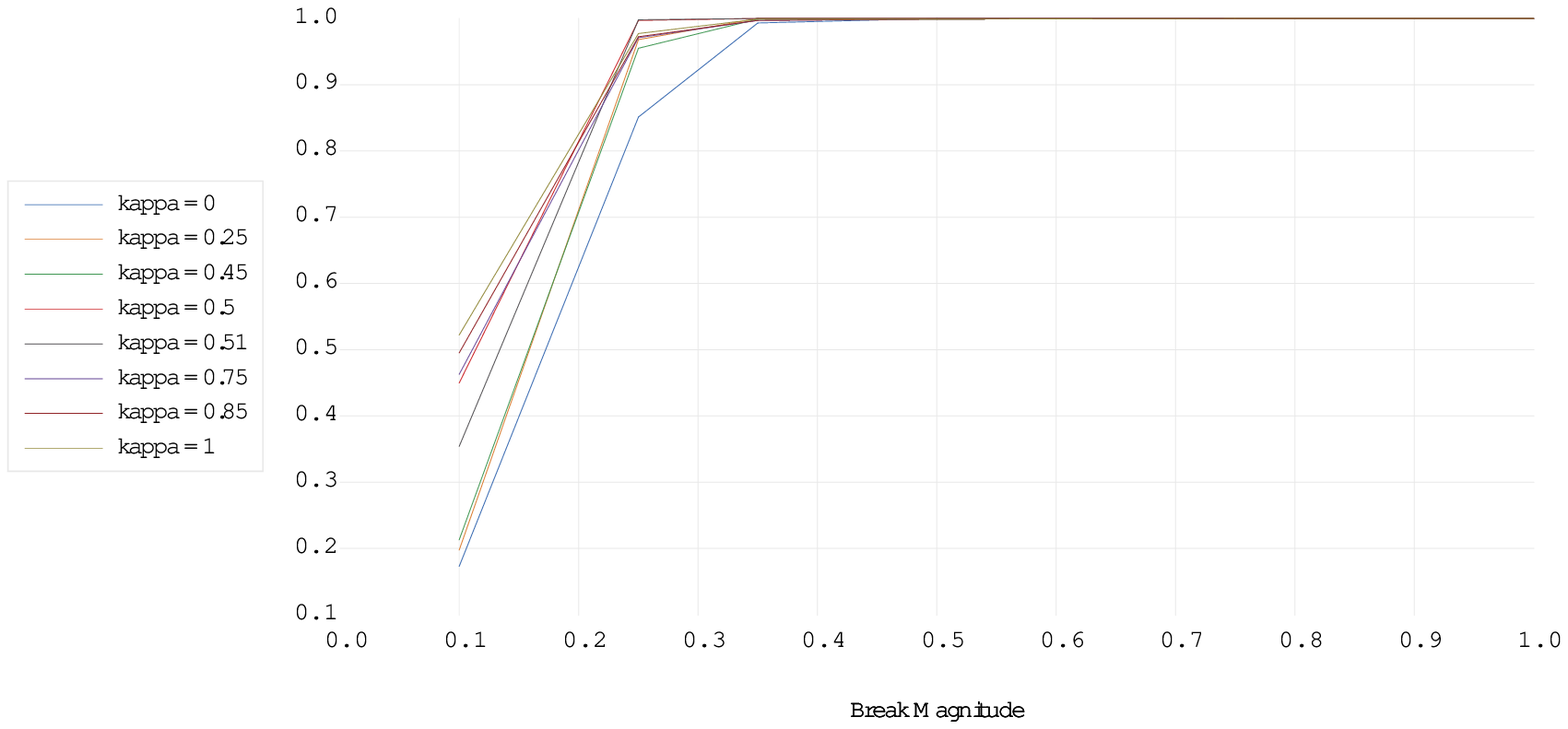}
    \captionof{subfigure}{$\beta_0=1$}
    \label{fig:t83}
\end{minipage}%
\begin{minipage}{0.4\textwidth}
\centering
   \includegraphics[scale=0.4]{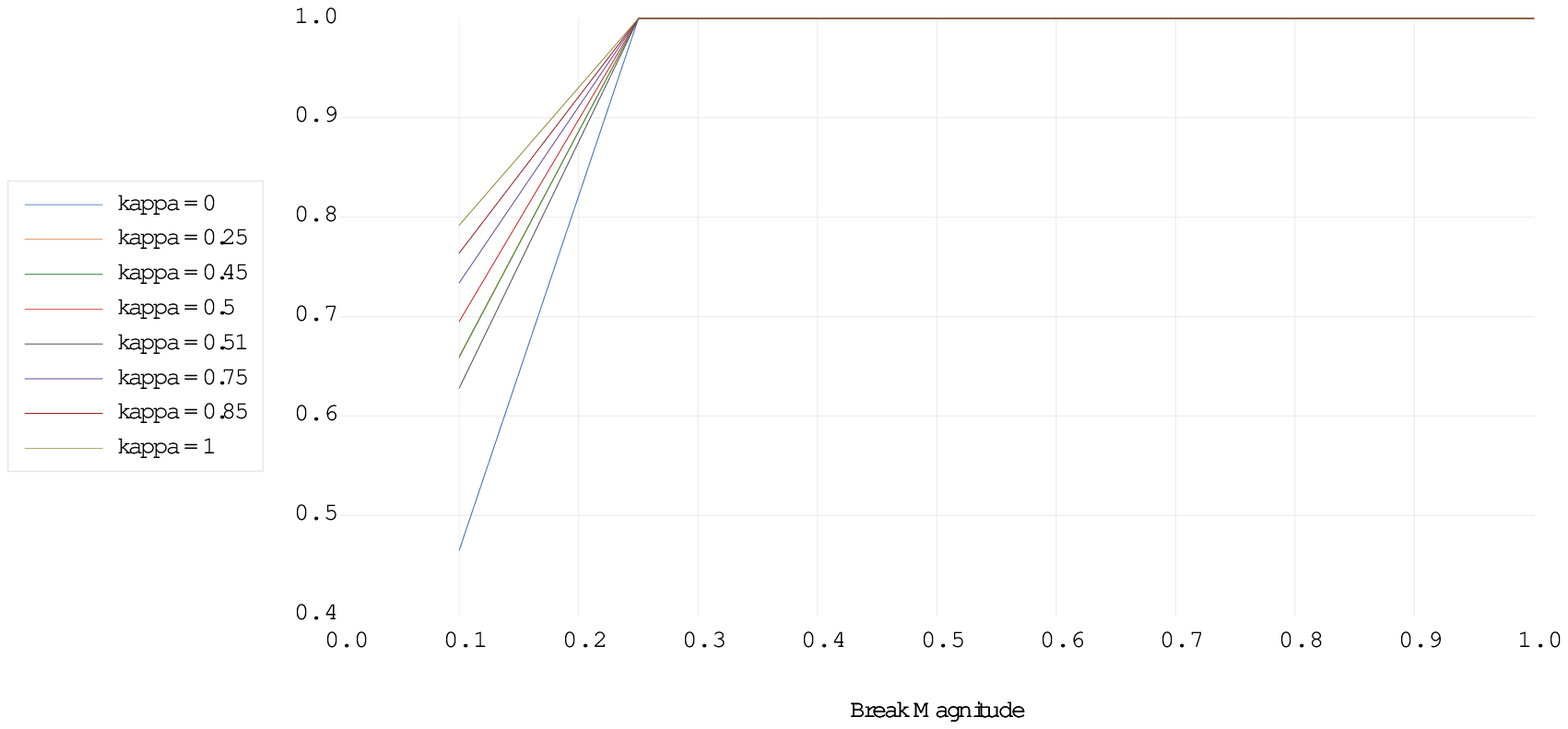}
    \captionof{subfigure}{$\beta_0=1.05$}
    \label{fig:t84}
\end{minipage} \\[0.25cm]
\par
\end{figure}

\begin{figure}[!b]
\caption{Empirical rejection frequencies - heteroskedasticity in $\protect%
\epsilon_{i,2}$ and mid-sample break}
\label{fig:bubb1}\centering
\hspace{-2.5cm} 
\begin{minipage}{0.4\textwidth}
\centering
    \includegraphics[scale=0.4]{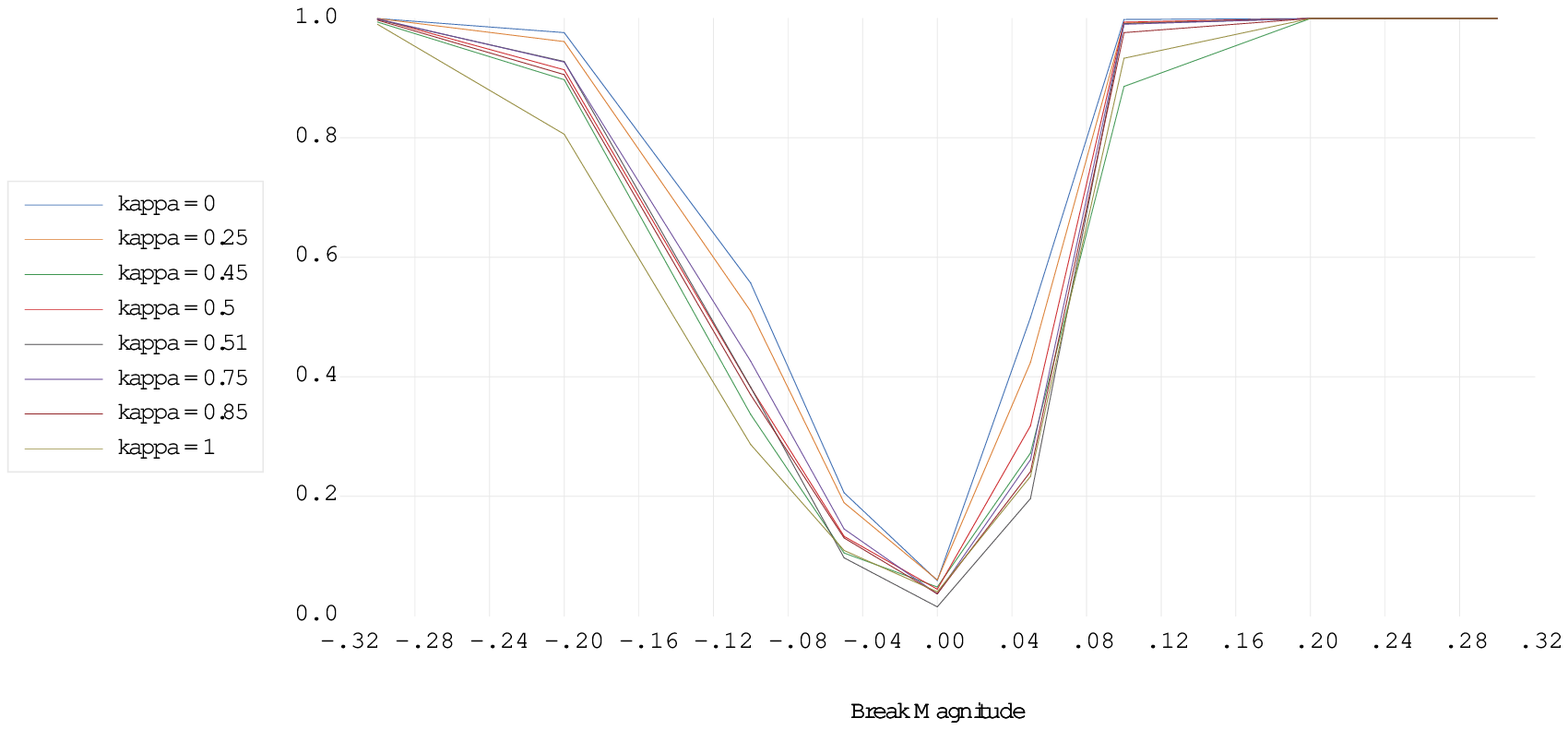}
    \captionof{subfigure}{$\beta_0=0.98$}
    \label{fig:phi98m}
\end{minipage}%
\begin{minipage}{0.4\textwidth}
\centering
   \includegraphics[scale=0.4]{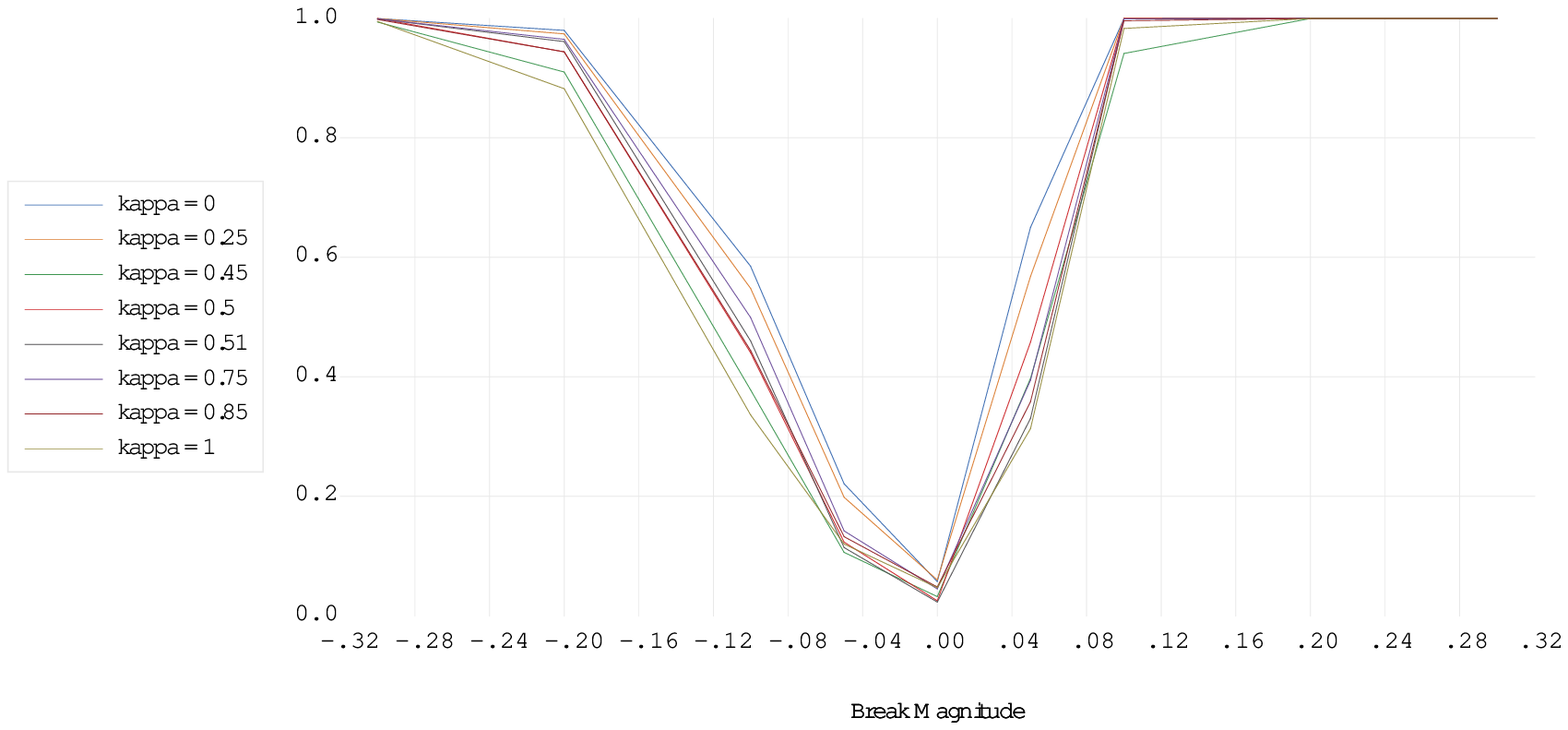}
    \captionof{subfigure}{$\beta_0=0.99$}
    \label{fig:phi99m}
\end{minipage} \\[0.25cm]
\par
\hspace{-2.5cm} 
\begin{minipage}{0.4\textwidth}
\centering
    \includegraphics[scale=0.4]{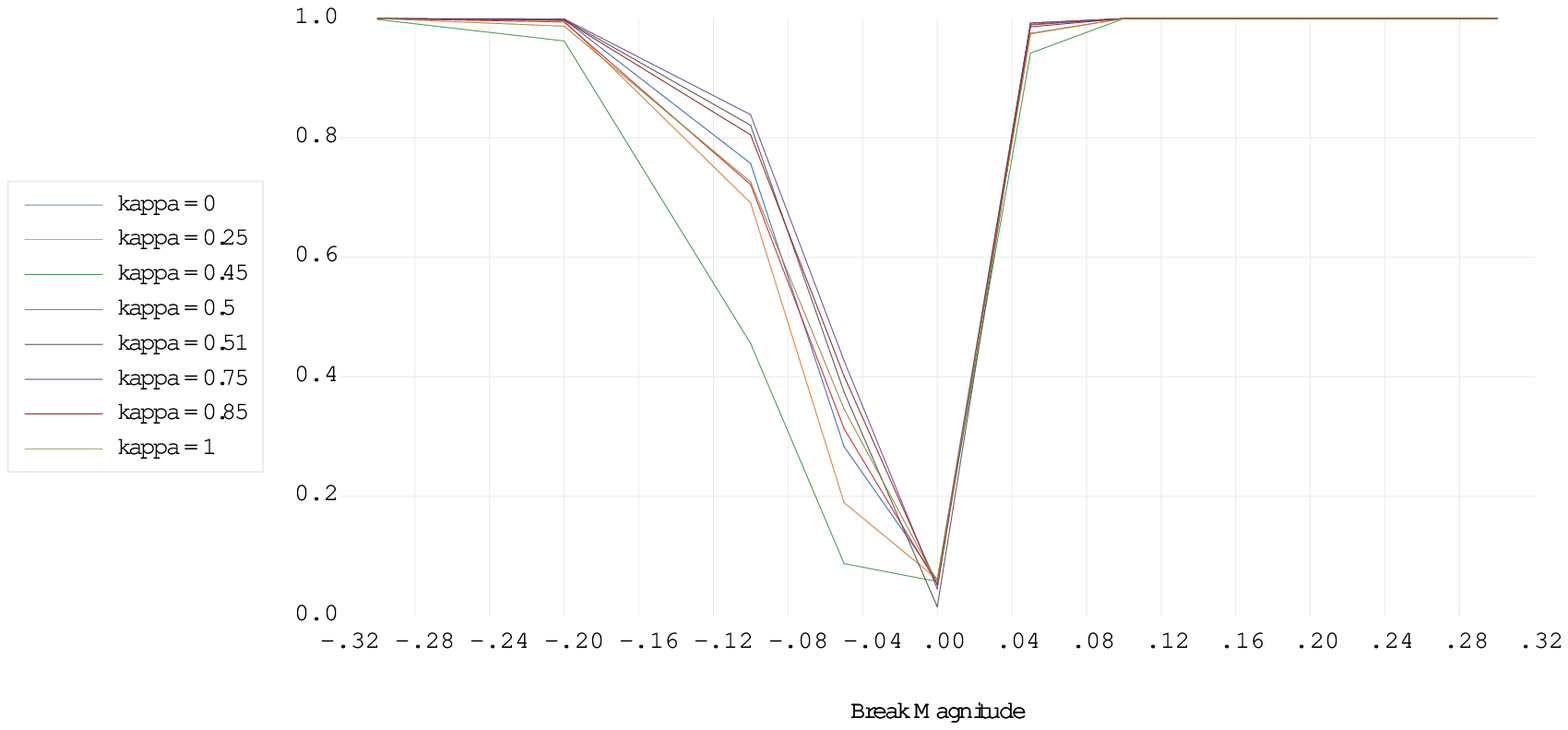}
    \captionof{subfigure}{$\beta_0=1.01$}
    \label{fig:phi01m}
\end{minipage}%
\begin{minipage}{0.4\textwidth}
\centering
   \includegraphics[scale=0.4]{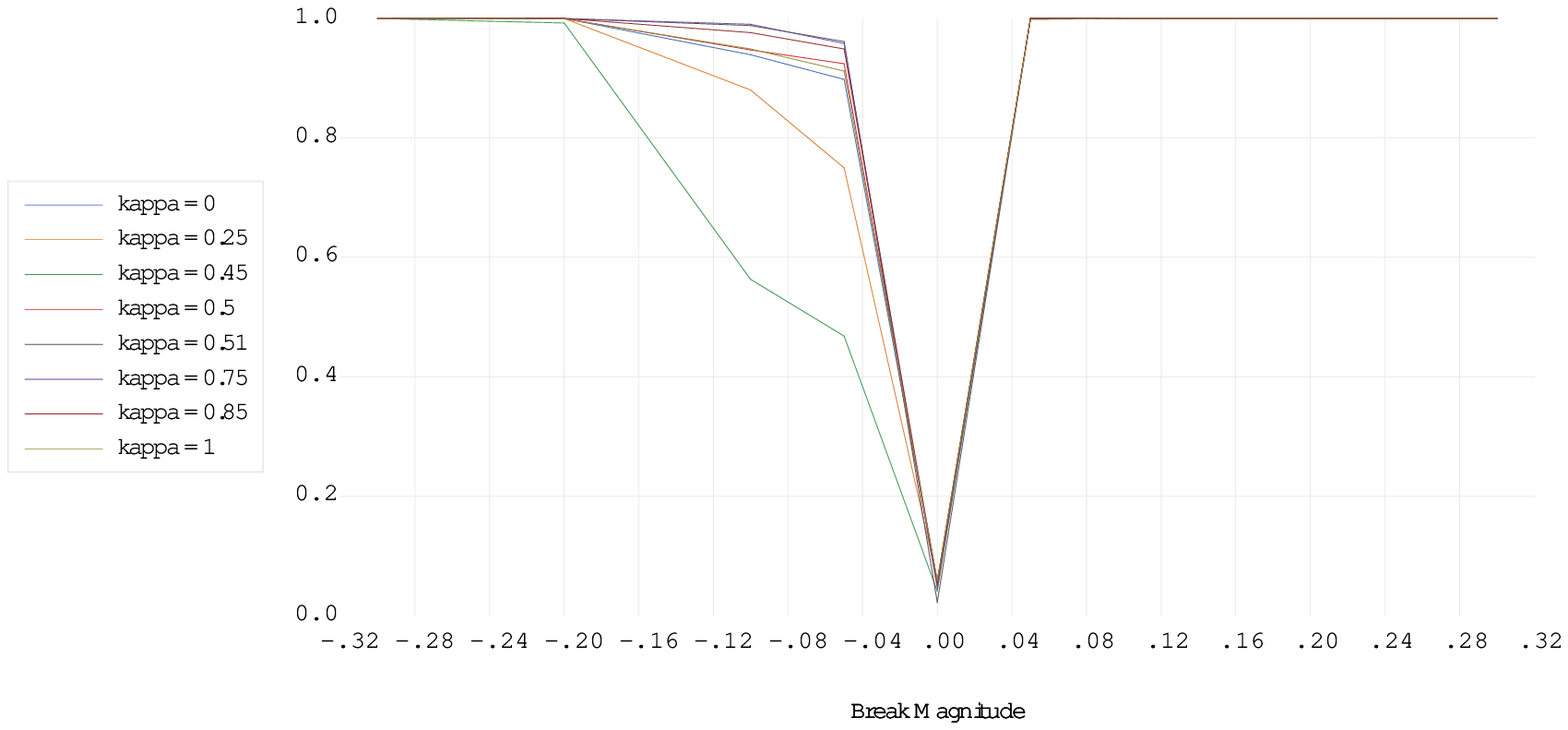}
    \captionof{subfigure}{$\beta_0=1.02$}
    \label{fig:phi02m}
\end{minipage} \\[0.25cm]
\par
\end{figure}

\begin{figure}[!b]
\caption{Empirical rejection frequencies - heteroskedasticity in $\protect%
\epsilon_{i,2}$ and end-of-sample break}
\label{fig:bubb2}\centering
\hspace{-2.5cm} 
\begin{minipage}{0.4\textwidth}
\centering
    \includegraphics[scale=0.4]{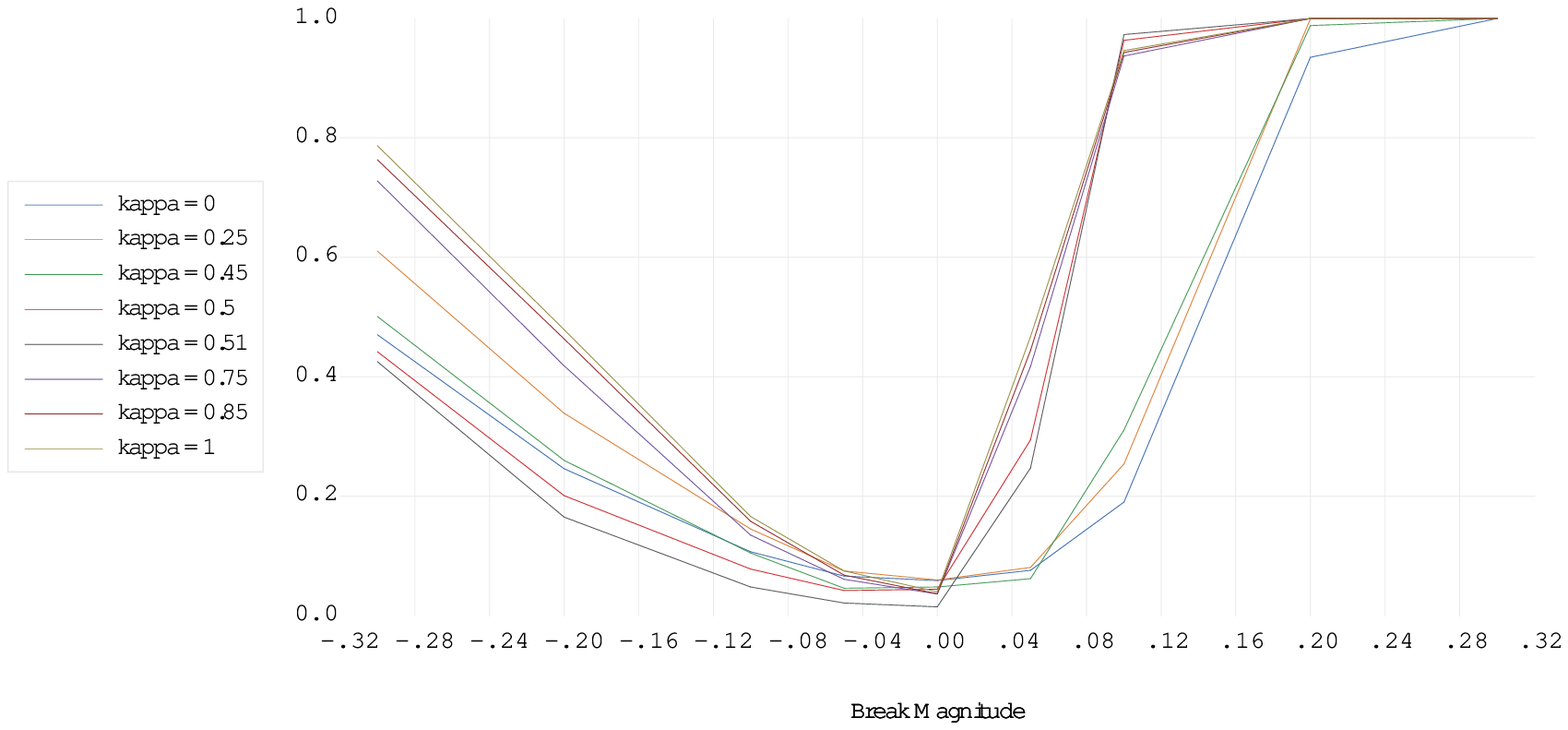}
    \captionof{subfigure}{$\beta_0=0.98$}
    \label{fig:phi98e}
\end{minipage}%
\begin{minipage}{0.4\textwidth}
\centering
   \includegraphics[scale=0.4]{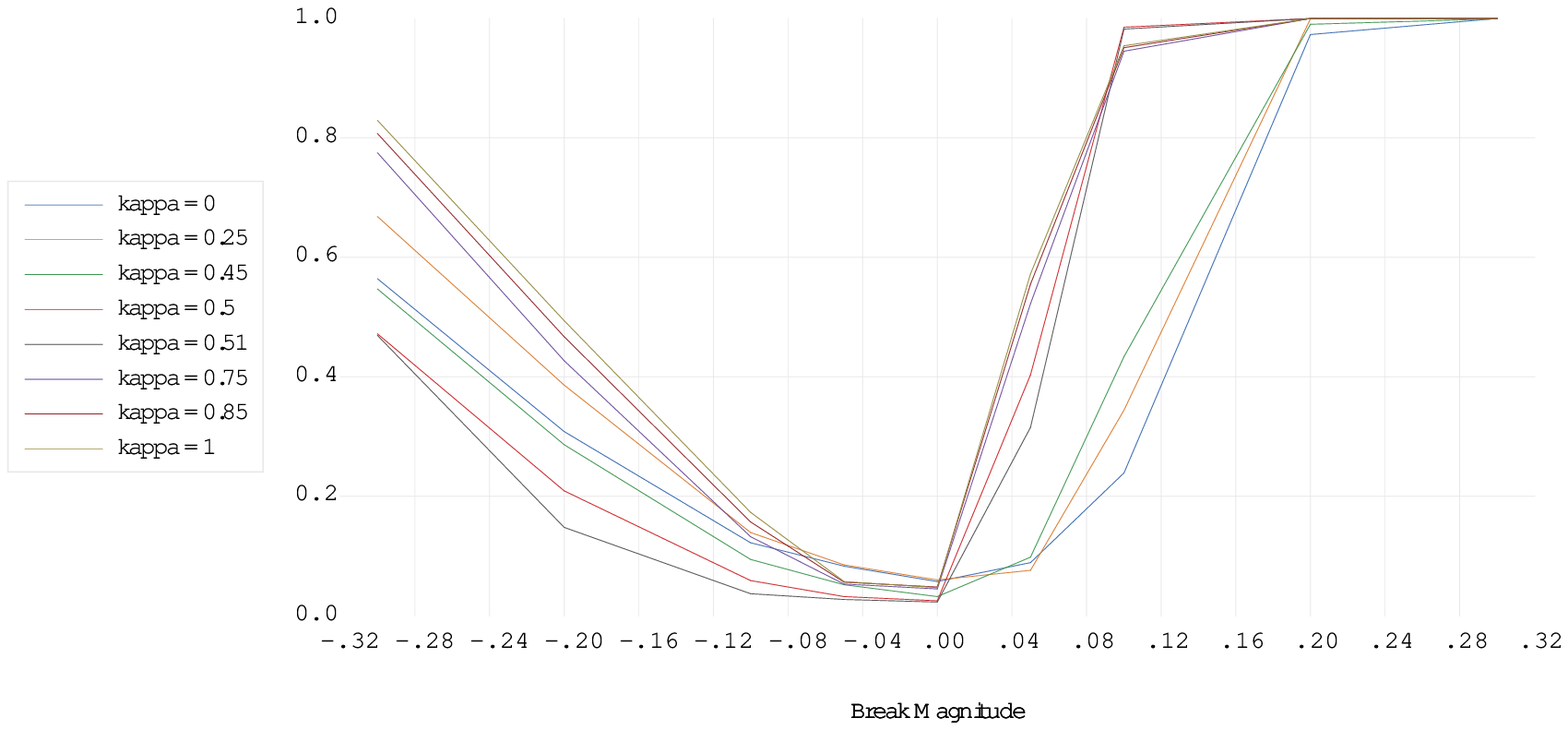}
    \captionof{subfigure}{$\beta_0=0.99$}
    \label{fig:phi99e}
\end{minipage} \\[0.25cm]
\par
\hspace{-2.5cm} 
\begin{minipage}{0.4\textwidth}
\centering
    \includegraphics[scale=0.4]{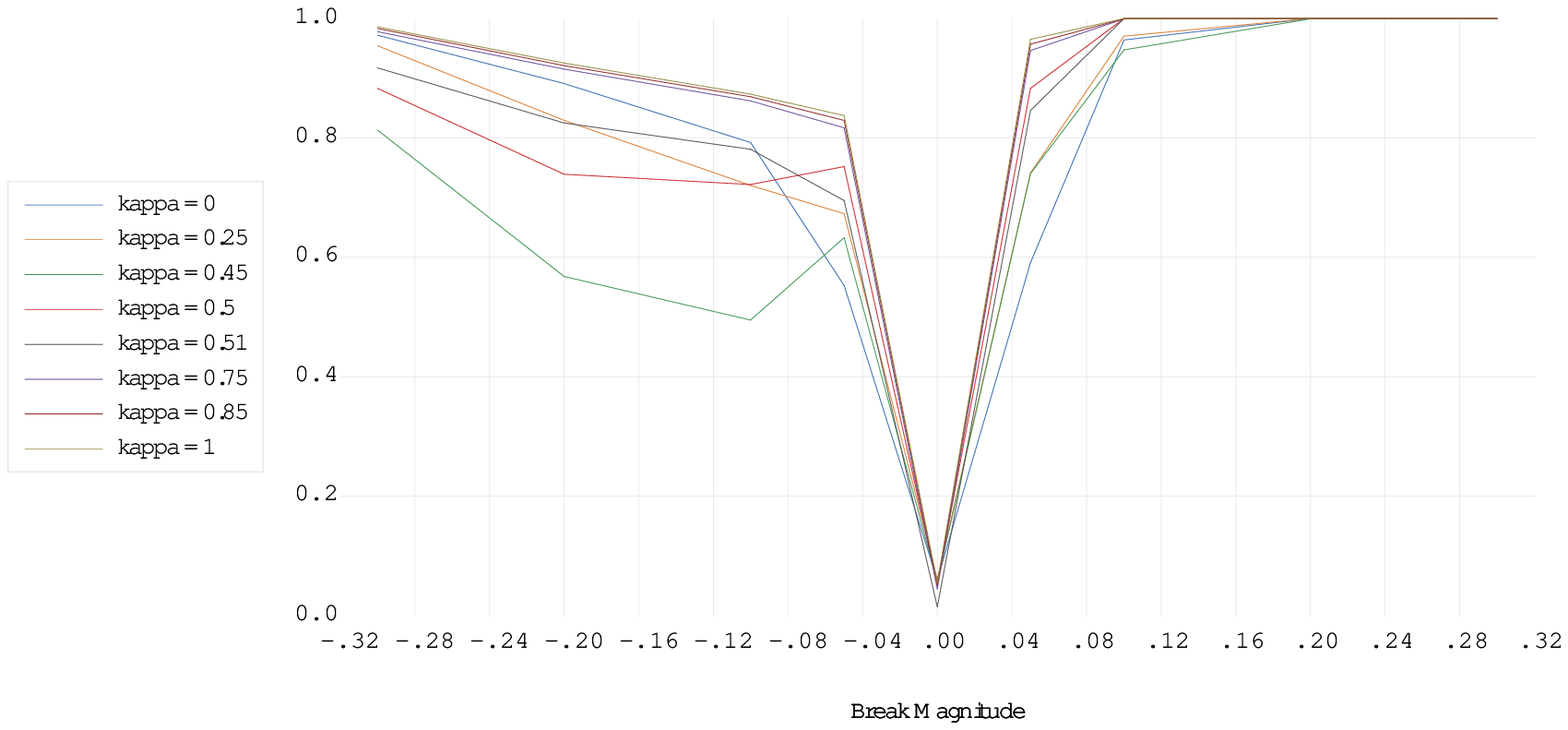}
    \captionof{subfigure}{$\beta_0=1.01$}
    \label{fig:phi01e}
\end{minipage}%
\begin{minipage}{0.4\textwidth}
\centering
   \includegraphics[scale=0.4]{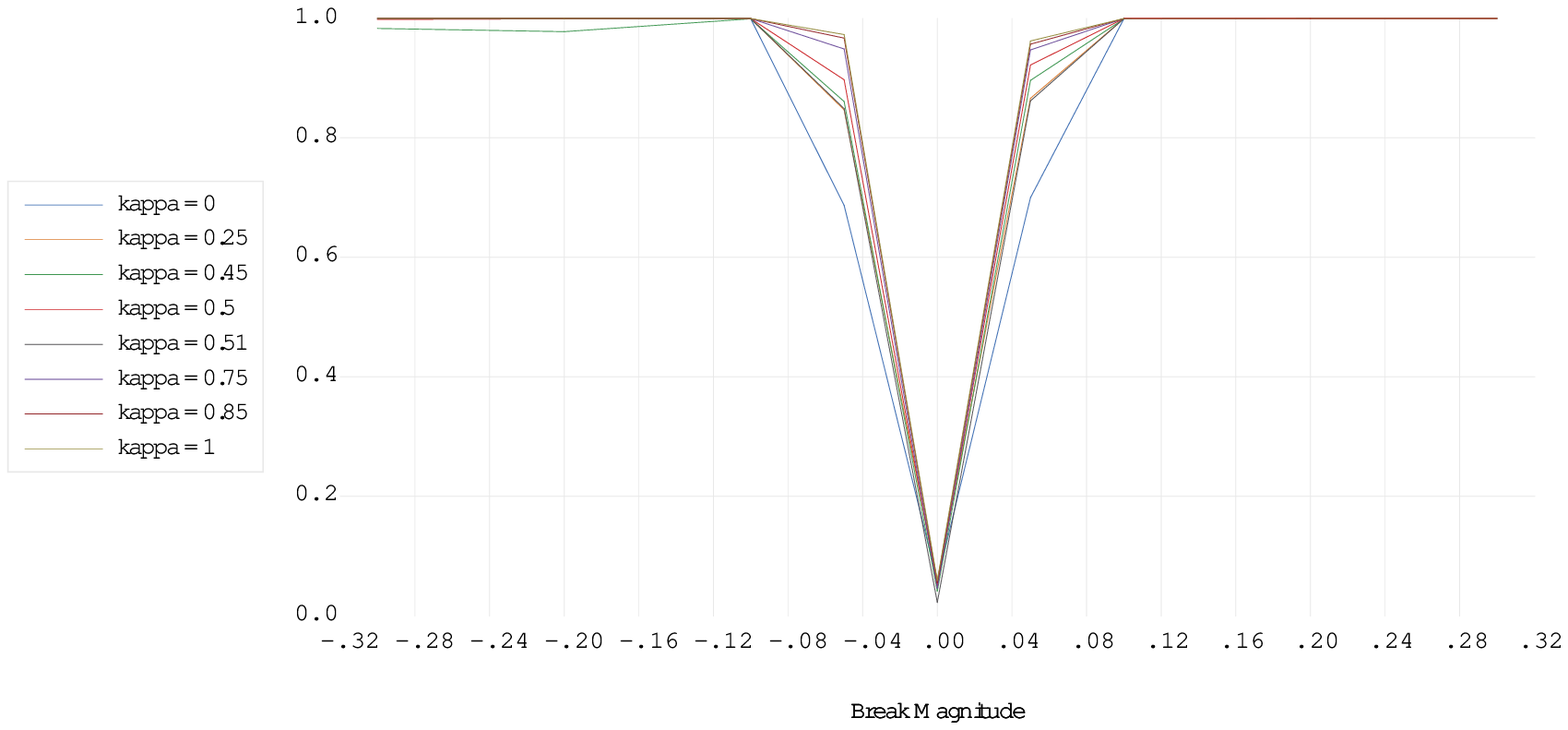}
    \captionof{subfigure}{$\beta_0=1.02$}
    \label{fig:phi02e}
\end{minipage} \\[0.25cm]
\par
\end{figure}

\begin{figure}[!t]
\caption{Monthly log differences of US CPI}
\label{fig:FigCPI}\centering
\includegraphics[scale=0.75]{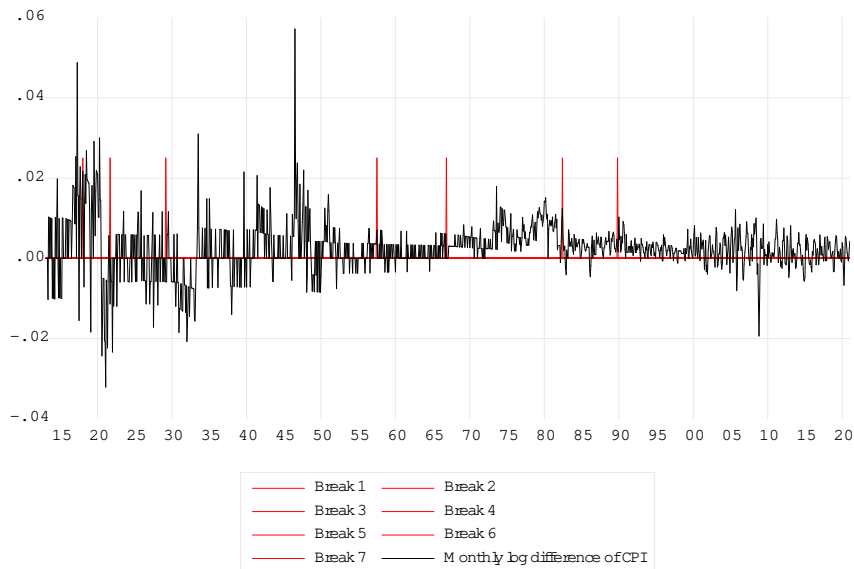}
\end{figure}

\newpage

\end{document}